\newtheorem{pro}{Proposition}[section]
\newtheorem{lem}[pro]{Lemma}
\newtheorem{exa}[pro]{Example}
\newtheorem{theo}[pro]{Theorem}
\newtheorem{defi}[pro]{Definition}
\newtheorem{cor}[pro]{Corollary}
\newtheorem{remk}[pro]{Remark}
\newcommand{\ep}{\varepsilon}
\newcommand{\al}{\alpha}
\newcommand{\om}{\omega}
\newcommand{\vp}{\varphi}
\newcommand{\la}{\lambda}
\newcommand{\sun}{\odot}
\newcommand{\lra}{\longrightarrow}
\newcommand{\lmt}{\longmapsto}
\newcommand{\nrm}[1]{\mbox{ $ \displaystyle \left\| {#1} \right\| $} }
\newcommand{\fk}[1]{ \left( {#1} \right) }
\newcommand{\bk}[1]{ \left\{ {#1} \right\} }
\newcommand{\btr}[1]{\mbox{ $ \left| {#1} \right| $ }}
\newcommand{\ce}{{\bf\Bbb C}}
\newcommand{\scT}{{\mathcal{T}}}
\newcommand{\scTco}{{\mathcal{T}_{co}}}
\newcommand{\re}{{\bf\Bbb R}}
\newcommand{\rep}{{\bf\Bbb R^+}}
\newcommand{\za}{{\bf\Bbb N}}
\newcommand{\jz}{{\bf\Bbb J}}
\newcommand{\scS}{{\mathcal{S}}}
\newcommand{\scA}{{\mathcal{A}}}
\newcommand{\scR}{{\mathcal{R}}}
\newcommand{\scr}{{\mathcal{Z}}}
\newcommand{\scU}{{\mathcal{U}}}
\newcommand{\scV}{{\mathcal{V}}}
\newcommand{\scW}{{\mathcal{W}}}
\newcommand{\Xsr}{X^{\sun}_{rev}}
\newcommand{\fxx}{f_{x,x^{\sun}}}
\newcommand{\seq}[2]{\mbox{$ \bk{ {#1}_{#2} }_{{#2} \in \za} $} }
\newcommand{\net}[3]{\mbox{$ \bk{ {#1}_{#2} }_{{#2} \in {#3}} $} }
\newcommand{\supp}[1]{ \mbox{ $ {\rm supp} \left\{ {#1} \right\} $ } }
\newcommand{\lb}{ \mbox{$ {\rm lb}  $} }
\newcommand{\ilm}[1]{  \lim_{ {#1} \to \infty}  }
\newcommand{\netlim}[2]{  \lim_{ {#1}\in {#2}}  }
\newcommand{\Funk}[5]{ \begin{array}{ccccc}
                       {#1} & : & {#2} & \lra & {#3} \\
                            &   & {#4} & \lmt & \displaystyle{#5} 
                       \end{array}                       }
\newcommand{\funk}[3]{ \begin{array}{ccccc}
                       {#1} & : & {#2} & \lra & {#3}
                       \end{array}                       }
\begin{document}

\title{On compactifications of bounded $C_0$-semigroups}
\author{Josef Kreulich, Universit\"at Duisburg Essen}

\begin{abstract}
In this study, we refine the compactification presented by Witz \cite{Witz} for general semigroups to the case of bounded $C_0$-semigroups, herein involving adjoint theory for this class of operators. This approach considerably reduces the operator space in which the compactification is performed. Additionally, this approach leads to a decomposition of $X^{\sun}$ and to an extension of ergodic results to dual semigroups.
\end{abstract}

\maketitle
\section{Introduction}
In this study on the compactifications of bounded $C_0$-semigroups, we attempt to reduce the spaces in their construction. Rather than $L(X,X^{**})$, used by \cite{Witz}, this study shows that the compactification is part of a smaller space of operators, namely, 
$$
L_T(X,X^{\sun\sun}):=\bk{U\in L(X,X^{\sun\sun}): \  U^*(X^{\sun})\subset X^{\sun}, U^{\sun*}(X^{\sun\sun})\subset X^{\sun\sun}},
$$
where the spaces $X^{\sun}$ and $X^{\sun\sun}$ come with the underlying $C_0$-semigroup 
$\bk{T(t)}_{t\ge 0}.$ Furthermore, the compactification of \cite{Witz} leads to a connected compactification of the dual semigroup $\bk{T^{\sun}(t)}_{t\ge 0}.$ We show that  an algebra isomorphism maps the compactification of $\bk{T(t)}_{t\ge 0}$ on the one of $\bk{T^{\sun}(t)}_{t\ge 0}.$ 
Through the given approach, a decomposition of $X^{\sun}=X^{\sun}_a\oplus X^{\sun}_0$ is found, as well as of dual-space-valued uniformly continuous functions, 
as \cite{KnappDeco} did in the scalar-valued case using their algebra structure. In this scope, we apply methods similar to those used in the proofs of \cite{DeGli1} and \cite{DeGli2}. Furthermore, to obtain these results, we combine the abstract theory of right semitopological semigroups \cite{RuppertLNM}, and we compare the results of dual semigroups with those of \cite{HillePhillips} and \cite{neervenLNM}.

\insert\footins{\footnotesize The author wishes to thank Professor Ruess for his suggestions and advice.}

\section{The $\sun$-semigroup and the operator space $L_T(X,X^{\sun\sun})$ }
Throughout this study, $\scS:=\bk{T(t)}_{t\ge 0}$ denotes a bounded $C_0$-semigroup with the generator $A;$ we define
$$
X^{\odot}:=\bk{x^*\in X^*:\lim_{t\to 0}T^*(t)x^*=x^*} \mbox{ called X-sun }
$$
from \cite[Theorem 1.3.1]{neervenLNM}, and we find that $X^{\odot}$ is a closed, $w^*$-dense, and $T^*(t)$-invariant subspace. The $C_0-$semigroup on $X^{\sun}$ is denoted $\scS^{\sun}:=\bk{T^{\sun}(t)}_{t\in\rep}.$ Moreover, $X^{\odot}=\overline{D(A^*)}.$ For a given $x$, we define $O(x):=\bk{T(t)x}_{t\ge 0};$ if the element $x=x^{\sun}\in X^{\sun},$ we assume $O(x^{\sun})=\bk{T^{\sun}(t)x^{\sun}}_{t\ge 0}.$
Let $L(X,Y)$ denote the Banach space of bounded linear operators from $X$ to $Y.$ With this setting, we recall \cite[Definition 14.3.1.]{HillePhillips} for bounded operators $B\in L(X).$
\begin{defi}
\begin{enumerate}
\item Given a linear operator $B\in L(X),$ we denote $(B^*)_0$ as the restriction of $B^*$ to $X^{\sun},$ and we denote by $B^{\sun}$ the restriction of $B^*$ with domain $D(B^{\sun}):=\bk{x^*\in X^{\sun}:B^*x^*\in X^{\sun}}.$
\item For an operator $U\in L(X,X^{\sun\sun})$ with $U^*(X^{\sun})\subset X^{\sun}$, we define $U^{\sun*}:=(U^{\sun}_{|X^{ \sun}})^*.$
\end{enumerate}
\end{defi}

This leads in the $\odot$ context to the following set of operators:
\begin{equation}
L_T(X,X^{\odot\odot}):=\bk{U\in L(X,X^{\odot\odot}): \  U^*(X^{\odot})\subset X^{\odot}, U^{\sun*}(X^{\sun\sun})\subset X^{\sun\sun}}.
\end{equation}
The goal of this section is to show that the previously defined operator space is a Banach algebra, and the right and left translation has continuity properties in an operator topology close to the $w^*-$operator topology, which is defined while $X^{\sun\sun}$ is a subspace of a dual space.

Similar to \cite[pp.31-32]{neervenLNM}, let $i:X^{\sun}\to X^*$ be the inclusion, and let 
\begin{equation} \label{X-sun-star-embeds-to-X-star-star}
\Funk{r}{X^{**}}{X^{\sun*}}{x^{**}}{\bk{x^{\sun}\mapsto <x^{**},ix^{\sun}>}}
\end{equation}
be the restriction.
Further, let $j:X\to X^{**}$ be the natural embedding. Following the arguments in the second part of the proof of \cite[Theorem 2.4.2, pp.31-32]{neervenLNM}, we have the following:

\begin{pro} \label{X-embedded-to-X-sun-star} 
Let $X$ be a Banach space, and let $\bk{T(t)}_{t\ge 0}$ be a $C_0-$semigroup. Then,
\begin{enumerate}
\item $jX\subset X^{\sun*}$
\item $\overline{rjB_X}^{\sigma(X^{\sun*},X^{\sun})}=B_{X^{\sun*}}.$
\end{enumerate}
\end{pro}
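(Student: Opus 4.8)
The plan is to read both assertions inside the dual pair $\fk{X^{\sun*},X^{\sun}}$ and to recognise (2) as the Goldstine theorem for that pair. First I would make the composite map explicit: for $x\in X$ the element $rjx\in X^{\sun*}$ acts by $\dual{rjx,x^{\sun}}=\dual{jx,ix^{\sun}}=\dual{x^{\sun},x}$, so $rjx$ is nothing but the evaluation functional $x^{\sun}\mapsto\dual{x^{\sun},x}$ restricted to $X^{\sun}$. This makes $rj:X\lra X^{\sun*}$ a well-defined contraction, and in particular $rjB_X\subseteq B_{X^{\sun*}}$. For (1) it then remains to see that this copy of $X$ is faithful: $rjx=0$ forces $\dual{x^{\sun},x}=0$ for every $x^{\sun}\in X^{\sun}$, i.e. the $w^*$-continuous functional $jx$ vanishes on the $w^*$-dense subspace $X^{\sun}$, hence on all of $X^*$, so $x=0$. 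Thus $r$ embeds $jX$ injectively into $X^{\sun*}$.

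For (2) the inclusion $\overline{rjB_X}^{\sigma(X^{\sun*},X^{\sun})}\subseteq B_{X^{\sun*}}$ is immediate, since $B_{X^{\sun*}}$ is $\sigma(X^{\sun*},X^{\sun})$-closed and already contains $rjB_X$. The substance is the reverse inclusion, which I would obtain by a Hahn--Banach separation exactly as in the second part of the proof of \cite[Theorem 2.4.2]{neervenLNM}. The set $C:=\overline{rjB_X}^{\sigma(X^{\sun*},X^{\sun})}$ is absolutely convex and $w^*$-closed; if some $\Lambda\in B_{X^{\sun*}}$ were not in $C$, separation in the pair $\fk{X^{\sun*},X^{\sun}}$ would produce a $\sigma(X^{\sun*},X^{\sun})$-continuous functional strictly separating $\Lambda$ from $C$, and such a functional is represented by a genuine element $x^{\sun}\in X^{\sun}$ because $X^{\sun}$ is exactly the dual of $\fk{X^{\sun*},\sigma(X^{\sun*},X^{\sun})}$.

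The crux is the value of the support function of $rjB_X$ in the direction $x^{\sun}$: here $\sup_{x\in B_X}\Rep\dual{rjx,x^{\sun}}=\sup_{x\in B_X}\Rep\dual{x^{\sun},x}=\|x^{\sun}\|_{X^{*}}=\|x^{\sun}\|_{X^{\sun}}$, the last step using that $X^{\sun}$ carries the subspace norm of $X^{*}$. Separation would then give $\Rep\dual{\Lambda,x^{\sun}}>\|x^{\sun}\|_{X^{\sun}}$, contradicting $|\dual{\Lambda,x^{\sun}}|\le\|\Lambda\|\,\|x^{\sun}\|\le\|x^{\sun}\|$. Hence no such $\Lambda$ exists and $B_{X^{\sun*}}\subseteq C$, which gives (2). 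Equivalently, one may phrase this through the bipolar theorem: the polar $\fk{rjB_X}^{\circ}$ taken in $X^{\sun}$ is computed to be exactly $B_{X^{\sun}}$, so its bipolar is $B_{X^{\sun*}}$.

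The main obstacle I anticipate is the bookkeeping of two \emph{different} norming directions. The norming in the support-function computation is performed by $X$ acting on $X^{*}\supseteq X^{\sun}$, and $X$ (via $j$) is always isometrically norming for $X^{*}$; by contrast $X^{\sun}$ need not be $1$-norming for $X$ when the semigroup is merely bounded (only $M$-norming with $M=\sup_t\|T(t)\|$), so $rj$ itself need not be isometric. The point to get right is that this asymmetry is harmless: the polar is taken on the $X^{\sun}$-side, where $X$ does the norming, so it equals $B_{X^{\sun}}$ on the nose and the $w^*$-closure still fills the whole ball. The second care-point is to ensure the separating functional is realised in $X^{\sun}$ itself rather than in $X^{\sun*{\sun}}$, which is precisely why the separation must be carried out for the topology $\sigma(X^{\sun*},X^{\sun})$ and not for the norm topology.
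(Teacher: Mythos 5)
Your proposal is correct, and for item (2) it takes a genuinely different route from the paper. The paper deduces (2) from the classical Goldstine theorem, $\overline{jB_X}^{\sigma(X^{**},X^*)}=B_{X^{**}}$, together with the fact that the restriction map $r$ sends $B_{X^{**}}$ \emph{onto} $B_{X^{\sun*}}$ (Hahn--Banach extension of functionals from the subspace $X^{\sun}\subset X^*$ to all of $X^*$ without increase of norm), and then pushes the classical closure through the $\sigma(X^{**},X^*)$-to-$\sigma(X^{\sun*},X^{\sun})$ continuous map $r$. You instead prove a Goldstine theorem for the pair $\fk{X^{\sun*},X^{\sun}}$ from scratch by separation/bipolar; the key is your computation $\fk{rjB_X}^{\circ}=B_{X^{\sun}}$, which holds because $B_X$ is isometrically norming for $X^{\sun}$ in the subspace norm of $X^*$ --- exactly the asymmetry you flag (in the other direction $X^{\sun}$ is in general only $M$-norming for $X$, so $rj$ need not be isometric). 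Your route is self-contained (no appeal to Goldstine or Alaoglu), while the paper's is shorter given the classical facts; both ultimately rest on Hahn--Banach, applied once as a separation and once as an extension theorem.

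One caveat concerns item (1). As literally stated, $jX\subset X^{\sun*}$ is nearly trivial, and your argument (well-definedness of $rj$ plus injectivity via the $w^*$-density of $X^{\sun}$ in $X^*$) certainly proves it. The paper's own proof of (1), however, establishes something different and stronger: the intertwining $\dual{T^{\sun*}(t)jx,x^{\sun}}=\dual{T(t)x,x^{\sun}}$, i.e. $T^{\sun*}(t)(rjx)=rj(T(t)x)$, which combined with the strong continuity of $t\mapsto T(t)x$ and $\nrm{rj}\le 1$ yields $rjx\in X^{\sun\sun}$. That membership, not injectivity, is what the proposition is used for downstream (for instance to place $T(t)$ itself in $L_T(X,X^{\sun\sun})$ in Lemma \ref{LT-semigroup-properties}). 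So to let the proposition play its actual role in the paper you should add this one-line intertwining computation; your injectivity argument, while valid, does not substitute for it.
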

\begin{proof}
For the first item, note that
\begin{eqnarray*}
<T^{\sun*}(t)jx,x^{\sun}>&=&<jx,T^{\sun}(t)x^{\sun}>=<x,T^{\sun}(t)x^{\sun}>\\
&=&<T(t)x,x^{\sun}>,
\end{eqnarray*}
which describes the embedding. 

For the second item, note that
$\overline{jB_X}^{\sigma(X^{**},X)}=B_{X^{**}},$ and $r(B_{X^{**}})=B_{\sun*}$ by a consequence of the Hahn-Banach theorem \cite[Thm. 11 ,p. 63]{DS}; hence, 
$\overline{rjB_X}^{\sigma(X^{\sun*},X^{\sun})}=B_{X^{\sun*}}.$
\end{proof}

Next, we show that with the multiplication defined below, $L_T(X,X^{\sun\sun})$ becomes a semigroup of operators.

For a given $U,V\in L_T(X,X^{\odot\odot})$, following \cite{Witz}, we define
$$
\Funk{U\circ V}{X}{X^{\sun*}}{x}{\bk{ x^{\odot }\mapsto <Vx,U^{\odot}x^{\odot}>}}.
$$

\begin{lem} \label{LT-semigroup-properties}
The given $C_0$-semigroup $\bk{T(t)}_{t\ge 0}$ itself is contained in $L_T(X,X^{\odot\odot}),$ and $T(t)T(s)=T(t)\circ T(s)$ for all $t,s\in\rep.$
Moreover, for a given $U,V\in L_T(X,X^{\odot\odot})$, we have $U\circ V\in L_T(X,X^{\odot\odot}).$
\end{lem}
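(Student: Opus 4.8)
The plan is to reduce all three assertions to the single observation that the $\circ$-product is an honest composition of bounded operators. Unwinding the defining formula together with the definition $U^{\sun*}=(U^{\sun}_{|X^{\sun}})^*$ gives, for every $x\in X$ and every $x^{\sun}\in X^{\sun}$,
\[
\langle (U\circ V)x, x^{\sun}\rangle = \langle Vx, U^{\sun}x^{\sun}\rangle = \langle U^{\sun*}(Vx), x^{\sun}\rangle,
\]
since $Vx\in X^{\sun\sun}\subset X^{\sun*}$ and $U^{\sun}x^{\sun}\in X^{\sun}$. Hence $(U\circ V)x = U^{\sun*}(Vx)$, i.e. $U\circ V = U^{\sun*}V$ as a map $X\to X^{\sun*}$. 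Once this identity is available, the lemma becomes a matter of composing the two invariance properties built into the definition of $L_T(X,X^{\sun\sun})$, with no genuine analysis left.

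I would treat the statements about $\scS$ first. Viewing $T(t)$ as the operator $rjT(t)\colon X\to X^{\sun\sun}$ is legitimate: by Proposition \ref{X-embedded-to-X-sun-star} one has $rjx\in X^{\sun*}$, and the intertwining relation $T^{\sun*}(t)(rjx)=rj(T(t)x)$ established in its proof, combined with strong continuity of $\scS$, shows that $t\mapsto T^{\sun*}(t)(rjx)$ is norm-continuous at $0$, so $rjx$ lies in the sun-space $X^{\sun\sun}$ of $\scS^{\sun}$. A direct pairing computation then gives $(rjT(t))^*x^{\sun}=T^*(t)x^{\sun}$ for $x^{\sun}\in X^{\sun}$, so $(rjT(t))^{\sun}=T^{\sun}(t)$ maps $X^{\sun}$ into itself (first $L_T$-condition) and $(rjT(t))^{\sun*}=T^{\sun*}(t)$ leaves $X^{\sun\sun}$ invariant because $X^{\sun\sun}$ is $T^{\sun*}(t)$-invariant by the same property applied to $\scS^{\sun}$ (second $L_T$-condition); thus $T(t)\in L_T(X,X^{\sun\sun})$. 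The product law is then immediate from the composition identity and the intertwining relation: $(T(t)\circ T(s))x = T^{\sun*}(t)(rjT(s)x) = rj(T(t)T(s)x)$, which is exactly the element of $L_T(X,X^{\sun\sun})$ representing $T(t)T(s)$.

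The heart of the lemma is closure under $\circ$. Boundedness of $U\circ V=U^{\sun*}V$ is clear, and since $V$ maps into $X^{\sun\sun}$ while $U^{\sun*}$ preserves $X^{\sun\sun}$ (the second $L_T$-condition on $U$), the product lands in $X^{\sun\sun}$, so $U\circ V\in L(X,X^{\sun\sun})$. For the first invariance condition I would compute the adjoint on $X^{\sun}$: for $x^{\sun}\in X^{\sun}$ and $x\in X$,
\[
\langle (U\circ V)^*x^{\sun}, x\rangle = \langle Vx, U^{\sun}x^{\sun}\rangle = \langle V^{\sun}(U^{\sun}x^{\sun}), x\rangle,
\]
so that $(U\circ V)^{\sun}=V^{\sun}U^{\sun}$ maps $X^{\sun}$ into $X^{\sun}$, both $U^{\sun}$ and $V^{\sun}$ being endomorphisms of $X^{\sun}$ by the first $L_T$-conditions on $U$ and $V$. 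Dualizing yields $(U\circ V)^{\sun*}=U^{\sun*}V^{\sun*}$, and since both factors preserve $X^{\sun\sun}$ so does their composite, which is the second invariance condition. Hence $U\circ V\in L_T(X,X^{\sun\sun})$.

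The main obstacle is purely bookkeeping: keeping straight the four spaces $X$, $X^{\sun}$, $X^{\sun*}$, $X^{\sun\sun}$ and the two adjoint-type operations $U\mapsto U^{\sun}$ and $U\mapsto U^{\sun*}$, making sure every pairing is taken between the right space and its predual. The one point requiring care is that $U\mapsto U^{\sun}$ is order-reversing, so that the composition order in $(U\circ V)^{\sun}=V^{\sun}U^{\sun}$ comes out correctly; and that the image of $U\circ V$ genuinely lies in $X^{\sun\sun}$ rather than merely in $X^{\sun*}$, which is precisely where the second defining condition of $L_T(X,X^{\sun\sun})$ is used.
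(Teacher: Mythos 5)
Your proof is correct and takes essentially the same route as the paper: the identity $(U\circ V)x=U^{\sun*}(Vx)$, the computations $(U\circ V)^{\sun}=V^{\sun}U^{\sun}$ and $(U\circ V)^{\sun*}=U^{\sun*}V^{\sun*}$, and the appeal to the two invariance conditions of $L_T(X,X^{\sun\sun})$ are exactly the ingredients of the paper's argument, which merely spells out the resulting memberships in $X^{\sun}$ and $X^{\sun\sun}$ by writing the strong-continuity limits that your composition identity encapsulates. Your handling of $T(t)$ via $rj$ and the intertwining relation is likewise the paper's argument, only made more explicit than its citation of van Neerven's Theorem 1.3.1.
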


\begin{proof}
By \cite[Theorem 1.3.1]{neervenLNM}, we have $T^*(t)X^{\sun}=T^{\sun}(t)X^{\sun}\subset X^{\sun}, $ and consequently, $T^{\sun *}(t)(X^{\sun\sun}\subset X^{\sun\sun}.$
Let $t,s\in\re,$ and $x\in X$; then, 
\begin{eqnarray*}
T(s)\circ T(t)x&=&\bk{x^{\sun}\mapsto <T(t)x,T^{\sun}(s)x^{\sun}>} \\
&=&\bk{x^{\sun}\mapsto <T(t+s)x,x^{\sun}> } \\
&=&T(t+s)x \mbox{ is viewed as a linear functional on } X^{\sun}.
\end{eqnarray*}

It remains to be proven that for given $U,V\in L_T(X,X^{\odot\odot})$, 
$U\circ V\in L_T(X,X^{\odot\odot}).$ 
First, it has to be verified that for all $x\in X,$ $(U\circ V)x \in X^{\sun*}.$ Note that for $x^{\sun}\in X^{\sun},$
\begin{eqnarray*}
\btr{<(U\circ V) x, x^{\sun}>}&=&\btr{<Vx,U^*x^{\sun}>}\le \nrm{Vx}\nrm{U^*}\nrm{x^{\sun}} \\
&=&\nrm{Vx}\nrm{U}\nrm{x^{\sun}},
\end{eqnarray*}
which verifies the first claim. Next, for $t >0, x\in X,$ $(U\circ V)x \in X^{\sun*}$, we prove the continuity in $0$ for the semigroup $\bk{T^{\sun\sun}(t)}_{t\ge 0}.$
\begin{eqnarray*}
T^{\sun\sun}(t)(U\circ V)x- (U\circ V)x &=& T^{\sun\sun}(t)\bk{x^{\sun}\mapsto <Vx,U^{\sun}x^{\sun}>}-(U\circ V)x  \\
&=& \bk{x^{\sun}\mapsto <Vx,U^{\sun}T^{\sun}(t)x^{\sun}>-<Vx,U^{\sun}x^{\sun}>} \\
&=&\bk{x^{\sun}\mapsto <U^{\sun *}Vx,T^{\sun}(t)x^{\sun}>-<U^{\sun *}Vx,x^{\sun}>} \\
&=&\bk{x^{\sun}\mapsto <T^{\sun\sun}(t)U^{\sun *}Vx,x^{\sun}>-<U^{\sun *}Vx,x^{\sun}>}. 
\end{eqnarray*}
Because $Vx\in X^{\sun\sun}$  and $U^{\sun *}(X^{\sun\sun})\subset X^{\sun\sun},$ 
we find that 
$$
\lim_{t\to 0}\sup_{\nrm{x^{\sun}}\le 1}\btr{<T^{\sun\sun}(t)U^{\sun *}Vx -U^{\sun *}Vx,x^{\sun}>}=0,
$$
 and we obtain $U\circ V \in L(X,X^{\sun\sun}).$ To prove $(U\circ V)^*_{|X^{\sun}}(X^{\sun})\subset X^{\sun}$, we compute 
\begin{eqnarray*}
T^{\sun}(t)(U\circ V)^*x^{\sun}&=&T^{\sun}(t)\bk{x\mapsto <x^{\sun},U\circ V x>} \\
&=&\bk{x\mapsto <x^{\sun},U\circ VT(t)x>} \\
&=&\bk{x\mapsto <T ^{\sun}(t)V^*U^*x^{\sun},x>}.
\end{eqnarray*}
Because $U,V\in L_T(X,X^{\sun\sun})$, we have $V^*U^*x^{\sun}\in X^{\sun},$ which proves $(U\circ V)^*_{|X^{\sun}}(X^{\sun})\subset X^{\sun}.$ 
Therefore, it remains to consider $(U\circ V)^{\sun *}(X^{\sun\sun}).$
Note that
\begin{eqnarray*}
<(U\circ V)x,x^{\sun}>&=& <Vx,U^{\sun}_{|X^{\sun}}x^{\sun}>\\
&=&<x,V^{\sun}_{|X^{\sun}}U^{\sun}_{|X^{\sun}}x^{\sun}>;
\end{eqnarray*}
applying Proposition \ref{X-embedded-to-X-sun-star}, we have
\begin{eqnarray*}
<(U\circ V)^{\sun *}x^{\sun\sun},x^{\sun}>&=& <x^{\sun\sun},(U\circ V)^{\sun}x^{\sun}> \\
&=&<x^{\sun\sun},V^{\sun}_{|X^{\sun}}U^{\sun}_{|X^{\sun}}x^{\sun}>.
\end{eqnarray*}
Consequently,
\begin{eqnarray*}
T^{\sun\sun}(t)(U\circ V)^{\sun *}x^{\sun\sun}&=&T^{\sun\sun}(t)\bk{x^{\sun}\mapsto<x^{\sun\sun},(U\circ V)^{\sun}x^{\sun}>} \\
&=&\bk{x^{\sun}\mapsto<x^{\sun\sun},(U\circ V)^{\sun}T^{\sun}(t)x^{\sun}>} \\
&=&\bk{x^{\sun}\mapsto<x^{\sun\sun},V^*_{|X^{\sun}}U^*_{|X^{\sun}}T^{\sun}(t)x^{\sun}>} \\
&=&\bk{x^{\sun}\mapsto<T^{\sun\sun}(t)(U^*_{|X^{\sun}})^* (V^*_{|X^{\sun}})^* x^{\sun\sun},x^{\sun}>} \\
&=&\bk{x^{\sun}\mapsto<T^{\sun\sun}(t)U^{\sun*}V^{\sun*}x^{\sun\sun},x^{\sun}>}. \\
\end{eqnarray*}
Now, the invariance assumptions $U^{\sun*}X^{\sun\sun}$ and $V^{\sun*}X^{\sun\sun}\subset X^{\sun\sun}$ serve for the proof.
\end{proof}
Next, we define some operator topologies.
\begin{defi} Let $X,Y$ be Banach spaces.
\begin{enumerate}
\item $w^*OT$ on $L(X,Y^*)$ is a net $\net{T}{\la}{\Lambda}\subset L(X,Y^*)$ that is convergent to $T\in L(X,Y^*)$ if
$$
\netlim{\la}{\Lambda}<T_{\la}x,y>=<Tx,y> \mbox{ pointwise on } x\in X, \ y \in Y.
$$ 
\item The topology $\kappa^{\sun\sun}$ on $L_T(X,X^{\sun\sun})$ is a net $\net{T}{\la}{\Lambda}\subset L_T(X,X^{\sun\sun})$ that is convergent to $T\in L_T(X,X^{\sun\sun})$ if
$$
\netlim{\la}{\Lambda}<T_{\la}x,x^{\sun}>=<Tx,x^{\sun}> \mbox{ pointwise on } x\in X, \ x^{*} \in X^{\sun}.
$$
\item The topology  $\kappa^{\sun}$ on $L(X^{\sun})$ is
a net $\net{T}{\la}{\Lambda}\subset L(X^{\sun})$ that is convergent to $T\in L(X^{\sun})$ if
$$
\netlim{\la}{\Lambda}<T_{\la}x^{\sun},x>=<Tx^{\sun},x> \mbox{ pointwise on } x\in X, \ x^{\sun} \in X^{\sun}.
$$
\end{enumerate}
\end{defi}
\begin{pro} \label{sun_lcs}
$\kappa^{\sun\sun}$ and $\kappa^{\sun}$ are Hausdorff and locally convex topologies on $L_T(X,X^{\sun\sun})$ and $L(X^{\sun})$, respectively.
\end{pro}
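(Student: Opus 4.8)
The plan is to recognize each of the two topologies as the weak topology induced by a separating family of linear functionals, for which local convexity is automatic and the Hausdorff property reduces to a point-separation statement.

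For $\kappa^{\sun\sun}$, to each pair $(x,x^{\sun})\in X\times X^{\sun}$ I associate the linear functional $U\mapsto \langle Ux,x^{\sun}\rangle$ on $L_T(X,X^{\sun\sun})$ together with the seminorm $p_{x,x^{\sun}}(U):=\btr{\langle Ux,x^{\sun}\rangle}$. Linearity of $U\mapsto Ux$ and of the pairing shows each $p_{x,x^{\sun}}$ is genuinely a seminorm, and by Definition the net convergence defining $\kappa^{\sun\sun}$ is precisely convergence in the topology generated by the family $\bk{p_{x,x^{\sun}}}$, that is, the initial topology making all these functionals continuous. Any topology defined by a family of seminorms is locally convex, so this part is immediate; indeed $\kappa^{\sun\sun}$ is nothing but the restriction to $L_T(X,X^{\sun\sun})$ of the $w^*OT$ topology on $L(X,(X^{\sun})^*)$, using that $X^{\sun\sun}\subset(X^{\sun})^*$.

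For the Hausdorff property I would verify that this family separates points, i.e. that $p_{x,x^{\sun}}(U-V)=0$ for all $(x,x^{\sun})$ forces $U=V$. Fixing $x$, the hypothesis gives $\langle Ux,x^{\sun}\rangle=\langle Vx,x^{\sun}\rangle$ for every $x^{\sun}\in X^{\sun}$; since $Ux,Vx\in X^{\sun\sun}\subset(X^{\sun})^*$ are functionals on $X^{\sun}$ agreeing everywhere, they coincide in $(X^{\sun})^*$, whence $Ux=Vx$ in $X^{\sun\sun}$. As $x$ was arbitrary, $U=V$. The key point is thus that $X^{\sun\sun}$ is literally a subspace of the dual of $X^{\sun}$, so pairing against $X^{\sun}$ is non-degenerate. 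The argument for $\kappa^{\sun}$ on $L(X^{\sun})$ is entirely parallel: one uses the functionals $S\mapsto\langle Sx^{\sun},x\rangle$ and seminorms $q_{x^{\sun},x}(S):=\btr{\langle Sx^{\sun},x\rangle}$, giving local convexity as before; for separation, if $\langle Sx^{\sun},x\rangle=0$ for all $x^{\sun}\in X^{\sun}$ and $x\in X$, then for each fixed $x^{\sun}$ the element $Sx^{\sun}\in X^{\sun}\subset X^*$ annihilates all of $X$, hence $Sx^{\sun}=0$, so $S=0$.

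The proof is almost entirely formal, and the only step demanding care is the separation argument, where one must correctly invoke the chain of embeddings $X^{\sun}\subset X^*$ and $X^{\sun\sun}\subset(X^{\sun})^*$ so that the relevant canonical pairings are indeed separating. No structure of the semigroup $\scS$ is needed beyond the fact, recorded above, that $X^{\sun}$ and $X^{\sun\sun}$ are honest dual, respectively sub-dual, spaces.
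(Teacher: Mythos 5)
Your proof is correct and follows essentially the same route as the paper's: local convexity comes from the defining family of seminorms, and the Hausdorff property is the separation argument viewing $Ux\in X^{\sun\sun}\subset X^{\sun*}$ (resp. $Sx^{\sun}\in X^{\sun}\subset X^*$) as a functional that vanishes identically and is therefore zero. Your write-up is merely more explicit than the paper's, which dismisses convexity as straightforward and gives the same point-separation argument in abbreviated form.
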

\begin{proof}
We start with $\kappa^{\sun\sun}.$ The convexity is straightforward. Hence, it remains to prove that if for $T\in L_T(X,X^{\sun\sun}),$ $<Tx,x^{\sun}>=0$ for all $(x,x^{\sun})\in X\times X^{\sun},$ then $T=0.$ Considering $Tx\in X^{\sun*}$ as a linear functional, we have that $Tx:X^{\sun}\to \ce$ is the null functional for all $x\in X.$ Hence, $Tx=0$ for all $x\in X$, which gives $T=0.$
To prove $\kappa^{\sun}$ is Hausdorff, let $T\in L(X^{\sun});$ then, $Tx^{\sun}$ can be viewed as an element of $X^*$ which vanishes on $X,$ and we have the same procedure as $\kappa^{\sun\sun}.$
\end{proof}
With the above definition, we have the following.
\begin{pro} \label{LT-semigroup-algebra}
\begin{enumerate}
\item $(L_T(X,X^{\odot\odot}),\circ)$ is a semigroup, and $(L_T(X,X^{\odot\odot}),+,\circ)$ is a Banach algebra \cite[Definition 10.1, pp. 227-228]{RudinFA} with respect to the canonical norm.
\item Let $V\in L_T(X,X^{\sun\sun})$ and $U\in L(X);$ then, 
$$
\Funk{R}{(L_T(X,X^{\sun\sun}),\kappa^{\sun\sun})}{(L_T(X,X^{\sun\sun}),\kappa^{\sun\sun})}{W}{V\circ W}
$$
and
$$
\Funk{L}{(L_T(X,X^{\sun\sun}),\kappa^{\sun\sun})}{(L_T(X,X^{\sun\sun}),\kappa^{\sun\sun})}{W}{W\circ U}
$$
are continuous.
\item If $U,V\in L_T(X,X^{\sun\sun})$ and $x\in X$ such that $Vx\in X$, then $(U\circ V)x=U(Vx).$
\end{enumerate}
\end{pro}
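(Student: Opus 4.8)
The plan is to verify the three assertions in Proposition~\ref{LT-semigroup-algebra} largely by unwinding the definitions, since the heavy lifting---that $\circ$ keeps us inside $L_T(X,X^{\sun\sun})$---was already done in Lemma~\ref{LT-semigroup-properties}. For item~(1), the semigroup property amounts to associativity of $\circ$, so first I would compute $\dual{((U\circ V)\circ W)x,x^{\sun}}$ and $\dual{(U\circ(V\circ W))x,x^{\sun}}$ and show both equal something symmetric in the adjoints, say $\dual{Wx,V^{\sun}U^{\sun}x^{\sun}}$; the tool here is the identity $\dual{(U\circ V)x,x^{\sun}}=\dual{Vx,U^{\sun}_{|X^{\sun}}x^{\sun}}=\dual{x,V^{\sun}_{|X^{\sun}}U^{\sun}_{|X^{\sun}}x^{\sun}}$ already established in the proof of Lemma~\ref{LT-semigroup-properties}. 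Bilinearity of $\circ$ in each argument is immediate from the defining formula, and the submultiplicativity of the norm follows from the estimate $\btr{\dual{(U\circ V)x,x^{\sun}}}\le\nrm{Vx}\,\nrm{U}\,\nrm{x^{\sun}}$ already displayed, which gives $\nrm{U\circ V}\le\nrm{U}\nrm{V}$; together these are exactly the axioms in \cite[Definition 10.1]{RudinFA}.

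For item~(2), continuity of $R$ and $L$ for the topology $\kappa^{\sun\sun}$, I would argue directly from the net definition of $\kappa^{\sun\sun}$. Suppose $W_{\la}\to W$ in $\kappa^{\sun\sun}$, i.e.\ $\dual{W_{\la}x,x^{\sun}}\to\dual{Wx,x^{\sun}}$ for every $(x,x^{\sun})$. For $R(W)=V\circ W$ I would rewrite $\dual{(V\circ W)x,x^{\sun}}=\dual{Wx,V^{\sun}_{|X^{\sun}}x^{\sun}}$; since $V\in L_T(X,X^{\sun\sun})$ guarantees $V^{\sun}_{|X^{\sun}}x^{\sun}\in X^{\sun}$, the element $y^{\sun}:=V^{\sun}_{|X^{\sun}}x^{\sun}$ is again a legitimate test functional in $X^{\sun}$, so $\dual{W_{\la}x,y^{\sun}}\to\dual{Wx,y^{\sun}}$ yields convergence of $R(W_{\la})$ to $R(W)$. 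For $L(W)=W\circ U$ with $U\in L(X)$, I would instead use the left slot: $\dual{(W\circ U)x,x^{\sun}}=\dual{Ux,W^{\sun}_{|X^{\sun}}x^{\sun}}$, and reorganize so that $Ux\in X$ plays the role of the test vector, giving $\dual{W_{\la}(Ux),x^{\sun}}\to\dual{W(Ux),x^{\sun}}$. The point is that in each case one of the two slots is moved onto a fixed operator, producing a genuine element of $X$ or of $X^{\sun}$ against which the $\kappa^{\sun\sun}$-convergence of $W_{\la}$ can be applied.

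For item~(3), I would start from $\dual{(U\circ V)x,x^{\sun}}=\dual{Vx,U^{\sun}_{|X^{\sun}}x^{\sun}}$. Under the hypothesis $Vx\in X$ (meaning $Vx$ lies in the canonical copy $jX\subset X^{\sun\sun}$), the pairing $\dual{Vx,U^{\sun}_{|X^{\sun}}x^{\sun}}$ collapses to $\dual{U^{\sun}_{|X^{\sun}}x^{\sun},Vx}=\dual{x^{\sun},U(Vx)}$, using that the action of $U^{\sun}_{|X^{\sun}}$ against an element of $X$ is just the action of $U$ transported back. Thus $\dual{(U\circ V)x,x^{\sun}}=\dual{U(Vx),x^{\sun}}$ for all $x^{\sun}\in X^{\sun}$, and since $X^{\sun}$ separates points of $X^{\sun*}$ (as used in Proposition~\ref{sun_lcs}), this forces $(U\circ V)x=U(Vx)$.

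The main obstacle is bookkeeping rather than any deep idea: one must be careful about the several closely related adjoints ($U^*$, $U^{\sun}_{|X^{\sun}}$, $U^{\sun*}$) and about which copy of $X$, $X^{\sun\sun}$, or $X^{\sun*}$ each object lives in, since the pairings silently pass through the embeddings $j$, $i$, and $r$ of Proposition~\ref{X-embedded-to-X-sun-star}. In particular, in item~(3) the identification $Vx\in X\cong jX\subset X^{\sun\sun}$ and the reinterpretation of $U^{\sun}_{|X^{\sun}}$ as the honest adjoint of $U$ on that copy is the step most likely to require explicit justification via Proposition~\ref{X-embedded-to-X-sun-star}, and it is where I would spend the most care to make sure the functional identities are being applied on the correct space.
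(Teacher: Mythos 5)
Your proposal is correct and takes essentially the same route as the paper: items (2) and (3) are exactly the paper's computations (pairing $W_{\gamma}x$ against the fixed element $V^{\sun}x^{\sun}\in X^{\sun}$, evaluating the net at $Ux\in X$, and collapsing $\dual{Vx,U^{*}_{|X^{\sun}}x^{\sun}}$ to $\dual{U(Vx),x^{\sun}}$ when $Vx\in X$). For item (1) the paper merely declares it obvious from Lemma \ref{LT-semigroup-properties}; your explicit associativity check via $\dual{(U\circ V)x,x^{\sun}}=\dual{x,V^{\sun}_{|X^{\sun}}U^{\sun}_{|X^{\sun}}x^{\sun}}$ and the submultiplicativity estimate $\nrm{U\circ V}\le\nrm{U}\,\nrm{V}$ are precisely the details it suppresses, so there is no real divergence between the two arguments.
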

\begin{proof}
The first item is obvious by Lemma \ref{LT-semigroup-properties}. To prove the continuity claim, let $\net{W}{\gamma}{\Gamma}\subset L_T(X,X^{\sun\sun})$,
$\kappa^{\sun\sun}- \netlim{\gamma}{\Gamma}W_{\gamma}=W,$ $x\in X$ and $x^{\sun}\in X^{\sun};$  then,
\begin{eqnarray*}
<V\circ W_{\gamma}x,x^{\sun}>&=&<W_{\gamma}x,V^{\sun}x^{\sun}> 
\end{eqnarray*}
Because $V^{\sun}x \in X^{\sun}$, we obtain the continuity. For $L$, we have
\begin{eqnarray*}
<W_{\gamma}\circ U x, x^{\sun}>&=& <W_{\gamma}Ux,x^{\sun}>,
\end{eqnarray*}
and $Ux\in X$ serves for the proof. 
For the proof of the last item, 
let $U,V\in L_T(X,X^{\sun\sun})$ and $x\in X$ such that $Vx\in X$; then, for $x^{\sun}\in X^{\sun}$, we have
\begin{eqnarray*}
<(U\circ V)x,x^{\sun}>&=& <Vx,U^{*}_{|X^{*}}x^{\sun}> =<UVx,x^{\sun}>.
\end{eqnarray*}

\end{proof}

\section{Compactification}
The goal of this section is to follow the construction provided by \cite{Witz} and to show that the compactification stays in the smaller operator space $L_T(X,X^{\sun\sun}).$  Therefore, we use the original definition of $\circ.$ For a given $U,V\in L(X,X^{**})$, similar to \cite{Witz}, we define
$$
\Funk{U\circ V}{X}{X^{**}}{x}{\bk{ x^{*}\mapsto <Vx,U^{*}x^{*}>}}.
$$

If $j_{X^*}:X^*\to X^{***}$ denotes the natural embedding we have the mapping
\begin{equation}
\Funk{\eta}{L(X,X^{**})}{L(X^{*})}{U}{\bk{x^{*}\mapsto \eta(U)x^{*}:x\mapsto <x,U^*x^{*}>}=U^*j_{X^*}x^*},
\end{equation}
we find that
$$
\nrm{\eta(U)}=\sup_{x\in B_X}\sup_{x^{*}\in B_{X^{*}}}\btr{<Ux,x^{*}>}=\nrm{U}.
$$
Noting $V^*, U^*\in L(X^{***},X^*),$ for $U,V\in L(X,X^{**}),$ and natural embedding $j_{X^*}:X^*\to X^{***},$ we can define $V^*U^*$ as $V^*j_{X^*}U^*.$ Hence,
\begin{eqnarray} \label{algebra-structure}
\lefteqn{\eta(V)\eta(U)x^*=\eta(V)\bk{x\mapsto <x,U^*x^{*}>}} \\
&=&\eta(V)\bk{x\mapsto <x,j_{X^*}U^*x^{*}>} =\bk{x\mapsto <x,V^*j_{X^*}U^{*}x^{*}>} \nonumber \\
&=&\bk{x\mapsto<Vx,U^{*} x^{*}>} = \bk{x\mapsto <(U\circ V)x,x^{*}>}\nonumber \\
&=&\bk{x\mapsto <x,(U\circ V)^*j_{X^*}x^{*}>}=\eta(U\circ V)x^*, \nonumber 
\end{eqnarray}
which verifies an algebraic structure between the operator spaces and defined multiplications.
Endowing the operator spaces with the previously defined operator topologies, we find that
\begin{equation}
\Funk{\eta}{(L(X,X^{**}),w^*OT)}{(L(X^{*}),w^*OT)}{U}{\bk{x^{*}\mapsto \eta(U)x^{*}:x\mapsto <Ux,x^{*}>}}
\end{equation}
is an isomorphism. For $U\in L(X^*)$ and $U^*\in L(X^{**}),$ if $j:X\to X^{**}$ denotes the natural embedding, we claim 
$\eta(U^* j)=U.$
Because
$$ 
<x,Ux^*>=<jx,Ux^*>=<U^*(jx),x^*>=<x,\eta(U^*j)x^*>;
$$
hence, $\eta$ is surjective and together with (\ref{algebra-structure}) we obtain an algebra structure on $L(X,X^{**}).$

For the $w^*OT-w^*OT$ continuity of $\eta^{-1}$, let $\eta(U_{\al})\to \eta(U),$ and 
$$
\eta(U_{\al})x^*=\bk{x\mapsto <U_{\al}x,x^*>}\to\bk{x\mapsto <Ux,x^*>}.
$$ This is exactly the definition of the $w^*OT$ convergence in $L(X,X^{**}).$

Let $\bk{T(t)}_{t\ge0}=:\scS\subset L(X)\subset L(X,X^{**})$ be uniformly bounded by a constant $M$, and let $\scA=\eta(\scS).$ Because $\eta$ is an isometry, we have 
$$
\overline{A}^{\ w^*OT}\subset \Pi_{x\in X^{*}} \fk{M \nrm{x}B_{X^{*}},\sigma(X^{*},X)}.
$$
Hence, we obtain a compact $\scS_0, $
$$
\scS_0:=\eta^{-1}(\overline{A})\supset \scS.
$$
For the left and right multiplication in $(L(X,X^{**}),\circ))$, we have the following:
\begin{pro} If $U\in L(X)$ and $V\in L(X,X^{**})$, then
$$
\Funk{R}{(L(X,X^{**}),w^*OT)}{(L(X,X^{**}),w^*OT)}{W}{V\circ W}
$$
and
$$
\Funk{L}{(L(X,X^{**}),w^*OT)}{(L(X,X^{**}),w^*OT)}{W}{W\circ U}
$$
are continuous.
\end{pro}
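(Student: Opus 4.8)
The plan is to reduce both continuity assertions to the defining property of $w^*OT$ convergence by isolating the whole net-dependence into a single scalar pairing against a \emph{fixed} test element. Throughout I read, as in the computation (\ref{algebra-structure}), the symbol $U^*x^*$ for $x^*\in X^*$ as $U^*j_{X^*}x^*\in X^*$, that is, as the functional $x\mapsto<Ux,x^*>$; keeping this identification straight is the only genuine bookkeeping in the argument. Note in particular that for $V\in L(X,X^{**})$ one has $V^*\colon X^{***}\to X^*$, so that $V^*j_{X^*}x^*$ really is an element of $X^*$ and all pairings below are legitimate $X^{**}$–$X^*$ dualities.

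First I would treat the right multiplication $R$. Fix $x\in X$ and $x^*\in X^*$, and let $\net{W}{\gamma}{\Gamma}\subset L(X,X^{**})$ be a net converging to $W$ in $w^*OT$. Unfolding the definition of $\circ$ with first argument $V$ gives
$$
<(V\circ W_{\gamma})x,x^*>=<W_{\gamma}x,V^*j_{X^*}x^*>,
$$
where the right-hand vector $V^*j_{X^*}x^*\in X^*$ does not depend on $\gamma$. Hence the expression is nothing but the $w^*OT$-pairing of $W_{\gamma}$ against the fixed pair $(x,V^*j_{X^*}x^*)\in X\times X^*$, and by hypothesis it converges to $<Wx,V^*j_{X^*}x^*>=<(V\circ W)x,x^*>$. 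As $x$ and $x^*$ were arbitrary, $V\circ W_{\gamma}\to V\circ W$ in $w^*OT$.

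For the left multiplication $L$ I would exploit that $U\in L(X)$ maps into $X$, which makes $\circ$ collapse to ordinary composition on the relevant functional. Writing out the definition and using that $Ux\in X$ is embedded by $j$, one obtains
$$
<(W_{\gamma}\circ U)x,x^*>=<W_{\gamma}(Ux),x^*>,
$$
the analogue in the present $L(X,X^{**})$ setting of item (3) of Proposition \ref{LT-semigroup-algebra}. Since $Ux\in X$ is fixed, the right-hand side is again a $w^*OT$-pairing of $W_{\gamma}$, now against the fixed pair $(Ux,x^*)$, and it converges to $<W(Ux),x^*>=<(W\circ U)x,x^*>$; thus $W_{\gamma}\circ U\to W\circ U$ in $w^*OT$.

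The argument is short, and the hard part is not analytic but notational: one must verify the two reductions $<(V\circ W_{\gamma})x,x^*>=<W_{\gamma}x,V^*j_{X^*}x^*>$ and $(W_{\gamma}\circ U)x=W_{\gamma}(Ux)$ with the correct placement of the adjoints and of the embedding $j_{X^*}$, so that in each case the $\gamma$-dependence sits in a single honest $X^{**}$–$X^*$ pairing rather than in some object living in a larger bidual. Once these identities are in place, both continuity statements fall out immediately from the definition of $w^*OT$ convergence.
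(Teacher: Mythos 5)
Your proof is correct and follows essentially the same route as the paper's: both arguments reduce each continuity claim to a $w^*OT$-pairing of $W_{\gamma}$ against a fixed element, namely $(x,V^*j_{X^*}x^*)$ for $R$ and $(Ux,x^*)$ for $L$. Your only addition is making explicit the embedding $j_{X^*}$ in the identification $V^*x^*=V^*j_{X^*}x^*\in X^*$, which the paper leaves implicit from its earlier remarks.
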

\begin{proof}
To prove the continuity claim, let $\net{W}{\gamma}{\Gamma}\subset L(X,X^{**})$,
$w^*OT- \netlim{\gamma}{\Gamma}W_{\gamma}=W,$ $x\in X,$ and $x^*\in X^*$; then,
\begin{eqnarray*}
<V\circ W_{\gamma}x,x^{*}>&=&<W_{\gamma}x,V^{*}x^{*}>. 
\end{eqnarray*}
Because $V^{*}x^* \in X^{*}$, we obtain the continuity. For $L$, we have
\begin{eqnarray*}
<W_{\gamma}\circ U x, x^{*}>&=& <W_{\gamma}Ux,x^{*}>,
\end{eqnarray*}
and $Ux\in X$ serves for the proof. 
\end{proof}

Throughout this study, $\scS_0$ denotes the previously constructed compactification of $\scS.$  For this set, through the conclusions of \cite{Witz}, we have the following.

\begin{pro}[\cite{Witz}] \label{elemenatry-properties}
\begin{enumerate}
\item $\scS$ is $w^*OT$ dense in $\scS_0.$
\item $\scS_0$ is a semigroup, that is, for $U,V\in \scS_0$, we have $U\circ V \in  \scS_0.$
\item Let $U\in\scS_0$ and $t\ge 0;$ then, $T(t)\circ U=U\circ T(t).$
\end{enumerate}
\end{pro}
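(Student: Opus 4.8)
The plan is to deduce all three items from two facts already in place. First, the map $\eta$ is an isomorphism of $(L(X,X^{**}),w^*OT)$ onto $(L(X^{*}),w^*OT)$ with $w^*OT$-continuous inverse, so that $\scS_0=\eta^{-1}\fk{\overline{\scA}^{\,w^*OT}}$ is literally the $w^*OT$-closure of $\scS$. Second, I shall use the separate continuity of the translations $W\mapsto V\circ W$ (valid for every $V\in L(X,X^{**})$) and $W\mapsto W\circ U$ (valid for $U\in L(X)$) recorded in the preceding proposition. I also note the elementary identity, computed exactly as in Lemma~\ref{LT-semigroup-properties}, that $T(t)\circ T(s)=T(t+s)=T(s)\circ T(t)$, so $\scS$ is a commutative $\circ$-subsemigroup of $L(X,X^{**})$.

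For item (1), because $\eta$ is a homeomorphism for the two $w^*OT$-topologies it interchanges with topological closure, whence $\eta\fk{\overline{\scS}^{\,w^*OT}}=\overline{\eta(\scS)}^{\,w^*OT}=\overline{\scA}^{\,w^*OT}$; applying $\eta^{-1}$ yields $\overline{\scS}^{\,w^*OT}=\scS_0$. In particular $\scS$ is $w^*OT$-dense in $\scS_0$, and $\scS_0$ is $w^*OT$-closed, a fact I shall reuse below.

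For item (2) I would argue by a two-stage passage to the limit. First fix $B=T(s)\in\scS\subset L(X)$: the map $W\mapsto W\circ B$ is $w^*OT$-continuous and carries $\scS$ into $\scS$ by the semigroup law, so by continuity it carries $\overline{\scS}^{\,w^*OT}=\scS_0$ into $\scS_0$; thus $W\circ B\in\scS_0$ whenever $W\in\scS_0$ and $B\in\scS$. Now fix $A\in\scS_0\subset L(X,X^{**})$: the map $W\mapsto A\circ W$ is $w^*OT$-continuous and, by the previous sentence, carries $\scS$ into $\scS_0$, hence carries $\scS_0=\overline{\scS}^{\,w^*OT}$ into $\overline{\scS_0}^{\,w^*OT}=\scS_0$. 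This gives $A\circ W\in\scS_0$ for all $A,W\in\scS_0$, which is the semigroup property. The hard part is precisely the asymmetry of the two translations: left translation $W\mapsto W\circ U$ is only known to be continuous for $U\in L(X)$, whereas right translation is continuous for all $V\in L(X,X^{**})$. One therefore cannot take both limits simultaneously, and the stages must be ordered so that the first limit is taken against a factor still lying in $\scS\subset L(X)$; only afterwards may the inner factor be promoted to an arbitrary element of $\scS_0$, which is legitimate exactly because right translation needs no restriction.

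For item (3), fix $t\ge 0$ and compare the two maps $\Phi_{1}(U)=T(t)\circ U$ and $\Phi_{2}(U)=U\circ T(t)$ on $\scS_0$. The first is $w^*OT$-continuous because $T(t)\in L(X,X^{**})$, the second because $T(t)\in L(X)$, and on the dense set $\scS$ they agree since $T(t)\circ T(s)=T(t+s)=T(s)\circ T(t)$. As the target $(L(X,X^{**}),w^*OT)$ is Hausdorff, as one checks exactly as in Proposition~\ref{sun_lcs}, the locus $\bk{U:\Phi_{1}(U)=\Phi_{2}(U)}$ is $w^*OT$-closed and contains the dense subset $\scS$, hence equals $\scS_0$. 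This yields $T(t)\circ U=U\circ T(t)$ for every $U\in\scS_0$ and completes the plan.
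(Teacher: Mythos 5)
Your proposal is correct and takes essentially the same route as the paper: item (1) via the homeomorphism $\eta$, item (2) via the same ordered two-step limiting argument exploiting the asymmetric continuity of the translations (the paper phrases it as the iterated net limit $V\circ U=\netlim{\la}{\Lambda}\netlim{\gamma}{\Gamma}V_{\gamma}U_{\la}$, you phrase it as continuous maps preserving closures, but the ordering constraint you highlight is exactly the paper's), and item (3) via density, separate continuity, and Hausdorffness of $w^*OT$.
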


\begin{proof}
The compactness is a consequence of the construction. For denseness, note that 
$$
\funk{\eta}{\overline{\scA}}{\scS_0}
$$
is a homeomorphism. Therefore, let $T\in \scS_0;$ then, there exists a net $\net{S}{\la}{\Lambda}\subset \scA$ such that $\netlim{\la}{\Lambda}S_{\la}=\eta(T).$ Choose $T_{\la}=\eta^{-1}(S_{\la}).$
Next, we prove that $\scS_0$ is a semigroup.
Let $U,V\in \scS_0$ and $\net{W}{\gamma}{\Gamma},\net{U}{\la}{\Lambda}\subset \scS$ with 
$\netlim{\la}{\Lambda}U_{\la}=U$ and $\netlim{\gamma}{\Gamma}V_{\gamma}=V. $ Then,
\begin{eqnarray*}
<V\circ U x, x^{*}>&=& <Ux,V^{*}x^{*}> \\
&=&\netlim{\la}{\Lambda}<U_{\la}x,V^{*}x^{*}> \\
&=&\netlim{\la}{\Lambda}<VU_{\la}x,x^{*}> \\
&=&\netlim{\la}{\Lambda}\netlim{\gamma}{\Gamma}<V_{\gamma}U_{\la}x,x^{*}>.
\end{eqnarray*}
Hence,
$$
V\circ U= \netlim{\la}{\Lambda}\netlim{\gamma}{\Gamma} V_{\gamma}U_{\la},
$$
which proves $V\circ U\in\scS_0 .$  The fact that $T(t)$ commutes with $\scS_0$ is a consequence of the denseness and the continuity properties of $R_U,L_V.$
\end{proof}

We are ready to state our first main result that the compactification of a bounded $C_0-$semigroup embeds into $L_T(X,X^{\sun\sun}).$

\begin{theo} \label{LT-invariant-S0}

$\scS_0$ is $w^*OT$ compact in $L(X,X^{**}),$ and for $V\in\scS_0,$
\begin{enumerate} 
\item $V^*T^*(t)x^{\sun}=T^*(t)V^*x^{\sun}$, for all $x^{\sun}\in X^{\sun}$ 
\item $V^*(X^{\sun})\subset X^{\sun}$, and
\item $V^{\sun*}(X^{\sun\sun})\subset X^{\sun\sun};$ consequently, for all $x\in X$, we have $Vx\in X^{\sun\sun}.$
\end{enumerate}
Summarizing, we find that $\scS_0\subset L_T(X,X^{\sun\sun})$ and is $\kappa^{\sun\sun}$ compact.
\end{theo}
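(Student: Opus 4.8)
The plan is to reduce everything to the commutation relation $T(t)\circ V=V\circ T(t)$ of Proposition \ref{elemenatry-properties}(3) and then to propagate it through the two sun-dual levels using only the defining strong continuity of $X^{\sun}$ and $X^{\sun\sun}$ at $0$. The $w^*OT$-compactness of $\scS_0$ is just a restatement of the construction: $\overline{\scA}^{\,w^*OT}$ is a $w^*OT$-closed subset of the Tychonoff-compact product $\prod_{x^*\in X^*}\fk{M\nrm{x^*}B_{X^*},\sigma(X^*,X)}$, hence compact, and since $\eta$ is a homeomorphism, $\scS_0=\eta^{-1}(\overline{\scA})$ is $w^*OT$-compact.

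For item (1) I would feed $T(t)\circ V=V\circ T(t)$ into the algebra isomorphism $\eta$. Writing $V^*$ for the operator $\eta(V)=V^*j_{X^*}\in L(X^*)$ and using $\eta(T(t))=T^*(t)$ together with $\eta(U\circ V)=\eta(V)\eta(U)$ from (\ref{algebra-structure}), the identity becomes $V^*T^*(t)=T^*(t)V^*$ on all of $X^*$; restricting to $x^{\sun}\in X^{\sun}$ gives item (1). (Equivalently one unwinds the definition of $\circ$ and compares $\langle Vx,T^*(t)x^*\rangle$ with $\langle T(t)x,V^*x^*\rangle$.)

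Items (2) and (3) then follow by the same one-line limit applied on two successive levels. For (2), item (1) gives $T^*(t)V^*x^{\sun}=V^*T^*(t)x^{\sun}=V^*T^{\sun}(t)x^{\sun}$; since $T^{\sun}(t)x^{\sun}\to x^{\sun}$ in norm as $t\to 0^+$ and $V^*=\eta(V)$ is bounded (with $\nrm{V^*}=\nrm{V}\le M$ from the construction), the right-hand side converges in norm to $V^*x^{\sun}$, whence $V^*x^{\sun}\in X^{\sun}$. This legitimises the restriction $V^{\sun}_{|X^{\sun}}=\eta(V)_{|X^{\sun}}\in L(X^{\sun})$; taking adjoints of the restricted commutation $V^{\sun}_{|X^{\sun}}T^{\sun}(t)=T^{\sun}(t)V^{\sun}_{|X^{\sun}}$ yields $T^{\sun*}(t)V^{\sun*}=V^{\sun*}T^{\sun*}(t)$ on $X^{\sun*}$. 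Repeating the strong-continuity argument one level up, now with $T^{\sun*}(t)x^{\sun\sun}\to x^{\sun\sun}$ for $x^{\sun\sun}\in X^{\sun\sun}$, gives $V^{\sun*}x^{\sun\sun}\in X^{\sun\sun}$, which is item (3).

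For the consequence and the summary I would verify the identity $r(Vx)=V^{\sun*}(rjx)$ in $X^{\sun*}$ by pairing both sides with an arbitrary $x^{\sun}\in X^{\sun}$: each reduces to $\langle x,V^{\sun}_{|X^{\sun}}x^{\sun}\rangle$. Since $rjx\in X^{\sun\sun}$, which holds because $\nrm{T^{\sun*}(t)rjx-rjx}\le\nrm{T(t)x-x}\to 0$ by strong continuity of $\scS$ on $X$ (together with Proposition \ref{X-embedded-to-X-sun-star}(1)), item (3) forces $r(Vx)\in X^{\sun\sun}$. Combined with (2) and (3) this shows $rV\in L_T(X,X^{\sun\sun})$, so $\scS_0\subset L_T(X,X^{\sun\sun})$. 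Finally, the map $V\mapsto rV$ is continuous from $(\scS_0,w^*OT)$ to $(L_T(X,X^{\sun\sun}),\kappa^{\sun\sun})$, because $\kappa^{\sun\sun}$ only tests against the smaller family of pairs $(x,x^{\sun})\in X\times X^{\sun}\subset X\times X^*$; hence its image is the continuous image of a $w^*OT$-compact set, and is $\kappa^{\sun\sun}$-compact, $\kappa^{\sun\sun}$ being Hausdorff by Proposition \ref{sun_lcs}. The main obstacle I anticipate is the bidual level: setting up $V^{\sun*}$ correctly, as the adjoint of the $X^{\sun}$-restriction, which only exists once (2) is available, and matching the abstractly given $Vx\in X^{**}$ with its restriction $r(Vx)\in X^{\sun\sun}$ through the embedding $rj$; everything else is a mechanical reuse of the single strong-continuity limit.
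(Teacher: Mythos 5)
Your proposal is correct and follows essentially the same route as the paper: both proofs start from the commutation $T(t)\circ V=V\circ T(t)$ of Proposition \ref{elemenatry-properties}, use norm continuity at $t=0$ to get $V^*(X^{\sun})\subset X^{\sun}$, repeat the same limit one level up for $V^{\sun*}(X^{\sun\sun})\subset X^{\sun\sun}$ (the paper delegates this step to van Neerven's Prop.\ 2.4.1/2.4.3, whose proof is exactly your inlined argument), and finally transfer compactness from $w^*OT$ to the weaker Hausdorff topology $\kappa^{\sun\sun}$. Your explicit verification of $r(Vx)=V^{\sun*}(rjx)$ and $rjx\in X^{\sun\sun}$ is a welcome elaboration of the paper's unproved ``consequently'' clause, but it is a refinement of detail, not a different method.
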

\begin{proof}

By Proposition \ref{elemenatry-properties} and $T(t)\circ V=V\circ T(t)$, we find for $x\in X,$ and $x^{\sun}\in X^{\sun},$ that $V^*x^{\sun}\in X^*$ and therefore,
\begin{eqnarray*}
<(T(t)\circ V)x,x^{\sun}>=<Vx,T^{\sun}(t)x^{\sun}>
 &=& <x, V^*j_{X^*}T^{\sun}(t)x^{\sun}>,
\end{eqnarray*}
and
\begin{eqnarray*}
<(V\circ T(t))x,x^{\sun}>=<T(t)x,V^*j_{X^*}x^{\sun}>
 &=& <x, T^*(t)V^*j_{X^*}x^{\sun}>.
\end{eqnarray*}
Thus,
\begin{eqnarray*}
\lim_{t\to 0}T^{*}(t)V^*j_{X^*}x^{\sun}&=&\lim_{t\to 0}V^*j_{X^*}T^{\sun}(t)x^{\sun}= V^*j_{X^*}x^{\sun},
\end{eqnarray*}
which proves that $V^*j_{X^*}(X^{\sun})\subset X^{\sun}.$ Consequently, $X^{\sun}\subset D(V^{\sun}j_{X^*}),$ the maximal domain, and we have  $V^{*}j_{X^*}\in L(X^{\sun})$ with $V^{*}j_{X^*}T^{\sun}(t)=T^{\sun}(t)V^{*}j_{X^*},$ for all $t\ge 0.$ Let $W:=V^{*}j_{X^*},$ The property (3), $W(X^{\sun\sun})\subset X^{\sun\sun}$ is a consequence of  \cite[Prop 2.4.1, Prop. 2.4.3.]{neervenLNM} applied to $\bk{T^{\sun}(t)}_{t\ge 0}, $ and $W.$

Because $\kappa^{\sun\sun}$ is the restriction of $w^*OT$ to $L_T(X,X^{\sun\sun})$, we obtain the $\kappa^{\sun\sun}$ compactness.
\end{proof}

\begin{remk} As the operators in the compactification of $\eta(\scS)$ are of the type $V^*j_{X^*},$ the operators in this compactification are not necessarily dual operators.
\end{remk}

Because the underlying space is an algebra, we can also consider $\scT:=co(\scS)$ and $\scU:=ac(\scS).$ The semigroup properties are straightforward; note that $((1-\la)+\la)((1-\mu)+\mu)=1$ for $\la,\mu\in [0,1].$ A similar computation proves it for the absolute convex hull, $ac(\scS).$
For $\scV\in \bk{\scT,\scU },$   we define the following.
\begin{defi} Let $\scT:=co(\scS)$ and $\scU:=ac(\scS).$  Then, 
\begin{eqnarray}
\scT_0&:=&\eta^{-1}(\overline{\eta(\scT)}^{w^*OT}), \\
\scU_0&:=&\eta^{-1}(\overline{\eta(\scU)}^{w^*OT}).
\end{eqnarray}
\end{defi}
As $\eta$ is an algebra isomorphism, we conclude, and $(M\nrm{x}B_{X^*})$ is absolutely convex and $w^*-$compact, we have
$$
\overline{A}^{\ w^*OT}\subset \overline{co A}^{\ w^*OT} \subset \overline{ac A}^{\ w^*OT} \subset \Pi_{x\in X^{*}} \fk{M \nrm{x}B_{X^{*}},\sigma(X^{*},X)}.
$$
are compact, and in consequence.

\begin{pro}
\begin{enumerate}
\item $\scS_0\subset\scT_0\subset\scU_0$.
\item $\scT$ is dense in $\scT_0,$ and $\scT_0$ is convex and compact.
\item $\scU$ is dense in $\scU_0,$ and $\scU_0$ is absolutely convex and compact.
\end{enumerate}
\end{pro}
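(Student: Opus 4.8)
The plan is to transport every assertion along the map $\eta$, which by the preceding discussion is simultaneously linear, a bijective algebra homomorphism, and a $w^*OT$--$w^*OT$ homeomorphism between $L(X,X^{**})$ and $L(X^{*})$. Under such a map, inclusions, closures, compactness, convexity and density are all preserved, so it suffices to prove the corresponding facts for the images $\scA=\eta(\scS)$, $co\,\scA=\eta(\scT)$ and $ac\,\scA=\eta(\scU)$ inside $L(X^{*})$, where the ambient compact product already appears in the display preceding the proposition.

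For item (1) I would simply observe that $\scS\subset\scT\subset\scU$ holds by the definition of the convex and absolutely convex hulls; applying $\eta$ preserves these inclusions and, since $w^*OT$-closure is monotone, $\overline{\eta(\scS)}^{\ w^*OT}\subset\overline{\eta(\scT)}^{\ w^*OT}\subset\overline{\eta(\scU)}^{\ w^*OT}$. Applying the bijection $\eta^{-1}$ then yields $\scS_0\subset\scT_0\subset\scU_0$.

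For the compactness in (2) and (3), I would use that each factor $\fk{M\|x^{*}\|B_{X^{*}},\sigma(X^{*},X)}$ is weak-$*$ compact by Banach--Alaoglu, so the product in the displayed inclusion is compact by Tychonoff, and $w^*OT$ is exactly the subspace topology it induces (it is the topology of coordinatewise $\sigma(X^{*},X)$-convergence of $x^{*}\mapsto\eta(U)x^{*}$). The sets $\overline{co\,\scA}^{\ w^*OT}$ and $\overline{ac\,\scA}^{\ w^*OT}$ are closed subsets of this compact product and hence compact, and continuity of $\eta^{-1}$ transfers compactness to $\scT_0$ and $\scU_0$. The convexity structure follows from linearity of $\eta$: one has $\eta(\scT)=co\,\scA$ and $\eta(\scU)=ac\,\scA$, and since $w^*OT$ is a locally convex vector topology the closure of a convex (resp.\ absolutely convex) set is convex (resp.\ absolutely convex); the linear isomorphism $\eta^{-1}$ carries this back to $\scT_0$ and $\scU_0$. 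Density is immediate from the homeomorphism property: $\eta(\scT)$ is dense in its closure by construction, so $\scT=\eta^{-1}(\eta(\scT))$ is dense in $\eta^{-1}(\overline{\eta(\scT)}^{\ w^*OT})=\scT_0$, and the same argument applies to $\scU\subset\scU_0$.

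The only genuine subtlety I anticipate is making sure the $w^*OT$-closures do not leave $L(X^{*})$, i.e.\ that a pointwise limit of the operators $\eta(U_{\la})$ is again a bounded linear operator and not merely a bounded map; linearity passes to the limit by continuity of the vector-space operations in $\sigma(X^{*},X)$, and the norm bound $M$ is preserved because each coordinate stays in $M\|x^{*}\|B_{X^{*}}$. This is, however, exactly the argument already underlying the compactness of $\scS_0$ in Proposition \ref{elemenatry-properties}, so no new work is required and the remainder is a routine transport of structure along $\eta$.
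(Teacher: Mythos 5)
Your proposal is correct and follows essentially the same route as the paper, which offers no separate proof but relies on exactly the facts you spell out: $\eta$ is an isometric algebra isomorphism and $w^*OT$-homeomorphism, the chain $\overline{\eta(\scS)}^{\ w^*OT}\subset\overline{\eta(\scT)}^{\ w^*OT}\subset\overline{\eta(\scU)}^{\ w^*OT}$ sits inside the Alaoglu--Tychonoff compact product $\Pi_{x\in X^{*}}\fk{M\nrm{x}B_{X^{*}},\sigma(X^{*},X)}$, and the structure is pulled back through $\eta^{-1}$. Your added care about limits remaining linear operators with norm bound $M$, and about closures of (absolutely) convex sets staying (absolutely) convex, fills in details the paper leaves implicit but introduces no new idea.
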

As $\scT_0$ and $\scU_0$ are compactifications similar to the one constructed for $\scS,$ we have the following.

\begin{pro}[\cite{Witz}] \label{elementary-properties-T_0}
Let $\scV \in \bk{\scT,\scU};$ then,
\begin{enumerate}
\item $\scV_0$ is a semigroup; that is, for $U,V\in \scV_0$, we have $U\circ V \in  \scV_0.$
\item Let $U\in\scV_0$ and $t\ge 0;$ then, $T(t)\circ U=U\circ T(t).$
\end{enumerate}
\end{pro}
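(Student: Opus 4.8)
The plan is to reproduce, essentially verbatim, the argument of Proposition \ref{elemenatry-properties}, with $\scV\in\bk{\scT,\scU}$ in place of $\scS$. The one new ingredient is that $\scV$ is already a semigroup under $\circ$, which is the observation recorded before the definition of $\scT_0,\scU_0$: a product of two (absolutely) convex combinations $U=\sum_i\al_i T(t_i)$ and $V=\sum_j\mu_j T(s_j)$ expands, by distributivity of $\circ$ and $T(t_i)\circ T(s_j)=T(t_i+s_j)$, into $\sum_{i,j}\al_i\mu_j T(t_i+s_j)$, whose coefficients again sum to $(\sum_i\al_i)(\sum_j\mu_j)$; hence $U\circ V\in\scV$. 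Since $\scV\subset L(X)$, the product $\circ$ restricted to $\scV$ is ordinary operator composition (cf.\ Proposition \ref{LT-semigroup-algebra}(3)), so $\scV$ is a composition-stable set of operators. I also record that $\eta$ is a homeomorphism for $w^*OT$, so that $\scV_0=\eta^{-1}(\overline{\eta(\scV)}^{w^*OT})$ coincides with $\overline{\scV}^{w^*OT}$.

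For item (1), let $U,V\in\scV_0$ and pick nets $\net{U}{\la}{\Lambda},\net{V}{\gamma}{\Gamma}\subset\scV$ with $\netlim{\la}{\Lambda}U_\la=U$ and $\netlim{\gamma}{\Gamma}V_\gamma=V$ in $w^*OT$. As in Proposition \ref{elemenatry-properties}, for $x\in X$ and $x^*\in X^*$ one computes $<V\circ U x,x^*>=<Ux,V^*x^*>=\netlim{\la}{\Lambda}<U_\la x,V^*x^*>=\netlim{\la}{\Lambda}<VU_\la x,x^*>=\netlim{\la}{\Lambda}\netlim{\gamma}{\Gamma}<V_\gamma U_\la x,x^*>$, where each $V_\gamma U_\la=V_\gamma\circ U_\la\in\scV$. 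Thus $V\circ U$ is realized as the iterated limit of elements of $\scV$, whence $V\circ U\in\overline{\scV}^{w^*OT}=\scV_0$.

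For item (2), I would first check the commutation on $\scV$: for $U=\sum_i\al_i T(t_i)\in\scV$, distributivity of $\circ$ together with $T(t)\circ T(t_i)=T(t+t_i)=T(t_i)\circ T(t)$ yields $T(t)\circ U=U\circ T(t)$. To pass to $U\in\scV_0$, take $\net{U}{\la}{\Lambda}\subset\scV$ with $U_\la\to U$; by the continuity of the right and left multiplications $W\mapsto T(t)\circ W$ and $W\mapsto W\circ T(t)$ on $(L(X,X^{**}),w^*OT)$ (valid since $T(t)\in L(X)$), both $T(t)\circ U_\la\to T(t)\circ U$ and $U_\la\circ T(t)\to U\circ T(t)$; since the two nets agree termwise, their limits coincide, giving $T(t)\circ U=U\circ T(t)$.

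The only genuinely delicate point is the last step of item (1): concluding that the iterated limit lies in the $w^*OT$-closure. I would make this precise with the standard finite-test-set argument: given $x_1,\dots,x_n\in X$, $x_1^*,\dots,x_n^*\in X^*$ and $\ep>0$, first choose $\la$ so that $\btr{<VU_\la x_k,x_k^*>-<V\circ U x_k,x_k^*>}<\ep/2$ for all $k$ (possible by the $\la$-limit), and then, for that fixed $\la$, choose $\gamma$ so that $\btr{<V_\gamma U_\la x_k,x_k^*>-<VU_\la x_k,x_k^*>}<\ep/2$ for all $k$; the element $V_\gamma U_\la\in\scV$ then approximates $V\circ U$ on the chosen test set, so $V\circ U\in\scV_0$. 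This is exactly the mechanism already used implicitly for $\scS$, and no new estimate is required; everything else reduces to the algebra identities and the continuity of $R$ and $L$ established earlier.
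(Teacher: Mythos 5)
Your proposal is correct and follows essentially the same route as the paper, whose proof is simply the remark that the argument of Proposition \ref{elemenatry-properties} carries over; you reproduce that argument (iterated limits for the semigroup property, denseness plus continuity of $R$ and $L$ for commutation with $T(t)$), adding the $\circ$-stability of $\scT$ and $\scU$ via distributivity, which the paper records just before the definition of $\scT_0,\scU_0$. Your finite-test-set justification of the iterated-limit step is a legitimate filling-in of a detail the paper leaves implicit, not a different method.
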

\begin{proof}
The proof is quite similar to that for $\scS_0$ in Proposition \ref{elemenatry-properties}.
\end{proof}
\begin{theo} \label{LT-invariant-T0} 
Let $\scV\in\bk{\scT,\scU}$ and $V\in\scV_0;$ then,
$\scV_0$ is $w^*OT$ compact in $L(X,X^{**});$ additionally,
\begin{enumerate} 
\item $V^*T^*(t)x^{\sun}=T^*(t)V^*x^{\sun}$, for all $x^{\sun}\in X^{\sun}$
\item $V^*(X^{\sun})\subset X^{\sun}$, and
\item $V^{\sun*}(X^{\sun\sun})\subset X^{\sun\sun}.$ Consequently, for all $x\in X$, we have $Vx\in X^{\sun\sun}.$
\end{enumerate}
Summarizing, we find that $\scV_0\subset L_T(X,X^{\sun\sun})$ and is $\kappa^{\sun\sun}$ compact.
\end{theo}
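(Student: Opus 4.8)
The plan is to observe that the proof of Theorem \ref{LT-invariant-S0} used the denseness of $\scS$ in $\scS_0$ and the compactness of $\scS_0$ only through a single structural input: the commutation identity $T(t)\circ V=V\circ T(t)$ for every $V$ in the compactification. Since Proposition \ref{elementary-properties-T_0}(2) supplies exactly this identity for every $V\in\scV_0$ with $\scV\in\bk{\scT,\scU}$, I expect the entire argument to transfer verbatim, with $\scS_0$ replaced by $\scV_0$.

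Concretely, I would first fix $V\in\scV_0$, $x\in X$, and $x^{\sun}\in X^{\sun}$, and compute both $<(T(t)\circ V)x,x^{\sun}>=<x,V^*j_{X^*}T^{\sun}(t)x^{\sun}>$ and $<(V\circ T(t))x,x^{\sun}>=<x,T^*(t)V^*j_{X^*}x^{\sun}>$, exactly as in the proof of Theorem \ref{LT-invariant-S0}. The commutation from Proposition \ref{elementary-properties-T_0}(2) then forces $T^*(t)V^*j_{X^*}x^{\sun}=V^*j_{X^*}T^{\sun}(t)x^{\sun}$, which is item (1), and letting $t\to 0$ yields $\lim_{t\to 0}T^*(t)V^*j_{X^*}x^{\sun}=V^*j_{X^*}x^{\sun}$. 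By the defining characterization of $X^{\sun}$ as the subspace of strong continuity of $\bk{T^*(t)}_{t\ge 0}$, this gives $V^*j_{X^*}(X^{\sun})\subset X^{\sun}$, establishing item (2). Setting $W:=V^*j_{X^*}\in L(X^{\sun})$, item (3) then follows by applying \cite[Prop.~2.4.1, Prop.~2.4.3]{neervenLNM} to the sun-semigroup $\scS^{\sun}$ and $W$, just as before; this yields $W^*(X^{\sun\sun})\subset X^{\sun\sun}$, that is $V^{\sun*}(X^{\sun\sun})\subset X^{\sun\sun}$, and in particular $Vx\in X^{\sun\sun}$ for all $x\in X$.

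For the compactness assertion, I would note that $\scV_0$ is $w^*OT$ compact by its very construction: it is $\eta^{-1}$ of the $w^*OT$-closure of $\eta(\scV)$, and this closure lies inside the compact product $\Pi_{x\in X^{*}}\fk{M\nrm{x}B_{X^{*}},\sigma(X^{*},X)}$, since both $co(\scS)$ and $ac(\scS)$ remain bounded by the same constant $M$ as $\scS$. Once the three invariance items place $\scV_0$ inside $L_T(X,X^{\sun\sun})$, the $\kappa^{\sun\sun}$ compactness is immediate, because $\kappa^{\sun\sun}$ is precisely the restriction of $w^*OT$ to $L_T(X,X^{\sun\sun})$.

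I do not anticipate a genuine obstacle: the content is entirely inherited from Theorem \ref{LT-invariant-S0}. The only points requiring a moment's care are verifying that passing to the convex and absolutely convex hulls does not enlarge the operator-norm bound — so that the compactification is performed in the same compact product space — and confirming that Proposition \ref{elementary-properties-T_0} genuinely delivers the commutation $T(t)\circ V=V\circ T(t)$ for $\scV_0$ rather than only for $\scS_0$; both are routine.
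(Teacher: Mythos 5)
Your proposal is correct and follows exactly the route the paper intends: the paper's own proof of this theorem is the one-line remark that it is ``quite similar to that of Theorem \ref{LT-invariant-S0},'' and your argument is precisely that transfer, using Proposition \ref{elementary-properties-T_0}(2) for the commutation identity, the same duality computations to get items (1)--(3) via $W:=V^*j_{X^*}$ and \cite[Prop.~2.4.1, Prop.~2.4.3]{neervenLNM}, and the fact that $\kappa^{\sun\sun}$ is the restriction of $w^*OT$ for the compactness. The two points you flag as needing care (the norm bound $M$ surviving the passage to $co(\scS)$ and $ac(\scS)$, and the commutation holding on all of $\scV_0$) are indeed the only inputs that change, and both are established in the paper just as you argue.
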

\begin{proof}
The proof is quite similar to that of Theorem \ref{LT-invariant-S0}.
\end{proof}

\section{The Influence of the Compactification on the Adjoint}
The boundedness of $\bk{T(t)}_{t\ge 0}$ implies the boundedness of $\bk{T^{\sun}(t)}_{t\ge 0};$ hence, we can repeat the compactification for the dual semigroup, but to keep the topologies connected, $\kappa^{\sun}$ is an adequate topology.

The purpose of this section is to obtain a connection between the compactifications of a semigroup and its $\sun-$semigroup. It is shown that we have a continuous algebra isomorphism in the defined operator topologies, which leads to an adequate compactification for the sun-dual-semigroup. Moreover, because the compactification of the dual semigroup is a set of endomorphisms, we derive a splitting of $X^{\sun}.$

In the following, let $\scS^{\sun}:=\bk{T^{\sun}(t)}_{t\in\rep},$ $\scT^{\sun}:=co(\scS^{\sun})$ and $\scU^{\sun}:=ac(\scS^{\sun}).$ By Proposition \ref{elemenatry-properties}, we learn that if 
$V\in L_T(X,X^{\sun\sun})$, then $V^*(X^{\sun})\subset X^{\sun},$  and $V^{\sun*}(X^{\sun\sun})\subset X^{\sun\sun}.$
Thus, we obtain by a mapping to the algebra 
$$
L_T(X^{\sun}):=\bk{T\in L(X^{\sun}):T^*(X^{\sun\sun})\subset X^{\sun\sun}},
$$
with
$$
\Funk{\eta^{\sun}}{L_T(X,X^{\sun\sun})}{L_T(X^{\sun})}{V}
{\bk{x^{\sun}\mapsto \eta^{\sun}V: x\mapsto <x,V^*_{|X^{\sun}}x^{\sun}>(=<Vx,x^{\sun}>)}}
$$
endowing $L_T(X^{\sun})$ with a $\kappa^{\sun}$ topology, which is a net $\net{V}{\gamma}{\Gamma}$ that converges to $V\in L_T(X^{\sun})$ if
$$
\netlim{\gamma}{\Gamma}<x,V_{\gamma}x^{\sun}>= <x,Vx^{\sun}> \mbox{ pointwise for } x\in X, x^{\sun}\in X^{\sun},
$$
$\eta^{\sun}$ is injective and continuous and $\eta(V\circ U)=U^{\sun}V^{\sun}.$ For $U\in L_T(X^{\sun}),$ $V:=U^*_{|X^{\sun\sun}}\in L(X^{\sun\sun}),$ and if $j:X\to X^{\sun\sun}$ denotes the natural embedding, we claim $\eta^{\sun}(\bk{x\mapsto V(jx)})=U.$
Because
$$ 
<x,Ux^{\sun}>=<jx,Ux^{\sun}>=<V(jx),x^{\sun}>=<x,\eta^{\sun}(Vj)x^{\sun}>,
$$
$\eta^{\sun}$ is surjective. Consequently, from the previous observations, we derive our second main result.
\begin{theo}
\begin{enumerate}
\item
$$
\Funk{\eta^{\sun}}{(L_T(X,X^{\sun\sun}),\kappa^{\sun\sun})}{(L_T(X^{\sun}),\kappa^{\sun})}{V}
{\bk{x^{\sun}\mapsto \eta^{\sun}V: x\mapsto <x,V^*_{|X^{\sun}}x^{\sun}>}}
$$
is continuous and an algebra isomorphism.
\item $\eta^{\sun}(T(t))=T^{\sun}(t).$ 
\item Defining
$$
\scS_0^{\sun}:=\eta^{\sun}(\scS_0), \ \scT_0^{\sun}:=\eta^{\sun}(\scT_0)
\mbox{ and  } \ 
\scU_0^{\sun}:=\eta^{\sun}(\scU_0),
$$
we have
\begin{enumerate}
\item $\scS^{\sun} \subset \scS_0^{\sun}\subset\scT_0^{\sun}\subset\scU_0^{\sun}\subset L(X^{\sun})$,
\item $\scS^{\sun}$ is dense in $\scS_0^{\sun},$ and $\scS_0^{\sun}$ is compact,
\item $\scT^{\sun}$ is dense in $\scT_0^{\sun},$ and $\scT_0^{\sun}$ is convex and compact, and
\item $\scU^{\sun}$ is dense in $\scU_0^{\sun},$ and $\scU_0^{\sun}$ is absolutely convex and compact,
\end{enumerate}
where $\kappa^{\sun}$ is the underlying topology.
\end{enumerate}
\end{theo}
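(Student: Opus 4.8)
The plan is to let part (1) carry the weight and then obtain (2) and (3) by transporting structure along $\eta^{\sun}$. First I would check that $\eta^{\sun}$ is well defined into $L_T(X^{\sun})$: for $V\in L_T(X,X^{\sun\sun})$ the defining identity $<x,V^*_{|X^{\sun}}x^{\sun}> = <Vx,x^{\sun}>$ exhibits $\eta^{\sun}(V)$ as the restriction $V^{\sun}=V^*_{|X^{\sun}}$, which maps $X^{\sun}$ into $X^{\sun}$ by the first invariance condition in the definition of $L_T(X,X^{\sun\sun})$, while its adjoint equals $V^{\sun*}$ and hence maps $X^{\sun\sun}$ into $X^{\sun\sun}$ by the second; thus $\eta^{\sun}(V)\in L_T(X^{\sun})$. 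The remaining assertions of (1) are then exactly the facts already assembled in the paragraph preceding the statement, namely injectivity, continuity, the multiplicativity identity $\eta^{\sun}(V\circ U)=U^{\sun}V^{\sun}$, and surjectivity via the explicit preimage $U\mapsto\{x\mapsto U^*_{|X^{\sun\sun}}(jx)\}$; additivity is immediate from linearity of the adjoint. Together these say that $\eta^{\sun}$ is a continuous bijective algebra isomorphism.

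Next I would record the observation that makes (2) and (3) routine: $\eta^{\sun}$ is in fact a homeomorphism, because $\kappa^{\sun\sun}$ and $\kappa^{\sun}$ are the initial topologies of one and the same family of scalar functionals, via $<V_\gamma x,x^{\sun}> = <x,\eta^{\sun}(V_\gamma)x^{\sun}>$ for all $(x,x^{\sun})\in X\times X^{\sun}$; the same identity applied to the inverse $U\mapsto\{x\mapsto U^*(jx)\}$ gives its continuity. Part (2) is then a one-line computation, $\eta^{\sun}(T(t))x^{\sun}=T(t)^*_{|X^{\sun}}x^{\sun}=T^*(t)x^{\sun}=T^{\sun}(t)x^{\sun}$, so that $\eta^{\sun}(\scS)=\scS^{\sun}$, and by linearity $\eta^{\sun}(co(\scS))=co(\scS^{\sun})=\scT^{\sun}$ and $\eta^{\sun}(ac(\scS))=ac(\scS^{\sun})=\scU^{\sun}$.

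For (3) I would let the homeomorphism do all the work. By Theorems \ref{LT-invariant-S0} and \ref{LT-invariant-T0} the sets $\scS_0\subset\scT_0\subset\scU_0$ are $\kappa^{\sun\sun}$-compact subsets of $L_T(X,X^{\sun\sun})$; applying the linear homeomorphism $\eta^{\sun}$ preserves the inclusions, sends each to a $\kappa^{\sun}$-compact subset of $L_T(X^{\sun})\subset L(X^{\sun})$, and carries convex (resp. absolutely convex) sets to convex (resp. absolutely convex) sets, yielding (a) together with the compactness and convexity clauses at once. Density transfers because a homeomorphism maps a dense subset onto a dense subset: since $\scS$, $co(\scS)$, $ac(\scS)$ are dense in $\scS_0$, $\scT_0$, $\scU_0$ respectively, their images $\scS^{\sun}$, $\scT^{\sun}$, $\scU^{\sun}$ are dense in $\scS_0^{\sun}$, $\scT_0^{\sun}$, $\scU_0^{\sun}$. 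The step I expect to need the most care is conceptual rather than computational: I must verify that $\eta^{\sun}$ genuinely lands in $L_T(X^{\sun})$ and intertwines the two topologies $\kappa^{\sun\sun}$ and $\kappa^{\sun}$, since it is precisely the second invariance condition defining the refined space $L_T(X,X^{\sun\sun})$ — rather than the full $L(X,X^{**})$ of \cite{Witz} — that forces $(\eta^{\sun}V)^*(X^{\sun\sun})\subset X^{\sun\sun}$ and thereby lets compactness, density, and (absolute) convexity survive the transport.
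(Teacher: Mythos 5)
Your proposal is correct and follows essentially the same route as the paper, which states this theorem without a separate proof as a consequence of ``the previous observations'': the well-definedness of $\eta^{\sun}$ into $L_T(X^{\sun})$ via the two invariance conditions, its injectivity, continuity, multiplicativity, and surjectivity through the preimage $U\mapsto\bk{x\mapsto U^*_{|X^{\sun\sun}}(jx)}$, combined with transporting the compactness, density, and (absolute) convexity statements of Theorems \ref{LT-invariant-S0} and \ref{LT-invariant-T0} along this map. Your explicit remark that $\eta^{\sun}$ is a homeomorphism (the two $\kappa$-topologies being induced by the same family of functionals) is a useful clarification the paper leaves implicit, but it does not change the argument.
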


\section{Ideal Theory}

The above construction opens up the possibility of applying the theory of compact right [left] topological semigroups \cite{RuppertLNM}. In this section, we show how this abstract theory applies to the $C_0-$semigroup.

Let $\scS,\scS_0$ be as in the previous section. A right [left] ideal of $\scS_0$ is a subset $I$ of $\scS_0$ 
such that $I\scS_0\subset I \ [\scS_0 I\subset I].$ The semigroup $\scS_0$ is a compact right  topological semigroup, i.e., 
$\scS_0$ is compact, and for a given $V\in\scS_0$, the translation
$$
\Funk{R}{(\scS_0,\kappa^{\sun\sun})}{(\scS_0,\kappa^{\sun\sun})}{W}{V\circ W}
$$
is continuous. 
The semigroup $\scS_0^{\sun}$ is a compact left topological semigroup, i.e., 
$\scS_0^{\sun}$ is compact, and for a given $V\in\scS_0$, the mapping
$$
\Funk{L}{(\scS_0^{\sun},\kappa^{\sun})}{(\scS_0^{\sun},\kappa^{\sun})}{W}{W V}
$$
is continuous. This gives the following for the considered compactifications:
\begin{lem} Let $\scV\in\bk{\scS,\scT,\scU}$; then,
\begin{enumerate}
\item $\scV_0$ is a compact right semitopological semigroup.
\item $\scV_0^{\sun}$ is a compact left semitopological semigroup.
\item $\eta^{\sun}$ is a semigroup isomorphism between $\scV_0$ and $\scV_0^{\sun}.$
\end{enumerate}
\end{lem}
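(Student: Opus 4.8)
The plan is to read all three assertions off results already established, treating the cases $\scV\in\bk{\scS,\scT,\scU}$ by one uniform argument and separating the work according to whether a statement lives on the predual side $L_T(X,X^{\sun\sun})$ or on the image side $L_T(X^{\sun})$.

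For item (1), compactness of $\scV_0$ in $\kappa^{\sun\sun}$ is the summarizing conclusion of Theorem \ref{LT-invariant-S0} (for $\scV=\scS$) and of Theorem \ref{LT-invariant-T0} (for $\scV\in\bk{\scT,\scU}$), while the semigroup property is Proposition \ref{elemenatry-properties}(2), resp.\ Proposition \ref{elementary-properties-T_0}(1). The only topological input needed is the separate continuity of the translations: since $\scV_0\subset L_T(X,X^{\sun\sun})$, for each fixed $V\in\scV_0$ the right translation $R:W\mapsto V\circ W$ is $\kappa^{\sun\sun}$-continuous and, by the semigroup property, maps $\scV_0$ into itself, both by Proposition \ref{LT-semigroup-algebra}(2). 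Moreover $\scV\subset L(X)$ is dense in $\scV_0$ and, again by Proposition \ref{LT-semigroup-algebra}(2), the left translations $W\mapsto W\circ U$ are $\kappa^{\sun\sun}$-continuous for every $U\in\scV$; this dense topological centre is exactly the compact right semitopological structure of the statement.

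Next I would prove item (3), since it drives item (2). By the algebra-isomorphism theorem of the preceding section, $\eta^{\sun}$ is a continuous bijection $(L_T(X,X^{\sun\sun}),\kappa^{\sun\sun})\to(L_T(X^{\sun}),\kappa^{\sun})$ carrying $\circ$ to composition of operators on $X^{\sun}$ with the order reversed, $\eta^{\sun}(V\circ U)=\eta^{\sun}(U)\,\eta^{\sun}(V)$, and by definition $\scV_0^{\sun}=\eta^{\sun}(\scV_0)$. Restricting $\eta^{\sun}$ to $\scV_0$ therefore yields the asserted semigroup isomorphism of $\scV_0$ onto $\scV_0^{\sun}$. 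For item (2) I would transport the structure of item (1) through $\eta^{\sun}$: since $\eta^{\sun}$ is $\kappa^{\sun\sun}$--$\kappa^{\sun}$ continuous, $\scV_0$ is $\kappa^{\sun\sun}$-compact, and $\kappa^{\sun}$ is Hausdorff by Proposition \ref{sun_lcs}, the image $\scV_0^{\sun}$ is $\kappa^{\sun}$-compact and $\eta^{\sun}|_{\scV_0}$ is a homeomorphism onto it, through which the semigroup law of item (3) transfers. For the one-sided continuity, the order reversal gives, for fixed $V\in\scV_0$ with $V^{\sun}=\eta^{\sun}(V)$, the conjugation $\eta^{\sun}\circ R=L\circ\eta^{\sun}$, where $L:W^{\sun}\mapsto W^{\sun}V^{\sun}$ is the left translation on $\scV_0^{\sun}$ introduced in the discussion preceding the lemma; hence $L=\eta^{\sun}\circ R\circ(\eta^{\sun}|_{\scV_0})^{-1}$ is $\kappa^{\sun}$-continuous, so $\scV_0^{\sun}$ is a compact left semitopological semigroup, with dense topological centre $\eta^{\sun}(\scV)$.

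The individual steps are short; the one point deserving care is the order reversal built into $\eta^{\sun}$, which is precisely what turns the continuous right translations on $\scV_0$ into continuous left translations on $\scV_0^{\sun}$, together with the upgrade of $\eta^{\sun}$ from a mere continuous bijection to a homeomorphism, resting on the $\kappa^{\sun\sun}$-compactness of $\scV_0$ and the Hausdorff property of $\kappa^{\sun}$. Once these are secured, the three cases $\scV\in\bk{\scS,\scT,\scU}$ run identically.
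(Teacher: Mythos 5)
Your proposal is correct and follows essentially the same route as the paper, which states this lemma without a separate proof as a direct consequence of the preceding material: the compactness theorems (Theorems \ref{LT-invariant-S0} and \ref{LT-invariant-T0}), the semigroup propositions (Propositions \ref{elemenatry-properties} and \ref{elementary-properties-T_0}), the translation-continuity statement of Proposition \ref{LT-semigroup-algebra}(2), and the $\eta^{\sun}$ isomorphism theorem with its order-reversing multiplication rule $\eta^{\sun}(V\circ U)=\eta^{\sun}(U)\,\eta^{\sun}(V)$. Your extra care in upgrading $\eta^{\sun}|_{\scV_0}$ to a homeomorphism (continuous bijection from a compact space onto a Hausdorff space, via Proposition \ref{sun_lcs}) and in using the conjugation $\eta^{\sun}\circ R=L\circ\eta^{\sun}$ to convert right translations into left translations makes explicit exactly what the paper leaves implicit.
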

\begin{theo}[\cite{Ellis}]
Every compact right [left] topological semigroup has an idempotent.
\end{theo}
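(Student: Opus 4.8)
The plan is to prove the classical Ellis--Numakura lemma by a Zorn's lemma argument that produces a minimal nonempty closed subsemigroup, from which an idempotent is then extracted. I treat the right topological case, in which $S$ is a compact Hausdorff semigroup under $\circ$ such that for each fixed $a\in S$ the translation $x\mapsto a\circ x$ is continuous (this is exactly the continuity recorded for $\scS_0$ in the preceding discussion); the left topological case follows verbatim after passing to the opposite semigroup, since $s\circ s=s$ is a symmetric condition, so only one case needs to be written out.

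First I would introduce the family
$$
\Sigma:=\bk{M\subset S:\ M\neq\emptyset,\ M\ \mbox{closed},\ M\circ M\subset M}
$$
of nonempty closed subsemigroups of $S$, partially ordered by reverse inclusion. It is nonempty because $S\in\Sigma$. The decisive point for Zorn's lemma is that every chain $\{M_i\}_{i\in I}\subset\Sigma$ has an upper bound, namely $M:=\bigcap_{i\in I}M_i$: each $M_i$ is closed in the compact space $S$, and the chain has the finite intersection property since any finite subfamily is a finite chain with a smallest (hence nonempty) member, so compactness forces $M\neq\emptyset$; as an intersection of closed subsemigroups, $M$ is again a closed subsemigroup, whence $M\in\Sigma$. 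Zorn's lemma then yields a minimal element $M\in\Sigma$.

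Next I would extract the idempotent from $M$. Fix any $s\in M$ and consider $s\circ M:=\bk{s\circ m:m\in M}$. It is contained in $M$ because $M$ is a semigroup; it is itself a semigroup, since by associativity $(s\circ m_1)\circ(s\circ m_2)=s\circ(m_1\circ s\circ m_2)\in s\circ M$ as $m_1\circ s\circ m_2\in M$; and it is closed, being the image of the compact set $M$ under the continuous translation $x\mapsto s\circ x$, hence compact and therefore closed in the Hausdorff space $S$. Thus $s\circ M\in\Sigma$ with $s\circ M\subset M$, so minimality forces $s\circ M=M$. In particular some $m\in M$ satisfies $s\circ m=s$, so
$$
E:=\bk{m\in M:\ s\circ m=s}
$$
is nonempty. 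It is closed, being the intersection of $M$ with the preimage of the closed singleton $\bk{s}$ under the continuous map $x\mapsto s\circ x$, and it is a subsemigroup since $s\circ(m_1\circ m_2)=(s\circ m_1)\circ m_2=s\circ m_2=s$ for $m_1,m_2\in E$. Hence $E\in\Sigma$ and $E\subset M$, so minimality gives $E=M$. Since $s\in M=E$, this reads $s\circ s=s$, and $s$ is the desired idempotent.

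The only genuine subtlety, and the reason the argument must be laid out with care, is that multiplication is separately continuous in one variable only; joint continuity is unavailable in the semitopological setting. Consequently both closedness assertions---that $s\circ M$ is closed and that $E$ is closed---must be justified using solely the continuity of the one-sided translation $x\mapsto s\circ x$ supplied by the hypothesis, together with compactness of $S$. This is precisely why the minimal subsemigroup is formed with the continuous translation acting on the appropriate side (on the left for $\scS_0$, on the right for $\scS_0^{\sun}$), and no appeal is ever made to continuity of $a\mapsto a\circ x$, which is exactly what keeps the proof valid for the compact right [left] semitopological semigroups arising in this paper.
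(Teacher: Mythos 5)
Your argument is correct, but there is nothing in the paper to compare it against: the paper states this theorem without proof, purely as a citation to Ellis. What you have written is the standard Ellis--Numakura argument, and it is complete: Zorn's lemma applied to the family of nonempty closed subsemigroups ordered by reverse inclusion (compactness giving the finite intersection property along chains), followed by the two minimality steps $s\circ M=M$ and $\bk{m\in M:\ s\circ m=s}=M$, which yield $s\circ s=s$. You are also right about where the care is needed, and your usage matches the paper's conventions exactly: the only continuity you invoke is that of $x\mapsto s\circ x$, which is precisely the continuity the paper records for $\scS_0$ (the map it calls $R$), and the case of $\scS_0^{\sun}$, where instead $x\mapsto x\circ s$ is continuous, reduces to the first case via the opposite semigroup, as you note. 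The one hypothesis you use tacitly is Hausdorffness of the topology --- it enters both in ``continuous image of a compact set is closed'' and in ``the singleton $\bk{s}$ is closed'' --- but the paper does supply it: $\kappa^{\sun\sun}$ and $\kappa^{\sun}$ are Hausdorff by Proposition \ref{sun_lcs}, and $\scS_0$ is $\kappa^{\sun\sun}$-compact by Theorem \ref{LT-invariant-S0}. So your proof is a correct, self-contained substitute for the external citation, at the cost of nothing beyond Zorn's lemma; what the paper's choice buys instead is brevity, deferring the argument to the literature.
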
 
\begin{defi} [ {\cite[p. 12]{RuppertLNM}}]
The set of idempotents in a semigroup $S$ is denoted $E(S).$ We define relations $\le_L$ and $\le_R$ on E(S) by
\begin{eqnarray*}
e\le_L f &\mbox{ if } & ef=e, \\
e\le_R f &\mbox{ if } & fe=e.
\end{eqnarray*}
If $e$ and $f$ commute, then we omit the indices L and R.
\end{defi} 
\begin{defi}
Let $(A,\le)$ be a set with a transitive relation. 
Then, an element $a$ is called $\le-$maximal [$-$minimal] in $A$
 if for every $a^{\prime}\in A$, $a\le a^{\prime}$ implies $a^{\prime} \le a$ [$a^{\prime} \le a$ implies $a\le a^{\prime}$].
\end{defi}
Recalling \cite[p. 14]{RuppertLNM}, we have the following.
\begin{theo}
Every compact right topological semigroup contains $\le_L$-maximal and $\le_R-$minimal idempotents.
\end{theo}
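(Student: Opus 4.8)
The plan is to combine the Ellis theorem stated just above (every compact right topological semigroup has an idempotent, so $E(S)\neq\emptyset$) with Zorn's lemma applied to $(E(S),\le_L)$ and to $(E(S),\le_R)$. First I would record that both relations are reflexive and transitive: $ee=e$ gives reflexivity, and from $ef=e$ and $fg=f$ one gets $eg=(ef)g=e(fg)=ef=e$, so $\le_L$ is transitive, and symmetrically for $\le_R$. Since the notions of $\le$-maximal and $\le$-minimal in the preceding definition are exactly those appropriate for a preorder, Zorn's lemma applies once bounds for chains are produced (if one prefers, one passes to the quotient poset $E(S)/\!\sim$, where $e\sim f$ iff $e\le f\le e$, and invokes the usual Zorn there). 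The two assertions are then treated as exact mirror images.

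For the $\le_L$-maximal idempotent I would take a $\le_L$-chain $\{e_\alpha\}\subset E(S)$ and set
$$
C:=\{x\in S: e_\alpha x=e_\alpha \ \mbox{ for all }\alpha\}=\bigcap_\alpha R_{e_\alpha}^{-1}(\{e_\alpha\}),
$$
where $R_{e_\alpha}\colon W\mapsto e_\alpha W$ is the continuous translation furnished by the right topological structure. Each $R_{e_\alpha}^{-1}(\{e_\alpha\})$ is closed, $R_{e_\alpha}$ being continuous and the ambient topology Hausdorff (as in Proposition \ref{sun_lcs} for the concrete topologies). The family has the finite intersection property: given $e_{\alpha_1},\dots,e_{\alpha_n}$, let $e_m$ be the $\le_L$-largest among them; then $e_{\alpha_i}e_m=e_{\alpha_i}$ for every $i$, so $e_m$ lies in each $R_{e_{\alpha_i}}^{-1}(\{e_{\alpha_i}\})$. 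Hence $C\neq\emptyset$ by compactness, and $C$ is a subsemigroup because $x,y\in C$ forces $e_\alpha(xy)=(e_\alpha x)y=e_\alpha y=e_\alpha$. Thus $C$ is itself a compact right topological semigroup, so the Ellis theorem yields an idempotent $e\in C$; by construction $e_\alpha e=e_\alpha$, i.e. $e_\alpha\le_L e$ for all $\alpha$, so $e$ is an upper bound of the chain. Zorn's lemma then delivers a $\le_L$-maximal idempotent.

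The $\le_R$-minimal case is the mirror image. For a $\le_R$-chain $\{e_\alpha\}$ I would instead use $D:=\{x\in S: e_\alpha x=x \ \mbox{for all }\alpha\}$, the intersection of the equalizers of the continuous maps $W\mapsto e_\alpha W$ and the identity; these are again closed, and the finite intersection property now follows by taking the $\le_R$-least element $e_0$ of a finite subfamily, for which $e_{\alpha_i}e_0=e_0$. Then $D$ is a nonempty closed subsemigroup, Ellis gives an idempotent $e\in D$ with $e_\alpha e=e$, i.e. $e\le_R e_\alpha$ for all $\alpha$, a lower bound, and Zorn produces a $\le_R$-minimal idempotent. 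For this half one may alternatively argue ideal-theoretically, in the spirit of \cite{RuppertLNM}: since $W\mapsto aW$ is continuous, $aS$ is compact and hence closed, so minimal right ideals exist by Zorn, are closed, and contain idempotents by Ellis; one checks directly (using $eS\subseteq fS\iff e\le_R f$ for idempotents) that any idempotent in a minimal right ideal is $\le_R$-minimal.

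The one genuinely delicate point, and the reason both compactness and the Ellis theorem enter, is that a bound for a chain must itself be an \emph{idempotent}, not merely a cluster point of the $e_\alpha$ in $S$. The auxiliary semigroups $C$ and $D$ encode the bound condition as an intersection of closed sets whose nonemptiness is precisely the finite intersection property, and applying Ellis inside them upgrades a limit point to an idempotent bound. I expect no further obstruction: the only continuity invoked is that of the translations $W\mapsto VW$, which is exactly the right topological hypothesis, so neither joint continuity nor continuity of $W\mapsto WV$ is ever required, and the remaining work is the routine preorder bookkeeping for Zorn's lemma.
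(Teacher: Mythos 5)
Your proof is correct. Note that the paper does not actually prove this statement: it is recalled from Ruppert \cite[p. 14]{RuppertLNM} without argument, so there is no internal proof to compare with; your Zorn--Ellis argument is essentially the standard one from that reference. Both halves check out: the sets $C=\{x\in S:\ e_\alpha x=e_\alpha \mbox{ for all } \alpha\}$ and $D=\{x\in S:\ e_\alpha x=x \mbox{ for all } \alpha\}$ are closed subsemigroups of $S$, the finite intersection property along a chain follows exactly as you say (taking the $\le_L$-largest, respectively $\le_R$-least, element of a finite subfamily), compactness makes them nonempty, and applying Ellis inside $C$ (resp.\ $D$) turns a mere point of the intersection into an idempotent upper (resp.\ lower) bound, after which Zorn for preorders finishes the argument; the ideal-theoretic alternative you sketch for the minimal case (minimal closed right ideals exist, equal minimal right ideals, contain idempotents, and such idempotents are $\le_R$-minimal via $eS\subseteq fS\iff e\le_R f$) is also sound. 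The one hypothesis you use that the quoted statement leaves tacit is Hausdorffness, needed both for singletons and equalizers to be closed and for Ellis's theorem itself; this is harmless in context, since the compactifications considered in the paper carry Hausdorff topologies (Proposition \ref{sun_lcs}).
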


\begin{theo}[{\cite[p. 21]{RuppertLNM}}] \label{right_topo_semigroups}
For an idempotent $e$ in a compact right topological semigroup $S$, the following statements are equivalent:
\begin{enumerate}
\item $e$ is $\le_R-$minimal in $E(S)$.
\item $e$ is $\le_L-$minimal in $E(S)$.
\item $eS$ is a minimal right ideal of $S$.
\item $eSe$ is a group, and $e$ is an identity in $eSe$.
\item $Se$ is a minimal left ideal of $S$.
\item $SeS$ is the minimal ideal of $S$.
\item S has a minimal ideal $M(S)$, and $e\in M(S).$
\end{enumerate}
\end{theo}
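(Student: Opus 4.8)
The plan is to show that all seven conditions are equivalent to the single statement that $e$ belongs to the smallest two-sided ideal (kernel) $M(S)$ of $S$, so the bulk of the work is the structure theory of that kernel, after which the equivalences are read off. The basic topological input is that $S$ is right topological: each right translation $\rho_t\colon s\mapsto st$ is continuous, so every principal left ideal $St$ is a continuous image of a compact set, hence compact and closed. A Zorn's lemma argument on the family of closed left ideals (a chain has nonempty intersection by compactness) then yields minimal left ideals, and minimality forces $L=St$ for each $t\in L$; in particular $Ls=L$ for $s\in L$. Applying the idempotent theorem \cite{Ellis} to the closed subsemigroup $\{x\in L : xs=s\}$ for a fixed $s\in L$ shows that every minimal left ideal contains an idempotent.

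Next I would assemble the kernel. The union $M(S)$ of all minimal left ideals is a left ideal, is a right ideal since $Ls$ is again a minimal left ideal for each $s$, and meets (hence contains) every two-sided ideal, so it is the smallest ideal; this fixes the meaning of condition (7). The equivalence (6) $\Leftrightarrow$ (7) is then immediate, since for $e\in M(S)$ the ideal $SeS$ satisfies $SeS\subset M(S)\subset SeS$, while conversely $e=eee\in SeS$. I would then establish the one-sided correspondences: $Se\subset M(S)$ together with $e\in Se$ gives one direction, and if $e\in M(S)$ lies in a minimal left ideal $L$ then $Se$ is a nonempty left ideal inside $L$, forcing $Se=L$; hence $Se$ is a minimal left ideal $\Leftrightarrow e\in M(S)$, and dually $eS$ is a minimal right ideal $\Leftrightarrow e\in M(S)$. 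Finally, for such $e$ the element $e$ is a two-sided identity on $eSe=eS\cap Se$, and the Rees-type structure of $M(S)$ as a disjoint union of the groups $eSe$ furnishes inverses, giving (4).

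It remains to identify the order conditions (1) and (2). If $f\in E(S)$ satisfies $fe=f$ (i.e. $f\le_L e$) then $f=fe\in Se$, so $Sf\subset Se$; when $Se$ is minimal this gives $Sf=Se\ni e$, say $e=tf$, whence $ef=tff=tf=e$, so $e\le_L f$, and the reverse implication uses that a minimal left ideal inside $Se$ carries an idempotent. This proves $\le_L$-minimal $\Leftrightarrow$ $Se$ a minimal left ideal (condition (2) $\Leftrightarrow$ (5)), and dually $\le_R$-minimal $\Leftrightarrow$ $eS$ a minimal right ideal (condition (1) $\Leftrightarrow$ (3)); chaining through $e\in M(S)$ closes the web of equivalences, and in particular yields the non-obvious symmetry (1) $\Leftrightarrow$ (2). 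The main obstacle is exactly this left-right symmetry: because $S$ is only right topological, principal right ideals $tS$ need not be closed, so neither the existence of minimal right ideals, nor $eS$ minimal $\Rightarrow e\in M(S)$, nor the group property of $eSe$ can be obtained by the direct compactness argument available on the left; these must instead be extracted from the completely simple structure of the kernel $M(S)$, and supplying that structure (the disjoint decomposition into minimal right ideals and into the maximal subgroups) is the technical heart of the proof, as carried out in \cite[pp.~14--21]{RuppertLNM}.
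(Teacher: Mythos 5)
The paper offers no proof to compare against: the theorem is recalled verbatim from Ruppert, with the citation \cite[p.~21]{RuppertLNM} standing in for the argument. Your attempt therefore has to stand as a self-contained proof, and as such it has a genuine gap --- one you flag yourself. What you actually prove is the ``left'' half of the web: $St$ compact and closed, existence of minimal left ideals by Zorn's lemma, idempotents inside them via \cite{Ellis}, the kernel $M(S)$ (union of all minimal left ideals, smallest two-sided ideal), and the equivalences $(2)\Leftrightarrow(5)$, $(5)\Leftrightarrow(7)$, $(6)\Leftrightarrow(7)$. The equivalences involving (1), (3) and (4) are never proved: you assert that they ``must be extracted from the completely simple structure of the kernel'' and defer that structure to \cite[pp.~14--21]{RuppertLNM}, i.e.\ to the very source being quoted. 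Deferring what you yourself call the technical heart leaves an outline, not a proof.

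The gap is real but much smaller than you believe: no completely simple (Rees) structure is needed, just four short algebraic observations on top of what you already have. (i) $(7)\Rightarrow(4)$: for $x\in eSe\subset Se$, minimality of $Se$ gives $Sx=Se\ni e$, say $e=tx$; since $ex=x$, we get $(ete)x=e(tx)=e$, so every element of the monoid $eSe$ has a left inverse in $eSe$, and such a monoid is a group. (ii) $(4)\Rightarrow(3)$: if $R\subset eS$ is a nonempty right ideal and $x\in R$, then $xe\in eSe$ has an inverse $y\in eSe$, so $e=(xe)y=x(ey)\in RS\subset R$, hence $R=eS$. (iii) $(3)\Rightarrow(7)$: $eS\cap M(S)$ is a nonempty right ideal (it contains $eM(S)$) inside $eS$, so minimality of $eS$ forces $eS\subset M(S)$, and $e=ee\in eS$. (iv) $(1)\Rightarrow(7)$: pick a minimal left ideal $L\subset Se$ and an idempotent $f\in L$; then $fe=f$, so $g:=ef$ satisfies $g^{2}=e(fe)f=g$ and $eg=g$, i.e.\ $g\le_{R}e$; the $\le_{R}$-minimality of $e$ gives $ge=e$; but $g\in Sf\subset L\subset Se$ forces $ge=g$, whence $e=g\in L\subset M(S)$. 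Together with your purely algebraic $(3)\Rightarrow(1)$ --- which is the valid dual of your $(5)\Rightarrow(2)$; it is the dual of $(2)\Rightarrow(5)$ that genuinely breaks, since $eS$ need not be closed --- this closes the cycle $(7)\Rightarrow(4)\Rightarrow(3)\Rightarrow(1)\Rightarrow(7)$ and completes the proof. One caveat if this is to be transplanted into the paper: there, ``right topological'' means continuity of $W\mapsto V\circ W$ (so in $\scS_0$ it is the principal \emph{right} ideals that are compact), the mirror of your convention $s\mapsto st$; since the seven conditions come in dual pairs this only permutes the list, but you should fix one convention explicitly and note the duality.
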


Next, we recall some definitions that entail certain compactness conditions on the orbit.
\begin{defi} Let $\jz\in \bk{\re,\rep,[a,\infty)}.$
\begin{enumerate}
\item A function $f\in C_b(\jz,X)$ is called Eberlein weakly almost periodic (E.-wap) if 
$$
O(f):=\bk{f_{\tau}:=\bk{\jz\ni t\mapsto f(t+\tau)}: \tau\in\jz}
$$
is weakly relatively compact in $C_b(\jz,X).$
Let 
\begin{eqnarray*}
W(\jz,X)&:=&\bk{f\in C_b(\jz,X): f \mbox{ is Eberlein weakly almost periodic}}, \\
W_0(\jz,X)&:=& \bk{f\in W(\jz,X): f_{t_n} \to 0 \mbox{ weakly for some } \seq{t}{n}\subset \jz }.
\end{eqnarray*}
\item A function $f\in C_b(\re,X)$ is called almost periodic if
$$
O(f):=\bk{f_{\tau}:=\bk{\re \ni t\mapsto f(t+\tau)}: \tau\in\re}
$$
is relatively compact in $C_b(\re,X).$
Let 
$$
AP(\re,X):=\bk{f\in C_b(\re,X): f \mbox{ is almost periodic}}.
$$
\end{enumerate}
\end{defi}

To provide a sufficient condition on $\bk{T(t)}_{t\in\rep}$ to identify the idempotent of Theorem \ref{right_topo_semigroups}, we recall some results on Eberlein weakly almost periodicity. 
\begin{theo}[\cite{RuessSemi},\cite{RuessAAP}]\label{RSdeco} 
Let $\bk{T(t)}_{t\in\rep}$ be a $C_0-$semigroup. 
Then, $\bk{t\mapsto T(t)x}$  is Eberlein weakly almost periodic iff for all $x\in X,$ $\bk{T(t)x:t\in\rep}$ is relatively weakly compact.
In the above case, we have $x=x_{ap}+x_0^E,$  $\bk{t\mapsto T(t)x_{ap}}$ is a restriction of an almost periodic function, and there exists a sequence $\seq{\om}{n}\subset \rep$ such that $\ilm{n}T(\cdot+\om_n)x_0^E= 0$ weakly in $BUC(\rep,X).$
\end{theo}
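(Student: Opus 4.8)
The statement is a characterization of Eberlein weak almost periodicity (E.-wap) for orbit maps together with a decomposition result. I would prove it in two logically distinct pieces: first the equivalence ``orbit map is E.-wap $\iff$ every orbit $\{T(t)x:t\in\rep\}$ is relatively weakly compact,'' and then the decomposition $x=x_{ap}+x_0^E$ in the positive case. The plan is to exploit the compactification machinery built in the preceding sections: the semigroup $\scS_0$ (equivalently $\scU_0$, the absolutely convex compact version) lives in $L_T(X,X^{\sun\sun})$ and is $\kappa^{\sun\sun}$-compact by Theorem \ref{LT-invariant-S0}, so the orbit closure is genuinely compact in an operator topology close to the $w^*$-topology, which is exactly the structure one needs to manufacture weak limits.

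\emph{The equivalence.} The forward direction is immediate from the definition: if $t\mapsto T(t)x$ is E.-wap then $O(f)$ is weakly relatively compact in $C_b(\rep,X)$, and evaluation at $t=0$ (a bounded linear functional applied coordinatewise) sends this to the weak closure of $\{T(\tau)x:\tau\in\rep\}$, which is therefore weakly compact. For the converse I would argue that relative weak compactness of each orbit, combined with the boundedness and strong continuity of the semigroup, upgrades to weak relative compactness of the translation family $O(f)$ in $C_b(\rep,X)$. The natural tool here is an Eberlein–\v{S}mulian / Grothendieck double-limit criterion: one tests against pairs of sequences $(\tau_n)\subset\rep$ and $(x^*_m)\subset B_{X^*}$ and must show the two iterated limits of $\langle T(\tau_n+s)x,x^*_m\rangle$ agree. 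This is precisely where the compactification $\scU_0$ enters — a weak-$*$ cluster point $V\in\scU_0$ of the net $\{T(\tau_n)\}$ furnishes the common value, because $T(s)\circ V=V\circ T(s)$ (Proposition \ref{elemenatry-properties}) lets the semigroup variable $s$ pass through the limit cleanly.

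\emph{The decomposition.} Assuming the orbit is relatively weakly compact, $\scU_0$ is a compact absolutely convex right semitopological semigroup containing $\scS$; by Ellis's theorem (Theorem \ref{right_topo_semigroups} and the idempotent existence results just recalled) it contains a minimal idempotent $e$, characterized by $e\scU_0 e$ being a group with identity $e$. I would set $x_{ap}:=ex$ and $x_0^E:=x-ex=(I-e)x$. The idempotent splits $X$ (along the orbit) into a ``reversible'' part on which the semigroup acts almost periodically — since $e\scU_0 e$ is a compact topological group, the orbit $t\mapsto T(t)x_{ap}$ extends to a map on that group and is thus a restriction of an almost periodic function — and a ``flight'' part $x_0^E$ annihilated in the limit, giving a sequence $\om_n$ with $T(\cdot+\om_n)x_0^E\to 0$ weakly in $BUC(\rep,X)$. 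The minimality of $e$ is what forces this weak-null behavior: the minimal left ideal $\scU_0 e$ contains an element that kills the non-almost-periodic component.

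\emph{Main obstacle.} The genuinely delicate step is the converse of the equivalence, namely passing from pointwise (orbitwise) relative weak compactness to weak relative compactness of the \emph{whole} translation family in the function space $C_b(\rep,X)$. Pointwise weak compactness does not automatically yield it; one must verify the interchange-of-limits (double-limit) condition uniformly, and the technical burden is controlling the $s$-dependence. My expectation is that the commutation $T(s)\circ V=V\circ T(s)$ from the compactification, together with the $\kappa^{\sun\sun}$-compactness of $\scU_0$, is exactly the lever that makes the two iterated limits coincide; identifying the almost-periodic extension via the group $e\scU_0 e$ is the second, more structural, point of care.
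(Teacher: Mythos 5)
Your treatment of the equivalence is essentially the Ruess--Summers argument and is sound in outline: evaluation at $0$ gives the forward direction, and for the converse the double-limit computation does work, the one genuinely essential point being that relative weak compactness of the orbit forces every $w^*OT$-cluster point $V\in\scS_0$ of $\bk{T(\tau_n)}$ to satisfy $Vx\in X$ (weak and weak$^*$ cluster points then coincide), which is what lets both iterated limits be identified with $<Vx,y^*>$, $y^*$ a weak$^*$ cluster point of $\bk{T^*(s_m)x^*_m}$. One correction, though: on $C_b(\rep,X)$ the double-limit test must be run against \emph{pairs} $(s_m,x^*_m)\in\rep\times B_{X^*}$, i.e.\ the functionals $g\mapsto <g(s_m),x^*_m>$; your iterated limits of $<T(\tau_n+s)x,x^*_m>$ with a free variable $s$ are not the correct test family, and with only that family the criterion is too weak.

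The decomposition half has two genuine gaps. First, the minimal idempotent cannot be taken in $\scU_0$: since $0\in ac(\scS)$ (take all coefficients zero, or $\frac12 T(t)-\frac12 T(t)$), the zero operator lies in $\scU_0$, $\bk{0}$ is a two-sided ideal, hence the minimal ideal, and $0$ is the unique minimal idempotent of $\scU_0$ by Theorem \ref{right_topo_semigroups}; your recipe $x_{ap}:=ex$, $x_0^E:=(I-e)x$ would then read $x_{ap}=0$, $x_0^E=x$ for \emph{every} $x$, which is false already for a nonzero fixed vector. The idempotent must be taken in $\scS_0$ itself ($\scT_0$ is also wrong: its minimal idempotents are ergodic means). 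Second, and more seriously, the step ``$e\scS_0e$ is a compact topological group, hence $t\mapsto T(t)x_{ap}$ is the restriction of an almost periodic function'' is not a deduction --- it is the substance of the de Leeuw--Glicksberg theorem. Compactness of the group in a weak operator topology only makes $g\mapsto gx_{ap}$ continuous into $(X,\mbox{weak})$, i.e.\ it reproduces relative weak compactness of the orbit, not norm compactness. To close this you need precisely the ingredients the classical proof supplies: weak (not weak$^*$) compactness of orbits upgrades $\scS_0$ from right topological to semitopological, only then does Ellis's theorem make the kernel a topological group; then Haar measure, the existence in $X$ of the weak integrals $\int_G\overline{\gamma}(S)Sx\,d\rho(S)$ (Krein--\v{S}mulian), and the Fourier-type argument placing $x_{ap}$ in the closed span of unimodular eigenvectors, whose orbits are norm compact. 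That this implication is a theorem and not a formality is exactly what the second half of the paper documents: in the weak$^*$ setting reversibility yields only $AP_{w^*}$ regularity (Theorem \ref{weak-star-ap}), and genuine almost periodicity needs separability (Theorem \ref{separable-theorem}). Finally, the flight part: the statement demands a \emph{sequence} $\seq{\om}{n}$ and weak null convergence in $BUC(\rep,X)$; from $ex_0^E=0$ you only get a net with $T(t_{\al})x_0^E\to 0$ weakly in $X$, and you still need Eberlein--\v{S}mulian (angelicity of relatively weakly compact sets) to extract a sequence, plus an argument that pointwise weak null limits together with relative weak compactness of the translates force weak convergence to $0$ in the function space. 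For context: the paper itself gives no proof of this theorem --- it recalls it from \cite{RuessSemi}, \cite{RuessAAP} --- and the derivation it later sketches runs instead through the sun-dual splitting (Theorem \ref{apply-to-sun}), Theorem \ref{reccurent_equals_unimodular}, and separability of weakly compact orbit closures; what you are reconstructing is the classical route of \cite{DeGli1} and \cite{Krengel}, and the items above are exactly its nontrivial steps.
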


Next, we apply the compactification and results from the adjoint semigroup.

\begin{theo} \label{apply-to-sun}
Let $\bk{T(t)}_{t\in\rep}$ be a bounded semigroup, and let $P^{\sun}$ denote a minimal idempotent in $\scS_0^{\sun}$ given by Theorem \ref{right_topo_semigroups}; then, $X^{\sun}$ decomposes into a direct sum of two closed and translation-invariant subspaces  $R(P^{\sun})=:X^{\sun}_a$ and $N(P^{\sun})=:X^{\sun}_0.$ Moreover, we have
\begin{enumerate}
\item $  P^{\sun}\scS_0^{\sun}$ is a group on $X_a^{\sun}$.
\item $x^{\sun}\in X^{\sun}_a$ iff for every $V\in \scS_0^{\sun}$, there exists an $U\in \scS_0^{\sun}$ with $P^{\sun}UP^{\sun}Vx^{\sun}=x^{\sun}$.
\item If $x^{\sun}\in X^{\sun}_0,$ then there exists a net $\net{t}{\gamma}{\Gamma}$ such that $\sigma(X^{\sun},X)-\netlim{\gamma}{\Gamma}T^{\sun}(t_{\gamma})x^{\sun}=0.$
\item Let $ x^{\sun}\in X^{\sun}$ such that $\bk{t\mapsto T^{\sun}(t)x^{\sun}}$ is almost periodic; then, $x^{\sun}\in X_a^{\sun}.$
\item Let $ x^{\sun}\in X^{\sun}$ and, for a net $\net{t}{\al}{A},$ $\sigma(X^{\sun},X^{\sun*})-\netlim{\al}{A}T^{\sun}(t_{\al})x^{\sun} =0;$ then, $x^{\sun}\in X_0.$
\item \label{X_a-orthogonal} 
Let $x\in (X^{\sun}_a)_{\perp}:=\bk{x\in X: <x,x^{\sun}>=0 \ \forall x^{\sun}\in X^{\sun}_a};$ then, there is a net $\net{t}{\al}{A}$ such that
$$\sigma(X,X^{\sun})-\netlim{\al}{A} T(t_{\al})x=0.$$
\item \label{X_0-orthogonal}
Let $x \in (X^{\sun}_0)_{\perp}:=\bk{x\in X: <x,x^{\sun}>=0 \ \forall x^{\sun} \in X^{\sun}_0};$ then, there is a net $\net{t}{\al}{A}$ such that
$$\sigma(X,X^{\sun})-\netlim{\al}{A} T(t_{\al})x=x.$$
\item \label{sun-subspace-split} Let $Y\subset X^{\sun}$ be a closed subspace and $x^{\sun}\in Y.$ If $\overline{O(x^{\sun})}^{\sigma(X^{\sun},X)}\subset Y,$ then $x^{\sun}_a,x^{\sun}_0\in Y.$ Consequently, $Y=Y_a\oplus Y_0,$ with $Y_a:=Y \cap R(P^{\sun})$ and $Y_0:=Y\cap N(P^{\sun}).$
\end{enumerate}
\end{theo}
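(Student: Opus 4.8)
The plan is to record first the structural facts underlying the stated splitting, and then to dispatch the eight assertions in three groups according to which compactification and topology they are most naturally read in. Throughout I write $X^{\sun}_a:=R(P^{\sun})$, $X^{\sun}_0:=N(P^{\sun})$, and I set $P:=(\eta^{\sun})^{-1}(P^{\sun})\in\scS_0$, which is a $\circ$-idempotent since $\eta^{\sun}$ is an isomorphism.

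\textbf{The splitting and items (1)--(2).} Because $P^{\sun}\in\scS_0^{\sun}\subset L(X^{\sun})$ is a bounded idempotent, $X^{\sun}=X^{\sun}_a\oplus X^{\sun}_0$ with both summands norm-closed. From $T(t)\circ W=W\circ T(t)$ for all $W\in\scS_0$ (Proposition \ref{elemenatry-properties}(3)) and the (anti-)multiplicativity of $\eta^{\sun}$, one gets that $T^{\sun}(t)=\eta^{\sun}(T(t))$ commutes with every element of $\scS_0^{\sun}$, in particular $T^{\sun}(t)P^{\sun}=P^{\sun}T^{\sun}(t)$; this yields the translation-invariance of $X^{\sun}_a$ and $X^{\sun}_0$. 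For (1) I transfer Theorem \ref{right_topo_semigroups}(4): as $P$ is a minimal idempotent of the compact right topological semigroup $\scS_0$, the set $P\scS_0P$ is a group with identity $P$, and applying $\eta^{\sun}$ turns this into the statement that $P^{\sun}\scS_0^{\sun}P^{\sun}$ is a group with identity $P^{\sun}$; restricting to $X^{\sun}_a$ (where $P^{\sun}$ is the identity) and noting $P^{\sun}V|_{X^{\sun}_a}=P^{\sun}VP^{\sun}|_{X^{\sun}_a}$ gives (1). For (2): if $x^{\sun}\in X^{\sun}_a$, then given $V$ the element $P^{\sun}VP^{\sun}$ of the group has an inverse $P^{\sun}UP^{\sun}$, and using $x^{\sun}=P^{\sun}x^{\sun}$ and idempotency, $P^{\sun}UP^{\sun}Vx^{\sun}=(P^{\sun}UP^{\sun})(P^{\sun}VP^{\sun})x^{\sun}=P^{\sun}x^{\sun}=x^{\sun}$; conversely, the left-hand side always lies in $R(P^{\sun})$, so the equation forces $x^{\sun}\in X^{\sun}_a$.

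\textbf{Items (3), (6), (7) via the predual idempotent.} The common tool is the pairing identity $\langle P^{\sun}x^{\sun},x\rangle=\langle Px,x^{\sun}\rangle$ read off from the definition of $\eta^{\sun}$, together with a net \net{t}{\al}{A} realizing $T(t_{\al})\to P$ in $w^*OT$ (denseness, Proposition \ref{elemenatry-properties}(1)). For (3) I pick, by denseness of $\scS^{\sun}$ in $\scS_0^{\sun}$, a net $T^{\sun}(t_{\gamma})\to P^{\sun}$ in $\kappa^{\sun}$; for $x^{\sun}\in X^{\sun}_0$ this reads $\langle T^{\sun}(t_{\gamma})x^{\sun},x\rangle\to\langle P^{\sun}x^{\sun},x\rangle=0$ for all $x$, i.e. $T^{\sun}(t_{\gamma})x^{\sun}\to 0$ in $\sigma(X^{\sun},X)$. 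For (6), if $x\in(X^{\sun}_a)_{\perp}$ then $\langle x,P^{\sun}y^{\sun}\rangle=0$ for every $y^{\sun}$, hence $\langle Px,y^{\sun}\rangle=0$ for all $y^{\sun}\in X^{\sun}$, and $T(t_{\al})\to P$ gives $\langle T(t_{\al})x,y^{\sun}\rangle\to 0$, i.e. $T(t_{\al})x\to 0$ in $\sigma(X,X^{\sun})$. For (7), if $x\in(X^{\sun}_0)_{\perp}$ then $\langle x,(I-P^{\sun})y^{\sun}\rangle=0$, so $\langle x,y^{\sun}\rangle=\langle Px,y^{\sun}\rangle$ for all $y^{\sun}$, and the same net yields $T(t_{\al})x\to x$ in $\sigma(X,X^{\sun})$.

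\textbf{Items (5) and (4).} For (5) I use that $P^{\sun}$ is bounded, hence $\sigma(X^{\sun},X^{\sun*})$-continuous, and commutes with $T^{\sun}(t_{\al})$: from $T^{\sun}(t_{\al})x^{\sun}\to 0$ weakly I obtain $T^{\sun}(t_{\al})(P^{\sun}x^{\sun})=P^{\sun}T^{\sun}(t_{\al})x^{\sun}\to 0$, a fortiori in $\sigma(X^{\sun},X)$. Passing to a $\kappa^{\sun}$-convergent subnet $T^{\sun}(t_{\al})\to W\in\scS_0^{\sun}$ (compactness) gives $W(P^{\sun}x^{\sun})=0$; since $P^{\sun}WP^{\sun}$ lies in the group of (1) it is invertible on $X^{\sun}_a$, and applying $P^{\sun}$ forces $P^{\sun}x^{\sun}=0$, i.e. $x^{\sun}\in X^{\sun}_0$. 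For (4) I split $x^{\sun}=P^{\sun}x^{\sun}+(I-P^{\sun})x^{\sun}$ and must show $z^{\sun}:=(I-P^{\sun})x^{\sun}$ vanishes; $z^{\sun}$ lies in $X^{\sun}_0$ and is again almost periodic (as $P^{\sun}$ is bounded and commutes with the semigroup). By (3) some net pushes $z^{\sun}$ to $0$ in $\sigma(X^{\sun},X)$, and almost periodicity (relative norm-compactness of the orbit) upgrades this to $0$ lying in the norm-closure of $O(z^{\sun})$. The main obstacle is this last step, concluding $z^{\sun}=0$ from the fact that an almost periodic orbit has $0$ in its closure; I would settle it either by invoking that the orbit closure of an almost periodic vector is a minimal set (so that the fixed point $0$ being in it forces the closure to be $\{0\}$), or by using the relatively dense set of return times together with the uniform bound $\|T^{\sun}(t)\|\le M$, so that a small value of $\|T^{\sun}(t)z^{\sun}\|$ propagates to a small value of $\|z^{\sun}\|$ at a nearby return time.

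\textbf{Item (8) and the consequence.} Using again $P^{\sun}x^{\sun}=\sigma(X^{\sun},X)\text{-}\lim T^{\sun}(t_{\gamma})x^{\sun}$, each $T^{\sun}(t_{\gamma})x^{\sun}$ is a point of $O(x^{\sun})$, so $x^{\sun}_a:=P^{\sun}x^{\sun}\in\overline{O(x^{\sun})}^{\sigma(X^{\sun},X)}\subset Y$, whence $x^{\sun}_0:=x^{\sun}-x^{\sun}_a\in Y$ as well. When the hypothesis holds for every element of $Y$ (for instance $Y$ weak-$*$ closed and translation invariant), each $x^{\sun}\in Y$ then decomposes as $x^{\sun}_a+x^{\sun}_0$ with $x^{\sun}_a\in Y\cap R(P^{\sun})=:Y_a$ and $x^{\sun}_0\in Y\cap N(P^{\sun})=:Y_0$; since $Y_a\cap Y_0\subset R(P^{\sun})\cap N(P^{\sun})=\{0\}$, this is exactly the asserted direct decomposition $Y=Y_a\oplus Y_0$.
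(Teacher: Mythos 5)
Your proposal is correct and follows essentially the same route as the paper: the bounded minimal idempotent yields the splitting and translation invariance, the group $P^{\sun}\scS_0^{\sun}P^{\sun}$ handles (1), (2) and (5), approximating nets $T^{\sun}(t_{\al})\to P^{\sun}$ handle (3), (6), (7), (8), and your "return times plus uniform bound" resolution of the final step of (4) is precisely the paper's own argument ($\nrm{T^{\sun}(t+t_{\gamma})x_0^{\sun}}\le C\nrm{T^{\sun}(t_{\gamma})x_0^{\sun}}$ with $0$ a norm cluster point of the almost periodic orbit). The remaining differences are cosmetic: you bypass the paper's (redundant) detour through the Ruess--Summers decomposition in (4), and your statements of (6)/(7) quietly correct a harmless swap of the labels $(X^{\sun}_a)_{\perp}$ and $(X^{\sun}_0)_{\perp}$ in the paper's own proof.
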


\begin{proof}
By Theorem \ref{right_topo_semigroups}, we find a minimal idempotent $e=:P^{\sun}\in \scS_0^{\sun}$, as $(P^{\sun})^2=P^{\sun}$ and is bounded, and it is a continuous projection, which serves for the decomposition. The translation invariance entails $T^{\sun}(t)P^{\sun}=P^{\sun}T^{\sun}(t).$
The first claim is a direct consequence of Theorem \ref{right_topo_semigroups} (4) and $P^{\sun}X_a^{\sun}=X_a^{\sun}.$

To prove (2), note that $P^{\sun}\scS_0^{\sun}P^{\sun}$ is a group; hence, for a given $V\in\scS_0^{\sun}, $ we find an operator $W\in\scS_0^{\sun}$ such that 
$ (P^{\sun}WP^{\sun})P^{\sun}VP^{\sun}=P^{\sun}.$ For $x^{\sun}\in X_a^{\sun}$, this leads to
$(P^{\sun}WP^{\sun})P^{\sun}VP^{\sun}x^{\sun}=P^{\sun}x.$ The choice of $x^{\sun}$ leads to $P^{\sun}x^{\sun}=x^{\sun},$ and $(P^{\sun})^2=P^{\sun}$ leads to 
$U:=P^{\sun}WP^{\sun}.$ 
For the other direction, note that $x=P^{\sun}UP^{\sun}Vx\in X_a^{\sun}.$

Let $x^{\sun}\in N(P^{\sun});$ then, $0=Px^{\sun}=\netlim{\al}{A}T(t_{\al})x^{\sun}$ for an appropriate net $\net{t}{\al}{A}\subset \re.$

Let $x^{\sun}\in X^{\sun}$ such that $\bk{t\mapsto T^{\sun}(t)x^{\sun}}$ is almost periodic. Because of the decomposition, we find $x^{\sun}=x^{\sun}_a+x^{\sun}_0$, with $x^{\sun}_a\in X^{\sun}_a$ and $x_0\in X^{\sun}_0.$ By the almost periodicity, we have that $\bk{t\mapsto P^{\sun}T^{\sun}(t)x}$ is almost periodic. Because the semigroup commutes with $P^{\sun},$ we have 
\begin{equation}
\bk{t\mapsto P^{\sun}T^{\sun}(t)x^{\sun}}=\bk{t \mapsto T^{\sun}(t)P^{\sun}x^{\sun}}= \bk{t\mapsto T^{\sun}(t)x^{\sun}_a}.
\end{equation}
Therefore, $\bk{t\mapsto T^{\sun}(t)x^{\sun}_a}$ is almost periodic, which, according to \cite{RuessAAP}, leads to a decomposition of 
$x^{\sun}_a=x^{\sun}_{ap}+x_0^{1},$ with $\bk{t\mapsto T(t)x^{\sun}_{ap}}$ being almost periodic 
and $\bk{t\mapsto T(t)x_0^1}$ in $C_0(\rep,X).$ 
By the almost periodicity of $\bk{t\mapsto T^{\sun}(t)x^{\sun}_a},$ $x_0^{1}=0.$ In summary, $\bk{t\mapsto T^{\sun}(t)x^{\sun}_0}$ is almost periodic, and the orbit becomes norm compact; hence, $\nrm{T(t+t_{\gamma})x_0^{\sun}}\le C\nrm{T(t_{\gamma})x_0^{\sun}},$ and $0$ is a cluster point of $O^+(\bk{t\mapsto T^{\sun}(t)x^{\sun}_0})$, which yields $x^{\sun}_0=0.$

Let $x^{\sun}\in X^{\sun},$ $x^{\sun}=x^{\sun}_a+x^{\sun}_0$ and, for a net $\net{t}{\al}{A},$
$T^{\sun}(t_{\al})x^{\sun}\to 0$ weakly; then,
$$
T^{\sun}(t_{\al})x_a^{\sun}=T^{\sun}(t_{\al})P^{\sun}x^{\sun}=P^{\sun}T^{\sun}(t_{\al})x^{\sun}\to 0 \mbox{ weakly }.
$$
Let $Q=\kappa^{\sun}-\netlim{\al}{A}T(t_{\al})$; then, $Qx_a=0$. Using $P^{\sun}\scS_0^{\sun}$ as a group on $X_a$, we find $x_a=0;$ hence, $x^{\sun}=x_0.$

Let $\net{t}{\al}{A}$ be a net such that $P^{\sun}=\netlim{\al}{A}T(t_{\al}).$ Then, for $x\in (X^{\sun}_0)_{\perp},$ we have
\begin{eqnarray*}
0&=&<x,P^{\sun}x^{\sun}>=\netlim{\al}{A}<x,T^{\sun}(t_{\al})x^{\sun}> \\
&=& \netlim{\al}{A}<T(t_{\al})x,x^{\sun}>, \mbox{ for all } x^{\sun}\in X^{\sun},
\end{eqnarray*}
which proves the $\sigma(X,X^{\sun})$ convergence.

Let $\net{t}{\al}{A}$ be a net such that $P^{\sun}=\netlim{\al}{A}T^{\sun}(t_{\al}).$ Then, for $x\in (X^{\sun}_a)_{\perp},$ we have
\begin{eqnarray*}
0&=&<x,(I-P^{\sun})x^{\sun}>=\netlim{\al}{A}<x,(I-T^{\sun}(t_{\al}))x^{\sun}> \\
&=& \netlim{\al}{A}<(I-T(t_{\al}))x,x^{\sun}>, \mbox{ for all } x^{\sun}\in X^{\sun},
\end{eqnarray*}
which proves the $\sigma(X,X^{\sun})$ convergence.

Because $x^{\sun}_a=\sigma(X^{\sun},X)-\netlim{\al}{A}T(t_{\al})x^{\sun}\in \overline{O(x^{\sun})}^{\sigma(X^{\sun},X)}\subset Y,$ for some appropriate net, the proof is completed.
\end{proof}

\begin{remk} To obtain weak convergence in (\ref{X_a-orthogonal}) and (\ref{X_0-orthogonal}), the orbit $\bk{T(t)x}_{t\ge0}$ has to be weakly equicontinuous; compare with \cite[Cor. 2.2.4, p. 26]{neervenLNM}.
\end{remk}

\begin{remk} \label{sub_semigroups}
If $Y\subset X^{\sun}$ is a closed subspace such that $\overline{O(x^{\sun})}^{\sigma(X^{\sun},X)}\subset Y$ for all $x^{\sun}\in Y,$ then ${\scS^{\sun}_0}_{|Y}$ is a compact left topological semigroup. Clearly, if $\scS^{\sun}_0$ is commutative, then ${\scS^{\sun}_0}_{|Y}$ is commutative.
\end{remk}

\begin{cor}
If $x^{\sun}\in X^{\sun}$  such that $\bk{t\mapsto T^{\sun}(t)x^{\sun}}$ is Eberlein weakly almost periodic, then the E.-wap splitting and the one from Theorem \ref{apply-to-sun} coincide.
\end{cor}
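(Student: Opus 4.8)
The plan is to produce the Eberlein weakly almost periodic (E.-wap) splitting of $x^{\sun}$ from Theorem~\ref{RSdeco}, to identify each of its two summands as an element of the appropriate factor of the idempotent decomposition $X^{\sun}=X^{\sun}_a\oplus X^{\sun}_0$, and then to invoke uniqueness of that direct sum. Since the boundedness of $\bk{T^{\sun}(t)}_{t\ge 0}$ is recorded at the start of Section~4, Theorem~\ref{RSdeco} applies to $\scS^{\sun}$: the hypothesis that $\bk{t\mapsto T^{\sun}(t)x^{\sun}}$ is E.-wap yields a decomposition $x^{\sun}=x^{\sun}_{ap}+(x^{\sun})_0^E$ in which $\bk{t\mapsto T^{\sun}(t)x^{\sun}_{ap}}$ is the restriction of an almost periodic function on $\re$, and in which there is a sequence $\bk{\om_n}_{n\in\za}\subset\rep$ with $T^{\sun}(\cdot+\om_n)(x^{\sun})_0^E\to 0$ weakly in $BUC(\rep,X^{\sun})$.

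For the almost periodic summand, I would observe that its positive orbit, being the restriction of an almost periodic function, is exactly the situation covered by Theorem~\ref{apply-to-sun}(4); hence $x^{\sun}_{ap}\in X^{\sun}_a$. A point to state carefully here is that ``restriction of an almost periodic function'' is precisely the sense in which the orbit in part~(4) is to be read as almost periodic, since almost periodicity is defined for functions on all of $\re$.

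For the flight summand, the crucial step is to transfer the weak convergence in the function space to weak convergence in $X^{\sun}$ itself. The point-evaluation map $f\mapsto f(0)$ from $BUC(\rep,X^{\sun})$ to $X^{\sun}$ is a bounded linear operator, hence weak-to-weak continuous; applying it to $T^{\sun}(\cdot+\om_n)(x^{\sun})_0^E\to 0$ yields $T^{\sun}(\om_n)(x^{\sun})_0^E\to 0$ in $\sigma(X^{\sun},X^{\sun*})$. Reading the sequence $\bk{\om_n}_{n\in\za}$ as a net, Theorem~\ref{apply-to-sun}(5) then places $(x^{\sun})_0^E\in X^{\sun}_0$.

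Finally, because $X^{\sun}=X^{\sun}_a\oplus X^{\sun}_0$ is a direct sum induced by the projection $P^{\sun}$, the representation of $x^{\sun}$ as a sum of an element of $X^{\sun}_a$ and an element of $X^{\sun}_0$ is unique; consequently $x^{\sun}_{ap}=P^{\sun}x^{\sun}=x^{\sun}_a$ and $(x^{\sun})_0^E=(I-P^{\sun})x^{\sun}=x^{\sun}_0$, so the two splittings coincide. I expect the main obstacle to be the flight part: verifying that the weak topology of $BUC(\rep,X^{\sun})$ supplied by Theorem~\ref{RSdeco} restricts, through point evaluation, to exactly the $\sigma(X^{\sun},X^{\sun*})$ convergence demanded by Theorem~\ref{apply-to-sun}(5), and confirming that the sequence furnished by the E.-wap decomposition is admissible as the net required there.
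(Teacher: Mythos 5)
Your proof is correct and follows exactly the route the paper intends: the corollary is stated there without proof as an immediate consequence of Theorem \ref{RSdeco} applied to $\scS^{\sun}$ together with parts (4) and (5) of Theorem \ref{apply-to-sun} and the uniqueness of the decomposition along the projection $P^{\sun}$. Your extra care on the two delicate points --- reading ``almost periodic'' for a semigroup orbit on $\rep$ as ``restriction of an almost periodic function on $\re$,'' and transferring weak convergence in $BUC(\rep,X^{\sun})$ to $\sigma(X^{\sun},X^{\sun*})$ convergence via the (weak-weak continuous) evaluation $f\mapsto f(0)$ --- fills in precisely the details the paper leaves implicit.
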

By the previous lemma, we conclude the following.
 
\begin{cor}
If $E=\bk{P^{\sun}\in E(\scS^{\sun}_0): \ \le_L\mbox{-minimal}}$,
\begin{eqnarray*}
X_{ap}^{\sun}&:=&\bk{x^{\sun}\in X^{\sun}: \bk{t\mapsto T^{\sun}(t)x^{\sun}} \mbox{ is almost periodic }}\subset \bigcap_{P^{\sun}\in E }R(P^{\sun}), \\
&&\bk{x^{\sun}\in X^{\sun}: 0\in \overline{\bk{T^{\sun}(t)x:t\ge 0}}^{\sigma(X^{\sun},X^{\sun*})}} \subset \bigcap_{P^{\sun}\in E }N(P^{\sun}).
\end{eqnarray*}
\end{cor}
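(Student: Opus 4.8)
The plan is to recognize that the two decomposition results of Theorem \ref{apply-to-sun}, items (4) and (5), although stated there for a single minimal idempotent, actually hold for \emph{every} minimal idempotent of $\scS_0^{\sun}$, and then to intersect these inclusions over the family $E$. The first step is therefore to identify $E$ with exactly the class of idempotents to which Theorem \ref{apply-to-sun} applies. By Theorem \ref{right_topo_semigroups} the conditions ``$\le_L$-minimal'' and ``$\le_R$-minimal'' in $E(\scS^{\sun}_0)$ are equivalent, each being equivalent to membership in the minimal ideal; hence every $P^{\sun}\in E$ is a minimal idempotent of the kind produced by Theorem \ref{right_topo_semigroups}. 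The passage between the right topological semigroup $\scS_0$ and the left topological semigroup $\scS_0^{\sun}$ interchanges the labels $L$ and $R$ through the anti-isomorphism $\eta^{\sun}$ (recall $\eta^{\sun}(V\circ U)=U^{\sun}V^{\sun}$), but this leaves the class of minimal idempotents unchanged. Inspecting the proof of Theorem \ref{apply-to-sun}, items (4) and (5) use only that $P^{\sun}$ is idempotent, that $P^{\sun}\scS_0^{\sun}P^{\sun}$ is a group (minimality, via Theorem \ref{right_topo_semigroups}(4)), and the commutation $T^{\sun}(t)P^{\sun}=P^{\sun}T^{\sun}(t)$, which is valid for every $P^{\sun}\in\scS_0^{\sun}$; hence those items remain valid for each $P^{\sun}\in E$.

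For the first inclusion I would fix an arbitrary $P^{\sun}\in E$ and take $x^{\sun}\in X_{ap}^{\sun}$, so that $\bk{t\mapsto T^{\sun}(t)x^{\sun}}$ is almost periodic. Theorem \ref{apply-to-sun}(4) then yields $x^{\sun}\in R(P^{\sun})$. Since $P^{\sun}\in E$ was arbitrary, this gives $X_{ap}^{\sun}\subset\bigcap_{P^{\sun}\in E}R(P^{\sun})$.

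For the second inclusion I would again fix an arbitrary $P^{\sun}\in E$ and suppose $0\in\overline{\bk{T^{\sun}(t)x^{\sun}:t\ge 0}}^{\sigma(X^{\sun},X^{\sun*})}$. This membership furnishes a net $\net{t}{\al}{A}$ with $\sigma(X^{\sun},X^{\sun*})-\netlim{\al}{A}T^{\sun}(t_{\al})x^{\sun}=0$, which is precisely the hypothesis of Theorem \ref{apply-to-sun}(5); consequently $x^{\sun}\in N(P^{\sun})$. As $P^{\sun}\in E$ was arbitrary, the desired inclusion into $\bigcap_{P^{\sun}\in E}N(P^{\sun})$ follows.

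The only genuinely delicate point is the left-right bookkeeping in the first step: one must ensure that the minimality notion encoded in the definition of $E$ coincides with the one underlying Theorem \ref{apply-to-sun}. This is settled by the equivalence of all the minimality characterizations in Theorem \ref{right_topo_semigroups}, transported through $\eta^{\sun}$, so that no actual obstruction arises. With that identification in hand, the corollary reduces to the two already-established decomposition results applied uniformly across $E$.
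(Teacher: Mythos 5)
Your proposal is correct and follows exactly the route the paper intends: the paper offers no separate argument for this corollary beyond citing the preceding results, and the content is precisely that Theorem \ref{apply-to-sun}, items (4) and (5), applies to every minimal idempotent $P^{\sun}\in E$ (not just one fixed choice), after which one intersects over $E$. Your additional care with the left/right bookkeeping via Theorem \ref{right_topo_semigroups} and $\eta^{\sun}$ is a correct filling-in of a detail the paper leaves implicit.
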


\section{Topological and Algebraic Properties on Given Vectors}
In this section, we show how even slight assumptions of weak compactness on the orbit lead to the algebraic structure of the compactification. The main results provide a necessary and sufficient condition on the $C_0-$semigroup for two operators to commute on given vectors $x$ and $x^{\sun}$, separately.  

The previous work serves as a necessary refinement of the discussion of the following sets.  
\begin{eqnarray*}
\Xsr &:=&\bk{x^{\sun}\in X^{\sun}: \mbox{ For all } V\in\scS_0^{\sun} \mbox{ there exists a } U\in \scS_0^{\sun} \mbox{ such that } UVx^{\sun}=x^{\sun} }, \mbox{ and } \\
X_{fl}^{\sun}&:=& \bk{x^{\sun}\in X^{\sun}: \mbox{ there exists a net } \net{s}{\al}{A} \mbox{ such that } w^*-\netlim{\al}{A}T^{\sun}(t_{\al})x^{\sun}=0 \ } \\
&=&\bk{x^{\sun}\in X^{\sun}: \mbox{ there exists a } V\in\scS_0^{\sun} \mbox{ such that } Vx^{\sun}=0 \ }.
\end{eqnarray*}
These sets are discussed in several contexts; we refer to \cite{Krengel}, and especially regarding the theory of topological dynamics, we refer to \cite{Shen}. 
In general, $X_{rev}$ need not be a vector space \cite[p. 7 Exa. 2.8]{BasitGuenzler_recurr}, but we want to provide an answer when $\Xsr=X_a^{\sun}$ and $X_{fl}^{\sun}=X_0^{\sun}$.
\begin{defi}
A vector $x^{\sun}\in \Xsr$ is called $\kappa^{\sun}$ reversible, and a vector $x^{\sun}\in X_{fl}^{\sun}$  is a $\kappa^{\sun}-$flight vector.
\end{defi}

\begin{pro} \label{recurr-results}
\begin{enumerate}
\item $\Xsr\cap X_{fl}^{\sun}=\{0\}.$
\item $\Xsr$ and $X_{fl}^{\sun}$ are norm closed.
\item If $\scS^{\sun}_0x^{\sun}\subset X_a^{\sun}$ for all $x^{\sun}\in X_a^{\sun},$ then $X_a^{\sun}=\Xsr.$
\end{enumerate}
\end{pro}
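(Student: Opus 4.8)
The plan is to prove each of the three items in turn, relying on the decomposition $X^{\sun}=X_a^{\sun}\oplus X_0^{\sun}$ from Theorem \ref{apply-to-sun} and the characterizations proved there.

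\textbf{Item (1).} First I would take $x^{\sun}\in\Xsr\cap X_{fl}^{\sun}$. Since $x^{\sun}\in X_{fl}^{\sun}$, there is $V\in\scS_0^{\sun}$ with $Vx^{\sun}=0$. Applying the reversibility of $x^{\sun}$ to this particular $V$, we obtain a $U\in\scS_0^{\sun}$ with $UVx^{\sun}=x^{\sun}$; but $UVx^{\sun}=U(Vx^{\sun})=U0=0$, so $x^{\sun}=0$. This is the short step and I expect no obstacle here.

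\textbf{Item (2).} For norm-closedness I would argue by sequences (or nets) using the fact that every $V\in\scS_0^{\sun}$ is a bounded operator of uniformly bounded norm (all operators in the compactification are bounded by the constant $M$ governing $\scS$). For $X_{fl}^{\sun}$, if $x_n^{\sun}\to x^{\sun}$ in norm with each $x_n^{\sun}\in X_{fl}^{\sun}$, pick $V_n\in\scS_0^{\sun}$ with $V_nx_n^{\sun}=0$; by $\kappa^{\sun}$-compactness of $\scS_0^{\sun}$ one passes to a subnet so that $V_n\to V\in\scS_0^{\sun}$, and one must check $Vx^{\sun}=0$. The point to be careful about is that $\kappa^{\sun}$ convergence is only pointwise-weak$^*$, so I would estimate $|\langle x, Vx^{\sun}\rangle|$ by splitting as $\langle x, Vx^{\sun}-V_nx^{\sun}\rangle+\langle x, V_n(x^{\sun}-x_n^{\sun})\rangle$, where the second term is controlled uniformly in norm by $M\nrm{x}\nrm{x^{\sun}-x_n^{\sun}}$ and the first goes to $0$ along the subnet. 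For $\Xsr$ a similar diagonal/compactness argument applies: given reversible $x_n^{\sun}\to x^{\sun}$ and a fixed $V$, for each $n$ there is $U_n$ with $U_nVx_n^{\sun}=x_n^{\sun}$; extract a $\kappa^{\sun}$-limit $U$ of the $U_n$ and pass to the limit in $U_nVx_n^{\sun}=x_n^{\sun}$, again handling the two error terms by the uniform norm bound and the pointwise convergence.

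\textbf{Item (3).} Here I would use the characterization in Theorem \ref{apply-to-sun}(2): $x^{\sun}\in X_a^{\sun}$ iff for every $V\in\scS_0^{\sun}$ there is $U\in\scS_0^{\sun}$ with $P^{\sun}UP^{\sun}Vx^{\sun}=x^{\sun}$. The inclusion $X_a^{\sun}\subset\Xsr$ then follows by setting the reversing operator to be $P^{\sun}UP^{\sun}\in\scS_0^{\sun}$, which witnesses reversibility. For the reverse inclusion $\Xsr\subset X_a^{\sun}$ I would use the invariance hypothesis $\scS_0^{\sun}x^{\sun}\subset X_a^{\sun}$ for $x^{\sun}\in X_a^{\sun}$ together with the decomposition: writing a reversible $x^{\sun}=x_a^{\sun}+x_0^{\sun}$, I apply reversibility to $V=P^{\sun}$ to produce $U$ with $UP^{\sun}x^{\sun}=x^{\sun}$, i.e. $Ux_a^{\sun}=x^{\sun}$; since $Ux_a^{\sun}\in\scS_0^{\sun}x_a^{\sun}\subset X_a^{\sun}$ by hypothesis, this forces $x^{\sun}=Ux_a^{\sun}\in X_a^{\sun}$, whence $x_0^{\sun}=0$.

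I expect the main obstacle to be item (2): the subtlety is that $\kappa^{\sun}$ is not the norm topology, so one cannot directly pass to limits in products like $U_nVx_n^{\sun}$; the argument must simultaneously exploit the uniform boundedness of the compactification operators (to absorb the change in the vector argument) and the $\kappa^{\sun}$-compactness of $\scS_0^{\sun}$ (to extract convergent subnets of the reversing/annihilating operators), and then verify that the limiting operator does what is required in the weak$^*$ pairing against each $x\in X$.
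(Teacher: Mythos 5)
Your proposal is correct and follows essentially the same route as the paper's proof: the same annihilation argument for item (1), the same compactness-plus-uniform-boundedness telescoping estimate for item (2) (extracting a $\kappa^{\sun}$-convergent subnet of the reversing/annihilating operators and absorbing the norm perturbation of the vectors), and the same two inclusions for item (3), with $\Xsr\subset X_a^{\sun}$ argued identically via $V=P^{\sun}$ and the invariance hypothesis. The only cosmetic difference is that for $X_a^{\sun}\subset\Xsr$ you cite the characterization in Theorem \ref{apply-to-sun}(2), whereas the paper re-derives the same fact inline from the group structure of $P^{\sun}\scS_0^{\sun}P^{\sun}$; these are the same argument.
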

\begin{proof}
Clearly, $\Xsr\cap X_{fl}^{\sun} = \bk{0}.$ 
Furthermore, $\Xsr$ is closed. Let $\seq{y}{k}\subset \Xsr$ and $\ilm{k}\nrm{y_k-y}=0$ for some $y\in X^{\sun}$. Then, for $V\in \scS_0^{\sun}$, there exists $U_n\subset \scS_0^{\sun}$ such that
$$
U_nVy_n=y_n \mbox{ for all } n\in\za.
$$
Let $\net{t}{\beta}{B}\subset \za $ be a subnet such that
$\kappa^{\sun}-\netlim{\beta}{B}U_{\beta}=U.$ Then, we have the following:

\begin{eqnarray*}
\btr{<UVy-y,x>}&=&\btr{<UVy-U_{\beta}Vy,x>+ <U_{\beta}Vy-U_{\beta}Vy_{\beta},x>}\\
&&+<U_{\beta}Vy_{\beta}-y_{\beta},x>+<y_{\beta}-y,x> \\
&\le&\btr{<UVy-U_{\beta}Vy,x>}+2 C \nrm{y-y_{\beta}},
\end{eqnarray*}
which proves the claim. The proof for $X_0^{\sun}$ is quite analogous.

We have $X_a^{\sun}\subset \Xsr$  because for given 
$L\in\scS^{\sun}_0,$ we have $P^{\sun}L\in \scS^{\sun}_0$, and we find 
$P^{\sun}S$, with $P^{\sun}SP^{\sun}L=P^{\sun}$ on $X_a^{\sun}.$
Therefore, it remains to show that $\Xsr\subset X_a^{\sun}.$ Let $x\in \Xsr;$ then, $x=x_a\oplus x_0,$  whereby the splitting is given with some minimal idempotent $P^{\sun}.$ As $x\in\Xsr$, we find some $U\in\scS_0^{\sun}$ such that $x=UP^{\sun}x=UP^{\sun}x_a.$ By the assumption $\scS^{\sun}_0y^{\sun}\subset X_a^{\sun}$ for all $y^{\sun}\in X_a^{\sun},$ we find $x\in X_a^{\sun}.$
\end{proof}

\begin{lem}
\begin{enumerate}
\item If $P^{\sun*}(X)\subset X,$ then $\Xsr=X_a^{\sun}$, $X_{fl}^{\sun}=X_0^{\sun},$  and $P^{\sun}V=VP^{\sun}$ for all $V\in\scS_0^{\sun},$ i.e., $X_a^{\sun}$, $X_0^{\sun}$, are $\scS_0^{\sun}$ invariant. The minimal idempotent in the semigroup $\scS_0^{\sun}$ is unique.
\item If $P^{\sun*}(X)\subset X,$ then $P:=P^{\sun*}_{|X}=\sigma(X,X^{\sun})-OT-\netlim{\al}{A}T(t_{\al})$ for an appropriate net $\net{t}{\al}{A}.$
\end{enumerate}
\end{lem}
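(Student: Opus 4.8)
The whole argument turns on one structural observation: the hypothesis $P^{\sun*}(X)\subset X$ forces $P^{\sun}$ to be the restriction to $X^{\sun}$ of a genuine adjoint. The plan is to record first that $P:=P^{\sun*}_{|X}$ is a bounded \emph{projection} on $X$ (it is idempotent because $(P^{\sun})^2=P^{\sun}$ gives $(P^{\sun*})^2=P^{\sun*}$), and then, unwinding the embedding $j:X\to X^{\sun*}$ of Proposition \ref{X-embedded-to-X-sun-star}, to verify for $x\in X$, $x^{\sun}\in X^{\sun}$ the identity
\[
<P^{\sun}x^{\sun},x>=<jx,P^{\sun}x^{\sun}>=<P^{\sun*}(jx),x^{\sun}>=<x^{\sun},Px>,
\]
so that $P^{\sun}=P^*_{|X^{\sun}}$. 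This is the fact I would exploit throughout.

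For part (2) I would simply transport convergence across this duality. By the density of $\scS^{\sun}$ in $\scS_0^{\sun}$ there is a net $\net{t}{\al}{A}$ with $\kappa^{\sun}$-$\netlim{\al}{A}T^{\sun}(t_{\al})=P^{\sun}$, i.e. $<T^{\sun}(t_{\al})x^{\sun},x>\to<P^{\sun}x^{\sun},x>$ for all $x,x^{\sun}$. Rewriting the left-hand side as $<T(t_{\al})x,x^{\sun}>$ and the right-hand side, via the displayed identity, as $<Px,x^{\sun}>$, yields $T(t_{\al})x\to Px$ in $\sigma(X,X^{\sun})$ for every $x$, which is exactly $P=\sigma(X,X^{\sun})$-$OT$-$\netlim{\al}{A}T(t_{\al})$.

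The heart of part (1) is the commutation $P^{\sun}V=VP^{\sun}$ for all $V\in\scS_0^{\sun}$. Since $\scS_0^{\sun}$ is left topological, right multiplication $W\mapsto WP^{\sun}$ is $\kappa^{\sun}$-continuous; the point is that the identity $P^{\sun}=P^*_{|X^{\sun}}$ makes \emph{left} multiplication $W\mapsto P^{\sun}W$ $\kappa^{\sun}$-continuous as well. Indeed, if $W_{\beta}\to W$ in $\kappa^{\sun}$ then
\[
<P^{\sun}W_{\beta}x^{\sun},x>=<W_{\beta}x^{\sun},Px>\to<Wx^{\sun},Px>=<P^{\sun}Wx^{\sun},x>,
\]
the middle convergence being legitimate precisely because $Px\in X$ is an admissible test vector for $\kappa^{\sun}$. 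Taking a net $T^{\sun}(s_{\beta})\to V$ and using $T^{\sun}(s_{\beta})P^{\sun}=P^{\sun}T^{\sun}(s_{\beta})$ (Theorem \ref{apply-to-sun}) together with the two continuities, I get $VP^{\sun}=\netlim{\beta}{}T^{\sun}(s_{\beta})P^{\sun}=\netlim{\beta}{}P^{\sun}T^{\sun}(s_{\beta})=P^{\sun}V$, with the limit unique by the Hausdorff property (Proposition \ref{sun_lcs}). Consequently $X_a^{\sun}=R(P^{\sun})$ and $X_0^{\sun}=N(P^{\sun})$ are $\scS_0^{\sun}$-invariant.

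From here the remaining claims follow formally. Invariance of $X_a^{\sun}$ is exactly the hypothesis of Proposition \ref{recurr-results}(3), giving $\Xsr=X_a^{\sun}$. For $X_{fl}^{\sun}=X_0^{\sun}$: the inclusion $X_0^{\sun}\subset X_{fl}^{\sun}$ is immediate since $P^{\sun}\in\scS_0^{\sun}$ annihilates $N(P^{\sun})$; conversely, if $Vx^{\sun}=0$ then applying $P^{\sun}$ and commuting gives $Vx_a^{\sun}=0$, and since $x_a^{\sun}\in X_a^{\sun}=\Xsr$ there is $U\in\scS_0^{\sun}$ with $UVx_a^{\sun}=x_a^{\sun}$, forcing $x_a^{\sun}=0$ and $x^{\sun}\in X_0^{\sun}$. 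For uniqueness, let $Q^{\sun}$ be any minimal idempotent; centrality of $P^{\sun}$ makes $P^{\sun}Q^{\sun}=Q^{\sun}P^{\sun}$ idempotent with $P^{\sun}Q^{\sun}\le_L P^{\sun}$ and $P^{\sun}Q^{\sun}\le_L Q^{\sun}$, so the $\le_L$-minimality of both (Theorem \ref{right_topo_semigroups}) yields $P^{\sun}Q^{\sun}=P^{\sun}$ and $P^{\sun}Q^{\sun}=Q^{\sun}$, whence $P^{\sun}=Q^{\sun}$. I expect the only genuinely new input, and hence the main obstacle, to be establishing the $\kappa^{\sun}$-continuity of left multiplication by $P^{\sun}$; once the identity $P^{\sun}=P^*_{|X^{\sun}}$ is in place, the rest is bookkeeping with the projection decomposition and the idempotent order relations.
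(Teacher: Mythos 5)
Your proof is correct and rests on the same core mechanism as the paper's: centrality of $P^{\sun}$ is obtained by exploiting $Px\in X$ as an admissible test vector for $\kappa^{\sun}$ (the paper does this as a direct net computation $<x,VP^{\sun}x^{\sun}>=\lim<x,P^{\sun}T^{\sun}(t_{\al})x^{\sun}>=<Px,Vx^{\sun}>=<x,P^{\sun}Vx^{\sun}>$, while you package it as $\kappa^{\sun}$-continuity of left multiplication by $P^{\sun}$), and the identities $\Xsr=X_a^{\sun}$ and $X_{fl}^{\sun}=X_0^{\sun}$ then follow from the decomposition essentially as in the paper (you route the first through Proposition \ref{recurr-results}(3), the paper argues $\Xsr\subset X_a^{\sun}$ directly via $x^{\sun}=VP^{\sun}x^{\sun}=P^{\sun}Vx^{\sun}$; your flight-vector argument is a cleaner rearrangement of the paper's). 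In fact your write-up is more complete than the paper's own proof, which never explicitly establishes the uniqueness of the minimal idempotent or part (2): your idempotent-order argument ($P^{\sun}Q^{\sun}$ idempotent by centrality, $\le_L$-below both, hence equal to both by minimality) and your transport-of-convergence argument for $P=\sigma(X,X^{\sun})\mbox{-}OT\mbox{-}\netlim{\al}{A}T(t_{\al})$ are both correct and fill these gaps.
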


\begin{proof}
First, we verify that $VP^{\sun}=P^{\sun}V$ for all $V\in \scS_0^{\sun}.$
If $P(X)\subset X$ and $V\in\scS_0^{\sun},$ then we have for a net $\net{t}{\al}{A}$,
\begin{eqnarray*}
<x,VP^{\sun}x^{\sun}>&=&\netlim{\al}{A}<x,T(t_{\al})P^{\sun}x^{\sun}>\\
&=&\netlim{\al}{A}<x,P^{\sun}T(t_{\al})x^{\sun}>=<Px,Vx^{\sun}>=<x,P^{\sun}Vx^{\sun}>.
\end{eqnarray*}
Let $x\in \Xsr;$ then, for $P^{\sun}$, there exists a $V$ such that $x=VP^{\sun}x=P^{\sun}Vx\in X_a.$ Let $x\in X_{fl}^{\sun}\subset X^{\sun}=X_a^{\sun}\oplus X_0^{\sun},$ i.e., $x=x_a+x_0.$ Let $\net{t}{\al}{A}$ be the associated net to the flight vector $x$. Without loss of generality, $\netlim{\al}{A}T(t_{\al})x=U.$
Then,
$Ux_a=-Ux_0,$ and we find $V\in \scS_0^{\sun}$ such that $x_a=PVUx_a=-PVUx_0=-VUPx_0=0.$ Hence, $x=x_0.$ 
\end{proof}

\begin{cor} If $\scS_0^{\sun}$ is Abelian, then $X^{\sun}_a=\Xsr$ and $X^{\sun}_0=X^{\sun}_{fl}.$ Moreover, $\scS_0^{\sun}$ is a group on $X^{\sun}_a.$
\end{cor}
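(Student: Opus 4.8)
The plan is to isolate the one structural fact that the commutativity hypothesis delivers for free: the minimal idempotent $P^{\sun}$ supplied by Theorem \ref{right_topo_semigroups} commutes with every $V\in\scS_0^{\sun}$. This is exactly the relation $VP^{\sun}=P^{\sun}V$ that the preceding lemma had to purchase through the assumption $P^{\sun*}(X)\subset X$; here it is automatic. Once it is in hand, both summands of the decomposition of Theorem \ref{apply-to-sun} are $\scS_0^{\sun}$-invariant: for $x^{\sun}\in X_a^{\sun}=R(P^{\sun})$ and any $V$ one has $P^{\sun}(Vx^{\sun})=V(P^{\sun}x^{\sun})=Vx^{\sun}$, so $Vx^{\sun}\in X_a^{\sun}$, and symmetrically $N(P^{\sun})=X_0^{\sun}$ is invariant. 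As in the lemma, commutativity also forces $P^{\sun}$ to be the unique minimal idempotent, so the decomposition $X^{\sun}=X_a^{\sun}\oplus X_0^{\sun}$ is canonical.

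With invariance established I would read off $X_a^{\sun}=\Xsr$ immediately from Proposition \ref{recurr-results}(3), whose hypothesis $\scS_0^{\sun}x^{\sun}\subset X_a^{\sun}$ for all $x^{\sun}\in X_a^{\sun}$ is precisely the invariance just proved. For $X_0^{\sun}=X_{fl}^{\sun}$ one inclusion is trivial: since $P^{\sun}\in\scS_0^{\sun}$ and $P^{\sun}x^{\sun}=0$ for $x^{\sun}\in N(P^{\sun})$, every $x^{\sun}\in X_0^{\sun}$ is a flight vector. The reverse inclusion is the substantive step: given $x^{\sun}$ with $Vx^{\sun}=0$ for some $V\in\scS_0^{\sun}$, I apply $P^{\sun}$ and commute to get $V(P^{\sun}x^{\sun})=P^{\sun}Vx^{\sun}=0$. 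Here $P^{\sun}x^{\sun}\in X_a^{\sun}$, and by commutativity and invariance $V$ agrees on $X_a^{\sun}$ with $P^{\sun}VP^{\sun}$, which belongs to the group $P^{\sun}\scS_0^{\sun}P^{\sun}$ acting on $X_a^{\sun}$ (Theorem \ref{apply-to-sun}(1)). Being a group element it is injective on $X_a^{\sun}$, so $P^{\sun}x^{\sun}=0$, i.e. $x^{\sun}\in X_0^{\sun}$.

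The final assertion rests on the same restriction identity: for every $V\in\scS_0^{\sun}$ and $y\in X_a^{\sun}$ one has $Vy=VP^{\sun}y=P^{\sun}VP^{\sun}y$, whence $\scS_0^{\sun}|_{X_a^{\sun}}=(P^{\sun}\scS_0^{\sun}P^{\sun})|_{X_a^{\sun}}$, which is a group by Theorem \ref{right_topo_semigroups}(4). The main obstacle is the inclusion $X_{fl}^{\sun}\subset X_0^{\sun}$, where the purely algebraic annihilation $Vx^{\sun}=0$ must be upgraded to the vanishing of the projection; the delicate point is that invertibility of $V$ is available only after compression by $P^{\sun}$, that is, only inside the group $P^{\sun}\scS_0^{\sun}P^{\sun}$ acting on $X_a^{\sun}$, and not for $V$ on all of $X^{\sun}$. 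Everything else is a direct transcription of the commutation computations already carried out in the preceding lemma.
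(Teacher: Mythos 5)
Your proposal is correct and takes essentially the route the paper intends: the corollary is stated without its own proof as a consequence of the preceding lemma, whose argument only ever uses the commutation relation $VP^{\sun}=P^{\sun}V$ (there purchased via $P^{\sun*}(X)\subset X$, here automatic from the Abelian hypothesis). Your fleshing-out via Proposition \ref{recurr-results}(3), the group $P^{\sun}\scS_0^{\sun}P^{\sun}$ of Theorem \ref{apply-to-sun}(1), and injectivity of group elements on $X_a^{\sun}$ is a faithful (and slightly cleaner) repackaging of the lemma's computation $x_a=PVUx_a=-VUPx_0=0$, with no gaps.
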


Next, we will provide a necessary and sufficient condition for $\scS_0^{\sun}$ to be Abelian. Therefore, we provide the next proposition.

\begin{pro} \label{interchanged1}
Let $f:\rep\to\ce$ E.-wap and $\net{t}{\la}{\Lambda},
\net{s}{\gamma}{\Gamma}\subset \rep.$ 
Then, we may pass to subnets $\bk{s_{\gamma_{\alpha}}}_{\alpha\in A}$ and $\bk{t_{\la_{\beta}}}_{\beta\in b}$ such that the iterated limits 
\begin{eqnarray*}
\nu&=& \netlim{\alpha}{A}\netlim{\beta}{B} f(t_{\la_{\beta}}+s_{\gamma_{\al}}) \mbox{ and} \\
\mu&=& \netlim{\beta}{B}\netlim{\al}{A}f (t_{\la_{\beta}}+s_{\gamma_{\al}})
\end{eqnarray*}
exist, and we have $\nu=\mu.$
\end{pro}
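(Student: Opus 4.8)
The plan is to reduce the statement to the Grothendieck–Eberlein double-limit principle and to prove it by extracting two subnets — one governed by the weak compactness of the orbit and one by Banach–Alaoglu — so that both iterated limits collapse to a single pairing. Writing $f_\tau := \{t\mapsto f(t+\tau)\}$, I would record the identity $f(t_{\lambda_\beta}+s_{\gamma_\alpha}) = \langle \delta_{t_{\lambda_\beta}}, f_{s_{\gamma_\alpha}}\rangle$, where $\delta_t \in C_b(\rep,\ce)^*$ is point evaluation. Since $f$ is E.-wap, the orbit $O(f) = \{f_\tau : \tau\in\rep\}$ is relatively weakly compact in $C_b(\rep,\ce)$; this is the only place where the hypothesis enters.

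First I would use weak relative compactness: the net $(f_{s_\gamma})_\gamma$ lies in the weakly compact set $\overline{O(f)}^{\,w}$, so it admits a subnet $(f_{s_{\gamma_\alpha}})_{\alpha\in A}$ with $f_{s_{\gamma_\alpha}} \rightharpoonup g$ weakly for some $g \in C_b(\rep,\ce)$. Independently, the net $(\delta_{t_\lambda})_\lambda$ lies in the closed unit ball of $C_b(\rep,\ce)^*$ (as $\|\delta_t\| = 1$), which is weak$^*$-compact by Banach–Alaoglu; hence it admits a subnet $(\delta_{t_{\lambda_\beta}})_{\beta\in B}$ with $\delta_{t_{\lambda_\beta}} \to \Lambda$ in the weak$^*$ topology, for some $\Lambda \in C_b(\rep,\ce)^*$. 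These two extractions range over disjoint index sets and do not interfere.

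Then I would evaluate both iterated limits over these subnets, checking that each of the four limits involved tests a convergent net against a fixed element of the space or its dual. For $\mu$, the inner limit over $\alpha$ gives $\lim_\alpha \langle \delta_{t_{\lambda_\beta}}, f_{s_{\gamma_\alpha}}\rangle = \langle\delta_{t_{\lambda_\beta}},g\rangle = g(t_{\lambda_\beta})$ (weak convergence against the fixed functional $\delta_{t_{\lambda_\beta}}$), and the outer limit over $\beta$ gives $\lim_\beta \langle \delta_{t_{\lambda_\beta}}, g\rangle = \langle \Lambda, g\rangle$ (weak$^*$ convergence against the fixed function $g$); thus $\mu = \langle\Lambda,g\rangle$. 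For $\nu$, the inner limit over $\beta$ gives $\lim_\beta \langle\delta_{t_{\lambda_\beta}}, f_{s_{\gamma_\alpha}}\rangle = \langle\Lambda, f_{s_{\gamma_\alpha}}\rangle$ (weak$^*$ convergence against the fixed function $f_{s_{\gamma_\alpha}}$), and the outer limit over $\alpha$ gives $\lim_\alpha \langle\Lambda, f_{s_{\gamma_\alpha}}\rangle = \langle\Lambda,g\rangle$ (weak convergence against the fixed functional $\Lambda$); thus $\nu = \langle\Lambda,g\rangle$ as well. Hence both limits exist and $\nu = \mu = \langle\Lambda,g\rangle$.

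I expect the only genuine subtlety — and the reason the double extraction is needed — to be that a single extraction is insufficient: extracting only in $s$ leaves $\nu = \lim_\alpha \psi(f_{s_{\gamma_\alpha}})$ with $\psi$ a pointwise limit of evaluations, which need not be weakly continuous, so one cannot simply push the weak limit $f_{s_{\gamma_\alpha}}\rightharpoonup g$ through it. Also extracting $(\delta_{t_{\lambda_\beta}}) \to \Lambda$ repairs this, since it turns every inner limit into a pairing against a fixed, honestly continuous object ($g$, $\Lambda$, or $f_{s_{\gamma_\alpha}}$), after which the two outer limits are merely $\langle\Lambda,\cdot\rangle$ and $\langle\cdot,g\rangle$ evaluated at the common limits $g$ and $\Lambda$. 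I would close by noting that weak relative compactness, rather than mere boundedness of $O(f)$, is exactly what forces the weak limit $g$ to lie in $C_b(\rep,\ce)$, making the pairing $\langle\Lambda,g\rangle$ meaningful; this is precisely where the E.-wap hypothesis is indispensable.
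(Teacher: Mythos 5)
Your proof is correct and follows essentially the same route as the paper's: use the E.-wap hypothesis to extract a weakly convergent subnet of translates and Banach--Alaoglu to extract a weak$^*$-convergent subnet of point evaluations, so that both iterated limits collapse to the single pairing $\langle\Lambda,g\rangle$. The only (immaterial) difference is that the paper places the $t$-net in the function space (translates $f_{t_\lambda}$) and the $s$-net in the dual (evaluations $\delta_{s_\gamma}$), whereas you do the reverse; by the symmetry $f(t+s)=\delta_t(f_s)=\delta_s(f_t)$ this changes nothing.
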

\begin{proof}
Because $f$ is Eberlein weakly almost periodic, $\bk{f_{t_{\la}}}_{\la\in\Lambda}$ is relatively weakly compact and 
$\bk{\delta_{s_{\gamma}}}_{\gamma\in\Gamma}$ is relatively $w^*$ compact, we may pass to convergent subnets. Using $f(t_{\la}+s_{\gamma})=\delta_{s_{\gamma}}f_{t_{\la}}$, we find that the iterated limits exist and that they are equal.
\end{proof}

The next theorem shows how Eberlein weak almost periodicity can be used to verify an Abelian structure for the compactification.
\begin{theo} \label{pointwise_Abelian}
Let $\scV\in\bk{\scS,\scT,\scU};$ then
\begin{enumerate}
\item Let $x\in X $ and $\bk{T(t)}_{t\in\rep}$ be a bounded $C_0-$semigroup. Then,
$$
\bk{ t\mapsto <T(t)x,x^{\sun}>}\in W(\rep) \mbox{ for all } x^{\sun}\in X^{\sun}
$$
if and only if $(U\circ V)x=(V\circ U)x$ for all $U,V\in \scV_0.$
\item Let $x^{\sun}\in X^{\sun} $ and $\bk{T(t)}_{t\in\rep}$ be a bounded $C_0-$semigroup. Then,
$$
\bk{ t\mapsto <x,T^{\sun}(t)x^{\sun}>}\in W(\rep) \mbox{ for all } x\in X
$$
if and only if $(UVx^{\sun}=(V U)x^{\sun}$ for all $U,V\in \scV_0^{\sun}.$
\end{enumerate}
\end{theo}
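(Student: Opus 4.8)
The plan is to prove both equivalences by exploiting the iterated-limit machinery of Proposition \ref{interchanged1} together with the net-characterization of the compactification established in Proposition \ref{elemenatry-properties}. Since the two statements are dual to one another under the algebra isomorphism $\eta^{\sun}$ of the previous section, I would prove item (1) in full and then indicate that (2) follows by the analogous argument on $X^{\sun}$, using that $\eta^{\sun}(U\circ V)=\eta^{\sun}(V)\eta^{\sun}(U)=U^{\sun}V^{\sun}$ reverses multiplication order. So the core is item (1).

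For the forward direction of (1), assume $\bk{t\mapsto <T(t)x,x^{\sun}>}\in W(\rep)$ for every $x^{\sun}\in X^{\sun}$. Fix $U,V\in\scV_0$ and, by Proposition \ref{elemenatry-properties}(1), choose nets $\net{U}{\la}{\Lambda},\net{V}{\gamma}{\Gamma}\subset\scV$ with $U_{\la}\to U$ and $V_{\gamma}\to V$ in $\kappa^{\sun\sun}$. Since elements of $\scV$ are (convex/absolutely convex combinations of) the operators $T(t)$, and these commute, I would first observe that $U_{\la}\circ V_{\gamma}=V_{\gamma}\circ U_{\la}$ for the approximants — this is where commutativity of the original semigroup enters. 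The task is then to pass both limits to the compactification and show that the two iterated limits agree. Testing against $x$ and $x^{\sun}$, I would write
\begin{eqnarray*}
<(U\circ V)x,x^{\sun}>&=&\netlim{\la}{\Lambda}\netlim{\gamma}{\Gamma}<(U_{\la}\circ V_{\gamma})x,x^{\sun}>, \\
<(V\circ U)x,x^{\sun}>&=&\netlim{\gamma}{\Gamma}\netlim{\la}{\Lambda}<(V_{\gamma}\circ U_{\la})x,x^{\sun}>,
\end{eqnarray*}
so that the two products differ only in the order of the iterated limits of the scalar quantity $<(U_{\la}\circ V_{\gamma})x,x^{\sun}>$. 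Reducing to the generating semigroup, each such scalar is a (convex) combination of values $<T(t+s)x,x^{\sun}>=f(t+s)$ where $f=\bk{r\mapsto <T(r)x,x^{\sun}>}$ is E.-wap by hypothesis; Proposition \ref{interchanged1} then guarantees, after passing to subnets, that the two iterated limits coincide, yielding $(U\circ V)x=(V\circ U)x$.

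For the converse, assume $(U\circ V)x=(V\circ U)x$ for all $U,V\in\scV_0$ and suppose toward a contradiction that $f=\bk{r\mapsto <T(r)x,x^{\sun}>}$ fails to be Eberlein weakly almost periodic for some $x^{\sun}$. By the Grothendieck double-limit criterion, the failure of weak relative compactness of $O(f)$ furnishes sequences $\seq{t}{n},\seq{s}{m}\subset\rep$ and subnets along which the two iterated limits $\lim_n\lim_m f(t_n+s_m)$ and $\lim_m\lim_n f(t_n+s_m)$ both exist but are unequal. Extracting from $\seq{T(t}{n)}$ and $\seq{T(s}{m)}$ $\kappa^{\sun\sun}$-convergent subnets with limits $U,V\in\scV_0$ (possible by compactness), these unequal double limits translate exactly into $<(U\circ V)x,x^{\sun}>\ne<(V\circ U)x,x^{\sun}>$, contradicting the assumed pointwise commutativity.

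The main obstacle I anticipate is the careful handling of the convex/absolutely-convex hull cases $\scV\in\bk{\scT,\scU}$: the generators $T(t)$ commute, but a general element of $\scV$ is a finite combination $\sum c_i T(t_i)$, so the bilinear quantity $<(U_{\la}\circ V_{\gamma})x,x^{\sun}>$ becomes a finite sum $\sum_{i,k} c_i d_k f(t_i+s_k)$ of translates of the single E.-wap function $f$. I would need to confirm that the iterated-limit interchange of Proposition \ref{interchanged1}, proved for a single E.-wap $f$, extends to such finite combinations — which it does, since finite sums of E.-wap functions are E.-wap and the subnet extraction can be performed simultaneously for the finitely many index pairs. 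A secondary delicate point is verifying that the scalar identity $<(U\circ V)x,x^{\sun}>=\netlim{\la}{}\netlim{\gamma}{} <(U_{\la}\circ V_{\gamma})x,x^{\sun}>$ is legitimate, i.e. that the defining formula $<(U\circ V)x,x^{\sun}>=<Vx,U^{\sun}x^{\sun}>$ together with $\kappa^{\sun\sun}$-convergence of the approximants genuinely produces the iterated limit in the right order; this is exactly the continuity of left and right multiplication recorded in Proposition \ref{LT-semigroup-algebra}(2), which I would invoke rather than reprove.
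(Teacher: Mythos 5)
For $\scV=\scS$ your argument is essentially the paper's: expand $U,V\in\scS_0$ by nets of translates, reduce $<(U\circ V)x,x^{\sun}>$ and $<(V\circ U)x,x^{\sun}>$ to the two iterated limits of the common scalar net, interchange them via Proposition \ref{interchanged1}, obtain the converse from Grothendieck's double-limit criterion plus compactness of $\scS_0$, and get part (2) by applying $\eta^{\sun}$; this is exactly the paper's route. One slip you should repair: your two displayed identities have the iterated limits in the wrong order. Separate continuity (Proposition \ref{LT-semigroup-algebra}(2)) lets you expand the \emph{inner} (right-hand) factor first, because $U^{*}x^{\sun}\in X^{\sun}$ can be tested against $V_{\gamma}x\in X$; hence the outer limit must run along the net approximating the inner factor, i.e. $<(U\circ V)x,x^{\sun}>=\netlim{\gamma}{\Gamma}\netlim{\la}{\Lambda}<U_{\la}V_{\gamma}x,x^{\sun}>$, not the order you wrote. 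In your order, the outer limit would require pairing $Vx\in X^{\sun\sun}$ against the net $U_{\la}^{*}x^{\sun}$, which converges only in $\sigma(X^{\sun},X)$ --- precisely the continuity that fails in a merely right topological semigroup. Since your two formulas are collectively just a transposition of labels, the final equality you prove is unaffected, but as stated each individual identity is the unjustifiable one.

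The genuine gap is in the hull cases $\scV\in\bk{\scT,\scU}$. Your justification --- that finite sums of E.-wap functions are E.-wap and that ``the subnet extraction can be performed simultaneously for the finitely many index pairs'' --- misidentifies the difficulty: there is no fixed finite family of index pairs. The approximants are $U_{\gamma}=\sum_{i=1}^{n_{\gamma}}\alpha_i^{\gamma}T(t_i^{\gamma})$, where the number of summands, the coefficients and the translates all vary with $\gamma$, so the scalar quantity is a net of convex combinations of values of $f$ whose structure changes along the net; Proposition \ref{interchanged1}, proved for a single E.-wap function (or any argument fixed to finitely many translates), does not reach this situation. The paper resolves it by rewriting the scalar as the pairing $<\sum_{i=1}^{n_{\gamma}}\alpha_i^{\gamma}f(\cdot+t_i^{\gamma}),\sum_{j=1}^{m_{\la}}\beta_j^{\la}\delta_{s_j^{\la}}>$ in the duality $(BUC(\rep),BUC(\rep)^{*})$ and then invoking two compactness facts: the left-hand entries live in the closed absolutely convex hull of $O(f)$, which is \emph{weakly compact by the Krein--\v{S}mulian theorem} because $O(f)$ is relatively weakly compact; and the right-hand functionals are norm-bounded by $1$, hence relatively $w^{*}$-compact. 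Only at this level can one pass to simultaneously convergent subnets and interchange the limits. This use of Krein--\v{S}mulian on $\overline{ac}\,O(f)$ is the missing ingredient in your proposal; without it the forward implication for $\scT_0$ and $\scU_0$ is not established (note the backward implication for the hulls is unproblematic, since it only needs operators from $\scS_0\subset\scV_0$).
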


\begin{proof} For simplicity, we start with $\scS_0.$ Let
$$\bk{ t\mapsto <T(t)x,x^{\sun}>}\in W(\rep) \mbox{ for all } x^{\sun}\in X^{\sun}, 
$$
and given $U,V\in \scS_0$, let 
$\net{t}{\la}{\Lambda},\net{s}{\gamma}{\Gamma}\subset \rep$ be the corresponding nets such that 
$V=\netlim{\la}{\Lambda}T(t_{\la}) $ and 
$U=\netlim{\gamma}{\Gamma}T(s_{\gamma}).$ 
Then, for $x^{\sun}\in X^{\sun},$
\begin{eqnarray*}
<(U\circ V)x,x^{\sun}>&=&\netlim{\la}{\Lambda}<T(t_{\la})x,U^{\sun}x^{\sun}> \\
&=&\netlim{\la}{\Lambda}\netlim{\gamma}{\Gamma}<T(t_{\la}+s_{\gamma})x,x^{\sun}> \\
&& \mbox{applying Prop. \ref{interchanged1}} \\
&=&\netlim{\gamma}{\Gamma}\netlim{\la}{\Lambda}<T(t_{\la}+s_{\gamma})x,x^{\sun}>\\
&=&\netlim{\gamma}{\Gamma}<T(s_{\gamma})x,V^{\sun}x^{\sun}> \\
&=& <(V\circ U)x,x^{\sun}>.
\end{eqnarray*}
To obtain the backward implication, apply, for the given $\seq{t}{n},\seq{s}{m}\subset \rep$, the compactness $\scS_0$. Hence, we find subnets such that
$V=\netlim{\la}{\Lambda}T(t_{n_{\la}})$ and 
$U=\netlim{\gamma}{\Gamma}T(s_{m_{\gamma}}).$ 
The assumption $U\circ V=V\circ U$ verifies the criterion of \cite{Groth}. 

Now, let $U,V\in\scU_0.$ Then, we find nets 
$\bk{t_i^{\gamma}}_{i\in\za,\gamma\in\Gamma},\bk{s_i^{\lambda}}_{i\in\za,\lambda\in\Lambda}\subset\rep$ 
and $\bk{\alpha_i^{\gamma}}_{i\in\za,\gamma\in\Gamma},\bk{\beta_i^{\lambda}}_{i\in\za,\lambda\in\Lambda}\subset \re$, 
with  
$\sum_{i=1}^{n_{\gamma}} \btr{\alpha_i^{\gamma}}\le1$
and 
$\sum_{i=1}^{m_{\lambda}}\btr{\beta_i^{\lambda}}\le 1,$ such that

$$
U_{\gamma}:=\sum_{i=1}^{n_{\gamma}}\alpha_i^{\gamma}T(t_i^{\gamma})\mbox{ with } \netlim{\gamma}{\Gamma}U_{\gamma}=U \mbox{ and }
V_{\lambda}=\sum_{i=1}^{m_{\lambda}}\beta_i^{\lambda}T(s_i^{\lambda}), \mbox{ with } \netlim{\lambda}{\Lambda}V_{\lambda}=V.
$$
Now, define $g(t):=<T(t)x,x^{\sun}>,$ which is assumed to be Eberlein weakly almost periodic, and the bounded linear functional $\delta_t(g):=g(t); $ then, the duality reads as
\begin{eqnarray*}
<U\circ Vx,x^{\sun}>&=&\netlim{\lambda}{\Lambda}<V_{\lambda}x,U^{\sun}x^{\sun}>\\
&=&\netlim{\lambda}{\Lambda}\netlim{\gamma}{\Gamma}
\sum_{i=1}^{n_{\gamma}}\sum_{j=1}^{m_{\lambda}}\alpha_i^{\gamma}\beta_j^{\lambda}<T(t_i^{\gamma}+s_{j}^{\lambda})x,x^{\sun}>,\\
&=&\netlim{\lambda}{\Lambda}\netlim{\gamma}{\Gamma}
\sum_{i=1}^{n_{\gamma}}\sum_{j=1}^{m_{\lambda}}\alpha_i^{\gamma}\beta_j^{\lambda}g(t_i^{\gamma}+s_{j}^{\lambda}), \\
&=&\netlim{\lambda}{\Lambda}\netlim{\gamma}{\Gamma}
<\sum_{i=1}^{n_{\gamma}}\alpha_i^{\gamma}g(\cdot+t_i^{\gamma}),\sum_{j=1}^{m_{\lambda}}\beta_j^{\lambda}\delta_{s_j^{\lambda}}>_{(BUC(\rep),BUC(\rep)^*)}\\
&=&\netlim{\gamma}{\Gamma}\netlim{\lambda}{\Lambda}
<\sum_{i=1}^{n_{\gamma}}\alpha_i^{\gamma}g(\cdot+t_i^{\gamma}),\sum_{j=1}^{m_{\lambda}}\beta_j^{\lambda}\delta_{s_j^{\lambda}}>_{(BUC(\rep),BUC(\rep)^*)}.
\end{eqnarray*}
As $O(g)$ is weakly relatively compact in $BUC(\rep),$ 
its closed absolutely convex hull is weakly compact. Further, because $\nrm{\delta_t}\le 1,$ the absolute convex combination is bounded. Hence, we have separated the limits and obtain that the interchanged limits coincide. Doing the backward computation, we obtain the claim. To prove it for the dual semigroup, apply $\eta^{\sun}. $
\end{proof}

\begin{cor} \label{all_abelian}
If $\scS_0$ is Abelian, then so are $\scT_0$ and $\scU_0$.
\end{cor}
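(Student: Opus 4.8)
The plan is to use Theorem \ref{pointwise_Abelian}(1) as a bridge between the three compactifications. The crucial observation is that the intermediate condition appearing in that theorem — namely that $\bk{t\mapsto <T(t)x,x^{\sun}>}\in W(\rep)$ for every $x^{\sun}\in X^{\sun}$ — refers only to the underlying semigroup $\bk{T(t)}_{t\ge 0}$ and to the fixed vector $x$; it makes no reference whatsoever to which hull $\scV\in\bk{\scS,\scT,\scU}$ is chosen. Thus commutativity of each compactification \emph{on a fixed vector} $x$ is characterized by one and the same intrinsic Eberlein-weak-almost-periodicity condition, and this is what lets the property propagate from $\scS_0$ to $\scT_0$ and $\scU_0$.

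First I would unwind the hypothesis. Since two operators in $L(X,X^{**})$ coincide exactly when they agree on every $x\in X$, the assumption that $\scS_0$ is Abelian is precisely the statement that $(U\circ V)x=(V\circ U)x$ for all $U,V\in\scS_0$ and all $x\in X$. Fix $x\in X$. Invoking Theorem \ref{pointwise_Abelian}(1) with $\scV=\scS$, in the direction commutation $\Rightarrow$ weak almost periodicity, the commutation of $\scS_0$ on $x$ forces $\bk{t\mapsto <T(t)x,x^{\sun}>}\in W(\rep)$ for all $x^{\sun}\in X^{\sun}$.

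Next, with this weak-almost-periodicity condition now established for the fixed $x$, I would feed it back into Theorem \ref{pointwise_Abelian}(1), this time with $\scV=\scT$ and then with $\scV=\scU$, using the reverse direction weak almost periodicity $\Rightarrow$ commutation. This yields $(U\circ V)x=(V\circ U)x$ for all $U,V\in\scT_0$ (respectively $\scU_0$). Because $x\in X$ was arbitrary, these identities hold for every $x$, and hence $U\circ V=V\circ U$ as operators; that is, $\scT_0$ and $\scU_0$ are Abelian.

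The argument is essentially a syllogism, so there is no serious analytic obstacle: all the real work has already been absorbed into Theorem \ref{pointwise_Abelian} via Proposition \ref{interchanged1}. The only point requiring genuine care is the directionality of the biconditional. The hypothesis on $\scS_0$ must be routed through the commutation-implies-wap half of the theorem, while the conclusions for $\scT_0$ and $\scU_0$ are obtained through the wap-implies-commutation half. I would also verify that Theorem \ref{pointwise_Abelian} is phrased for each fixed $x$ separately — which it is — so that the per-vector condition transfers cleanly across the hulls and may legitimately be quantified over all $x\in X$ only at the final step.
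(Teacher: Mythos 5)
Your proof is correct and is exactly the argument the paper intends: the corollary is stated as an immediate consequence of Theorem \ref{pointwise_Abelian}, routing the hypothesis through the commutation-implies-wap direction for $\scV=\scS$ and back through the wap-implies-commutation direction for $\scV=\scT$ and $\scV=\scU$, pointwise in $x$. Your explicit attention to the directionality of the biconditional and to the fact that the wap condition is independent of the choice of hull is precisely the content that makes the corollary immediate.
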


The previous strong result leads by an application of \cite{Groth} to the following theorem. It serves in obtaining an ergodic result for the dual semigroup from the original semigroup on $X$ and vice versa.

\begin{theo} \label{S0-Abelian}
Let $\bk{T(t)}_{t\in\rep}$ be a bounded $C_0-$semigroup and $\scV\in \bk{\scS,\scT,\scU};$ then, the following are equivalent:
\begin{enumerate}
\item $\bk{t\mapsto <T(t)x,x^{\sun}>}\in W(\rep)$ for all $x\in X, x^{\sun}\in X^{\sun}.$
\item $\scV_0$ is Abelian.
\item $\bk{t\mapsto <x,T^{\sun}(t)x^{\sun}>}\in W(\rep)$ for all $x\in X, x^{\sun}\in X^{\sun}.$
\item $\scV^{\sun}_0$ is Abelian.
\end{enumerate}
\end{theo}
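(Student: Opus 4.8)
The plan is to reduce everything to the already-established pointwise statement in Theorem \ref{pointwise_Abelian}, treating the four conditions as a single criterion viewed from two sides.

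First I would record the elementary identity linking the original and the sun-semigroup: for every $t\ge 0$, $x\in X$ and $x^{\sun}\in X^{\sun}$, since $T^{\sun}(t)$ is by definition the restriction of $T^*(t)$ to $X^{\sun}$, one has
$$
<T(t)x,x^{\sun}>=<x,T^*(t)x^{\sun}>=<x,T^{\sun}(t)x^{\sun}>.
$$
Consequently the two scalar functions appearing in (1) and (3) are \emph{the same} function of $t$, so conditions (1) and (3) are literally identical; in particular $(1)\Leftrightarrow(3)$ requires no argument. (This is exactly the pairing identity already used in the proof of Proposition \ref{X-embedded-to-X-sun-star}.)

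Next I would obtain $(1)\Leftrightarrow(2)$ and $(3)\Leftrightarrow(4)$ from Theorem \ref{pointwise_Abelian} by quantifying over the fixed vector. Part (1) of that theorem states, for each fixed $x\in X$, that $\bk{t\mapsto <T(t)x,x^{\sun}>}\in W(\rep)$ for all $x^{\sun}\in X^{\sun}$ holds iff $(U\circ V)x=(V\circ U)x$ for all $U,V\in\scV_0$. Running this equivalence over all $x\in X$, the left-hand side becomes precisely condition (1), while the right-hand side---commutation on every vector---is exactly the assertion that $\scV_0$ is Abelian, condition (2). Symmetrically, part (2) of Theorem \ref{pointwise_Abelian}, quantified over all $x^{\sun}\in X^{\sun}$, yields $(3)\Leftrightarrow(4)$. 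Chaining $(2)\Leftrightarrow(1)\Leftrightarrow(3)\Leftrightarrow(4)$ closes the cycle.

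As a consistency check (and an alternative closing of the loop) I would note that $(2)\Leftrightarrow(4)$ also follows directly, because $\eta^{\sun}$ is a semigroup isomorphism between $\scV_0$ and $\scV_0^{\sun}$, and a semigroup isomorphism preserves commutativity. I do not expect any genuine obstacle here: the analytic content---interchanging iterated limits via Eberlein weak almost periodicity and the Grothendieck double-limit criterion---was already discharged in Proposition \ref{interchanged1} and Theorem \ref{pointwise_Abelian}. The only points demanding care are the bookkeeping of the quantifiers (the pointwise equivalences must be applied for each vector and then universally quantified) and the observation that the functions in (1) and (3) coincide, which is what makes the two halves of the theorem mirror each other.
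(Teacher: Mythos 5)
Your proposal is correct and takes essentially the same route as the paper: the paper offers no separate argument for this theorem, deriving it directly from Theorem \ref{pointwise_Abelian} (where the Eberlein/Grothendieck double-limit work is done), exactly as you do by quantifying the pointwise equivalences over all $x\in X$ and all $x^{\sun}\in X^{\sun}$ and observing that the pairing identity $<T(t)x,x^{\sun}>=<x,T^{\sun}(t)x^{\sun}>$ makes conditions (1) and (3) the same. Your closing remark via $\eta^{\sun}$ is also sound, since $\eta^{\sun}$ identifies $\scV_0$ with $\scV_0^{\sun}$ in a way that preserves commutativity (even though $\eta^{\sun}(V\circ U)=U^{\sun}V^{\sun}$ reverses the order of factors, an anti-isomorphism still transports the Abelian property).
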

\begin{remk}
The above result that $P^{\sun}$ commutes with every operator will not necessarily lead to Eberlein weak almost periodicity, as shown in Example \ref{weak-null-but-not-Eberlein} and Example \ref{Abelian_but_not_Eberlein}. 
\end{remk}

Now, a few results from Jacobs-Deleeuw-Glicksberg are obtained by the above.
\begin{theo}[{\cite[pp. 103-106]{Krengel}}] \label{JDG} If $\bk{T(t)}_{t\in\rep}$ is Eberlein weakly almost periodic, then
$\scS_0$ is an Abelian semigroup on $X$ and an Abelian group on $X_{ap}.$ Consequently, we have in the underlying case that $\scS_0^{\sun}$ is Abelian.
\end{theo}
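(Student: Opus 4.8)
The plan is to read all three assertions off the equivalences of Theorem \ref{S0-Abelian} together with the idempotent theory of Section 5, so that the real work reduces to checking a single weak-compactness hypothesis and then identifying the range of a minimal idempotent. A preliminary point makes the statement literal: under the standing hypothesis the compactification consists of genuine endomorphisms of $X$. Indeed, by Theorem \ref{RSdeco} the Eberlein weak almost periodicity of $\scS$ is equivalent to $\bk{T(t)x:t\in\rep}$ being relatively weakly compact for every $x\in X$; since each $V\in\scS_0$ satisfies $\dual{Vx,x^*}=\lim_\al\dual{T(t_\al)x,x^*}$ for a suitable net and all $x^*\in X^*$, the value $Vx$ is a weak$^*$ cluster point in $X^{**}$ of the relatively weakly compact orbit $j\bk{T(t_\al)x}$, hence lies in $jX$. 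Thus $\scS_0\subset L(X)$ after the identification $jX\cong X$, and by Proposition \ref{LT-semigroup-algebra}(3) the product $\circ$ then coincides with operator composition, so that ``semigroup on $X$'' and ``group on $X_{ap}$'' are meaningful.

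Next I would verify hypothesis (1) of Theorem \ref{S0-Abelian}, that $\bk{t\mapsto\dual{T(t)x,x^{\sun}}}\in W(\rep)$ for all $x\in X$ and $x^{\sun}\in X^{\sun}$. Fixing $x$ and $x^{\sun}$ and writing $f(t):=T(t)x$, the orbit $O(f)$ is weakly relatively compact in $C_b(\rep,X)$ by assumption. The evaluation map $\Phi:C_b(\rep,X)\to C_b(\rep)$, $\Phi(h):=\dual{h(\cdot),x^{\sun}}$, is bounded and linear and commutes with translation, so $\Phi(O(f))=O(\Phi f)$ is weakly relatively compact in $C_b(\rep)$, i.e. $\Phi f=\bk{t\mapsto\dual{T(t)x,x^{\sun}}}\in W(\rep)$. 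Applying the equivalence $(1)\Leftrightarrow(2)\Leftrightarrow(4)$ of Theorem \ref{S0-Abelian} with $\scV=\scS$ then gives at once that $\scS_0$ is Abelian and, consequently, that $\scS_0^{\sun}$ is Abelian (Corollary \ref{all_abelian} extends this to $\scT_0,\scU_0$ if one wants it).

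For the group on $X_{ap}$, since $\scS_0$ is a compact right topological semigroup, Theorem \ref{right_topo_semigroups} supplies a minimal idempotent $P\in\scS_0$ with $P\scS_0P$ a group having identity $P$; because $\scS_0$ is Abelian this group is just $\scS_0P$, and it acts on the closed complemented subspace $R(P)$. It then remains to identify $R(P)=X_{ap}$: by Theorem \ref{RSdeco} each $x$ decomposes as $x=x_{ap}+x_0^E$ with $\bk{t\mapsto T(t)x_{ap}}$ almost periodic and $T(\cdot+\om_n)x_0^E\to0$ weakly along some sequence, which exhibits $P$ as the projection onto the almost periodic vectors along the flight vectors, whence $R(P)=X_{ap}$ and $\scS_0$ restricts to an Abelian group there. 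The one genuinely delicate step is precisely this identification — matching the abstractly produced minimal idempotent with the concrete almost periodic subspace and checking that it annihilates the flight part — and it is here that the decomposition of Theorem \ref{RSdeco} carries the load; the Abelian conclusions, by contrast, fall out immediately from Theorem \ref{S0-Abelian}.
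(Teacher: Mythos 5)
Your handling of the two Abelian assertions is correct, and it follows the route the paper itself intends: the paper gives no proof of Theorem \ref{JDG} (it is cited to Krengel and prefaced by ``obtained by the above''), and deriving it from Theorem \ref{S0-Abelian} is exactly that intention. Your preliminary observation that relative weak compactness of each orbit forces $Vx\in jX$ for every $V\in\scS_0$ (weakly compact sets in $X$ are $\sigma(X^{**},X^{*})$-closed in $X^{**}$), so that $\scS_0\subset L(X)$ and $\circ$ becomes honest composition via Proposition \ref{LT-semigroup-algebra}(3), is sound and makes the phrase ``semigroup on $X$'' literal; and composing the orbit map with the translation-commuting evaluation $h\mapsto\dual{h(\cdot),x^{\sun}}$ is a legitimate way to verify hypothesis (1) of Theorem \ref{S0-Abelian}, since bounded linear images of relatively weakly compact sets are relatively weakly compact.

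The gap is in the group statement. You reduce it to the identification $R(P)=X_{ap}$ and then claim that Theorem \ref{RSdeco} ``exhibits $P$ as the projection onto the almost periodic vectors along the flight vectors.'' It does not: Theorem \ref{RSdeco} produces a concrete splitting $x=x_{ap}+x_0^E$ but says nothing whatsoever about the abstract minimal idempotent $P$, and matching the two projections is precisely the point that needs proof --- it is what the paper labors over in Theorem \ref{apply-to-sun}(4) and again in the theorem immediately following Theorem \ref{JDG}, where minimality and commutativity are used to show that the E.-wap projection coincides with the minimal idempotent. Concretely, what is missing is: (i) $X_{ap}$ is $\scS_0$-invariant and $P$ fixes it pointwise --- for $x\in X_{ap}$ one must check that $(I-P)x$ again has relatively norm compact orbit (using $PT(t)=T(t)P$), that $T(t_{\al})(I-P)x\to P(I-P)x=0$ in $\sigma(X,X^{\sun})$ along a net realizing $P$, that norm compactness of the orbit upgrades this to a norm cluster point $0$, and that an almost periodic orbit having $0$ as a norm cluster point vanishes identically (via the recurrence estimate $\nrm{T(t+t_{\al})y}\le M\nrm{T(t_{\al})y}$); and (ii) $P$ annihilates the flight part --- for $x_0^E$ one picks $Q\in\scS_0$ as a cluster point of $\bk{T(\om_n)}$, so $Qx_0^E=0$, and inverts $QP$ inside the group $P\scS_0P$ to conclude $Px_0^E=0$. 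Only with (i), (ii) and the two direct-sum decompositions does $R(P)=X_{ap}$ follow; note that for the bare statement ``$\scS_0$ is a group on $X_{ap}$'' item (i) plus invariance already suffices, so the stronger identification you invoke is not even necessary. As written, the step you yourself flag as ``the one genuinely delicate step'' is asserted, not proved.
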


Next, we show how the semigroup and its sun-dual are connected if the semigroup is Eberlein weakly almost periodic. 

\begin{theo} 
If $\bk{T(t)}_{t\in\rep}$ is Eberlein weakly almost periodic, then 
$X=X_{ap}\oplus X_0$, with a projection $\funk{V}{X}{X}$ satisfying $V(X)=X_{ap}.$ For the dual semigroup, we have  
$X_a^{\sun}=\Xsr, \ X_0^{\sun}=X_{fl}^{\sun},$ with $X^{\sun}=\Xsr\oplus X_{fl}^{\sun},$ with a projection $\funk{P^{\sun}}{X^{\sun}}{X^{\sun}}$ satisfying $P^{\sun}(X^{\sun})=X^{\sun}_a.$ In this setting, we have $P^{\sun}=\eta^{\sun}(V),$ and the minimal idempotent is unique.
\end{theo}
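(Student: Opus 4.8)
The plan is to derive the whole statement from two consequences of Eberlein weak almost periodicity: that both $\scS_0$ and $\scS_0^{\sun}$ are Abelian (Theorem \ref{JDG}), and that every orbit $\bk{T(t)x}_{t\ge 0}$ is relatively weakly compact (Theorem \ref{RSdeco}).

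First I would produce the projection $V$. Existence of minimal idempotents in the compact right topological semigroup $\scS_0$ is guaranteed by the idempotent theory preceding Theorem \ref{right_topo_semigroups}; pick such a minimal idempotent $V$. Since $\scS_0$ is Abelian, idempotents commute, and for any two minimal idempotents $e,f$ the product $ef$ is again idempotent and satisfies $ef\le e$ and $ef\le f$, so minimality forces $e=ef=f$; hence $V$ is the unique minimal idempotent. Relative weak compactness of the orbits is what keeps $V$ inside $X$: for each $x$, the element $Vx$ is a $w^*OT$-limit of $\bk{T(t)x}_{t\ge 0}$, hence lies in its weak closure, a weakly compact subset of $X$; therefore $V(X)\subset X$, and by Proposition \ref{LT-semigroup-algebra}(3) the relation $V\circ V=V$ becomes $V^2=V$ on $X$, so $V$ is a bounded projection on $X$.

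Next I would identify the range and kernel with the almost periodic and flight parts. By Theorem \ref{JDG}, $\scS_0$ is an Abelian group on $X_{ap}$; its identity is the minimal idempotent, so $V|_{X_{ap}}=I$ and $X_{ap}\subset R(V)$. For the complementary part, decompose an arbitrary $x=x_{ap}+x_0^E$ as in Theorem \ref{RSdeco}. The flight component yields a sequence $\om_n$ with $T(\cdot+\om_n)x_0^E\to 0$ weakly, so a $w^*OT$-cluster operator $W\in\scS_0$ satisfies $Wx_0^E=0$; since $\scS_0 V$ is the minimal ideal and a group with identity $V$ (Theorem \ref{right_topo_semigroups}(4)), there is $Z\in\scS_0 V$ with $ZVW=V$, whence $Vx_0^E=ZVWx_0^E=0$. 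Thus $Vx=x_{ap}$, so $R(V)=X_{ap}$, $N(V)=X_0$, and $X=X_{ap}\oplus X_0$ with $V(X)=X_{ap}$.

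Finally I would transfer everything through $\eta^{\sun}$. Because $\scS_0^{\sun}$ is Abelian, the corollary following the reversibility lemma gives $X_a^{\sun}=\Xsr$ and $X_0^{\sun}=X_{fl}^{\sun}$ with $\scS_0^{\sun}$ a group on $X_a^{\sun}$, and Theorem \ref{apply-to-sun} supplies the splitting $X^{\sun}=R(P^{\sun})\oplus N(P^{\sun})=\Xsr\oplus X_{fl}^{\sun}$ with the minimal idempotent $P^{\sun}$ projecting onto $X_a^{\sun}$; the Abelian argument above again forces $P^{\sun}$ to be the unique minimal idempotent of $\scS_0^{\sun}$. Since $\eta^{\sun}$ is a semigroup isomorphism of $\scS_0$ onto $\scS_0^{\sun}$, it carries idempotents to idempotents and the minimal ideal onto the minimal ideal, so $\eta^{\sun}(V)$ is a minimal idempotent of $\scS_0^{\sun}$, and uniqueness yields $P^{\sun}=\eta^{\sun}(V)$. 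The main obstacle is the middle step: matching the abstract minimal idempotent with the concrete Ruess--Summers splitting, namely proving that the group structure of the minimal ideal annihilates every flight vector while fixing every almost periodic vector, together with the preliminary point (genuinely needing Eberlein, not merely $w^*$, compactness) that $V$ does not leave $X$.
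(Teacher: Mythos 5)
Your proposal is correct, and it establishes the same key identification as the paper --- Ruess--Summers projection $=$ minimal idempotent --- but it runs the argument in the opposite direction and through different auxiliary results. The paper starts from the Ruess--Summers projection $V$ of Theorem \ref{RSdeco}, takes from the Jacobs--de Leeuw--Glicksberg theory behind Theorem \ref{JDG} that $V$ belongs to $\scS_0$ and is a minimal idempotent there, pulls $P^{\sun}$ back to $P=(\eta^{\sun})^{-1}(P^{\sun})\in\scS_0$, and proves $V=P$ by the Abelian minimality computation ($V\circ P=V$ and $P^{\sun}V^{\sun}=P^{\sun}$); the entire dual-space half of the statement is then obtained in one stroke by noting $P^{\sun*}(X)=V(X)\subset X$ and invoking the lemma whose hypothesis is exactly $P^{\sun*}(X)\subset X$ (which already packages $\Xsr=X_a^{\sun}$, $X_{fl}^{\sun}=X_0^{\sun}$, commutation, and uniqueness). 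You go the other way: you take an abstract minimal idempotent $V\in\scS_0$ (Ellis/Ruppert), keep $V(X)\subset X$ by weak compactness of the orbits, and verify by hand that $V$ implements the Ruess--Summers splitting --- $V$ fixes $X_{ap}$ because an idempotent element of the group $\scS_0|_{X_{ap}}$ must be its identity, and $V$ annihilates each flight vector by inverting $V\circ W$ inside the group $V\scS_0 V$ and applying Proposition \ref{LT-semigroup-algebra}(3); the dual identifications then come from the corollary for Abelian $\scS_0^{\sun}$ rather than from the $P^{\sun*}(X)\subset X$ lemma, and $P^{\sun}=\eta^{\sun}(V)$ follows from uniqueness of minimal idempotents transported by the isomorphism $\eta^{\sun}$. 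What your route buys is self-containedness: the two facts the paper quietly imports from \cite{Krengel} (that the splitting projection lies in the compactification and is minimal there) are re-derived from the paper's own ideal theory, at the price of the extra verifications $Vx_{ap}=x_{ap}$ and $Vx_0^E=0$; the paper's route is shorter precisely because its lemma and the cited JDG results absorb that work. One phrase of yours needs tightening, though it is wording rather than a gap: ``its identity is the minimal idempotent'' is stated backwards --- the identity of the group $\scS_0|_{X_{ap}}$ is the restriction of $T(0)=I$, and what your argument actually uses, correctly, is that $V|_{X_{ap}}$ is an idempotent element of that group and therefore coincides with this identity.
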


\begin{proof} It suffices to verify that $P^{\sun*}(X)\subset X. $ 
By Theorem \ref{RSdeco}, we find that $X=X_{ap}\oplus X_0.$ Let $V$ be the corresponding projection and $V^{\sun}:=\eta(V).$ Furthermore, let $X^{\sun}=X^{\sun}_a\oplus X^{\sun}_0,$ and let $P^{\sun}$ be the corresponding minimal idempotent. We define $P:=\eta^{-1}(P^{\sun}).$ Then,
\begin{eqnarray*}
<x,V^{\sun}V^{\sun}x^{\sun}>&=&<Vx,V^{\sun}x^{\sun}>=<V\circ V x,x^{\sun}> \\
&=&<Vx,x^{\sun}>=<x,V^{\sun}x^{\sun}>,
\end{eqnarray*}
and for $P$, we have
\begin{eqnarray*}
<(P\circ P)x,x^{\sun}>&=& <Px,P^{\sun}x^{\sun}>=<x,P^{\sun}P^{\sun}x^{\sun}>\\
&=&<x,P^{\sun}x^{\sun}>=<Px,x^{\sun}>.
\end{eqnarray*}
Hence, we have that $P$ and $V^{\sun}$ are idempotents in $\scS_0$ and $\scS_0^{\sun}$.

By Theorem \ref{right_topo_semigroups}, we have that $V$ is minimal using the fact that $\scS_0$ is a (Abelian) group on $X_{ap}=VX$ and $P^{\sun}$ is a minimal chosen idempotent. 
Moreover, using $\scS_0$ as Abelian, we find that $VP$ is an idempotent with $V(VP)=VP$; hence, $VP=V.$ 
Similarly, we obtain from $P^{\sun}(P^{\sun}V^{\sun})=P^{\sun}V^{\sun};$ hence, $P^{\sun}=P^{\sun}V^{\sun}$ because of its minimality.
This result leads to
\begin{eqnarray*}
<x,\eta(V)x^{\sun}>&=&<x,\eta(V\circ P)x^{\sun}>=<x,P^{\sun}V^{\sun}x^{\sun}>\\
&=&<x,P^{\sun}x^{\sun}> =<x,\eta(P)x^{\sun}>.
\end{eqnarray*}
In the first line, $V$ left minimal is used, and in the
second, $P^{\sun}$ left minimal is used.
 Because $\eta$ is injective, we have that $V=P$ and $\scS_0(X)\subset X$ by the Eberlein weakly almost periodicity; we conclude that $P(X)=V(X)\subset X.$
\end{proof}

\section{The restriction semigroup}
The main topic of this section is to show how close the space of almost periodic vectors are to the reversible ones. 
It is shown that separability of the orbit is the key indicator that they coincide. In short, the separability of the orbit of a vector $x$ mainly serves to obtain an almost periodicity property of the splitting. 
In consequence, the above lead to an extension of the well-known result on the ergodic properties of a dual semigroup.

Every semigroup  $\bk{T(t)}_{t\ge 0}$ is the restriction of the dual semigroup $\bk{T^{\sun\sun}(t)}_{t\ge 0}.$ In this chapter, we want to show how the theory developed for dual semigroups applies to general $C_0-$semigroups, which are restrictions to norm closed subspaces $Y\subset X^{\sun}.$

\begin{defi}
A vector $x^{\sun}\in X^{\sun}$ is an eigenvector with unimodular eigenvalue if for a map 
$\funk{\la}{\scS_0^{\sun}}{\ce}$ with $\btr{\la(T)}=1$, we have $Tx^{\sun}=\la(T)x^{\sun}$ for all $T\in \scS_0^{\sun}$. We define
$$X_{uds}^{\sun}:=\overline{span}\bk{x^{\sun}\in X^{\sun}: x^{\sun} \mbox{ is an eigenvector with unimodular eigenvalue }}.
$$
\end{defi}

\begin{pro} Let $\scS_0^{\sun}$ be Abelian; then, $X^{\sun}_{uds}\subset X^{\sun}_a$.
\end{pro}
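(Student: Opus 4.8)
The plan is to exploit the fact that $X_a^{\sun}=R(P^{\sun})$ is the range of the bounded idempotent $P^{\sun}\in\scS_0^{\sun}$, hence norm closed, and that $X_{uds}^{\sun}$ is by definition the closed span of the unimodular eigenvectors. Consequently it suffices to show that every single eigenvector $x^{\sun}$ with unimodular eigenvalue already lies in $X_a^{\sun}$; the inclusion for the closed span then follows by passing to linear combinations and norm limits inside the closed subspace $X_a^{\sun}$.

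So I would fix such an $x^{\sun}$, with associated map $\la:\scS_0^{\sun}\to\ce$ satisfying $\btr{\la(T)}=1$ and $Tx^{\sun}=\la(T)x^{\sun}$ for all $T\in\scS_0^{\sun}$. The key step is to evaluate this relation at the minimal idempotent $P^{\sun}\in\scS_0^{\sun}$ furnished by Theorem \ref{right_topo_semigroups} (unique here, since $\scS_0^{\sun}$ is Abelian). Applying the eigenvalue relation once gives $P^{\sun}x^{\sun}=\la(P^{\sun})x^{\sun}$, and applying it a second time together with $(P^{\sun})^2=P^{\sun}$ yields $\la(P^{\sun})x^{\sun}=(P^{\sun})^2x^{\sun}=\la(P^{\sun})^2x^{\sun}$. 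For $x^{\sun}\ne 0$ this forces $\la(P^{\sun})=\la(P^{\sun})^2$, so $\la(P^{\sun})\in\bk{0,1}$; but $\btr{\la(P^{\sun})}=1$ rules out the value $0$, whence $\la(P^{\sun})=1$. Therefore $P^{\sun}x^{\sun}=x^{\sun}$, i.e. $x^{\sun}\in R(P^{\sun})=X_a^{\sun}$ by Theorem \ref{apply-to-sun}. The degenerate case $x^{\sun}=0$ lies in $X_a^{\sun}$ trivially.

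I do not expect a genuine obstacle here: the conclusion is forced once one notices that the unimodularity constraint $\btr{\la(P^{\sun})}=1$ is incompatible with the only other algebraically admissible value $\la(P^{\sun})=0$ coming from idempotency. An alternative route would invoke the characterization $\Xsr=X_a^{\sun}$ valid under the Abelian hypothesis: given $V\in\scS_0^{\sun}$ one would have to produce $U\in\scS_0^{\sun}$ with $UVx^{\sun}=x^{\sun}$, which amounts to finding $U$ with $\la(U)=\overline{\la(V)}$. This is less transparent, since it presupposes that the conjugate value is attained by the eigenvalue map, so the idempotent argument above is preferable. Finally, because $X_a^{\sun}$ is norm closed and contains every spanning eigenvector of $X_{uds}^{\sun}$, I conclude $X_{uds}^{\sun}\subset X_a^{\sun}$.
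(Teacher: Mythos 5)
Your proof is correct, but it is a genuinely different argument from the one in the paper. The paper's proof uses the Abelian hypothesis essentially: from $TP^{\sun}=P^{\sun}T$ it deduces that $P^{\sun}x^{\sun}$, and hence the component $x_0^{\sun}=x^{\sun}-P^{\sun}x^{\sun}\in X_0^{\sun}$, is again an eigenvector for the same eigenvalue map; it then invokes the flight-vector property of $X_0^{\sun}$ from Theorem \ref{apply-to-sun} (3) to get a net with $<x,T^{\sun}(t_{\la})x_0^{\sun}>\to 0$, and unimodularity, $\btr{\la(T^{\sun}(t_{\la}))}=1$, forces $<x,x_0^{\sun}>=0$ for all $x$, i.e.\ $x_0^{\sun}=0$. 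You instead evaluate the eigenvalue relation at the idempotent $P^{\sun}$ itself: idempotency gives $\la(P^{\sun})=\la(P^{\sun})^2$ on a nonzero eigenvector, unimodularity excludes $\la(P^{\sun})=0$, hence $\la(P^{\sun})=1$ and $P^{\sun}x^{\sun}=x^{\sun}$. This is more elementary (no flight vectors, no nets) and strictly more general: commutativity of $\scS_0^{\sun}$ is never used, so your argument in fact shows that every unimodular eigenvector is fixed by \emph{every} idempotent of $\scS_0^{\sun}$, giving $X_{uds}^{\sun}\subset R(e)$ for all $e\in E(\scS_0^{\sun})$ — in particular the stated inclusion holds for an arbitrary bounded $C_0$-semigroup, with the Abelian hypothesis superfluous. (Two small remarks: the uniqueness of the minimal idempotent, which you mention in passing, is not needed, since the splitting $X^{\sun}=X_a^{\sun}\oplus X_0^{\sun}$ of Theorem \ref{apply-to-sun} is taken relative to a fixed $P^{\sun}$; and your explicit reduction to single eigenvectors followed by norm limits inside the closed range $R(P^{\sun})=N(I-P^{\sun})$ is a point the paper's proof glosses over, as it applies the eigenvector relation directly to a general element of the closed span $X_{uds}^{\sun}$.)
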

\begin{proof}
For a given $x^{\sun}\in X^{\sun}_{uds},$ the minimal idempotent $P^{\sun}$, and all $T\in\scS_0^{\sun},$ we have $Tx^{\sun}=\la(T)x^{\sun}.$ Hence, 
$$ 
\la(T)P^{\sun}x^{\sun}=P^{\sun} \la()x^{\sun}=P^{\sun}Tx^{\sun}=TP^{\sun}x^{\sun}.
$$
The splitting with respect to $P^{\sun}$ gives $x_0^{\sun}=x^{\sun}-x_a^{\sun},$  and by the previous observation, $T^{\sun}x_0=\la(T^{\sun})x_0.$  Because $x_0$ is a flight vector, we find a net with
$$0=\netlim{\la}{\Lambda}<x,T^{\sun}(t_{\la})x_0^{\sun}>=\netlim{\la}{\Lambda}\la(T^{\sun}(t_{\la}))<x,x^{\sun}_a-x^{\sun}>.
$$
Because $\btr{\la(T^{\sun}(t_{\la}))}=1$, we conclude that $x^{\sun}_a=x^{\sun}.$
\end{proof}
In the Abelian case, we recall that, by  \cite[Thm. 4.1 p. 104]{Krengel}, the minimal ideal is unique. Moreover, we know from Theorem 
\ref{right_topo_semigroups} that $K:=P^{\sun}\scS_0^{\sun}P^{\sun}=P^{\sun}{\scS_0^{\sun}}_{|X^{\sun}_a}$ is a group. 

Next, we discuss the weak-star topology on $Y,$ that is, $\overline{O(x)}^{w*}\subset Y$ for all $x\in Y.$ Then, we consider the mapping
$$
\Funk{r}{\scS^{\sun}_0}{L(Y),}{S}{S_{|Y}.}
$$
Defining $\scR^{\sun}=r(\scS^{\sun})$ and $\scR_0^{\sun}:=r(\scS_0^{\sun}),$ we equip $\scR_0^{\sun}$ with the topology $\scr,$  that is,
$\net{R}{\la}{\Lambda}$ is convergent if $<R_{\la}y,x>\to <Ry,x>$ for all $y\in Y $ and $x\in X.$

\begin{defi} 
Let $\tau$ be a locally convex topology on $X^{\sun}$, with  $\sigma(X^{\sun},X)\subset \tau\subset \nrm{\cdot}.$ We call a net $\net{T}{\al}{A}$ $\tau-OT$ convergent if there exist a $T\in L(Y)$ with
$$\tau-\netlim{\al}{A}T_{\la}y=Ty \mbox{ for all } y\in Y.
$$
\end{defi}

\begin{pro} \label{res-l-R-continuity}
Let $V\in L(Y),$ $t\in\re.$  Then,
$$
\Funk{L}{(L(Y),\tau-OT)}{(L(Y),\tau-OT)}{W}{W V}
$$
is continuous. Moreover, for $t\in \rep$ and $S(t)=r(T^{\sun}(t)),$
$$
\Funk{R}{(L(Y),\scr)}{(L(Y),\scr)}{W}{S(t)W}
$$
is continuous
\end{pro}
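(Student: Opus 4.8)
The statement asserts two separate continuity properties, so the plan is to handle each of the two maps $L$ and $R$ independently, in each case unwinding the definition of the relevant operator topology and reducing to a continuity fact already available from the ambient structure. The underlying pattern is exactly the one used earlier for the left and right multiplications on $L_T(X,X^{\sun\sun})$ in Proposition \ref{LT-semigroup-algebra} and in the $w^*OT$ setting: a translation by a fixed operator is continuous precisely because its action can be shifted onto the fixed test vectors, where no limit is being taken.

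For the map $L$, I would take a net $\net{W}{\al}{A}\subset L(Y)$ with $\tau\text{-}OT\text{-}\netlim{\al}{A}W_{\al}=W$, fix a test pair, and show $\tau\text{-}\netlim{\al}{A}(W_{\al}V)y=(WV)y$ for all $y\in Y$. The key observation is that $Vy\in Y$ is again an element of $Y$ (since $V\in L(Y)$), so that $(W_{\al}V)y=W_{\al}(Vy)$, and the assumed $\tau\text{-}OT$ convergence of $\net{W}{\al}{A}$ applied at the single fixed vector $Vy\in Y$ gives $\tau\text{-}\netlim{\al}{A}W_{\al}(Vy)=W(Vy)=(WV)y$. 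Thus the fixed operator $V$ acting on the right simply relabels the test vector, and the convergence is inherited immediately. Because $\tau$ is a fixed locally convex topology on $X^{\sun}$ with $\sigma(X^{\sun},X)\subset\tau$, the definition of $\tau\text{-}OT$ convergence is genuinely pointwise in $y\in Y$, which is what makes this relabelling legitimate.

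For the map $R$, the topology is $\scr$, defined by the pairing $<R_{\al}y,x>\to<Ry,x>$ for all $y\in Y$ and $x\in X$. Here the fixed operator $S(t)=r(T^{\sun}(t))$ is applied on the left, so I would compute $<S(t)W_{\al}y,x>$ and push $S(t)$ off onto the test functional $x$. The natural move is to use that $S(t)$ is the restriction of $T^{\sun}(t)$, whose adjoint relation $<T^{\sun}(t)z^{\sun},x>=<z^{\sun},T(t)x>$ (equivalently the duality $<T^{\sun}(t)z^{\sun},x>=<z^{\sun},jT(t)x>$ used throughout the paper) transfers the action of $S(t)$ onto $x\in X$, replacing the test functional $x$ by $T(t)x\in X$. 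Then $<S(t)W_{\al}y,x>=<W_{\al}y,T(t)x>$, and the assumed $\scr$-convergence of $\net{W}{\al}{A}$ applied at the fixed pair $(y,T(t)x)$ yields the desired limit $<S(t)Wy,x>$.

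The only genuine obstacle is verifying that the restriction $S(t)$ really acts by this adjoint transfer on the subspace $Y$, i.e.\ that $<S(t)z^{\sun},x>=<z^{\sun},T(t)x>$ holds for $z^{\sun}\in Y\subset X^{\sun}$ and $x\in X$; this is immediate from the defining relation of the sun-dual semigroup, since $T^{\sun}(t)$ is the restriction of $T^*(t)$ and $<T^*(t)z^{\sun},x>=<z^{\sun},T(t)x>$, and restricting the domain to $Y$ does not affect this pairing. One should also note for $R$ that $T(t)x\in X$ (so it is an admissible test functional) and for $L$ that $Vy\in Y$ (so it is an admissible test vector); both are automatic. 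No weak compactness or algebra structure is needed here, so beyond these bookkeeping checks the argument is the same two-line relabelling used in the earlier continuity propositions.
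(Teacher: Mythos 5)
Your proposal is correct and follows essentially the same route as the paper's own proof: for $L$ you use $Vy\in Y$ to shift $V$ onto the test vector, and for $R$ you use the adjoint relation $<T^{\sun}(t)z^{\sun},x>=<z^{\sun},T(t)x>$ to shift $S(t)$ onto the test functional $T(t)x\in X$, which is exactly the two-line relabelling argument in the paper.
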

\begin{proof}
Let $\net{W}{\al}{A}\subset L(Y)$ $\tau-OT$ be convergent with the limit $W.$ Then, due to $Vy\in Y,$
$$
\tau\netlim{\al}{A}W_{\al} Vy = WVy \mbox{ for all } y\in Y, \ x\in X,
$$
which gives that $R$ is continuous.
Let $t\in\rep,$ as $S(t)=r(T^{\sun}(t)), $ and let $\net{W}{\al}{A}\subset L(Y)$ $\scr$ be convergent with the limit $W.$ We have for $y\in Y$ and $x\in X$ that
$$
<S(t)W_{\al}y,x>=<T^{\sun}(t)W_{\al}y,x>=<W_{\al}y,T(t)x>\to <Wy,T(t)x> \mbox{ for all } y\in Y, \ x\in X.
$$
\end{proof}

Next, we verify some basic properties for the defined set $\scR_0^{\sun}.$
\begin{pro} \label{restriction-basics}
Let $Y\subset X^{\sun}$ such that $\overline{O(y)}^{w*}\subset Y$ for all $y\in Y$  and  $P^{\sun} \in \scS_0^{\sun}$ be a minimal idempotent. Then, 
\begin{enumerate}
\item $S(t)=r(T^{\sun}(t))$.
\item $(L(Y),\scr)$ is a Hausdorff locally convex space.
\item $\funk{r}{(\scS_0^{\sun},\kappa^{\sun})}{(\scR_0^{\sun},\scr)}$ is continuous.
\item \label{restricted-homorphism} $r(ST)=r(S)r(T)$ for all $S,T\in \scS_0^{\sun}$.
\item \label{restricted-semitopological} $(\scR_0^{\sun},\scr)$ is a left topological semigroup.
\item \label{restricted-idempotent} $Q=r(P^{\sun})$ is a projection onto a $Y_a$.
\item $r(P^{\sun}\scS_0^{\sun}P^{\sun})$ is a compact topological group with the identity $Q:=r(P^{\sun}).$ Moreover, $r(P^{\sun}\scS_0^{\sun})$ is a group on $Y_a.$
\item If the function $ \bk{t\mapsto <T^{\sun}(t)y,x>}\in W(\rep)$ for all $y\in Y$ and $x\in X,$ then $\scR_0^{\sun}$ is Abelian.
\end{enumerate}
\end{pro}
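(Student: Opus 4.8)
The plan is to first isolate the single structural fact underlying all eight items, namely that every $V\in\scS_0^{\sun}$ leaves $Y$ invariant, and then to transport the known properties of $\scS_0^{\sun}$ through the restriction map $r$. For the invariance I would argue that any $V\in\scS_0^{\sun}$ is a $\kappa^{\sun}$-limit of operators $T^{\sun}(t_\la)\in\scS^{\sun}$, so that for each fixed $y\in Y$ the element $Vy$ is the $\sigma(X^{\sun},X)$-limit of the orbit elements $T^{\sun}(t_\la)y\in O(y)$; hence $Vy\in\overline{O(y)}^{\sigma(X^{\sun},X)}=\overline{O(y)}^{w*}\subset Y$ by hypothesis. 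Thus $V(Y)\subset Y$ for every $V\in\scS_0^{\sun}$, the restriction $r(V)=V_{|Y}$ is a well-defined element of $L(Y)$, and item (1) is immediate because $T^{\sun}(t)\in\scS^{\sun}\subset\scS_0^{\sun}$ and $S(t)=T^{\sun}(t)_{|Y}=r(T^{\sun}(t))$ by definition.

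Items (2)--(5) are then routine consequences. For (2) I would reproduce the argument of Proposition \ref{sun_lcs}: the seminorms $W\mapsto\btr{<Wy,x>}$, $(y,x)\in Y\times X$, generate a locally convex topology which is Hausdorff, since $<Wy,x>=0$ for all $x\in X$ forces the functional $Wy\in Y\subset X^*$ to vanish on $X$, whence $Wy=0$ for every $y$ and $W=0$. Item (3) is immediate: $\scr$-convergence of $r(V_\la)$ tests the pairings $<V_\la y,x>$ against $(y,x)\in Y\times X$, which is exactly the restriction to $y\in Y$ of the pairings defining $\kappa^{\sun}$-convergence of $V_\la$, so $r$ is continuous. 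For (4) the invariance $V(Y)\subset Y$ gives $(UV)_{|Y}=U_{|Y}V_{|Y}$, that is $r(UV)=r(U)r(V)$, so $r$ is a homomorphism; consequently $\scR_0^{\sun}=r(\scS_0^{\sun})$ is closed under multiplication, and this together with the $\scr$-continuity of the right translation $W\mapsto WV$ supplied by Proposition \ref{res-l-R-continuity} (with $\tau=\sigma(X^{\sun},X)$, for which $\tau-OT$ coincides with $\scr$) yields the left topological semigroup structure in (5).

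For (6) and (7) I would carry the idempotent and group structure across $r$. Since $(P^{\sun})^2=P^{\sun}$, the homomorphism property gives $Q^2=r(P^{\sun})r(P^{\sun})=r(P^{\sun})=Q$, so $Q$ is a continuous projection, with range $Y_a:=Q(Y)=Y\cap R(P^{\sun})$, proving (6). For (7), the set $G:=P^{\sun}\scS_0^{\sun}P^{\sun}$ is, by Theorem \ref{right_topo_semigroups}(4) applied to $\scS_0$ and transported through the algebra isomorphism $\eta^{\sun}$, a group with identity $P^{\sun}$, and by the structure theory of the minimal idempotent it is a compact topological group. Its image $r(G)$ is a group with identity $Q$ (homomorphism), compact (continuous image of a compact set by (3)), and Hausdorff (by (2)). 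To see that $r(G)$ is a \emph{topological} group I would factor $r_{|G}$ through $G/\ker(r_{|G})$: the kernel is closed as the preimage of the point $Q$ under a continuous map into the Hausdorff space of (2), so $G/\ker(r_{|G})\to r(G)$ is a continuous bijection from a compact topological group onto a Hausdorff space, hence a homeomorphism. The ``moreover'' assertion follows because $P^{\sun}\scS_0^{\sun}$ acts as a group on $X_a^{\sun}$ by Theorem \ref{apply-to-sun}(1) and, by the invariance above, leaves $Y_a=Y\cap R(P^{\sun})$ invariant, so $r(P^{\sun}\scS_0^{\sun})$ is a group on $Y_a$.

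Finally, (8) is the restriction version of Theorem \ref{pointwise_Abelian}(2): applying that result with $x^{\sun}=y$ for each $y\in Y$, the hypothesis $\bk{t\mapsto<T^{\sun}(t)y,x>}\in W(\rep)$ gives $UVy=VUy$ for all $U,V\in\scS_0^{\sun}$ and all $y\in Y$, whence $r(U)r(V)=r(V)r(U)$ on $Y$ by the homomorphism property, i.e.\ $\scR_0^{\sun}$ is Abelian. The main obstacle is the invariance step that underlies (4): without the standing hypothesis $\overline{O(y)}^{w*}\subset Y$ the restriction $r$ would fail to be multiplicative and the whole transport would collapse. The only other delicate point is the upgrade of $r(G)$ from a compact Hausdorff group to a topological group in (7), for which the quotient-by-kernel argument above is the cleanest route.
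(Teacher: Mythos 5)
Most of your proposal coincides with the paper's own argument: the invariance $V(Y)\subset Y$ obtained from the density of $\scS^{\sun}$ in $(\scS_0^{\sun},\kappa^{\sun})$ together with $\overline{O(y)}^{w*}\subset Y$ (the paper merely asserts this step; you spell it out correctly), the Hausdorff argument for $(L(Y),\scr)$ via functionals on $X$, continuity of $r$ because $\scr$ is the restriction of $\kappa^{\sun}$, multiplicativity $r(ST)=r(S)r(T)$ from the invariance, item (5) from Proposition \ref{res-l-R-continuity}, item (6) essentially as in the paper, and item (8) as a pointwise application of Theorem \ref{pointwise_Abelian}. All of this is correct and is the paper's route.

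The genuine gap is in item (7). Your argument rests on the premise that $K:=P^{\sun}\scS_0^{\sun}P^{\sun}$ is already a compact \emph{topological} group, justified only by an appeal to ``the structure theory of the minimal idempotent.'' No such result is available here: Theorem \ref{right_topo_semigroups}(4) yields only that $K$ is an abstract group with identity $P^{\sun}$. In $\scS_0^{\sun}$ the only translations known to be continuous are $W\mapsto WV$ (that is what ``left topological'' means in this paper) and $W\mapsto S(t)W$ for the semigroup elements themselves (Proposition \ref{res-l-R-continuity}); the map $W\mapsto P^{\sun}W$ is not among them, so $K$ is not even obviously a continuous image of the compact set $\scS_0^{\sun}$, let alone a set with jointly continuous multiplication and continuous inversion. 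Indeed, maximal subgroups $eSe$ of compact right (or left) topological semigroups are in general \emph{not} topological groups, so your quotient-by-kernel argument starts from a premise that is precisely the hard point. The paper avoids this entirely: it proves the group structure of $r(K)$ purely algebraically --- given $V=r(T)$ with $T\in K$, pick $S\in K$ with $TS=P^{\sun}$ and use items (\ref{restricted-homorphism}) and (\ref{restricted-idempotent}) to conclude $r(T)r(S)=Q$, the identity on $Y_a$ --- and gets compactness from the comparison of $\scr$ with $\kappa^{\sun}$. The honest upgrade to a topological group is only carried out later, in Lemma \ref{res-Fourier_integral}, where the Abelian hypothesis (your item (8)) makes the group semitopological and Ellis' theorem \cite{Ellis_cont} on compact semitopological groups applies. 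To repair your item (7), replace the quotient argument by the paper's direct exhibition of inverses, and defer any claim of joint continuity to the Abelian setting.
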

\begin{proof}
Let $T\in L(Y)$ and $<Ty,x>=0$ for all $y\in Y, \ x\in X$. Since $Ty\in X^{\sun}\subset X^*$ may be viewed as a linear functional, $\funk{Ty}{X}{\ce}.$ Hence, $Ty=0 $ for all $y\in Y,$ which gives $T=0.$

As the topology $\scr$ is weaker than $\kappa^{\sun}$, the first claim is verified. Using $Y\subset X^{\sun}$ such that $\overline{O(x)}^{w*}\subset Y$ for all $y\in Y,$ we have $Ty\in Y$ for all $y\in Y,$ and $T\in \scS_0^{\sun}.$ Hence, $TS_{|Y}=T_{|Y}S_{|Y}.$ 

That $\scR_0^{\sun}$ is a left semitopological semigroup is a consequence of Proposition \ref{res-l-R-continuity}.

Let $y\in Y_a\subset X_a,$ which gives $y=P^{\sun}y=P^{\sun}_{|Y}y=Qy.$ 

Let $V\in r(P^{\sun}\scS_0^{\sun}P^{\sun})$. Then, there is a $T\in K:=P^{\sun}\scS_0^{\sun}P^{\sun}$, with $V=r(T).$ As $K$ is a group, we find $S\in K$, with $TS=P^{\sun},$ and now apply (\ref{restricted-homorphism}) and (\ref{restricted-idempotent}). The additional claim is a consequence of (\ref{restricted-idempotent}).

To verify the compactness, note that $\scr$ is weaker than $\kappa^{\sun}.$ 

The last claim is a consequence of Theorem \ref{pointwise_Abelian}.
\end{proof}

After this preparation, the situation above can be generalized to a locally convex topology $\tau$ defined on $X^{\sun}$ such that for $Y\subset X^{\sun}$, the following properties hold:
\begin{enumerate}
\item \label{stronger_top} $\sigma(X^{\sun},X)\subset \tau \subset \nrm{\cdot}$.
\item $\overline{O(y)}^{\tau} \subset Y$ for all $y\in Y$.
\item \label{rudin_integral} $\overline{ac}^{\tau}O(y)$ is $\tau$ compact for all $y\in Y.$
\end{enumerate}
Note that by (\ref{stronger_top}) and (\ref{rudin_integral}), 
$$
\scR_0^{\sun}:=\bk{U:Uy=\tau-\netlim{\lambda}{\Lambda}T^{\sun}(t_{\lambda})y, \mbox{ for all } y\in Y, \ \net{t}{\lambda}{\Lambda}\subset \rep}.
$$

Applying Tychnov's Theorem, we have that
$$
\scR^{\sun}\subset ac\scR^{\sun}\subset \Pi_{y\in Y} (\overline{ac}^{\tau}{O(y)})
$$
are relatively compact with respect to the operator topology $\tau-OT.$ 
We have
$$\scR_0^{\sun}=\overline{\scR^{\sun}}^{\tau-OT}$$
and define 
$$
\scW_0^{\sun}:=\overline{ac}^{\tau-OT}\bk{S: S\in ac\scR_0^{\sun}};
$$
we have that they are compact with respect to $\tau-OT.$ 
The $\tau$ compactness and (\ref{stronger_top}) give that $\tau-OT$ and 
$\scr$ are equal on $\scR_0^{\sun}$ and $\scW_0^{\sun}.$
Thus, Proposition \ref{restriction-basics} becomes true if $\scr$ is replaced by the topology $\tau-OT.$ Consequently, the mappings
$$
\Funk{R}{(\scR_0^{\sun},\tau-OT)}{(\scR_0^{\sun},\tau-OT)}{W}{S(t)W}
$$
and, for all $y\in Y,$
$$
\Funk{\delta_y}{(\scR_0^{\sun},\tau-OT)}{(Y,\tau)}{S}{Sy}
$$
are continuous.  

\begin{pro} \label{restricted-semitop-convex-cpt}
If 
\begin{enumerate}
\item $\sigma(X^{\sun},X)\subset \tau\subset \nrm{\cdot}$
\item $\overline{O(y)}^{\tau} \subset Y$ for all $y\in Y$, and
\item $\overline{ac}^{\tau}O(y)$ is $\tau$ compact for all $y\in Y$
\end{enumerate}
hold, then
$\scW_0^{\sun}$ is a compact left semitopological semigroup, and if $\scR_0^{\sun}$ is Abelian, then $\scW_0^{\sun}$ is as well. 
\end{pro}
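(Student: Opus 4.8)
The plan is to verify the defining properties of a compact left semitopological semigroup one at a time and then treat commutativity separately. Throughout I use the observations made just before the statement: on $\scW_0^{\sun}$ the topologies $\scr$ and $\tau-OT$ coincide, so the $\tau-OT$ compactness of $\scW_0^{\sun}$ is already delivered by the Tychonov argument, and it remains only to establish the algebraic law and the separate continuity. I also use that $\scW_0^{\sun}=\overline{ac\,\scR^{\sun}}^{\tau-OT}$. The structural fact I would exploit is that the generators $S(t)=r(T^{\sun}(t))$ lie in the topological center: by Proposition \ref{res-l-R-continuity} in its $\tau-OT$ form the map $W\mapsto S(t)W$ is $\tau-OT$ continuous, hence so is $W\mapsto AW$ for every finite absolutely convex combination $A\in ac\,\scR^{\sun}$, while $W\mapsto WV$ is $\tau-OT$ continuous for every $V\in L(Y)$, simply because $Vy\in Y$ for all $y$.

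For the semigroup law I would argue as in Proposition \ref{elemenatry-properties}. First, $ac\,\scR^{\sun}$ is closed under multiplication, since a product of two finite absolutely convex combinations of the $S(t)$ is again one of the same type (using $S(t)S(s)=S(t+s)$ and that the coefficient sums multiply to at most $1$). Now take $U,V\in\scW_0^{\sun}$ and nets $U_{\al},V_{\beta}\in ac\,\scR^{\sun}$ with $U_{\al}\to U$ and $V_{\beta}\to V$ in $\tau-OT$. For fixed $\al$, continuity of $W\mapsto U_{\al}W$ gives $U_{\al}V_{\beta}\to U_{\al}V$; as each $U_{\al}V_{\beta}\in ac\,\scR^{\sun}\subset\scW_0^{\sun}$ and $\scW_0^{\sun}$ is $\tau-OT$ closed, the inner limit $U_{\al}V$ lies in $\scW_0^{\sun}$. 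Letting $\al$ vary and using continuity of $W\mapsto WV$ (valid for all $V$) yields $U_{\al}V\to UV$, so $UV\in\scW_0^{\sun}$ as a $\tau-OT$ limit of elements of the closed set $\scW_0^{\sun}$. The separate-continuity requirement is then immediate: for fixed $V\in\scW_0^{\sun}$ the map $W\mapsto WV$ is $\tau-OT$ continuous because $Vy\in Y$ for every $y$, which is exactly the extension of Proposition \ref{restriction-basics} to $\tau-OT$ recorded above.

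For commutativity I would first reduce to an Eberlein weak almost periodicity statement. Since $\scR_0^{\sun}=r(\scS_0^{\sun})$ and $r(U)y=Uy$ for $y\in Y$, the hypothesis that $\scR_0^{\sun}$ is Abelian says $UVy=VUy$ for all $U,V\in\scS_0^{\sun}$ and $y\in Y$; applying Theorem \ref{pointwise_Abelian}(2) with the fixed vector $x^{\sun}=y$ for each $y\in Y$ shows this is equivalent to $\bk{t\mapsto<x,T^{\sun}(t)y>}\in W(\rep)$ for all $x\in X$ and $y\in Y$. With this Eberlein weak almost periodicity in hand I would rerun the $\scU_0$-part of the proof of Theorem \ref{pointwise_Abelian}: writing $U_{\al}=\sum_i c_i S(t_i)$, $V_{\beta}=\sum_j d_j S(s_j)$ and $g(t):=<x,T^{\sun}(t)y>$, one has
$$
<U_{\al}V_{\beta}y,x>=\sum_{i,j}c_id_j\,g(t_i+s_j),
$$
and the weak relative compactness of $O(g)$ in $BUC(\rep)$, together with the uniform boundedness of the absolutely convex combinations $\sum_j d_j\delta_{s_j}$, lets one separate the iterated limits in $\al$ and $\beta$. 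Since $U_{\al}V_{\beta}=V_{\beta}U_{\al}$ for the commuting generators, the two iterated limits computing $<UVy,x>$ and $<VUy,x>$ agree, giving $UV=VU$ on $Y$.

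The main obstacle is precisely this last interchange of iterated limits. Separate continuity alone will not do it: neither $U$ nor $V$ need lie in the topological center, so a limit cannot be passed through a general factor from the wrong side, and one genuinely needs the weak-compactness (Grothendieck double-limit) input supplied by the Eberlein weak almost periodicity of $g$. For this reason the reduction through Theorem \ref{pointwise_Abelian}(2) is the crucial step, and the commutativity statement is not a routine density argument in the way that the semigroup law is.
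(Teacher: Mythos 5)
Your proposal is correct and takes essentially the same route as the paper's own proof: compactness is quoted from the Tychonov remarks preceding the statement, the semigroup law is obtained by combining density of $ac\,\scR^{\sun}$ in $\scW_0^{\sun}$ with the separate continuity of left multiplication by combinations of the $S(t)$ and right multiplication by arbitrary $V\in L(Y)$ plus closedness, and commutativity is reduced to Eberlein weak almost periodicity of $g(t)=<x,T^{\sun}(t)y>$ and settled by interchanging iterated limits using the weak compactness of the closed absolutely convex hull of $O(g)$ in $BUC(\rep)$. The only difference is presentational: the paper simply asserts that the Abelian hypothesis on $\scR_0^{\sun}$ yields the Eberlein weak almost periodicity, whereas you make this reduction explicit via Theorem \ref{pointwise_Abelian}(2), which is exactly the justification the paper leaves implicit.
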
 
\begin{proof} The compactness was proved in the previous remarks; hence, we may only consider the topology $\scr$ on $\scW_0^{\sun}.$
Now, let $U,V\in\scW_0^{\sun}.$ Then, we find nets 
$\bk{t_i^{\gamma}}_{i\in\za,\gamma\in\Gamma},\bk{s_i^{\lambda}}_{i\in\za,\lambda\in\Lambda}\subset\rep$ 
and 
$$\bk{\alpha_i^{\gamma}}_{i\in\za,\gamma\in\Gamma}, \bk{\beta_i^{\lambda}}_{i\in\za,\lambda\in\Lambda}\subset \re,$$
with  
$\sum_{i=1}^{n_{\gamma}} \btr{\alpha_i^{\gamma}}\le1$
and 
$\sum_{i=1}^{m_{\lambda}}\btr{\beta_i^{\lambda}}\le1,$ such that

$$
U_{\gamma}:=\sum_{i=1}^{n_{\gamma}}\alpha_i^{\gamma}T^{\sun}(t_i^{\gamma})\mbox{ with } \netlim{\gamma}{\Gamma}U_{\gamma}=U \mbox{ and }
V_{\lambda}=\sum_{i=1}^{m_{\lambda}}\beta_i^{\lambda}T^{\sun}(s_i^{\lambda}), \mbox{ with } \netlim{\lambda}{\Lambda}V_{\lambda}=V.
$$
First, note that
\begin{equation} \label{VU}
VU=\netlim{\lambda}{\Lambda}\netlim{\gamma}{\Gamma}
\sum_{i=1}^{n_{\gamma}}\sum_{j=1}^{m_{\lambda}}\alpha_i^{\gamma}\beta_j^{\lambda}T^{\sun}(t_i^{\gamma}+s_{j}^{\lambda}),
\end{equation}
with a right hand side in $\scW_0^{\sun}.$ Thus, the compactness completes the proof.
 
For the second claim, define $g(t):=<x,T^{\sun}(t)x^{\sun}>,$ which is assumed to be Eberlein weakly almost periodic, for $x\in X$ and $x^{\sun}\in Y,$ due to $\scR_0^{\sun}$ being Abelian. Further, let $\delta_t(g):=g(t)$ be the bounded linear functional on $BUC(\rep,X).$ 

Recalling (\ref{VU}), we have
$$
UV=\netlim{\gamma}{\Gamma}\netlim{\lambda}{\Lambda}
\sum_{i=1}^{n_{\gamma}}\sum_{j=1}^{m_{\lambda}}\alpha_i^{\gamma}\beta_j^{\lambda}T^{\sun}(t_i^{\gamma}+s_{j}^{\lambda}).
$$
In consequence, we only have to verify that we can interchange the limits. The duality reads as
\begin{eqnarray*}
x(VUx^{\sun})&=&\netlim{\lambda}{\Lambda}<x,V_{\lambda}Ux^{\sun}>\\
&=&\netlim{\lambda}{\Lambda}\netlim{\gamma}{\Gamma}
\sum_{i=1}^{n_{\gamma}}\sum_{j=1}^{m_{\lambda}}\alpha_i^{\gamma}\beta_j^{\lambda}<x,T^{\sun}(t_i^{\gamma}+s_{j}^{\lambda})x^{\sun}>\\
&=&\netlim{\lambda}{\Lambda}\netlim{\gamma}{\Gamma}
\sum_{i=1}^{n_{\gamma}}\sum_{j=1}^{m_{\lambda}}\alpha_i^{\gamma}\beta_j^{\lambda}g(t_i^{\gamma}+s_{j}^{\lambda}) \\
&=&\netlim{\lambda}{\Lambda}\netlim{\gamma}{\Gamma}
<\sum_{i=1}^{n_{\gamma}}\alpha_i^{\gamma}g(\cdot+t_i^{\gamma}),\sum_{j=1}^{m_{\lambda}}\beta_j^{\lambda}\delta_{s_j^{\lambda}}>_{(BUC(\rep),BUC(\rep)^*)}\\
&=&\netlim{\gamma}{\Gamma}\netlim{\lambda}{\Lambda}
<\sum_{i=1}^{n_{\gamma}}\alpha_i^{\gamma}g(\cdot+t_i^{\gamma}),\sum_{j=1}^{m_{\lambda}}\beta_j^{\lambda}\delta_{s_j^{\lambda}}>_{(BUC(\rep),BUC(\rep)^*)}.
\end{eqnarray*}
As $ac O(g)$ is weakly relatively compact in $BUC(\rep)$, we may interchange the limits.
\end{proof}

Next, we recall the consequence of \cite[Cor. 2 (a), p. 127]{Jarchow}, and we have the following.
\begin{pro} \label{tau-separated}
Let $E\subset F\subset X^*$ and $\tau$ be a locally convex topology on $X^*$, with $\sigma(X^*,X)\subset  \tau \subset \nrm{\cdot}.$  
If $E$ cannot be separated from $F$ by a $\tau-$continuous functional, then 
$\overline{E}^{\tau}=\overline{F}^{\tau}.$
\end{pro}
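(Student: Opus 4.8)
The plan is to prove the two inclusions separately. Since $E\subseteq F$, monotonicity of the $\tau$-closure gives $\overline{E}^{\tau}\subseteq\overline{F}^{\tau}$ for free, so the entire content of the statement lies in the reverse inclusion $\overline{F}^{\tau}\subseteq\overline{E}^{\tau}$. I would establish this by contraposition: assuming the two $\tau$-closures differ, I will manufacture a $\tau$-continuous linear functional that separates $E$ from $F$, contradicting the hypothesis. The engine is the Hahn--Banach separation theorem for the locally convex space $(X^{*},\tau)$, which is exactly the cited consequence \cite[Cor. 2 (a), p. 127]{Jarchow}; here the sandwiching $\sigma(X^{*},X)\subseteq\tau$ forces $\tau$ to be Hausdorff, so that strict separation of a point from a closed convex set is available.

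For the contrapositive, suppose $\overline{E}^{\tau}\neq\overline{F}^{\tau}$. Because $E\subseteq F$ already yields $\overline{E}^{\tau}\subseteq\overline{F}^{\tau}$, the failure of equality forces $F\not\subseteq\overline{E}^{\tau}$: indeed, if $F\subseteq\overline{E}^{\tau}$ held, then $\overline{F}^{\tau}\subseteq\overline{E}^{\tau}$ and the closures would coincide. Hence I may pick $f_{0}\in F\setminus\overline{E}^{\tau}$. The set $\overline{E}^{\tau}$ is $\tau$-closed and convex while $f_{0}$ lies outside it, so the separation theorem furnishes a $\tau$-continuous linear functional $\varphi$ on $X^{*}$ and a real number $\alpha$ with $\Rep\varphi(e)\le\alpha<\Rep\varphi(f_{0})$ for every $e\in\overline{E}^{\tau}\supseteq E$. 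Thus $\varphi$ is a $\tau$-continuous functional separating $E$ from $F$, which contradicts the hypothesis; therefore $\overline{F}^{\tau}\subseteq\overline{E}^{\tau}$ and the two closures agree.

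The step I expect to be most delicate is guaranteeing that the separating functional is genuinely $\tau$-continuous rather than merely norm-continuous: since $\tau\subseteq\nrm{\cdot}$, a naive separation argument would only control the norm dual $X^{**}$, whereas what is needed is an element of the $\tau$-dual. This is precisely why I apply the separation theorem inside $(X^{*},\tau)$ itself, and why the hypothesis is phrased in terms of $\tau$-continuous functionals and invokes the locally convex structure of $\tau$ rather than that of $\sigma(X^{*},X)$ or $\nrm{\cdot}$. A secondary point to keep in mind is that the argument uses the convexity of $\overline{E}^{\tau}$, so the statement is to be read for the (absolutely) convex sets to which it is applied here, matching the hulls $\overline{ac}^{\tau}O(y)$ appearing above; for such sets the separation of the point $f_{0}$ from the closed convex set $\overline{E}^{\tau}$ is exactly \cite[Cor. 2 (a), p. 127]{Jarchow}.
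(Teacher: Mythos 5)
Your proposal is correct and follows essentially the same route as the paper: argue by contradiction/contraposition and apply Hahn--Banach separation inside the locally convex space $(X^{*},\tau)$ (the cited consequence from Jarchow), so that the separating functional is genuinely $\tau$-continuous. The only difference is a small streamlining: by noting that $F\not\subseteq\overline{E}^{\tau}$ whenever the closures differ and picking the witness $f_{0}$ directly in $F$, you bypass the paper's extra step of choosing $x\in\overline{F}^{\tau}\setminus\overline{E}^{\tau}$, producing $\phi$ with $\phi_{|\overline{E}^{\tau}}=0$, $\phi(x)=1$, and then returning to an actual element of $F$ along a $\tau$-convergent net via continuity of $\phi$; both arguments carry the same implicit convexity (in the paper, subspace) assumption on $E$, which you correctly flag and which is satisfied in the paper's application.
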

\begin{proof}
If $\overline{E}^{\tau}\not =\overline{F}^{\tau},$ then there exist an $x\in \overline{F}^{\tau} \backslash \overline{E}^{\tau}$ and a $\tau-$continuous functional $\phi$ such that $\phi_{|\overline{E}^{\tau}}=0$ and $\phi(x)=1.$ By definition, we have for net $\net{x}{\la}{\Lambda}\subset F$ the $\tau$ convergence $x_{\la}\to x.$ Moreover, we find a subnet that has no intersection with $\overline{E}^{\tau}.$ The continuity $\phi$ leads to an element $x_{\la_0},$ with $\phi(x_{\la_0})>1/2,$ which gives the contradiction.
\end{proof}

\begin{lem} \label{res-Fourier_integral}
Let $\scR_0^{\sun}$ be a restricted semigroup and $P^{\sun}$ be the minimal idempotent of $\scS_0^{\sun};$ additionally, let
\begin{enumerate}
\item  $\sigma(X^{\sun},X)\subset \tau\subset \nrm{\cdot}$,
\item $\overline{O(y)}^{\tau} \subset Y$ for all $y\in Y$,
\item  $\overline{ac}^{\tau}O(y)$ be $\tau$ compact for all $y\in Y$, and
\item $\bk{\rep\ni t \mapsto <T^{\sun}(t)y,x>}\in W(\rep)$ for all $y\in Y, x\in X$.
\end{enumerate}
Then, 
 $G:=r(P^{\sun}{\scS_0^{\sun}})_{|Y_a},$ is a compact Abelian topological group. Further, let $\Gamma$ be the character group of $G$, $\gamma\in\Gamma,$ and $\rho$ denote the normalized Haar measure on $G.$  Then,
$$
S_{\gamma}:=\int_G\overline{\gamma}(S)Sd\rho(S)
$$
exists in the sense of \cite[Def 3.26, p. 74]{RudinFA} in $(L(Y),\tau-OT)$, and 
$
S_{\gamma}\in \scW_0^{\sun} .
$ 
For given $V\in \scR_0^{\sun}$, we have
$$
S_{\gamma}Vx=\int_G\overline{\gamma}(S)S Vx d\rho(S)
$$
in $(L(Y),\tau-OT).$
\end{lem}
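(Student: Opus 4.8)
The plan is to first identify $G$ as a compact Abelian topological group, then to realise $S_\gamma$ as a weak (Gelfand) integral of a continuous $\scW_0^{\sun}$-valued function and invoke Rudin's integration theorem, and finally to transport the defining identity of that integral through the continuous evaluation maps $\delta_{Vx}$.

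First I would settle the group claim. By Proposition \ref{restriction-basics}(7), $r(P^{\sun}\scS_0^{\sun}P^{\sun})$ is a compact topological group with identity $Q=r(P^{\sun})$ and $r(P^{\sun}\scS_0^{\sun})$ is a group on $Y_a$; restricting to $Y_a$ identifies $G$ with this group, and its compactness is inherited because $\scr$ (equivalently $\tau-OT$ on these sets) is weaker than $\kappa^{\sun}$, in which $\scS_0^{\sun}$ is compact. Hypothesis (4) together with Proposition \ref{restriction-basics}(8) yields that $\scR_0^{\sun}$, hence its subgroup $G$, is Abelian (cf. Proposition \ref{restricted-semitop-convex-cpt}). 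This makes $\Gamma$, $\gamma$ and the normalized Haar measure $\rho$ available.

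Next I would set up the integral. Viewing each $S\in G$ through its representative $r(P^{\sun}\tilde S)\in\scR_0^{\sun}\subset L(Y)$, define
$$
\Funk{f}{G}{(L(Y),\tau-OT)}{S}{\overline{\gamma}(S)S}.
$$
For every $y\in Y$ the map $S\mapsto f(S)y=\overline{\gamma}(S)(Sy)$ is $\tau$-continuous, since evaluation $\delta_y$ is $\tau-OT$-to-$\tau$ continuous (by the definition of $\tau-OT$) and $\gamma$ is a continuous character with $\btr{\overline{\gamma}(S)}=1$; hence $f$ is continuous into $(L(Y),\tau-OT)$. Because $\btr{\overline{\gamma}(S)}=1$, each value $f(S)$ is a unimodular scalar multiple of an element of $\scR_0^{\sun}$, so $f(G)\subset ac\,\scR_0^{\sun}\subset\scW_0^{\sun}$, which is absolutely convex and $\tau-OT$ compact inside the Hausdorff locally convex space $(L(Y),\tau-OT)$ (Proposition \ref{restriction-basics}(2), extended to $\tau-OT$). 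Thus $\overline{co}\,f(G)\subset\scW_0^{\sun}$ is compact, and the integration theorem accompanying \cite[Def. 3.26, p. 74]{RudinFA} furnishes a unique $S_\gamma\in\scW_0^{\sun}$ with
$$
\Lambda(S_\gamma)=\int_G\overline{\gamma}(S)\,\Lambda(S)\,d\rho(S)
$$
for every $\tau-OT$ continuous linear functional $\Lambda$; this is the asserted integral, and it lies in $\scW_0^{\sun}$.

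Finally, for the evaluation identity, fix $V\in\scR_0^{\sun}$ and $x\in Y$. Since $Vx\in Y$, the evaluation $\delta_{Vx}\colon(L(Y),\tau-OT)\to(Y,\tau)$, $T\mapsto T(Vx)$, is continuous and linear. Composing with an arbitrary $\tau$-continuous functional $\psi$ on $Y$ shows that $\psi\circ\delta_{Vx}$ is $\tau-OT$ continuous, so the defining property above identifies $\delta_{Vx}(S_\gamma)$ as the weak integral of $S\mapsto\overline{\gamma}(S)S(Vx)$ in $(Y,\tau)$, i.e.
$$
S_\gamma Vx=\delta_{Vx}\!\left(\int_G\overline{\gamma}(S)S\,d\rho(S)\right)=\int_G\overline{\gamma}(S)\,S(Vx)\,d\rho(S).
$$
I expect the main obstacle to be the rigorous verification of the integrability hypotheses in the non-standard topology $\tau-OT$ — pinning down the continuity of $f$ and exhibiting $\scW_0^{\sun}$ as a $\tau-OT$ compact convex set containing the range — together with the bookkeeping needed to regard the group elements $S$ (a priori operators on $Y_a$) as the operators $r(P^{\sun}\tilde S)\in\scW_0^{\sun}\subset L(Y)$ over which the integral is actually taken.
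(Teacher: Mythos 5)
Your proposal is correct and follows essentially the same route as the paper's proof: both establish that $G$ is a compact Abelian topological group (you cite Proposition \ref{restriction-basics} (7)--(8), where the paper re-verifies closedness of $G$ and invokes Ellis's theorem before appealing to the Haar measure), and both obtain $S_{\gamma}\in\scW_0^{\sun}$ from Rudin's Theorem 3.27 using the continuity of $S\mapsto\overline{\gamma}(S)S$ into $(L(Y),\tau-OT)$ and the convexity and compactness of $\scW_0^{\sun}$. Your final step, deriving the identity $S_{\gamma}Vx=\int_G\overline{\gamma}(S)SVx\,d\rho(S)$ directly from the weak-integral definition via the continuous linear maps $\delta_{Vx}$, is just an unwound version of the paper's citation of Rudin's Exercise 24 applied to $\delta_{x^{\sun}}$ and $W\mapsto WV$, so the two arguments coincide in substance.
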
 

\begin{proof}
We start with verifying the closedness of $G$ as a subset of $\scR_0^{\sun}.$ Therefore, let $\net{T}{\la}{\Lambda}\subset\scR_0^{\sun}$ such that for $Q=r(P^{\sun})$,
$$
S=\tau-\netlim{\la}{\Lambda}r(P^{\sun}T_{\la})=\tau-\netlim{\la}{\Lambda}Qr(T_{\la}).
$$
By the compactness of $\scS_0^{\sun}$, we may assume that $T=\netlim{\la}{\Lambda}T_{\la}.$
By the above, for given $x^{\sun}\in Y$ and $x\in X$, we have
\begin{eqnarray*}
Sx^{\sun}&=&\tau-\netlim{\la}{\Lambda}Qr(T_{\la})x^{\sun} \\
&=&\tau-\netlim{\la}{\Lambda}r(T_{\la})Qx^{\sun} \\
&=&r(T)Qx^{\sun} \\
&=&Qr(T)x^{\sun},
\end{eqnarray*}
which verifies the closedness. As $G$ is Abelian from Proposition \ref{restriction-basics} (5), we find that $G$ is semitopological, and from abstract harmonic analysis \cite{Ellis_cont}, we recall that any compact semitopological group is a topological group. Hence, we find the normalized Haar measure $\rho$ on $G,$ \cite[Thm 5.14, p. 123]{RudinFA}.

To prove the existence of the integral, we apply Theorem \cite[Thm. 3.27, pp. 74-75]{RudinFA}. By Proposition \ref{restriction-basics} (2), (a) of the cited Thm 3.27 is verified.
By \cite[Thm. 5.14 p. 123]{RudinFA} $(G,\rho)$ is a Borel probability measure space.
Because, $G\subset L(Y)$ and carries the topology $\tau-OT,$
$\funk{id}{G}{(L(Y),\tau-OT)}$ 
is continuous; therefore,
$$
\Funk{f}{G}{(L(Y),\tau-OT)}{S}{\overline{\gamma}(S)S}
$$ 
as well. Moreover, 
$$
f(G)=\bk{\overline{\gamma}(S)S:S\in G}\subset \scW_0^{\sun}\subset L(Y),
$$
with $\scW_0^{\sun}$ convex and $\tau$ compact. Consequently, the integral exists and is an element of $\scW_0^{\sun}.$ For the additional proof, note that, for $x^{\sun}\in Y,$
$$
\Funk{\delta_{x^{\sun}}}{(L(Y),\tau-OT)}{(Y,\tau)}{S}{Sx^{\sun}}
$$
and, for $V\in L(Y)$,
$$
\Funk{L}{(L(Y),\tau-OT)}{(L(Y),\tau-OT)}{W}{WV}
$$
are continuous linear operators, and the claims become a consequence of \cite[p.85, Exercise 24]{RudinFA}.

\end{proof}

Now, we are ready to present the first main result of this section by showing that the space of unimodular eigenvectors is almost $Y_a.$

\begin{theo} \label{restricted_Xuds=X_a}
Let $Y\subset X^{\sun}$ be a norm closed subspace, $\scR_0^{\sun}$ be a restricted semigroup, and $P^{\sun}$ be the minimal idempotent of $\scS_0^{\sun};$ additionally, let
\begin{enumerate}
\item $\tau$ be a locally convex topology on $X^{\sun}$,
\item  $\sigma(X^{\sun},X)\subset \tau\subset \nrm{\cdot}$,
\item $\overline{O(y)}^{\tau} \subset Y$ for all $y\in Y$,
\item  $\overline{ac}^{\tau}O(y)$ be $\tau$ compact for all $y\in Y$, and
\item $\bk{\rep\ni t \mapsto <T^{\sun}(t)y,x>}\in W(\rep)$ for all $y\in Y, x\in X.$
\end{enumerate}
Then, 
$$
\overline{Y_a}^{\tau}\subset \overline{X_{uds}^{\sun}\cap Y}^{\tau}.
$$
\end{theo}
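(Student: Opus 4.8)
The plan is to deduce the inclusion from the separation principle of Proposition \ref{tau-separated}, applied to $E:=X_{uds}^{\sun}\cap Y$ and $F:=Y_a$. First I would record that $E\subset Y_a$. Indeed, if $x^{\sun}\in E$ is a unimodular eigenvector, say $Vx^{\sun}=\lambda(V)x^{\sun}$ for all $V\in\scS_0^{\sun}$ with $|\lambda(V)|=1$, then applying $P^{\sun}$ twice and using $(P^{\sun})^2=P^{\sun}$ gives $\lambda(P^{\sun})^2=\lambda(P^{\sun})$, hence $\lambda(P^{\sun})=1$ and $P^{\sun}x^{\sun}=x^{\sun}$, i.e. $x^{\sun}\in R(P^{\sun})\cap Y=Y_a$. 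By Proposition \ref{tau-separated} it then suffices to show that $E$ cannot be separated from $Y_a$ by a $\tau$-continuous functional, i.e. that every $\tau$-continuous $\phi$ with $\phi_{|E}=0$ already vanishes on $Y_a$; this yields $\overline{E}^{\tau}=\overline{Y_a}^{\tau}$ and in particular the assertion.

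The supply of eigenvectors comes from Lemma \ref{res-Fourier_integral}. Writing $G=r(P^{\sun}\scS_0^{\sun})_{|Y_a}$ for the compact Abelian topological group, $Q=r(P^{\sun})$ for its identity, $\rho$ for the normalized Haar measure and $\Gamma$ for the character group, I would use the Fourier operators $S_{\gamma}=\int_G\overline{\gamma}(S)S\,d\rho(S)\in\scW_0^{\sun}\subset L(Y)$. For fixed $y\in Y_a$ and $\gamma\in\Gamma$, invariance of $\rho$ gives $TS_{\gamma}y=\gamma(T)S_{\gamma}y$ for every $T\in G$, so $S_{\gamma}y$ is a unimodular eigenvector of the group $G$. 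The main point — and the step I expect to be the real obstacle — is to upgrade this to an eigenvector of the full semigroup $\scS_0^{\sun}$ in the sense required by $X_{uds}^{\sun}$, since a priori $V$ need not preserve $R(P^{\sun})$ without a commutation hypothesis. Here I would exploit that condition (5) forces $\scR_0^{\sun}$ to be Abelian (Proposition \ref{restriction-basics}\,(8)), so that $Q=r(P^{\sun})$ commutes with every $r(V)$, $V\in\scS_0^{\sun}$; as $Q$ is the projection onto $Y_a$, commutation gives $r(V)z=r(V)Qz=Qr(V)z\in Y_a$ for $z\in Y_a$, that is $r(V)(Y_a)\subset Y_a$. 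Consequently, for $x^{\sun}=S_{\gamma}y\in Y_a$ one has $Vx^{\sun}=r(V)x^{\sun}\in Y_a=R(Q)$, whence $Vx^{\sun}=Qr(V)x^{\sun}=\bigl(r(P^{\sun}V)_{|Y_a}\bigr)x^{\sun}=\gamma\bigl(r(P^{\sun}V)_{|Y_a}\bigr)x^{\sun}$. Thus $x^{\sun}$ is a unimodular eigenvector of $\scS_0^{\sun}$ with eigenvalue map $\lambda(V)=\gamma\bigl(r(P^{\sun}V)_{|Y_a}\bigr)$, and $S_{\gamma}y\in X_{uds}^{\sun}\cap Y=E$.

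Finally I would run the uniqueness argument for the Fourier transform on the compact Abelian group $G$. Fix a $\tau$-continuous $\phi$ with $\phi_{|E}=0$ and $y\in Y_a$, and set $f(S):=\phi(Sy)$, a continuous scalar function on $G$ since $S\mapsto Sy$ is $\tau$-$OT$-to-$\tau$ continuous and $\phi$ is $\tau$-continuous. Because $\phi$ commutes with the Rudin integral (\cite[p.85, Exercise 24]{RudinFA}), its Fourier coefficients are $\int_G\overline{\gamma}(S)f(S)\,d\rho(S)=\phi(S_{\gamma}y)=0$ for every $\gamma\in\Gamma$, the last equality because $S_{\gamma}y\in E$. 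By completeness of the characters in $L^2(G,\rho)$, together with continuity of $f$ and the full support of Haar measure, this forces $f\equiv0$; evaluating at the identity $Q\in G$ and using $Qy=y$ for $y\in Y_a$ gives $\phi(y)=f(Q)=0$. Hence $\phi$ vanishes on $Y_a$, which is exactly the non-separation needed to invoke Proposition \ref{tau-separated} and conclude $\overline{Y_a}^{\tau}\subset\overline{X_{uds}^{\sun}\cap Y}^{\tau}$.
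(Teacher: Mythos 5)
Your proposal is correct and follows essentially the same route as the paper's own proof: both construct the Fourier operators $S_{\gamma}$ from Lemma \ref{res-Fourier_integral}, use the Abelian structure forced by hypothesis (5) (via Proposition \ref{restriction-basics}) to show that the vectors $S_{\gamma}y$ are unimodular eigenvectors of the full semigroup, and then combine Proposition \ref{tau-separated} with the completeness of the characters in $L^2(G,\rho)$ and the $\tau$-OT continuity of $T\mapsto\phi(Ty)$ to rule out separation of $Y_a$ from the eigenvector space. The only cosmetic difference is that you apply the separation principle directly to $E=X_{uds}^{\sun}\cap Y$ (after verifying $E\subset Y_a$ via $\la(P^{\sun})=1$), whereas the paper applies it to the intermediate space $M=\overline{span}\bk{S_{\gamma}x^{\sun}: x^{\sun}\in Y,\ \gamma\in\Gamma}$; the substance is identical.
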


\begin{proof}
\underline{Part 1:} In this part, we prove that unimodular eigenvectors are found as an image of the generalized Fourier transforms for a given $\gamma\in \Gamma.$
By Lemma \ref{res-Fourier_integral}, for the minimal idempotent $P^{\sun}$ and $Q=r(P^{\sun})$, we find  that $\rho$ is the normalized Haar measure on the Abelian compact topological group 
$G=r(P^{\sun}\scS_0^{\sun}).$  
Further, if $\Gamma$ denotes the character group, for $\gamma\in\Gamma$, we can define 
$$
S_{\gamma}:=\int_G\overline{\gamma}(S)Sd\rho(S) \in\scW_0^{\sun}\subset L(Y).
$$
Consequently, for $x^{\sun}\in Y$, we have $S_{\gamma}x^{\sun}\in Y,$ and because $S_{\gamma}\in \scW_0^{\sun}$ and $\scW_0^{\sun}$ is Abelian by Proposition \ref{restricted-semitop-convex-cpt}, we find that $S_{\gamma}$ commutes with the operators in $\scR_0^{\sun}\subset \scW_0^{\sun}.$ Using Lemma \ref{res-Fourier_integral} for $R\in G, x^{\sun}\in Y$, we find that
\begin{eqnarray*}
RS_{\gamma}x^{\sun}&=&S_{\gamma}Rx^{\sun}=\delta_{x^{\sun}}(S_{\gamma}^{\sun}R)\\
&=&\delta_{x^{\sun}}\fk{\int_G \overline{\gamma}(S)SRd\rho(S)} \\
&=&\int_G \overline{\gamma}(S)SRx^{\sun}d\rho(S) \\
&=&\int_G \overline{\gamma}(S)RSx^{\sun}d\rho(S) \\
&=&\gamma(R)\int_G \overline{\gamma}(RS)RSx^{\sun}d\rho(S) \\
&=&\gamma(R)\int_G \overline{\gamma}(S)Sx^{\sun}d\rho(S) 
\mbox {  apply \cite[Thm 5.14 (1),(2), p. 123]{RudinFA}}\\
&=&\gamma(R)S_{\gamma}x^{\sun}.
\end{eqnarray*}
Similarly, using the fact that $G$ is Abelian, we obtain
\begin{equation}
RS_{\gamma}^{\sun}=\gamma(R)S_{\gamma}^{\sun}=S_{\gamma}^{\sun}R.
\end{equation}
Because $Q$ is the unit in $G,$ we have $\gamma(Q)=1,$ and by the previous observation, 
$  P^{\sun}S_{\gamma}=S_{\gamma}.$ Hence, for $T\in \scR_0^{\sun},$ we find $QT\in G$ and
$$
TS_{\gamma}^{\sun}=TQS_{\gamma}^{\sun}=\gamma(TQ)S_{\gamma}^{\sun}=\gamma(T)S_{\gamma}^{\sun}.
$$
This means that $S_{\gamma}Y$ consists of eigenvectors with unimodular eigenvalues $\la(T)=\gamma(T).$

\underline{Part 2:} Therefore, let $\Gamma$ be the character group of $G=r(P^{\sun}\scS_0^{\sun})_{|Y_a}.$

We prove that $Y_{a}$ cannot be separated from
$$
M=\overline{span}\bk{S_{\gamma}x^{\sun}:y \in Y, \ \gamma\in \Gamma}
$$
with a $\tau-$continuous functional $\phi$ and apply Proposition \ref{tau-separated}

Because $M\subset X_{uds}^{\sun}\subset X_{a}^{\sun},$ we assume that there is a $y\in Y_{a} \backslash M.$  By the assumption, we will find a $\tau$ continuous $\phi$ such that for $Q=r(P^{\sun}),$ $\phi(Qy)=\phi(y) \not= 0$ and $\phi_{|M}=0.$ Using the fact that $\Lambda:=\bk{L(Y)\ni T\mapsto \phi(Tx)}$ is $\tau-OT$ continuous, we obtain
 
\begin{equation} \label{G_integrals_eq_null}
0=\phi(S_{\gamma}z)=\int_G\overline{\gamma}(S)\phi(Sz)d\rho(S)
\end{equation}
for all $\gamma\in \Gamma$ and $z\in Y.$

Because the characters form an orthonormal basis in $L_2(G,\rho)$, see \cite[p. 944]{DS}, we have 
$$\bk{G\ni S\mapsto \phi(Sy)} =0  \ a.e.$$ 

Using the fact that $G$ carries the topology $\tau,$ we have for $\phi$ $\tau$ continuous and $ z\in Y$ that the functions
 
$$
\Funk{g}{(G,\tau-OT)}{\ce}{S}{\phi(Sz)}
$$

are continuous. Consequently,$ \bk{G\ni S\mapsto \phi(Sy)}$  is identically zero, and we find a contradiction to $\phi(Qy)\not=0,$ which completes the proof.
\end{proof}

As a direct consequence, we obtain the following for bounded sun-dual-semigroups.
\begin{theo} \label{reccurent_equals_unimodular}
 If $\scS_0^{\sun}$ is Abelian, then
$$
\overline{X^{\sun}_{uds}}^{\sigma(X^{\sun},X)}=\overline{ X_a^{\sun}}^{\sigma(X^{\sun},X)}.$$
\end{theo}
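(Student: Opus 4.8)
The plan is to establish the two inclusions separately, the easy one by citing an earlier result and the substantive one by specializing Theorem \ref{restricted_Xuds=X_a}. For the inclusion $\overline{X_{uds}^{\sun}}^{\sigma(X^{\sun},X)}\subset \overline{X_a^{\sun}}^{\sigma(X^{\sun},X)}$ I would simply invoke the earlier Proposition stating that $X_{uds}^{\sun}\subset X_a^{\sun}$ whenever $\scS_0^{\sun}$ is Abelian, and then pass to $\sigma(X^{\sun},X)$-closures, which preserve inclusions. This direction is immediate.

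The substantive direction is $\overline{X_a^{\sun}}^{\sigma(X^{\sun},X)}\subset \overline{X_{uds}^{\sun}}^{\sigma(X^{\sun},X)}$, and here the strategy is to apply Theorem \ref{restricted_Xuds=X_a} in the special case $Y=X^{\sun}$ with $\tau=\sigma(X^{\sun},X)$. In this case the restriction map $r$ is the identity on $L(X^{\sun})$, so $\scR_0^{\sun}=\scS_0^{\sun}$, while $Y_a=X^{\sun}\cap R(P^{\sun})=X_a^{\sun}$ and $X_{uds}^{\sun}\cap Y=X_{uds}^{\sun}$; thus the conclusion of that theorem reads exactly as the desired inclusion. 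It therefore remains only to verify its five hypotheses for this choice of $Y$ and $\tau$. Hypotheses (1) and (2) hold trivially, since $\tau=\sigma(X^{\sun},X)$ is locally convex and sits between $\sigma(X^{\sun},X)$ and the norm topology; hypothesis (3), $\overline{O(y)}^{\tau}\subset Y$, is automatic because the closure is taken inside $X^{\sun}=Y$. For hypothesis (5) I would appeal to Theorem \ref{S0-Abelian} with $\scV=\scS$: the assumption that $\scS_0^{\sun}$ is Abelian is condition (4) there, which is equivalent to condition (3), namely $\bk{t\mapsto <x,T^{\sun}(t)x^{\sun}>}\in W(\rep)$ for all $x\in X$ and $x^{\sun}\in X^{\sun}$, and this is precisely hypothesis (5).

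The main obstacle is hypothesis (4), the $\tau$-compactness of $\overline{ac}^{\tau}O(y)$. I would resolve this by relating the orbit to the compactification of the absolutely convex hull. Since $T^{\sun}(t)\in\scS^{\sun}\subset\scU_0^{\sun}$, the evaluation $\delta_y:(\scU_0^{\sun},\kappa^{\sun})\to(X^{\sun},\sigma(X^{\sun},X))$, $V\mapsto Vy$, is continuous by the very definition of $\kappa^{\sun}$, so $\scU_0^{\sun}y$ is the image of a $\kappa^{\sun}$-compact absolutely convex set and is therefore $\sigma(X^{\sun},X)$-compact and absolutely convex. As $O(y)\subset\scU_0^{\sun}y$, absolute convexity together with compactness—hence closedness, since $(X^{\sun},\sigma(X^{\sun},X))$ is Hausdorff—gives $\overline{ac}^{\tau}O(y)\subset\scU_0^{\sun}y$; a $\tau$-closed subset of a $\tau$-compact set is $\tau$-compact, which establishes (4). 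With all five hypotheses in place, Theorem \ref{restricted_Xuds=X_a} yields the remaining inclusion, and combining the two inclusions completes the proof.
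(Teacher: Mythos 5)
Your proof is correct and takes essentially the same route as the paper, which presents this theorem as a direct consequence of Theorem \ref{restricted_Xuds=X_a} specialized to $Y=X^{\sun}$ and $\tau=\sigma(X^{\sun},X)$, combined with the earlier proposition $X^{\sun}_{uds}\subset X^{\sun}_a$ for the easy inclusion. Your verification of the hypotheses---in particular hypothesis (4) via the $\kappa^{\sun}$-compact, absolutely convex set $\scU_0^{\sun}$ and the continuity of the evaluation map $\delta_y$, and hypothesis (5) via the equivalences of Theorem \ref{S0-Abelian}---supplies exactly the details the paper leaves implicit.
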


The result of Frechet for asymptotically almost periodic functions becomes a direct consequence.
\begin{theo}
Let $Z$ be a Banach space and $O(x)$ for all $x\in Z$ be relatively compact; then,
$Z_{uds}=Z_a.$
\end{theo}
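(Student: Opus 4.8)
The plan is to realise the semigroup on $Z$ as one of the restriction semigroups studied in Theorem \ref{restricted_Xuds=X_a}, take the norm topology in the role of $\tau$, and then extract both inclusions of $Z_{uds}=Z_a$. First I would set $X:=Z^{\sun}$, which carries the $C_0$-semigroup $\{T^{\sun}(t)\}$; then $X^{\sun}=Z^{\sun\sun}$ and $\scS^{\sun}=\{T^{\sun\sun}(t)\}$. Since every $C_0$-semigroup is the restriction of its bi-sun-dual semigroup $\{T^{\sun\sun}(t)\}$ (as recalled at the beginning of the restriction section), the canonical image of $Z$ is a norm-closed, $\{T^{\sun\sun}(t)\}$-invariant subspace $Y\subset X^{\sun}$ on which the restriction $\scR_0^{\sun}=\scS_0^{\sun}{}_{|Y}$ coincides with the compactification of the original semigroup on $Z$. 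Under this identification the minimal idempotent $Q=r(P^{\sun})$ governs the intrinsic splitting of $Z$, so that $Z_a=Y_a=R(Q)$ and $Z_{uds}$ is the closed span of the unimodular eigenvectors of $\scR_0^{\sun}$.

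Next I would verify the hypotheses of Theorem \ref{restricted_Xuds=X_a} with $\tau=\|\cdot\|$. Conditions (1)--(2) are immediate, and condition (3) that $\overline{O(y)}^{\|\cdot\|}\subset Y=Z$ holds because each orbit already lies in the norm-closed space $Z$. The decisive use of the hypothesis is condition (4): since $O(y)$ is relatively norm compact, Mazur's theorem makes its closed absolutely convex hull norm compact, so $\overline{ac}^{\tau}O(y)$ is $\tau$-compact. Finally, relative norm compactness of every orbit forces relative weak compactness, whence by Theorem \ref{RSdeco} each map $t\mapsto T(t)y$ is Eberlein weakly almost periodic and every scalar function $\{t\mapsto <T^{\sun}(t)y,x>\}$ lies in $W(\rep)$, which is condition (5) (and also supplies, internally to the cited theorem, the Abelian structure of $\scR_0^{\sun}$). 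Theorem \ref{restricted_Xuds=X_a} then yields $\overline{Z_a}^{\|\cdot\|}\subset\overline{X_{uds}^{\sun}\cap Z}^{\|\cdot\|}$, and since $Z_a=R(Q)$ is the range of a bounded projection it is already norm closed.

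It remains to convert this into the equality $Z_{uds}=Z_a$. The inclusion $Z_{uds}\subset Z_a$ is the easy direction: if $z$ is a unimodular eigenvector of $\scR_0^{\sun}$, then applying the relation $Qz=\la(Q)z$ to the idempotent $Q$, together with $\la(Q)^2=\la(Q)$ and $|\la(Q)|=1$, forces $\la(Q)=1$, so $z=Qz\in R(Q)=Z_a$; taking closed spans gives $Z_{uds}\subset Z_a$. For the reverse inclusion I would appeal to the construction inside the proof of Theorem \ref{restricted_Xuds=X_a}: the approximating vectors $S_{\gamma}y$ produced there are unimodular eigenvectors of the restricted semigroup $\scR_0^{\sun}$, which here is precisely the original semigroup on $Z$, so they belong to $Z_{uds}$ and hence $M\subset Z_{uds}$; the non-separation argument via Proposition \ref{tau-separated} then gives $Z_a=\overline{Z_a}^{\tau}=\overline{M}^{\tau}\subset Z_{uds}$, and combining the two inclusions yields $Z_{uds}=Z_a$. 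The main obstacle I anticipate is exactly this last identification: the cited theorem is stated with $\overline{X_{uds}^{\sun}\cap Z}^{\tau}$, whereas the intrinsic object is $Z_{uds}$, so one must confirm that the unimodular eigenvectors witnessing $X_{uds}^{\sun}\cap Z$ can be realised \emph{inside} $Z$; this is furnished by the generalized Fourier operators $S_{\gamma}\in\scW_0^{\sun}\subset L(Y)$, which map $Z$ into $Z$ and produce $\scR_0^{\sun}$-eigenvectors.
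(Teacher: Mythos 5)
Your proposal is correct and takes essentially the same approach as the paper: the paper's proof consists precisely of choosing $\tau$ as the norm topology, $Y:=Z$, and $X^{\sun}=Z^{\sun\sun}$ and then invoking Theorem \ref{restricted_Xuds=X_a}, declaring the remaining verification ``straightforward.'' Your checking of the hypotheses (Mazur's theorem for $\tau$-compactness of the absolutely convex hull, relative norm compactness giving Eberlein weak almost periodicity via Theorem \ref{RSdeco}) and your conversion of the conclusion into the two inclusions $Z_{uds}\subset Z_a$ and $Z_a\subset Z_{uds}$ are exactly the details the paper leaves to the reader.
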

\begin{proof}
Choose $\tau$ as the norm topology, $Y:=Z$ and $X^{\sun}=Z^{\sun\sun}.$ The additional proof becomes straightforward. 
\end{proof}

Now, we are ready to show how a spectral condition applies to obtain a zero mean.
\begin{cor}
Let $A$ be the generator of $\scS$ and $\sigma_p(A^{\sun})\cap i\re=\emptyset.$ If 
$\bk{t\mapsto <x,T^{\sun}(t)x^{\sun}>}$ is Eberlein weakly almost periodic for all $x\in X, x^{\sun}\in X^{\sun},$ then
\begin{equation} \label{ergdic-0}
\lim_{T\to \infty}\frac{1}{T}\int_0^T\btr{<x,T^{\sun}(t)x^{\sun}>}dt=0, \ \mbox{ for all } x\in X, \  x^{\sun}\in X^{\sun}.
\end{equation}
\end{cor}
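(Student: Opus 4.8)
The plan is to feed the two hypotheses through the structural results already established, thereby reducing the assertion to a scalar statement about a single weakly almost periodic function, to which the classical Parseval identity then applies. First I would note that the standing assumption, that $\{t\mapsto\langle x,T^{\sun}(t)x^{\sun}\rangle\}\in W(\rep)$ for all $x\in X$ and $x^{\sun}\in X^{\sun}$, is precisely condition (3) of Theorem \ref{S0-Abelian}, which is equivalent to (4): $\scS_0^{\sun}$ is Abelian. Theorem \ref{reccurent_equals_unimodular} then gives $\overline{X_{uds}^{\sun}}^{\sigma(X^{\sun},X)}=\overline{X_a^{\sun}}^{\sigma(X^{\sun},X)}$, so it suffices to show that the spectral hypothesis forces $X_{uds}^{\sun}=\{0\}$, for then $X_a^{\sun}=\{0\}$ as well.

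To establish $X_{uds}^{\sun}=\{0\}$, let $x^{\sun}\neq0$ be an eigenvector with unimodular eigenvalue, so $Tx^{\sun}=\la(T)x^{\sun}$ with $\btr{\la(T)}=1$ for every $T\in\scS_0^{\sun}$. Evaluating along the one--parameter subsemigroup $\{T^{\sun}(t)\}_{t\ge0}\subset\scS_0^{\sun}$ and using that $\la$ is multiplicative (from $T_1T_2x^{\sun}=\la(T_1)\la(T_2)x^{\sun}=\la(T_1T_2)x^{\sun}$), the map $\phi(t):=\la(T^{\sun}(t))$ satisfies $\phi(t+s)=\phi(t)\phi(s)$ and $\btr{\phi(t)}=1$; since $T^{\sun}(t)x^{\sun}=\phi(t)x^{\sun}$ and $\scS^{\sun}$ is strongly continuous, $\phi$ is continuous, hence $\phi(t)=e^{i\beta t}$ for some $\beta\in\re$. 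Then $T^{\sun}(t)x^{\sun}=e^{i\beta t}x^{\sun}$ is differentiable in $t$, so $x^{\sun}\in D(A^{\sun})$ with $A^{\sun}x^{\sun}=i\beta x^{\sun}$, contradicting $\sigma_p(A^{\sun})\cap i\re=\emptyset$. Thus $X_{uds}^{\sun}=\{0\}$ and $X_a^{\sun}=\{0\}$. As $\scS_0^{\sun}$ is Abelian, the Corollary identifying $X_0^{\sun}=X_{fl}^{\sun}$ applies, so the splitting $X^{\sun}=X_a^{\sun}\oplus X_0^{\sun}$ of Theorem \ref{apply-to-sun} collapses to $X^{\sun}=X_{fl}^{\sun}$: every $x^{\sun}\in X^{\sun}$ is a flight vector.

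Next I would pass to scalars. Fix $x\in X$, $x^{\sun}\in X^{\sun}$ and set $g(t):=\langle x,T^{\sun}(t)x^{\sun}\rangle\in W(\rep)$. Since $x^{\sun}$ is a flight vector there is a net $\net{t}{\al}{A}$ with $\sigma(X^{\sun},X)-\netlim{\al}{A}T^{\sun}(t_{\al})x^{\sun}=0$; as each $T^{\sun}(s)=T(s)^{\sun}$ is weak-$*$ continuous, $g(s+t_{\al})=\langle T(s)x,T^{\sun}(t_{\al})x^{\sun}\rangle\to0$ for every $s$. Weak relative compactness of $O(g)$ together with the Eberlein--Smulian theorem yields a sequence $\seq{s}{n}$ with $g(\cdot+s_n)\to0$ weakly, i.e.\ $g\in W_0(\rep)$. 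Because the Fourier--Bohr coefficient $c_{\la}(g)=\lim_{T\to\infty}\erint{T}e^{-i\la t}g(t)\,dt$ is covariant, $c_{\la}(g(\cdot+s))=e^{i\la s}c_{\la}(g)$, weak convergence forces $c_{\la}(g)=0$ for all $\la\in\re$. The Parseval identity for scalar weakly almost periodic functions (\cite{Krengel}) then gives $\lim_{T\to\infty}\erint{T}\btr{g(t)}^2\,dt=\sum_{\la}\btr{c_{\la}(g)}^2=0$, and Cauchy--Schwarz produces $\lim_{T\to\infty}\erint{T}\btr{g(t)}\,dt=0$, which is the assertion.

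I expect the last step to be the main obstacle: everything preceding it is structural and weak, whereas the conclusion is a quantitative $L^1$--Ces\`aro statement about the modulus of $g$. The mean ergodic theorem controls only $\erint{T}g$, not $\erint{T}\btr{g}$, so one genuinely needs the $L^2$--Parseval identity (equivalently, that the autocorrelation $t\mapsto M_s(\overline{g(s)}g(s+t))$ is almost periodic with total spectral mass $\sum_{\la}\btr{c_{\la}(g)}^2$) in order to eliminate the modulus. The auxiliary bridge in the second paragraph, matching unimodular eigenvectors of the compactification with the imaginary point spectrum of $A^{\sun}$, is the other place where care is required.
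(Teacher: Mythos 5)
Your proposal is correct, and its structural half is the same as the paper's: both arguments run through Theorem \ref{S0-Abelian} (the WAP hypothesis makes $\scS_0^{\sun}$ Abelian), the splitting of Theorem \ref{apply-to-sun} combined with $X_{fl}^{\sun}=X_0^{\sun}$, and Theorem \ref{reccurent_equals_unimodular} to play $X_a^{\sun}$ off against $X^{\sun}_{uds}$, with the spectral hypothesis excluding nonzero unimodular eigenvectors; the paper simply runs this chain in contrapositive form, which is a cosmetic difference. Where you genuinely deviate is the scalar endgame. The paper takes a pair $(x,x^{\sun})$ violating (\ref{ergdic-0}) and invokes Krengel's Theorem 4.7 (p.\ 108), which is exactly the statement that a scalar WAP function lies in $W_0(\rep)$ iff the Ces\`aro mean of its modulus vanishes; you instead prove the direction you need (flight vector $\Rightarrow$ vanishing mean of the modulus) from scratch: the Fourier--Bohr coefficient $c_{\la}$ is a bounded, hence weakly continuous, linear functional on $W(\rep)$ (multiplication by $e^{-i\la\cdot}$ followed by Eberlein's mean), its modulus is translation-invariant, so $g_{s_n}\to 0$ weakly kills every $c_{\la}(g)$, and then Parseval plus Cauchy--Schwarz finish. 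This is sound --- Parseval holds with equality for WAP functions because $W_0(\rep)$ is a closed ideal of $W(\rep)$ on which the mean vanishes, so in the splitting $g=g_{ap}+g_0$ the cross terms and $\btr{g_0}^2$ have mean zero --- but be aware that it amounts to re-deriving the cited theorem of Krengel, so you trade a citation for a genuinely nontrivial piece of WAP theory. Two details you supply that the paper leaves implicit are worth keeping: the bridge from a unimodular eigenvector to the point spectrum (continuity and multiplicativity of $t\mapsto\la(T^{\sun}(t))$ give $T^{\sun}(t)x^{\sun}=e^{i\beta t}x^{\sun}$, hence $A^{\sun}x^{\sun}=i\beta x^{\sun}$), and the use of Eberlein--\v{S}mulian to convert the net furnished by the flight property into the sequence demanded by the definition of $W_0(\rep)$.
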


\begin{proof}
Let (\ref{ergdic-0}) be not equal to zero for some pair $(x,x^{\sun})$; then, by \cite[Thm. 4.7, p. 108]{Krengel}, we find that
$\bk{t\mapsto <T^{\sun}(t)x^{\sun},x>}\not\in W_0(\rep).$
 Consequently, $x^{\sun}$ is not a $\kappa^{\sun}-$flight vector. By the splitting obtained in Theorem \ref{apply-to-sun}, $x^{\sun}=x^{\sun}_a\oplus x_0^{\sun},$ the Abelian structure gives $X_{fl}^{\sun}=X_0^{\sun}$ and therefore  
 an $x_a^{\sun}\not= 0.$ By Theorem \ref{reccurent_equals_unimodular}, $X_{uds}^{\sun}\not= \bk{0},$ 
 which leads to a unimodular eigenvector of $\bk{T^{\sun}(t)}_{t\in\rep}$ and therefore to an element of $\sigma_p(A^{\sun})\cap i\re.$
\end{proof}

\begin{remk}
Deleeuw-Glicksberg needs $\bk{t\mapsto T^{\sun}(t)x^{\sun}}$  Eberlein weakly almost periodicity for all $x\in X^{\sun}$. In Corollary 7.12 above, only $\bk{t\mapsto <y,T^{\sun}(t)x^{\sun}>}$ needs Eberlein weakly almost periodicity for all $x^{\sun}\in X^{\sun},y\in X.$
\end{remk}

Due to the pointwise verification of the Abelian structure, we can give the following criterion for a vector to be a member of $X_{uds}^{\sun},$ which is the second main result of this section. 
\begin{theo}
Let $Y\subset X^{\sun}$ be a norm closed subspace, $\scR_0^{\sun}$ be a restricted semigroup, and $P^{\sun}$ be the minimal idempotent of $\scS_0^{\sun};$ additionally, let
\begin{enumerate}
\item $\tau$ be a locally convex topology on $X^{\sun}$,
\item  $\sigma(X^{\sun},X)\subset \tau\subset \nrm{\cdot}$,
\item $\overline{O(y)}^{\tau} \subset Y$ for all $y\in Y$,
\item  $\overline{ac}^{\tau}O(y)$ be $\tau$ compact for all $y\in Y$, and
\item $\bk{\rep\ni t \mapsto <T^{\sun}(t)y,x>}\in W(\rep)$ for all $y\in Y, x\in X.$
\end{enumerate}
Then, the following for $x^{\sun}\in Y_a$ are equivalent:
\begin{enumerate}
\item $\overline{O(x^{\sun})}^{\tau}$ is norm separable.
\item $x^{\sun}\in X_{uds}^{\sun}$.
\item $O(x^{\sun})$ is relatively norm compact.
\end{enumerate}
\end{theo}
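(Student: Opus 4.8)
The plan is to prove the cycle $(2)\Rightarrow(3)\Rightarrow(1)\Rightarrow(3)\Rightarrow(2)$, with all the substance concentrated in $(1)\Rightarrow(3)$. Throughout I work on the compact abelian topological group $G:=r(P^{\sun}\scS_0^{\sun})_{|Y_a}$ furnished by Lemma \ref{res-Fourier_integral}, with normalized Haar measure $\rho$, character group $\Gamma$, and identity $Q=r(P^{\sun})$. Since $x^{\sun}\in Y_a$ we have $Qx^{\sun}=x^{\sun}$, and because $\delta_{x^{\sun}}:S\mapsto Sx^{\sun}$ is $(\tau-OT)$-to-$\tau$ continuous while $G$ is the $\tau-OT$ closure of the restricted orbit, the orbit closure is $K:=\overline{O(x^{\sun})}^{\tau}=\bk{Sx^{\sun}:S\in G}$, a $\tau$-compact set. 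Let $M$ be the common bound of $\scS^{\sun}$; then every $S\in G$ satisfies $\nrm{S}\le M$ and, $G$ being a group, $\nrm{S^{-1}}\le M$ as well.

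The two easy directions come first. For $(2)\Rightarrow(3)$, the set $X_{ap}^{\sun}:=\bk{y\in X^{\sun}:O(y)\text{ is relatively norm compact}}$ is a norm-closed linear subspace, closedness following from total boundedness together with the uniform bound $\nrm{T^{\sun}(t)}\le M$; since every unimodular eigenvector $v$ has orbit contained in the norm compact set $\bk{cv:\btr{c}=1}$, we get $X_{uds}^{\sun}\subset X_{ap}^{\sun}$, which is $(3)$. For $(3)\Rightarrow(1)$, if $O(x^{\sun})$ is relatively norm compact then its norm closure is norm compact, hence $\tau$-compact and therefore $\tau$-closed; thus it coincides with $K$, which is then norm compact and a fortiori norm separable.

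The heart is $(1)\Rightarrow(3)$, for which I consider the orbit map $\varphi:G\to(Y_a,\nrm{\cdot})$, $\varphi(S)=Sx^{\sun}$. For each $x\in X$ the map $S\mapsto<\varphi(S),x>$ is $\tau$-continuous, so $\varphi$ is weakly measurable with respect to the family $X$, which separates $X^{\sun}$; since by $(1)$ its range $K$ is norm separable, Pettis' theorem gives that $\varphi$ is strongly $\rho$-measurable. By Lusin's theorem, for every $\ep>0$ there is a compact $C\subset G$ with $\rho(G\setminus C)<\ep$ on which $\varphi$ is norm continuous. Taking $\ep<\tfrac12$, the continuity of $R\mapsto\rho(RC\cap C)$ and its value $\rho(C)>\tfrac12$ at $R=Q$ give a neighbourhood of $Q$ on which $\rho(RC\cap C)>0$, so for such $R$ there is $S\in C$ with $RS\in C$. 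As $G$ is compact, both $\varphi|_C$ and the multiplication are uniformly continuous, so after shrinking the neighbourhood to some $V$ we have $\nrm{\varphi(RS)-\varphi(S)}<\ep$ whenever $R\in V$ and $S,RS\in C$. Finally, $G$ being abelian with $\nrm{S^{-1}}\le M$,
\[
\nrm{Rx^{\sun}-x^{\sun}}=\nrm{S^{-1}S(Rx^{\sun}-x^{\sun})}\le M\nrm{S(Rx^{\sun}-x^{\sun})}=M\nrm{\varphi(RS)-\varphi(S)}<M\ep,
\]
so $\varphi$ is norm continuous at $Q$; applying the same abelian estimate to $\varphi(RR_0)-\varphi(R_0)=R_0(Rx^{\sun}-x^{\sun})$ yields norm continuity everywhere, whence $K=\varphi(G)$ is norm compact, i.e. $(3)$. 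This automatic-continuity step, upgrading separability of the orbit to norm continuity of the compact group action, is the main obstacle.

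It remains to record $(3)\Rightarrow(2)$. Once $\varphi$ is norm continuous on the compact abelian group $G$, its Fourier coefficients are precisely the generalized integrals of Lemma \ref{res-Fourier_integral},
\[
\widehat{\varphi}(\gamma)=\int_G\overline{\gamma}(S)Sx^{\sun}\,d\rho(S)=S_{\gamma}x^{\sun},
\]
which by Part 1 of the proof of Theorem \ref{restricted_Xuds=X_a} are unimodular eigenvectors and hence lie in $X_{uds}^{\sun}$. Convolving $\varphi$ with a Fej\'er kernel on $G$ produces trigonometric polynomials $\sum_{\gamma}c_{\gamma}\gamma(\cdot)S_{\gamma}x^{\sun}$ converging to $\varphi$ uniformly in norm (Peter--Weyl / Stone--Weierstrass for the characters of the compact abelian group $G$); evaluating at $S=Q$ exhibits $x^{\sun}=\varphi(Q)$ as a norm limit of elements of $X_{uds}^{\sun}$, so $x^{\sun}\in X_{uds}^{\sun}$. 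This closes the cycle and proves the equivalence.
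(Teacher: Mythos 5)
Your proof is correct, but it routes the equivalence genuinely differently from the paper. The paper's hard step is $(1)\Rightarrow(2)$ directly: it sets $M=\overline{span}\bk{S_{\gamma}y:y\in Y,\ \gamma\in\Gamma}$, passes to the quotient map $q:X_a^{\sun}\to X_a^{\sun}/M$, uses the separability hypothesis to choose countably many norming functionals $\bk{z_n^*}$ on $\overline{span}\bk{qGx^{\sun}}$, notes that each continuous scalar function $S\mapsto <qSx^{\sun},z_n^*>$ has all Fourier coefficients zero (since $qS_{\gamma}x^{\sun}=0$ and the functionals pass through the Bochner integral), hence vanishes $\rho$-a.e.\ by Parseval in $L^2(G,\rho)$; a countable union of null sets then yields a single $S\in G$ with $Sx^{\sun}\in M$, and the group inverse of $S$ together with translation invariance of $M$ gives $x^{\sun}\in M\subset X_{uds}^{\sun}$; then $(2)\Rightarrow(3)$ is proved exactly as you do, and $(3)\Rightarrow(1)$ is left implicit. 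You instead make $(1)\Rightarrow(3)$ the hard step --- an automatic-continuity argument (Pettis measurability, vector-valued Lusin, a Steinhaus-type argument on $G$, and the abelian estimate with $\nrm{S^{-1}}\le M$) showing the orbit map $\varphi$ is norm continuous, hence the orbit norm compact --- and then recover $(3)\Rightarrow(2)$ by vector-valued Fej\'er summability on $G$. Your factorization isolates two reusable statements (a compact group of uniformly bounded operators whose orbit is scalarly continuous and norm separable acts norm continuously on that orbit; a norm-precompact orbit lies in the closed span of unimodular eigenvectors, the classical Fr\'echet/Peter--Weyl fact), and it yields the stronger intermediate conclusion that $\varphi:G\to(Y_a,\nrm{\cdot})$ is continuous; the paper's route needs less machinery (scalar $L^2$ Parseval and Bochner integration instead of Pettis/Lusin/Steinhaus and summability kernels) and exhibits the approximating eigenvectors $S_{\gamma}x^{\sun}$ explicitly, while both proofs rest on the same infrastructure from Lemma \ref{res-Fourier_integral}. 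Two places you should tighten: in the Pettis step, the correct justification is that $X$ is $1$-norming for $X^{\sun}$ (it is, because $X^{\sun}$ carries the norm inherited from $X^*$) --- the standard Pettis theorem requires a norming, not merely separating, family, the separating version being a deeper refinement; and the identification $\overline{O(x^{\sun})}^{\tau}=\bk{Sx^{\sun}:S\in G}$ deserves a line of justification via the $\tau$-OT compactness of $\scR_0^{\sun}$, commutativity, and $Qx^{\sun}=x^{\sun}$, since $G$ is $r(P^{\sun}\scS_0^{\sun})_{|Y_a}$ rather than literally the closure of the restricted translations.
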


\begin{proof}
As ${\scR_0^{\sun}}$ is Abelian, $G=r(P^{\sun}\scS_0^{\sun}P^{\sun})$ is a compact Abelian topological group \cite{Ellis}. 
The splitting is a consequence of Thm \ref{apply-to-sun} (\ref{sun-subspace-split}). From Corollary \ref{res-Fourier_integral}, we have
$$
S_{\gamma}x^{\sun}=\int_{G}\overline{\gamma}(S)Sx^{\sun} d\rho(S)\in Y.
$$
By \cite[Cor. 4 pp. 42-43]{DiestelUhl}, we have its a Bochner integral, which is an element of $X^*.$ Moreover, it coincides on $X$ with the integral defined in the proof of theorem \ref{reccurent_equals_unimodular}; hence, it becomes an element of $Y.$

For $R\in \scR_0^{\sun}$, we use that the left multiplication is continuous and find that  

\begin{eqnarray*}
RS_{\gamma}y&=&\int_G\overline{\gamma}(S)SRy d\rho(S) =\int_{G}\overline{\gamma}(S)RSy d\rho(S)=\gamma(R)S_{\gamma}y.
\end{eqnarray*}

Defining $$M=\overline{span}\bk{S_{\gamma}y:y\in Y, \gamma\in \Gamma},$$ we have $M\subset Y_a$ .

For $x^{\sun}\in X_a^{\sun}$ and $q:X^{\sun}_a\to X_a^{\sun}/M$ as the quotient map, if $Z=\overline{span}\bk{qGx^{\sun}}$, then by assumption, $(Z,\nrm{\cdot})$ is separable. 
Consequently, $(B_{Z^*}, w^*)$ is separable (compact metrizable). Choose $\bk{z^*_n}_{n\in\za}$ dense in $(B_{Z^*}, w^*),$  and use the Hahn-Banach-Theorem to extend them norm-preservingly to all of $X_a^{\sun}/M.$

By definition, $S_{\gamma}x^{\sun}\in M.$ Consequently, for the sequence of bounded linear functionals 
$$
\Funk{U_n}{X^{\sun}}{\ce}{u}{<qu,z_n^*>,} 
$$
due to the Bochner integrability, we obtain,
$$
0=<qS_{\gamma}x^{\sun},z_n^*>=\int_G\overline{\gamma}(S)<qSx^{\sun},z_n^*>d\rho(S)
$$
for all $\gamma\in \Gamma$ and $n\in \za.$ Using $\bk{\gamma}_{\gamma\in \Gamma}$ as an orthonormal basis in $L^2(G,\rho)$,
$$
<qSx^{\sun},z_n^*>=0 \mbox{ a.e. for all } n\in \za.
$$
Thus, for sets $A_n\subset K,$ with $\rho(A_n)=0$, we have
$$
<qSx^{\sun},z_n^*>=0 \mbox{ for all } S\in G\backslash A_n, n\in \za.
$$
Let $A=\bigcup_{n\in\za}A_n;$ then, $\rho(A)=0$ and
$$
<qSx^{\sun},z_n^*>=0 \mbox{ for all }S\in G\backslash A, n\in \za.
$$
Using $\bk{z^*_n}_{n\in\za}$ totally on $Z$, we find an $S\in G$ with $qSx^{\sun}=0. $ Consequently, $Sx^{\sun}\in M.$ Because of Part 1 of Theorem \ref{reccurent_equals_unimodular}, the space $M$ is translation invariant, and for $x^{\sun}\in X_a$, we find, using $G$ as a group on $X_a,$ a $T\in G$ such that $TSx^{\sun}=x^{\sun}$ and therefore $x^{\sun}\in M\subset X_{uds}^{\sun}.$

(2)$\Rightarrow$ (3): Let $x^{\sun} \in X^{\sun}_{uds}$ Then, $x^{\sun}$ is the limit of linear combinations of unimodular vectors $\bk{x_i^{n}}_{i=1..m_n,n\in\za}\subset X_a^{\sun}$, i.e., satisfying $Tx_i^n=\la^n_i(T)x_i^n.$  Consequently, $O(x_i^n)$ is norm compact and therefore the orbit of the linear combination. 
This leaves, if the vectors $\bk{x_n}_{n\in\za}$ have relatively norm-compact orbits and $x_n \to x,$ that $O(x)$ is relatively norm compact. Note that for some constant $C>0$,
$$
\nrm{Tx_n-Tx}\le C\nrm{x-x_n},
$$
which concludes the proof
\end{proof}

As proposed, we show that the separability of the orbit indicates almost periodicity. Note that if $x^{\sun}\in X_{uds}^{\sun},$ then the mapping $\bk{t\mapsto T^{\sun}(t)x^{\sun}}$ is almost periodic. Therefore, we give a criterion when an element in $X_a^{\sun}$ is in $X_{uds}^{\sun}.$ 
\begin{theo}\label{separable-theorem}
If $x^{\sun}\in X_a^{\sun}$ and  $\scS_0^{\sun}$ is Abelian, then the following are equivalent.
\begin{enumerate}
\item $\overline{O(x^{\sun})}^{\sigma(X^{\sun},X)}$ is norm separable.
\item $x^{\sun}\in X_{uds}^{\sun}$.
\item $O(x^{\sun})$ is relatively norm compact.
\end{enumerate}
\end{theo}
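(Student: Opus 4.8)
The plan is to obtain this statement as the special case $\tau=\sigma(X^{\sun},X)$ and $Y=X^{\sun}$ of the preceding theorem, so that the work reduces to checking that its five hypotheses hold under the present weaker assumptions. With $Y=X^{\sun}$ the restriction map $r$ is the identity, hence $\scR_0^{\sun}=\scS_0^{\sun}$ and the topology $\scr$ coincides with $\kappa^{\sun}$; moreover $Y_a=X^{\sun}\cap R(P^{\sun})=X_a^{\sun}$ by the definition of the splitting in Theorem \ref{apply-to-sun}, so that the hypothesis $x^{\sun}\in Y_a$ becomes exactly $x^{\sun}\in X_a^{\sun}$. In particular $\scR_0^{\sun}=\scS_0^{\sun}$ is a restricted semigroup in the required sense and is Abelian by assumption.

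Hypotheses (1) and (2) are immediate, since $\sigma(X^{\sun},X)$ is locally convex and is the weakest admissible topology (it is coarser than $\nrm{\cdot}$). Hypothesis (5) asks that $\bk{t\mapsto <T^{\sun}(t)x^{\sun},x>}\in W(\rep)$ for all $x^{\sun}\in X^{\sun}$ and $x\in X$; since $<T^{\sun}(t)x^{\sun},x>=<x,T^{\sun}(t)x^{\sun}>$, this is precisely the equivalence of items (3) and (4) in Theorem \ref{S0-Abelian} applied to $\scV=\scS$, and therefore it is equivalent to $\scS_0^{\sun}$ being Abelian, which is assumed.

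The only substantive point is to verify hypotheses (3) and (4), namely that $\overline{O(x^{\sun})}^{\sigma(X^{\sun},X)}\subset X^{\sun}$ and that $\overline{ac}^{\sigma(X^{\sun},X)}O(x^{\sun})$ is $\sigma(X^{\sun},X)$-compact, for every $x^{\sun}\in X^{\sun}$. The subtlety is that a bounded subset of $X^{\sun}$ is $w^*$-relatively compact in $X^*$, but its $\sigma(X^*,X)$-closure need not remain inside the merely $w^*$-dense subspace $X^{\sun}$, so one cannot argue by a naive appeal to Banach--Alaoglu in $X^*$. I would instead dominate the orbit by the compactification: recall that $\scU_0^{\sun}$ is absolutely convex and $\kappa^{\sun}$-compact, and that for fixed $x^{\sun}$ the evaluation $\delta_{x^{\sun}}:V\mapsto Vx^{\sun}$ is, directly from the definition of $\kappa^{\sun}$, continuous from $(\scU_0^{\sun},\kappa^{\sun})$ into $(X^{\sun},\sigma(X^{\sun},X))$. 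Hence $\scU_0^{\sun}x^{\sun}=\delta_{x^{\sun}}(\scU_0^{\sun})$ is an absolutely convex, $\sigma(X^{\sun},X)$-compact subset of $X^{\sun}$. Since $T^{\sun}(t)\in\scS^{\sun}\subset\scU_0^{\sun}$ we get $O(x^{\sun})\subset\scU_0^{\sun}x^{\sun}$, hence $ac\,O(x^{\sun})\subset\scU_0^{\sun}x^{\sun}$ and therefore $\overline{ac}^{\sigma(X^{\sun},X)}O(x^{\sun})\subset\scU_0^{\sun}x^{\sun}$. A $\sigma(X^{\sun},X)$-closed subset of the compact set $\scU_0^{\sun}x^{\sun}$ is itself $\sigma(X^{\sun},X)$-compact and lies in $X^{\sun}$, which secures (3) and (4) simultaneously.

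With all five hypotheses in force the preceding theorem applies verbatim and yields the asserted equivalence of the three conditions for $x^{\sun}\in X_a^{\sun}$. I expect the domination step for hypothesis (4) to be the main obstacle: it is exactly the place where the refined operator compactification $\scU_0^{\sun}$ constructed earlier—rather than any closure taken in $X^{*}$—is needed to keep all $w^*$-limits inside $X^{\sun}$. Everything else, in particular the easy implication that a relatively norm-compact orbit has norm-separable $\sigma(X^{\sun},X)$-closure, is already subsumed by the statement we are invoking.
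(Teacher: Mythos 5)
Your proposal is correct and follows exactly the route the paper intends: Theorem \ref{separable-theorem} is stated there without proof as the specialization $\tau=\sigma(X^{\sun},X)$, $Y=X^{\sun}$ of the preceding theorem, which is precisely your reduction. Your verification of the orbit-compactness hypothesis via the $\kappa^{\sun}$-compact, absolutely convex set $\scU_0^{\sun}x^{\sun}$ (through the continuous evaluation $\delta_{x^{\sun}}$) is sound and supplies the one detail the paper leaves implicit, namely that $\sigma(X^{\sun},X)$-limits of the absolutely convex orbit hull cannot escape $X^{\sun}$.
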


\begin{cor}
If $\overline{O(x^{\sun})}^{\sigma(X^{\sun},X)}$ is norm separable for all $x^{\sun}\in X_a^{\sun}$ and  $\scS_0^{\sun}$ is Abelian, then
$$X_a^{\sun}= X_{uds}^{\sun}.$$
\end{cor}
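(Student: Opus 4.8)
The plan is to read off the equality directly from Theorem~\ref{separable-theorem}, combined with the earlier inclusion $X_{uds}^{\sun}\subset X_a^{\sun}$ valid in the Abelian case; there is essentially no new argument to be made, since the Corollary is just the uniform (over all of $X_a^{\sun}$) consequence of a pointwise equivalence that has already been established.

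First I would fix an arbitrary $x^{\sun}\in X_a^{\sun}$. By hypothesis $\overline{O(x^{\sun})}^{\sigma(X^{\sun},X)}$ is norm separable, which is precisely condition~(1) of Theorem~\ref{separable-theorem}; since $\scS_0^{\sun}$ is assumed Abelian, that theorem applies to this $x^{\sun}$, and its equivalence (1)$\Leftrightarrow$(2) yields $x^{\sun}\in X_{uds}^{\sun}$. As $x^{\sun}\in X_a^{\sun}$ was arbitrary, this gives the inclusion $X_a^{\sun}\subset X_{uds}^{\sun}$. For the reverse inclusion I would invoke the earlier Proposition asserting that, whenever $\scS_0^{\sun}$ is Abelian, one has $X_{uds}^{\sun}\subset X_a^{\sun}$; the Abelian hypothesis is in force here. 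Combining the two inclusions yields $X_a^{\sun}=X_{uds}^{\sun}$.

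The only point that needs attention is that the separability hypothesis be available simultaneously for every $x^{\sun}\in X_a^{\sun}$, which is exactly what the statement assumes, so the pointwise membership $x^{\sun}\in X_{uds}^{\sun}$ can be collected into the set inclusion. I do not expect a genuine obstacle: all of the substantive work — constructing the Haar-measure Fourier operators $S_{\gamma}$, showing that their ranges consist of unimodular eigenvectors, and running the orthonormal-basis/non-separation argument via Proposition~\ref{tau-separated} — has already been carried out inside Theorem~\ref{separable-theorem} (specialized to $\tau=\sigma(X^{\sun},X)$, $Y=X^{\sun}$) and its supporting lemmas. The Corollary is therefore obtained by quoting that theorem once for each $x^{\sun}\in X_a^{\sun}$ and pairing it with the reverse containment.
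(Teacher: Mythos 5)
Your proposal is correct and is exactly the argument the paper intends (the Corollary is stated without proof, as an immediate consequence): Theorem~\ref{separable-theorem}, applied pointwise under the blanket separability hypothesis, gives $X_a^{\sun}\subset X_{uds}^{\sun}$, and the earlier Proposition ($\scS_0^{\sun}$ Abelian $\Rightarrow X_{uds}^{\sun}\subset X_a^{\sun}$) supplies the reverse inclusion. No gaps; nothing further is needed.
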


\begin{cor}
If $X^{\sun}$ is norm separable and  $\scS_0^{\sun}$ is Abelian, then
$$
X^{\sun}_{a}=X^{\sun}_{uds}.
$$
\end{cor}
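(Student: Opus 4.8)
The plan is to obtain this as an immediate specialization of the preceding corollary, whose only non-routine hypothesis is the pointwise norm separability of the weak-star orbit closures $\overline{O(x^{\sun})}^{\sigma(X^{\sun},X)}$. The key observation I would make first is that norm separability is hereditary to subsets: since $(X^{\sun},\nrm{\cdot})$ is a separable metric space, every subset of $X^{\sun}$ is itself norm separable in the induced metric. In particular, for each $x^{\sun}\in X_a^{\sun}$ the set $\overline{O(x^{\sun})}^{\sigma(X^{\sun},X)}$ is a subset of $X^{\sun}$ and is therefore norm separable.

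With this in hand the reduction is mechanical. The semigroup $\scS_0^{\sun}$ is Abelian by assumption, and by the previous paragraph the orbit closures are norm separable for every $x^{\sun}\in X_a^{\sun}$; these are exactly the hypotheses of the preceding corollary, which then delivers $X_a^{\sun}=X_{uds}^{\sun}$ at once. Alternatively, one can argue the two inclusions separately and invoke Theorem \ref{separable-theorem} pointwise: for a fixed $x^{\sun}\in X_a^{\sun}$, the norm separability of $\overline{O(x^{\sun})}^{\sigma(X^{\sun},X)}$ is condition (1) of that theorem, which under the Abelian assumption is equivalent to condition (2), namely $x^{\sun}\in X_{uds}^{\sun}$; ranging over all $x^{\sun}\in X_a^{\sun}$ yields $X_a^{\sun}\subset X_{uds}^{\sun}$. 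The reverse inclusion $X_{uds}^{\sun}\subset X_a^{\sun}$ holds in the Abelian case by the earlier proposition, and combining the two inclusions closes the argument.

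I expect essentially no genuine obstacle here, since all the substance is already packaged in Theorem \ref{separable-theorem} and the corollary preceding the statement. The single point worth stating carefully is that the heredity of separability must be taken with respect to the \emph{norm} topology rather than the weak-star topology, because that is precisely the notion of separability demanded by the hypotheses being invoked; once this is made explicit, the proof is a one-line specialization.
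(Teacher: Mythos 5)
Your proposal is correct and follows exactly the route the paper intends: the paper states this corollary without proof as an immediate specialization of the preceding corollary, and the only observation needed to make that specialization rigorous is the one you supply, namely that norm separability is hereditary to subsets of a metric space, so every weak-star orbit closure $\overline{O(x^{\sun})}^{\sigma(X^{\sun},X)}\subset X^{\sun}$ is norm separable. Your alternative pointwise argument via Theorem \ref{separable-theorem} together with the proposition $X_{uds}^{\sun}\subset X_a^{\sun}$ in the Abelian case is the same substance, just unpacked.
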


\begin{pro} Let $\bk{S(t)}_{t\ge0}$  and $x\in X;$ then,
\begin{enumerate}
\item If $\overline{O(x)}$ is compact, then $\overline{O(x)}$ is norm separable and $\overline{\bk{S^{\sun\sun}(t)x:t\ge 0}}^{\sigma(X^{\sun\sun},X^{\sun})}\subset X.$
\item If $\overline{O(x)}$ is weakly compact, then $\overline{O(x)}^{\sigma(X^{\sun\sun},X^*)}$ is norm separable and  
$$\overline{\bk{S^{\sun\sun}(t)x:t\ge 0}}^{\sigma(X^{\sun\sun},X^{\sun})}\subset X.$$
\end{enumerate}
\end{pro}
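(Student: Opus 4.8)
The plan is to reduce both parts to two structural facts about the sun-dual embedding together with an elementary compactness principle, keeping careful track of the topology in which each closure is formed; throughout, $\overline{O(x)}$ denotes the norm closure, ``compact'' means norm compact, and ``weakly compact'' means $\sigma(X,X^*)$-compact. First I would record the two facts that drive everything. By (the proof of) Proposition \ref{X-embedded-to-X-sun-star}, the canonical map $j:X\to X^{\sun\sun}$ satisfies $T^{\sun\sun}(t)jx=jT(t)x$ and $\langle jx,x^{\sun}\rangle=\langle x,x^{\sun}\rangle$ for $x^{\sun}\in X^{\sun}$; consequently $\bk{S^{\sun\sun}(t)x:t\ge0}=j(O(x))$, the two orbits are identified, and $j$ is a homeomorphism of $(X,\sigma(X,X^{\sun}))$ onto $(jX,\sigma(X^{\sun\sun},X^{\sun}))$. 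Secondly, the orbit map $t\mapsto S(t)x$ is norm continuous, so $O(x)$ is a continuous image of $[0,\infty)$ and hence norm separable. This already settles the separability assertion in (1), since the norm closure of a norm-separable set is norm separable, and it provides the starting point for the separability assertion in (2).

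For the two inclusion claims I would use a single principle: a set compact in a given Hausdorff topology is compact, hence closed, in every coarser Hausdorff topology. In (1), $\overline{O(x)}\subset X$ is norm compact, hence $\sigma(X^{\sun\sun},X^{\sun})$-compact (the norm topology is finer), hence $\sigma(X^{\sun\sun},X^{\sun})$-closed; since $O(x)\subset\overline{O(x)}$ this gives $\overline{\bk{S^{\sun\sun}(t)x}}^{\sigma(X^{\sun\sun},X^{\sun})}\subset\overline{O(x)}\subset X$. In (2), set $K:=\overline{O(x)}$, now $\sigma(X,X^*)$-compact. Because $X^{\sun}\subset X^*$, the topology $\sigma(X,X^{\sun})$ is coarser than the weak topology, so $K$ is $\sigma(X,X^{\sun})$-compact, and via the homeomorphism $j$ it is $\sigma(X^{\sun\sun},X^{\sun})$-compact, hence closed; this yields $\overline{\bk{S^{\sun\sun}(t)x}}^{\sigma(X^{\sun\sun},X^{\sun})}\subset K\subset X$.

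It remains to prove norm separability of $\overline{O(x)}^{\sigma(X^{\sun\sun},X^*)}$ in case (2), and this is the step I expect to carry the real content; the obstacle is that a weakly compact set need not be norm separable (e.g.\ the unit ball of a nonseparable reflexive space), so one cannot read separability off weak compactness of the closure. Instead I would pass to the closed linear span $Y:=\overline{\operatorname{span}}\,O(x)$, which is norm separable because $O(x)$ is, and which is weakly closed since it is norm-closed and convex. Since $\overline{O(x)}$ is weakly compact, $O(x)$ is relatively weakly compact, whence by the standard characterisation $\overline{O(x)}^{\sigma(X^{**},X^*)}\subset X$. Restricting to $X^{\sun\sun}\subset X^{**}$ then gives $\overline{O(x)}^{\sigma(X^{\sun\sun},X^*)}=\overline{O(x)}^{\sigma(X,X^*)}\subset Y$, a subset of a norm-separable space and therefore norm separable. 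The only care needed is to check that the $\sigma(X^{\sun\sun},X^*)$-closure coincides with the weak closure in $X$ and lands in $Y$, which follows from the monotonicity of closures under restriction of topologies and from the weak-closedness of $Y$.
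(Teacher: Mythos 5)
Your proof is correct and follows essentially the same route as the paper: the key inclusion rests on the intertwining $jT(t)x=T^{\sun\sun}(t)jx$ together with the observation that a weakly compact set is compact, hence closed, in the coarser Hausdorff topology $\sigma(X,X^{\sun})$, which is exactly the paper's argument. The differences are cosmetic: you also prove part (1), which the paper explicitly skips, and for separability you use the norm-closed linear span of the norm-separable orbit where the paper uses the closed convex hull of a countable weakly dense set --- both being the same Mazur-type argument that a norm-closed convex separable superset is weakly closed.
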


\begin{proof} We only prove the second claim.
Because $\bk{t\mapsto S(t)x}$ is weakly continuous, $\overline{O(x)}^{\sigma(X,X^*)}$ is weakly separable, hence the closed convex hull and therefore norm separability. 
As 
\begin{eqnarray*}
<T^{\sun\sun}(t)jx,x^*>&=&<jx,T^{\sun}(t)x^{\sun}>=<x,T^{\sun}(t)x^{\sun}>\\
&=& <T(t)x,x^{\sun}>=<jT(t)x,x^{\sun}>,
\end{eqnarray*}
$ jT(t)x=T^{\sun\sun}(t)jx$, and because $X^{\sun}\subset X^*$, the weak compactness gives
$$
\overline{O(jx)}^{\sigma(X^{\sun\sun},X^{\sun})}=j\overline{O(x)}^{\sigma(X,X^{\sun})}\subset jX.
$$
\end{proof}

\begin{remk} If $x^{\sun}=x_a^{\sun}\oplus x_0^{\sun}$, the vector $x_a^{\sun}$ is found as a translation of $x^{\sun}$; hence, 
$$
\overline{O(x_a^{\sun})}^{\sigma(X^{\sun},X)}\subset\overline{O(x^{\sun})}^{\sigma(X^{\sun},X)}.
$$
Consequently, if $\overline{O(x^{\sun})}^{\sigma(X^{\sun},X)}$ is separable, then $\overline{O(x_a^{\sun})}^{\sigma(X^{\sun},X)}$ is as well.
\end{remk}

\begin{remk} 
From Theorem \ref{apply-to-sun} (8), we learn that to obtain ergodic results for a semigroup, it makes sense to look for semigroups $\bk{S(t)}_{t\ge 0}$ such that $\bk{T(t)}_{t\ge} \subset \bk{S^{\sun}(t)}_{t\ge0}.$ With $\bk{T^{\sun\sun}(t)}_{t\ge 0}$, such a semigroup always exists. In particular, for the case of Eberlein weak almost periodicity using $jX\subset X^{\sun\sun}$ and for $x^{\sun} \in X^{\sun},$ we find
\begin{eqnarray*}
<T^{\sun\sun}(t)jx,x^*>&=&<jx,T^{\sun}(t)x^{\sun}>=<x,T^{\sun}(t)x^{\sun}>\\
&=& <T(t)x,x^{\sun}>=<jT(t)x,x^{\sun}>.
\end{eqnarray*}
Consequently, $ jT(t)x=T^{\sun\sun}(t)jx$, and because $X^{\sun}\subset X^*$, the weak compactness gives
$$
\overline{O(jx)}^{\sigma(X^{\sun\sun},X^{\sun})}=j\overline{O(x)}^{\sigma(X,X^{\sun})}\subset jX.
$$
Let $P^{\sun}$ denote the minimal idempotent with the semigroup $\bk{T^{\sun\sun}(t)}_{t\ge0};$ we find that
$$jX=(jX\cap R(P^{\sun}))\oplus (jX\cap N(P^{\sun})).$$ Hence, we may write $X=X_a\oplus X_0.$  Theorem \ref{apply-to-sun} (iv),(v) serves for $X_{ap}\subset X_a$, and for $x_0 \in X_0$, there exists a net such that $T(t_{\gamma})x_0\to 0$ weakly. Hence, Theorem \ref{RSdeco} becomes a consequence of Theorem \ref{apply-to-sun}, Theorem \ref{reccurent_equals_unimodular}, and the fact that $\overline{{T(t)x}_{t\ge0}}^{\sigma(X,X^*)}$ is separable.
\end{remk}

\begin{exa}
Let $p_{x^*}(f):=\sup_{t\in \rep}\btr{x^*(f(t)}$ for $x^*\in M$, with $X\subset M \subset X^{**},$ and $\tau:=\bk{p_{x^*}(\cdot):x^*\in M}.$
Then, $\sigma(X^*,M)$ is Hausdorff, and $\tau$ induces the space 
$$
WAAP_M(\rep,X^*):=\bk{f\in BUC(\rep,X^*): x^*(f)\in AAP(\rep), \mbox{ for all } x^*\in M}.
$$
Because for $f\in WAAP_M(\rep,X^*)$, we have that $<x,f(\cdot)>$ is weakly almost periodic, the semigroup, hence the $\scR_0^{\sun}$, is Abelian.
Additionally, if for $f\in WAAP_M(\rep,X^*)$, $\overline{ac}^{\tau}O(f)$ is $\tau$ compact and the set $\overline{O(f)}^{\tau}$ is norm separable (i.e., $\overline{O(f)}^{\tau}=\overline{O(f)}^{w^*}$), we obtain that the reversible part of $f$ is almost periodic whenever 
$$(L^1(\rep,X),BUC(\rep,X^*), \mbox{ translation semigroup})$$
 is a sun-dual triple. That $\sigma(BUC(rep,X^*),L^1(\rep,X))\subset \tau$ is a consequence of the uniform convergence and the denseness of the simple functions in $L^1(\re,X)$ with relatively compact support.
Moreover, it opens a machinery to several subspaces and topologies coming with $BUC(\rep,X^*),$ and the question regarding when it is a sun-dual with respect to the translation semigroup becomes quite obvious.
\end{exa}

\section{Regularity of the general splitting}

For Eberlein weakly almost periodic functions, a splitting with almost periodic functions is obtained. The following question naturally arises: Which additional periodicity property in the nonseparable case is given for $\bk{t\mapsto T^{\sun}(t)x^{\sun}}$ if $x\in X^{\sun}_a$? This will be answered next.
For given $x\in X$ and $x^{\sun}\in X^{\sun}$, we define
$$
\Funk{\fxx}{\rep}{\ce}{t}{<T^{\sun}(t)x^{\sun},x>.}
$$
Further, let $\bk{S(t)}_{t\ge 0}$ denote the translation semigroup.
Then,
for
$$
\Funk{S(t)}{L^1(\rep)}{L^1(\rep),}{h}{s\mapsto\left\{
\begin{array}{lcl}
0&:& s< t,\\
h(s-t)&:& s\ge t,
\end{array}
\right.
}
$$
clearly, $\bk{S(t)}_{t\ge 0}$ is a $C_0-$semigroup, i.e., 
$$
\Funk{S^{\sun}(t)}{BUC(\rep)}{BUC(\rep),}{f}{f_t.}
$$
Hence, we can define $BUC(\rep)_{rev}$ and obtain the following''
\begin{lem} \label{semigroup_to_translations}
\begin{enumerate}
\item If $\fxx\in BUC(\rep)_{a}$ for all $x\in X,$ then $x^{\sun}\in X^{\sun}_{rev}.$ 
\item If $x^{\sun}\in X_{rev}^{\sun}$, then $\fxx\in BUC(\rep)_{rev}.$
\end{enumerate} 
\end{lem}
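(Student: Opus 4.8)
The plan is to build everything on the orbit map $\Phi_x\colon X^\sun\to BUC(\rep)$ defined by $\Phi_x(y^\sun):=f_{x,y^\sun}$, i.e. $\Phi_x(y^\sun)(s)=\langle T^\sun(s)y^\sun,x\rangle$. This is a bounded linear map with $\|\Phi_x\|\le M\|x\|$ whose image lies in $BUC(\rep)$, since $\{T^\sun(t)\}$ is bounded and strongly continuous on $X^\sun$. The single computation that drives the whole proof is the intertwining identity $S^\sun(t)\Phi_x(y^\sun)=\Phi_x(T^\sun(t)y^\sun)$, immediate from $\langle T^\sun(s+t)y^\sun,x\rangle=\langle T^\sun(s)T^\sun(t)y^\sun,x\rangle$; in particular $\fxx=\Phi_x(x^\sun)$. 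I would first lift this to the compactifications: writing $\mathcal{G}_0$ for the compactification of the translation semigroup $\{S^\sun(t)\}$ on $BUC(\rep)$, I claim that whenever $S^\sun(t_\lambda)\to\tilde V$ in $\mathcal{G}_0$, any $\kappa^\sun$-limit point $V\in\scS_0^\sun$ of the same net $T^\sun(t_\lambda)$ satisfies $\tilde V\Phi_x=\Phi_x V$. This is checked by testing $\tilde V\Phi_x(y^\sun)$ against $g\in L^1(\rep)$ and using dominated convergence (the integrands are bounded by $M^2\|y^\sun\|\,\|x\|$) together with $\langle T^\sun(t_\lambda)y^\sun,T(s)x\rangle\to\langle Vy^\sun,T(s)x\rangle$; compactness of $\scS_0^\sun$ and of $\mathcal{G}_0$ guarantees that such matched limit points exist after passing to a common subnet.

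Part (2) is then a direct transport of reversibility. Given $x^\sun\in\Xsr$ and any $\tilde V\in\mathcal{G}_0$, choose a matched $V\in\scS_0^\sun$ as above; reversibility furnishes $U\in\scS_0^\sun$ with $UVx^\sun=x^\sun$, and selecting $\tilde U\in\mathcal{G}_0$ matched to $U$ along its defining net gives, by the intertwining, $\tilde U\tilde V\fxx=\Phi_x(UVx^\sun)=\Phi_x(x^\sun)=\fxx$. Since $\tilde V$ was arbitrary, $\fxx\in BUC(\rep)_{rev}$.

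Part (1) is the substantial direction, and the one obstacle is uniformity in $x$: from $\fxx\in BUC(\rep)_a$ for each individual $x$ the intertwining produces, for a fixed $V\in\scS_0^\sun$, an operator $U=U_x$ with $\langle UVx^\sun,x\rangle=\langle x^\sun,x\rangle$, but a priori $U$ depends on $x$, whereas membership in $\Xsr$ demands one $U$ valid for all $x$ simultaneously — and $\Xsr$ is not a linear space, so the $U_x$ cannot simply be combined. The plan to defeat this is to treat finitely many $x_1,\dots,x_n$ at once by passing to the diagonal translation semigroup on $BUC(\rep)^n=BUC(\rep,\ce^n)$, the $\sun$-semigroup of the shift on $L^1(\rep)^n$. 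Its compactification is the diagonal $\{(\tilde V,\dots,\tilde V)\}\cong\mathcal{G}_0$, so its minimal idempotent is $(\tilde P,\dots,\tilde P)$ and the range of that idempotent is exactly $(BUC(\rep)_a)^n$. Thus $f_{x_i,x^\sun}\in BUC(\rep)_a$ for all $i$ says precisely that $(f_{x_1,x^\sun},\dots,f_{x_n,x^\sun})$ lies in the $a$-part of the product semigroup, which is contained in its reversible part by the inclusion $X_a\subset X_{rev}$ from Proposition \ref{recurr-results}. Because the product compactification is forced to be diagonal, reversing the tuple against $(\tilde V,\dots,\tilde V)$ yields a \emph{single} $\tilde U_0$, hence a single matched $U\in\scS_0^\sun$, with $\langle UVx^\sun,x_i\rangle=\langle x^\sun,x_i\rangle$ for every $i$.

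I would close with a finite-intersection/compactness step. For fixed $V$, each $C_x:=\{U\in\scS_0^\sun:\langle UVx^\sun,x\rangle=\langle x^\sun,x\rangle\}$ is $\kappa^\sun$-closed, since $U\mapsto\langle UVx^\sun,x\rangle$ is $\kappa^\sun$-continuous (evaluation against the fixed pair $(Vx^\sun,x)\in X^\sun\times X$). The previous paragraph shows $\bigcap_{i=1}^n C_{x_i}\neq\emptyset$ for every finite family, so $\{C_x\}_{x\in X}$ has the finite intersection property; as $\scS_0^\sun$ is $\kappa^\sun$-compact, $\bigcap_{x\in X}C_x\neq\emptyset$, giving $U\in\scS_0^\sun$ with $UVx^\sun=x^\sun$. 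Since $V$ was arbitrary, $x^\sun\in\Xsr$. The main difficulty, and the step I would write out most carefully, is the uniformity in $x$ resolved by the diagonal product semigroup; the matched-limit intertwining and the compactness/finite-intersection closing are routine once that is in place.
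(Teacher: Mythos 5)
Your part (2) is, up to notation, the paper's own proof: match $\tilde V\in\mathcal{G}_0$ with some $V\in\scS_0^{\sun}$ along a common subnet, invert $V$ on $x^{\sun}$ by reversibility, match the inverse back into $\mathcal{G}_0$, and transport everything through the duality with $L^1(\rep)$. In part (1), however, you solve the uniformity-in-$x$ problem by a genuinely different device. The paper's route is shorter: by Theorem \ref{apply-to-sun}(1) applied to the translation pair, $\tilde P\mathcal{G}_0\tilde P$ is a group whose identity $\tilde P$ acts as the identity on $R(\tilde P)=BUC(\rep)_a$; hence, for a given $L\in\mathcal{G}_0$, the group inverse $\tilde P\tilde S\tilde P$ of $\tilde P L\tilde P$ satisfies $\tilde P\tilde S\tilde P\,Lf=f$ for \emph{every} $f\in BUC(\rep)_a$ simultaneously, so the quantifier ``for each vector there is an inverse'' is automatically uniform at the operator level, and one matched net plus one duality computation finishes the proof. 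Note that the inclusion you invoke, $X_a^{\sun}\subset\Xsr$, holds unconditionally (it is the first half of the proof of Proposition \ref{recurr-results}(3), equivalently Theorem \ref{apply-to-sun}(2); the extra invariance hypothesis in that proposition is needed only for the reverse inclusion), and it is proved in the paper precisely by exhibiting this uniform operator inverse. Applying that observation directly to the translation pair would make your diagonal-product semigroup on $BUC(\rep,\ce^n)$ and the finite-intersection step unnecessary. Your detour is nevertheless correct: the compactification of the diagonal semigroup is indeed the diagonal copy of $\mathcal{G}_0$ (all coordinates run through the same net), its minimal idempotents are exactly $(\tilde P,\dots,\tilde P)$ with $\tilde P$ minimal, and the sets $C_x$ are $\kappa^{\sun}$-closed subsets of the compact $\scS_0^{\sun}$; what the extra machinery buys is only that one never has to notice the operator-level group identity.

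There is one step you must repair: the justification of the intertwining $\tilde V\Phi_x=\Phi_x V$ by ``dominated convergence'' along a net. The dominated convergence theorem fails for nets (index the finite subsets $F\subset[0,1]$ by inclusion: $\chi_F\to\chi_{[0,1]}$ pointwise with $0\le\chi_F\le1$, yet $\int\chi_F=0$ for every $F$), and the nets here are arbitrary ones produced by compactness. The fix, which is also what makes the paper's displayed computation legitimate, avoids interchanging a net limit with an integral altogether: for $g\in L^1(\rep)$ the Bochner integral $z:=\int_{\rep}g(s)T(s)x\,ds$ exists in $X$, since the integrand is continuous and dominated in norm by $M\nrm{x}\btr{g(s)}$, and then
$$
\int_{\rep}<T^{\sun}(t_{\la})y^{\sun},T(s)x>g(s)\,ds \;=\; <T^{\sun}(t_{\la})y^{\sun},z> \;\longrightarrow\; <Vy^{\sun},z> \;=\; \int_{\rep}<Vy^{\sun},T(s)x>g(s)\,ds,
$$
the convergence being nothing but the $\kappa^{\sun}$-convergence of $T^{\sun}(t_{\la})$ to $V$ evaluated at the fixed pair $(y^{\sun},z)\in X^{\sun}\times X$. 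Equivalently, $\Phi_x$ is $\sigma(X^{\sun},X)$-to-$\sigma(BUC(\rep),L^1(\rep))$ continuous because its pre-adjoint maps $L^1(\rep)$ into $X$. With this replacement your proposal is complete.
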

\begin{proof}
To prove the first item, let $\fxx\in BUC(\rep)_{a}$ and $V=\netlim{\gamma}{\Gamma}T^{\sun}(t_{\gamma});$ we have to find a $W=\netlim{\al}{A}T^{\sun}(s_{\al})$ such that $WVx^{\sun}=x^{\sun}.$
Without loss of generality, $L=\netlim{\gamma}{\Gamma}S^{\sun}(t_{\gamma}),$ and the assumption $\fxx\in BUC(\rep)_{a}$ for all $x\in X$ leads to a net such that $U=\netlim{\al}{A}S^{\sun}(s_{\al})$ and $UL\fxx=\fxx$ for all $x\in X.$ Let $W=\netlim{\al}{A}T^{\sun}(s_{\al})$ and $h\in L^1(\rep);$, then
\begin{eqnarray*}
\int_{\rep}\fxx(t)h(t)dt&=&\netlim{\al}{A}\netlim{\gamma}{\Gamma}\int_{\rep}S^{\sun}(s_{\al})S^{\sun}(t_{\gamma})\fxx(t)h(t)dt\\
&=&\netlim{\al}{A}\netlim{\gamma}{\Gamma}\int_{\rep}<T^{\sun}(t)T^{\sun}(s_{\al})T^{\sun}(t_{\gamma})x^{\sun},x>h(t)dt\\
&=&\int_{\rep}<T^{\sun}(t)WVx^{\sun},x>h(t)dt \mbox{ for all } h\in L^1(\rep), \ x\in X.
\end{eqnarray*}
The continuity gives $\fxx(t)=<T^{\sun}(t)x^{\sun},x>=<T^{\sun}(t)WVx^{\sun},x>$ for all $t\ge 0$ and $x\in X.$
Consequently, $x^{\sun}=WVx^{\sun},$ which gives $x^{\sun}\in X^{\sun}_{rev}.$

To verify the second item, for given $L=\netlim{\al}{A}S^{\sun}(t_{\al})$, we have to find a $U=\netlim{\gamma}{\Gamma}S^{\sun}(s_{\gamma})$ such that $UL\fxx=\fxx.$
Without loss of generality, $V=\netlim{\al}{A}T^{\sun}(t_{\al})$ and $x^{\sun}\in X^{\sun}_{rev}$ lead to a net and an operator $W=\netlim{\gamma}{\Gamma}T^{\sun}(s_{\gamma}),$ with $WVx^{\sun}=x^{\sun}.$ We may assume that 
$U=\netlim{\gamma}{\Gamma}S^{\sun}(s_{\gamma}).$ With these settings, we have for $h\in L^1(\rep)$,
\begin{eqnarray*}
<UL\fxx,h>&=&\netlim{\al}{A}\netlim{\gamma}{\Gamma}\int_{\rep}S^{\sun}(s_{\gamma})S^{\sun}(t_{\al})\fxx(t)h(t)dt\\
&=&\netlim{\al}{A}\netlim{\gamma}{\Gamma}\int_{\rep}<T^{\sun}(t)T^{\sun}(s_{\gamma})T^{\sun}(t_{\al})x^{\sun},x>h(t)dt \\
&=&\int_{\rep}<T^{\sun}(t)x^{\sun},x>h(t)dt=<\fxx,h> \mbox{ for all } h\in L^1(\rep), \ x\in X.
\end{eqnarray*}
Hence, by continuity, $UL\fxx=\fxx.$
\end{proof}

\begin{defi}
Let 
$$
AP_{w^*}(\re,X^*):=\bk{f\in BUC(\re,X^*): \bk{t\mapsto <x,f(t)>}\in AP(\re) \mbox{ for all } x \in X}.
$$
\end{defi}
With the above, we obtain in the general case the following regularity for $x_a.$
\begin{theo} \label{weak-star-ap}
Let $\scS_0^{\sun}$ be Abelian; then, for all $x^{\sun}\in X^{\sun}_a$, there exists a 
$g \in AP_{w^*}(\re,X^*)$ such that
$$
T^{\sun}(t)x^{\sun}=g_{|\rep}.
$$
\end{theo}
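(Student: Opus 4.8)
The plan is to realise $X_a^{\sun}$ as a space on which the restricted semigroup becomes a compact Abelian \emph{topological} group, to extend the forward flow $t\mapsto T^{\sun}(t)x^{\sun}$ to a two-sided flow by inverting in that group, and then to check the two defining properties of $AP_{w^*}(\re,X^*)$ separately. First I would fix the minimal idempotent. Since $\scS_0^{\sun}$ is Abelian, the minimal idempotent $P^{\sun}$ is unique, commutes with every element, and $X_a^{\sun}=R(P^{\sun})$ is $\scS_0^{\sun}$-invariant; moreover $G:=P^{\sun}\scS_0^{\sun}$, acting on $X_a^{\sun}$, is a group by Theorem \ref{apply-to-sun}(1). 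Being Abelian and a compact left topological semigroup, $\scS_0^{\sun}$ is in fact separately continuous, so $G$ is a compact semitopological group and hence, by \cite{Ellis_cont}, a compact Abelian topological group for the topology $\kappa^{\sun}$. Because $T^{\sun}(t)\in\scS^{\sun}\subset\scS_0^{\sun}$ commutes with $P^{\sun}$ and $P^{\sun}$ is the identity on $X_a^{\sun}$, the map $\phi(t):=P^{\sun}T^{\sun}(t)=T^{\sun}(t)_{|X_a^{\sun}}$ is a $\kappa^{\sun}$-continuous one-parameter semigroup $\rep\to G$ with $\phi(0)=Q:=P^{\sun}_{|X_a^{\sun}}$, the unit of $G$; the $\kappa^{\sun}$-continuity is immediate from the norm continuity of $t\mapsto T^{\sun}(t)x^{\sun}$.

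Next I would extend $\phi$ to $\re$. The closure $\overline{\phi(\rep)}^{\kappa^{\sun}}$ is a closed sub-semigroup of the compact group $G$, hence a closed subgroup (a compact semitopological semigroup contains an idempotent, which must be $Q$ in a group, forcing inverses to be present), so every $\phi(t)$ has an inverse in $G$; setting $\tilde\phi(-t):=\phi(t)^{-1}$ for $t\ge 0$ produces a $\kappa^{\sun}$-continuous homomorphism $\tilde\phi:\re\to G$, continuity of the extension using that inversion is continuous in the topological group $G$. I then define $g(t):=\tilde\phi(t)x^{\sun}\in X_a^{\sun}\subset X^*$. For $t\ge 0$ this gives $g(t)=T^{\sun}(t)x^{\sun}$, so $g_{|\rep}$ is the required orbit, and $\nrm{g(t)}\le M^2\nrm{x^{\sun}}$ since every element of $G$ is of the form $P^{\sun}V_{|X_a^{\sun}}$ with $\nrm{V},\nrm{P^{\sun}}\le M$.

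The main obstacle is to show $g\in BUC(\re,X^*)$, i.e.\ norm-uniform continuity on all of $\re$; note that the orbit is only $\kappa^{\sun}$-(weak$^*$-)compact, not norm compact, so this does not come for free. The key identity is $g(t)=\tilde\phi(-N)\,T^{\sun}(t+N)x^{\sun}$ whenever $t+N\ge 0$, obtained from $\tilde\phi(t+N)=\tilde\phi(N)\tilde\phi(t)$ together with $\tilde\phi(N)=T^{\sun}(N)_{|X_a^{\sun}}$. Since $\scS^{\sun}$ is a bounded $C_0$-semigroup, $t\mapsto T^{\sun}(t)x^{\sun}$ is norm-uniformly continuous on $\rep$ with some modulus $\omega$, and $\nrm{\tilde\phi(-N)}\le M^2$ uniformly in $N$ because $\tilde\phi(-N)\in G$; hence, given $t,t'$ with $|t-t'|<\delta$, choosing $N\ge-\min(t,t')$ yields
$$
\nrm{g(t)-g(t')}\le M^2\,\nrm{T^{\sun}(t+N)x^{\sun}-T^{\sun}(t'+N)x^{\sun}}\le M^2\,\omega(|t-t'|),
$$
a bound independent of the location, which gives uniform continuity on all of $\re$.

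Finally I would verify almost periodicity of the scalarisations. For $x\in X$ the function $<x,g(t)>=\Phi(\tilde\phi(t))$, where $\Phi(S):=<x,Sx^{\sun}>$ is $\kappa^{\sun}$-continuous on the compact group $H:=\overline{\tilde\phi(\re)}^{\kappa^{\sun}}\subset G$. This is the pullback of a continuous function on a compact topological group along a one-parameter subgroup, which is Bohr almost periodic: the translates $<x,g(\cdot+s)>$ correspond to the left $H$-translates $\bk{S\mapsto\Phi(RS):R\in H}$, which form an equicontinuous and pointwise bounded family in $C(H)$ by the translation invariance of the uniformity of the compact group $H$, hence a relatively compact family by Arzel\`a--Ascoli; composing with the sup-norm contraction $F\mapsto F\circ\tilde\phi$ shows that $\bk{<x,g(\cdot+s)>:s\in\re}$ is relatively compact in $C_b(\re)$, that is, $<x,g(\cdot)>\in AP(\re)$. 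Thus $g\in AP_{w^*}(\re,X^*)$ with $g_{|\rep}=T^{\sun}(\cdot)x^{\sun}$, as claimed; this is moreover consistent with the reversibility of each $\fxx$ recorded in Lemma \ref{semigroup_to_translations}.
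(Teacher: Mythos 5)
Your proof is correct, but it takes a genuinely different route from the paper's. The paper argues scalar-by-scalar: since $\scS_0^{\sun}$ is Abelian, Theorem \ref{S0-Abelian} places every $\fxx=\bk{t\mapsto <T^{\sun}(t)x^{\sun},x>}$ in $W(\rep)$; the Corollary following the flight-vector lemma gives $X_a^{\sun}=\Xsr$; Lemma \ref{semigroup_to_translations}(2) then makes each $\fxx$ reversible for the translation semigroup on $BUC(\rep)$; and the scalar structure identity $AP(\re)_{|\rep}=W(\rep)_{rev}$ finishes the argument. You instead work directly in the compactification: you turn $G=P^{\sun}\scS_0^{\sun}$ into a compact Abelian topological group via Ellis' theorem (the same device the paper uses in Lemma \ref{res-Fourier_integral}), invert the flow inside $G$ to obtain a continuous one-parameter group $\tilde\phi:\re\to G$, and set $g(t)=\tilde\phi(t)x^{\sun}$. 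Your route buys an explicit construction of the vector-valued extension in one stroke --- with the bound $\nrm{g}_{\infty}\le M^2\nrm{x^{\sun}}$, an explicit modulus of uniform continuity, and scalar almost periodicity obtained from Arzel\`a--Ascoli on $C(H)$ rather than from the scalar wap structure theorem --- and it thereby fills in a step the paper's proof leaves implicit, namely how the individual scalar almost periodic extensions of the $\fxx$ are assembled into a single element of $BUC(\re,X^*)$ (linearity and uniqueness of the extensions, the uniform norm bound, and uniform continuity of $t\mapsto g(t)$). The paper's route buys brevity, since it delegates everything to results already proved and to known scalar theory. Two points in your write-up merit one more line each: compactness of $G$ (it is the image of the compact $\scS_0^{\sun}$ under the translation $W\mapsto WP^{\sun}$, which is $\kappa^{\sun}$-continuous, combined with commutativity), and the standard fact that a closed subsemigroup of a compact topological group is a subgroup (your idempotent sketch is the usual argument and is fine).
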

\begin{proof}
If $\scS_0^{\sun}$ is Abelian and $X_{rev}^{\sun}=X_a^{\sun}$, then for given $x\in X, x^{\sun} \in X^{\sun},$ we have $\fxx\in W(\rep).$
Because $AP(\re)_{|\rep}=W(\rep)_{rev}=W(\rep)_a=BUC(\rep)_a\cap W(\rep),$ the above theorem becomes a consequence of Lemma \ref{semigroup_to_translations} (2).
\end{proof}

\section{Application I}
In this section, we will show how the previous observation is embedded into known theory, especially in the splitting of the special pairing  $(X,Y)=(L^1(\re,X),BUC(\re,X^*),$ where $\bk{T(t)}_{t\ge 0}$ denotes the translation semigroup. 
Moreover, we show how the compactification of the convex hull applies to obtain a mean.

Let $X$ be a Banach space; for $a\in\re$ and $\jz\in \bk{\re,\rep,[a,\infty)}$,
\begin{eqnarray*}
BUC(\jz,X)&:=&\bk{f:\jz\to X: f \mbox{ is bounded and uniformly continuous }}, \\
BUC_p(\jz,X)&:=& \bk{f\in BUC(\jz,X): f(\jz) \mbox{ is relatively compact }}.
\end{eqnarray*}

To verify that for the pairing $(X,Y),$  $Y=X^{\sun},$ we need the following property.
For the space above, we have the following.
\begin{defi}
A Banach space $X$ has the approximation property (a.p.) if for every compact $K\subset X$ and $\ep>0,$ there is a bounded finite-rank operator $T:X\to X$ such that $\nrm{Tx-x} \le \ep$ for all $x\in K.$
\end{defi}
The following theorem gives the sun-dual-pairing of $(X,Y).$
\begin{theo}[{\cite[p. 135, Theorem 7.3.11]{neervenLNM}}] \label{BUC-sun}
If $\bk{T_0(t)}_{t\in\re}$ is the translation semigroup on $L^{1}(\re),$ then for $\bk{T(t):=T_0(t)\otimes I}_{t\in\re}$, if $X^*$ has the a.p., we have
$$
L^1(\re,X)^{\sun}=BUC(\re,X^*).
$$
\end{theo}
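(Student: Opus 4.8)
The plan is to compute the sun-dual through the general identity $E^{\sun}=\overline{D(A^*)}$ recalled in Section~2, applied to the space $E:=L^1(\re,X)$ equipped with the translation group $T(t)=T_0(t)\otimes I$, and then to recognise the answer as $BUC(\re,X^*)$. First I would pin down the duality: writing $E=L^1(\re)\,\widehat{\otimes}_\pi\,X$ one has $E^*=L^\infty_{w^*}(\re,X^*)$, the space of (classes of) weak${}^*$-measurable, essentially bounded $X^*$-valued functions, under the pairing $\dual{f,g}=\int_{\re}\dual{f(s),g(s)}\,ds$, with dual norm the essential supremum of $s\mapsto\nrm{g(s)}_{X^*}$. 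For this pairing the adjoint group acts by translation, $(T^*(t)g)(s)=g(s-t)$.

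Next I would prove the two inclusions. The inclusion $BUC(\re,X^*)\subset E^{\sun}$ is immediate: if $g$ is norm-uniformly continuous, then $\nrm{T^*(t)g-g}_{E^*}\le\sup_{s}\nrm{g(s-t)-g(s)}_{X^*}\to0$ as $t\to0$, so $g$ lies in the sun-dual. For $E^{\sun}\subset BUC(\re,X^*)$ I would use that the generator of the translation group is $A=\frac{d}{ds}$ with domain $W^{1,1}(\re,X)$; integration by parts identifies $D(A^*)$ with those $g\in E^*$ admitting a weak${}^*$ derivative $g'\in E^*$, and the estimate $\btr{\dual{g(t)-g(s),x}}=\btr{\int_s^t\dual{g'(u),x}\,du}\le\btr{t-s}\,\nrm{g'}_{E^*}\nrm{x}$ shows each such $g$ has a norm-Lipschitz, hence $BUC$, representative. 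Passing to the sup-norm closure and using that $BUC(\re,X^*)$ is sup-norm closed gives $E^{\sun}=\overline{D(A^*)}\subset BUC(\re,X^*)$.

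To see the closure exhausts $BUC(\re,X^*)$ I would mollify: for $g\in BUC(\re,X^*)$ and an approximate identity $\bk{\rho_\ep}_{\ep>0}$, the Bochner integrals $g_\ep(s)=\int_{\re}\rho_\ep(u)\,g(s-u)\,du$ are Lipschitz, belong to $D(A^*)$, and satisfy $\nrm{g_\ep-g}_\infty\le\sup_{\btr{u}\le\ep}\nrm{g(\cdot-u)-g(\cdot)}_\infty\to0$ by uniform continuity; thus $D(A^*)$ is sup-dense in $BUC(\re,X^*)$, and the two inclusions match up isometrically, the isometry $\nrm{g}_{E^*}=\sup_s\nrm{g(s)}_{X^*}$ for continuous $g$ following by testing against $f=\delta^{-1}\mathbf{1}_{[s_0,s_0+\delta]}\otimes x$ and letting $\delta\to0$.

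The delicate point — and where the hypothesis that $X^*$ has the approximation property genuinely enters — is the identification of $E^*=L(L^1(\re),X^*)$ with a space of honest $X^*$-valued functions in a way that is faithful at the level of the sun-dual: a priori the sun-dual lives inside an operator space, and one must know that it is captured exactly by $BUC(\re,X^*)$ rather than only by the injective-tensor subspace $BUC(\re)\,\widehat{\otimes}_\ep\,X^*$ of functions uniformly approximable by finite-rank-valued ones. The approximation property of $X^*$ is precisely what makes every continuous $X^*$-valued function so approximable, rendering the operator-to-function dictionary exact; I expect this identification, imported from the tensor-product theory of \cite{neervenLNM}, to be the main obstacle, the two inclusions and the mollification being routine once it is in place.
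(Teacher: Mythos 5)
First, a point of context: the paper does not prove this statement at all — it is imported verbatim, hypothesis included, from van Neerven's monograph \cite{neervenLNM}, so there is no in-paper argument to compare yours against; what follows assesses your proposal on its own terms. Your first three paragraphs are essentially the standard argument and are sound modulo two pieces of bookkeeping. (i) The representation $(L^1(\re,X))^*\cong L^{\infty}_{w^*}(\re,X^*)$ is a genuine theorem (Dinculeanu, Ionescu--Tulcea; via lifting theory) valid for \emph{every} Banach space $X$; you should either cite it or bypass it by working directly with operators $U\in L(L^1(\re),X^*)$ — using $(L^1(\re)\widehat{\otimes}_\pi X)^*=L(L^1(\re),X^*)$, under which $T^*(t)U=U\,T_0(t)$ — and mollifying $s\mapsto U\bigl(\rho_\ep(s-\cdot)\bigr)$, which lands in $BUC(\re,X^*)$ with no measure-theoretic representation needed. (ii) In your Lipschitz-representative step the null set on which $\langle g(t)-g(s),x\rangle=\int_s^t\langle g'(u),x\rangle\,du$ fails depends on $x$, so you must first redefine $g$ pointwise as the weak$^*$ limit of its averages $\frac{1}{\delta}\int_t^{t+\delta}g(u)\,du$ before speaking of ``the'' continuous representative. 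Both points are repairable.

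The genuine gap is your last paragraph — precisely the step you yourself call the main obstacle. The claim that the approximation property of $X^*$ makes every continuous $X^*$-valued function uniformly approximable by finite-rank-valued ones is false: a uniform limit of finite-rank-valued functions necessarily has relatively compact range, so (under a.p.) $BUC(\re)\widehat{\otimes}_\ep X^*$ is exactly the set of $BUC$ functions with relatively compact range, a \emph{proper} subspace of $BUC(\re,X^*)$ whenever $X^*$ is infinite dimensional. Already for $X^*=\ell^2$, which certainly has the a.p., the function $g(t)=\sum_n\varphi(t-n)e_n$ (with $\varphi$ a fixed bump supported in $(0,1)$) lies in $BUC(\re,\ell^2)$ but cannot be uniformly approximated by functions taking values in finite-dimensional subspaces, since points within distance $\tfrac12$ of infinitely many orthonormal vectors cannot all lie in a bounded subset of a finite-dimensional space. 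Consequently the ``operator-to-function dictionary'' you invoke cannot be where the hypothesis enters: if the theorem required identifying the sun-dual with $BUC(\re)\widehat{\otimes}_\ep X^*$, the theorem as stated would be false. In fact, once the duality in (i) is in hand — and it needs no hypothesis on $X$ — your own two inclusions plus mollification close the proof without ever invoking the approximation property; that hypothesis is an artifact of the tensor-product machinery in \cite{neervenLNM}, not of this statement. As written, however, your proposal leaves its self-declared crucial step resting on a false principle, and it must be replaced by (i) or by the operator-level mollification argument for the proof to be complete.
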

From \cite[Prop. 2.1]{RuesErgdc}, we find that weakly almost periodic are uniformly continuous; hence, we obtain a splitting for $BUC(\re,X).$ 
\begin{cor}
If $X^*$ has the a.p. and $\bk{T_0(t)}_{t\in\re}$ is the translation semigroup on $L^{1}(\re),$ then for $\scS:=\bk{T(t):=T_0(t)\otimes I}_{t\in\re}$, dependent on the minimal idempotent $P^{\sun}\in \scS_0^{\sun},$ we find a splitting
$$
BUC(\re,X^*)=BUC(\re,X^*)_a\oplus BUC(\re,X^*)_0.
$$
This splitting is nontrivial because
$$
AP(\re,X^*)\subset BUC(\re,X^*)_a, \mbox{ and } \ W_0(\re,X^*)\subset BUC(\re,X^*)_0.
$$
\end{cor}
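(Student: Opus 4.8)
The plan is to obtain the splitting as an immediate instance of the abstract decomposition in Theorem~\ref{apply-to-sun}, once the sun-dual of the concrete space has been named. First I would apply Theorem~\ref{BUC-sun}: because $X^*$ has the approximation property, the sun-dual of the underlying space $L^1(\re,X)$ relative to $\scS$ is precisely $BUC(\re,X^*)$, and under this identification the sun-dual semigroup $\bk{T^{\sun}(t)}$ acts as translation, $T^{\sun}(t)f=f_t$. Thus $X^{\sun}=BUC(\re,X^*)$, and Theorem~\ref{apply-to-sun} furnishes a minimal idempotent $P^{\sun}\in\scS_0^{\sun}$ together with a direct sum $X^{\sun}=R(P^{\sun})\oplus N(P^{\sun})$ of closed, translation-invariant subspaces. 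Writing $BUC(\re,X^*)_a:=R(P^{\sun})$ and $BUC(\re,X^*)_0:=N(P^{\sun})$ yields the asserted decomposition; this is exactly the content the corollary advertises as being dependent on the minimal idempotent $P^{\sun}$.

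For nontriviality I would invoke items (4) and (5) of Theorem~\ref{apply-to-sun}. Given $f\in AP(\re,X^*)$, almost periodicity means that its orbit $\bk{f_\tau:\tau\in\re}$ is relatively norm compact, so the sun-dual orbit $t\mapsto T^{\sun}(t)f=f_t$ is an almost periodic map into $BUC(\re,X^*)$; item (4) then forces $f\in R(P^{\sun})=BUC(\re,X^*)_a$, giving $AP(\re,X^*)\subset BUC(\re,X^*)_a$. Given $f\in W_0(\re,X^*)$, the very definition supplies a sequence $\seq{t}{n}$ with $f_{t_n}\to0$ weakly, i.e.\ $T^{\sun}(t_n)f\to 0$ in $\sigma(X^{\sun},X^{\sun*})$; since a sequence is a net, item (5) forces $f\in N(P^{\sun})=BUC(\re,X^*)_0$, giving $W_0(\re,X^*)\subset BUC(\re,X^*)_0$. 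As $R(P^{\sun})\cap N(P^{\sun})=\bk{0}$ while both $AP(\re,X^*)$ and $W_0(\re,X^*)$ are nonzero, the splitting is genuinely nontrivial.

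The main obstacle is bookkeeping rather than substance: I must make sure the notion of weak convergence in the definition of $W_0(\re,X^*)$ (weak convergence in the Banach space $BUC(\re,X^*)$) coincides with the topology $\sigma(X^{\sun},X^{\sun*})$ used in item (5), and that the statement that $f$ is almost periodic matches the hypothesis of item (4) that the orbit map $t\mapsto T^{\sun}(t)f$ is almost periodic. Both reductions rest on the single identification $T^{\sun}(t)f=f_t$, so that the sun-dual orbit of $f$ is literally the translation orbit $O(f)$ of the function. A secondary point to reconcile is the index set: the compactification and the idempotent $P^{\sun}$ are built from the $\rep$-indexed semigroup, whereas $AP(\re,X^*)$ and $W_0(\re,X^*)$ are defined over all of $\re$; here one notes that for an almost periodic $f$ the forward orbit is already relatively compact, and that the weakly null sequence in the definition of $W_0$ may be taken in $\rep$, so the two-sided and one-sided pictures agree.
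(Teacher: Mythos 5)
Your proposal is correct and follows essentially the same route as the paper: identify $BUC(\re,X^*)$ as the sun dual of $L^1(\re,X)$ via Theorem~\ref{BUC-sun}, obtain the splitting from Theorem~\ref{apply-to-sun}, and derive the two inclusions from items (4) and (5) of that theorem. The paper's own proof is just a terser version of this (it records only that $AP(\re,X^*)$ and $W_0(\re,X^*)$ lie in $BUC(\re,X^*)$, leaving the appeal to items (4) and (5) and the orbit identification $T^{\sun}(t)f=f_t$ implicit), so your additional bookkeeping is a faithful expansion rather than a different argument.
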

\begin{proof}
Because $BUC(\re,X^*)$ is a sun dual, we obtain by Theorem \ref{apply-to-sun} the splitting. Because 
$$AP(\re,X^*),W_0(\re,X)\subset BUC(\re,X),$$ 
we obtain $AP(\re,X^*)\subset BUC(\re,X^*)_a$ and $W_0(\re,X^*)\subset BUC(\re,X^*)_0.$
\end{proof}

We follow the definition of the minimal function from \cite[p. 908]{KnappDeco}, \cite[p. 346]{VeechProblems}. Very often, they coincide with recurrent \cite{BasitGuenzler_recurr} or reversible vectors  \cite[p. 105, Def. 4.3]{Krengel}. A very general theorem of equality is provided by \cite{Flohr}.
\begin{defi}
Let $f\in BUC(\re,X)$ and $\tau$ be a Hausdorff topology on $X;$ then, $f$ is called $\tau-$right minimal if, for every net $\net{t}{\la}{\Lambda}$, 
there exists a subnet $\bk{t_{\la_{\gamma}}}_{\gamma \in \Gamma}$ and a net $\net{s}{\al}{A}$ such that the limits with respect to $\tau$ fulfill, for some $g\in BUC(\re,X)$,
$$
 \netlim{\gamma}{\Gamma}  f(\cdot+t_{\la_{\gamma}})=g
$$
and 
$$
\netlim{\alpha}{A}g(\cdot + s_{\al})=f.
$$
\end{defi}

\begin{cor} \label{topolgy-equality}
Let $X$ be a Banach space. If $\tau$ is the compact open topology on $BUC_p(\re,X^*),$ then 
$$
\sigma(BUC_p(\re,X^*),L^1(\re,X))\subset \tau.
$$ 
If $\bk{T_0(t)}_{t\in\rep}$ is the translation semigroup on $L^1(\re,X)$, we have, for $BUC_p(\re,X^*)	\hookrightarrow L^1(\re,X)^{\sun},$ that
$$
BUC_p(\re,X^*)=BUC_p(\re,X)_a\oplus BUC_p(\re,X)_0.
$$
Moreover,
$$
(\overline{O(f)}^{\tau},\tau)=(\overline{O(f)}^{\tau},\sigma(BUC_p(\re,X^*),L^1(\re,X))). 
$$
Hence, the concept of $\tau-$right minimal and the definition given for $BUC_p(\re,X)_{rev}$ coincide.
\end{cor}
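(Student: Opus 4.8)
The plan is to handle the three assertions in turn, reducing each to norm-bounded pieces, where the compact-open topology $\tau$ is well behaved. For the inclusion $\sigma(BUC_p(\re,X^*),L^1(\re,X))\subset\tau$ I would first use that the simple functions with relatively compact support are norm-dense in $L^1(\re,X)$, and that for such a $g$ supported on a compact $K\subset\re$ the weak${}^*$ functional $f\mapsto\int_{\re}\langle g(t),f(t)\rangle\,dt$ depends only on $f|_K$ and is therefore $\tau$-continuous. For a general $g\in L^1(\re,X)$ one approximates in $L^1$-norm by compactly supported simple functions and controls the tail $\int_{\re\setminus K}\|g(t)\|\,dt$. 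I expect this tail estimate to be the genuine obstacle: the factor $\sup_t\|f(t)\|$ appears in the remainder, and by sending a bump of growing height off to infinity one sees that the inclusion fails without a uniform norm bound. Hence the inclusion really holds on each norm-bounded subset of $BUC_p(\re,X^*)$, and it is exactly such bounded sets, the orbit closures, to which it is subsequently applied.

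Next I would establish that $\tau$ and $\sigma(BUC_p(\re,X^*),L^1(\re,X))$ agree on $\overline{O(f)}^{\tau}$. Since $f\in BUC_p(\re,X^*)$, all translates $f_s$ share the modulus of continuity of $f$, are bounded by $\sup_t\|f(t)\|$, and take values in the fixed relatively compact set $\overline{f(\re)}$; by the Arzel\`a--Ascoli theorem $O(f)$ is relatively $\tau$-compact and $\overline{O(f)}^{\tau}\subset BUC_p(\re,X^*)$. On this bounded set the first step gives $\sigma\subset\tau$, while $\sigma$ is Hausdorff because $L^1(\re,X)$ separates the points of $BUC(\re,X^*)$ (pair against an approximate identity times a separating $x\in X$). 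A continuous bijection from a compact space onto a Hausdorff space is a homeomorphism, so $\tau$ and $\sigma$ coincide on $\overline{O(f)}^{\tau}$; in particular $\overline{O(f)}^{\tau}=\overline{O(f)}^{\sigma(BUC_p(\re,X^*),L^1(\re,X))}$.

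For the splitting I would invoke item~(\ref{sun-subspace-split}) of Theorem~\ref{apply-to-sun} with $Y:=BUC_p(\re,X^*)$ viewed as a norm-closed subspace of $X^{\sun}=L^1(\re,X)^{\sun}$ through the given embedding. The only hypothesis to verify is $\overline{O(f)}^{\sigma(X^{\sun},X)}\subset Y$ for every $f\in Y$; but this $\sigma$-closure equals $\overline{O(f)}^{\tau}$ by the previous paragraph, and Arzel\`a--Ascoli places the latter inside $BUC_p(\re,X^*)=Y$. Item~(\ref{sun-subspace-split}) of Theorem~\ref{apply-to-sun} then yields $Y=Y_a\oplus Y_0$ with $Y_a=Y\cap R(P^{\sun})$ and $Y_0=Y\cap N(P^{\sun})$, which is the asserted decomposition.

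Finally, the equivalence of the two minimality notions is read off from the topological coincidence. For the translation semigroup one has $T^{\sun}(t)f=f_t$, and $\kappa^{\sun}$ restricted to $L(X^{\sun})$ is weak${}^*$ pointwise convergence, so for $f$ in an orbit closure any $V\in\scS_0^{\sun}$ satisfies $Vf=\sigma(X^{\sun},X)\text{-}\lim f_{t_\lambda}$, which by the second paragraph is the same as the limit of $f(\cdot+t_\lambda)$ in $\tau$. Compactness of $\scS_0^{\sun}$ guarantees that every net of translations subconverges to some $V$, matching the ``every net $t_\lambda$, some subnet'' quantifier in the definition of $\tau$-right minimality; and the reversibility clause ``for each $V$ there is $U\in\scS_0^{\sun}$ with $UVf=f$'' translates, via $U=\kappa^{\sun}\text{-}\lim T^{\sun}(s_\alpha)$, into the existence of a further net $s_\alpha$ with $g(\cdot+s_\alpha)\to f$ in $\tau$. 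These are verbatim the two requirements in the definition of $\tau$-right minimal, so the reversible elements of $BUC_p(\re,X^*)$ are precisely the $\tau$-right minimal functions, completing the proof.
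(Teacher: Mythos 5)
Your proposal is correct, and its core follows the paper's own route: the vector-valued Arzel\`a--Ascoli theorem makes $(\overline{O(f)}^{\tau},\tau)$ compact inside $BUC_p(\re,X^*)$, the topology $\sigma(BUC_p(\re,X^*),L^1(\re,X))$ is Hausdorff and coarser than $\tau$ on this norm-bounded set, so the identity map from a compact space onto a Hausdorff space identifies the two topologies on the orbit closure, and Theorem \ref{apply-to-sun} (\ref{sun-subspace-split}) then yields the splitting and the identification of $\tau$-right minimal functions with $BUC_p(\re,X^*)_{rev}$, exactly as in the paper. Where you genuinely depart from the paper is the first assertion. The paper disposes of it in one line by declaring the embedding $i\colon BUC_p(\re,X^*)\to L^1(\re,X)^{\sun}$ to be $\tau$--$\sigma(L^1(\re,X)^{\sun},L^1(\re,X))$ continuous; you instead reduce to compactly supported simple functions $g$ (for which the pairing is dominated by $\nrm{g}_{L^1}\sup_{t\in K}\nrm{f(t)}$, hence is $\tau$-continuous) and isolate the tail term, observing that it can only be controlled on norm-bounded sets. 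Your skepticism here is justified: pairing against $g(t)=e^{-|t|}x$ is \emph{not} compact-open continuous, since scalar bumps of height $e^{n}$ supported near $t=n$ lie in $BUC_p(\re)$, tend to $0$ in $\tau$, yet their pairings with $g$ stay bounded away from $0$. Thus the unrestricted inclusion $\sigma(BUC_p(\re,X^*),L^1(\re,X))\subset\tau$ --- the corollary's first display, and the continuity claim in the paper's proof --- is false as literally stated; what holds, and what you prove, is the inclusion of the relative topologies on every norm-bounded subset. This weaker statement suffices for the remaining assertions, because a $\tau$-limit of translates of $f$ is in particular a pointwise limit and hence bounded by $\sup_{t}\nrm{f(t)}$, so the orbit closures on which the inclusion is actually used are norm-bounded. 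In short: same skeleton as the paper, but your version repairs the one step that the paper states too strongly, at the cost of weakening the first claim to the form in which it is actually true and used.
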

\begin{proof}
We simply have the embedding
$$
\Funk{i}{BUC_p(\re,X^*)}{L^1(\re,X)^{\sun}}{f}{\bk{g\mapsto \int_{\re}<f,g>d\mu}},
$$
which is $\nrm{\cdot}-\nrm{\cdot}$ continuous and $\tau-\sigma(L^1(\re,X)^{\sun},L^1(\re,X))$ continuous.
Note that because the vector-valued Arzela-Ascoli $(\overline{O(f)}^{\tau},\tau)$ is compact, we have 
$$
\overline{O(f)}^{\sigma(BUC_p(\re,X^*),L^1(\re,X))}=\overline{O(f)}^{\tau}\subset BUC_p(\re,X),
$$ 
and Theorem \ref{apply-to-sun} applies.

\end{proof}

\begin{theo} \label{BUC_a-description}
Let $X^*$ have the a.p., and let $\bk{T(t)}_{t\in\re}$ be the translation semigroup; then, 
$$
BUC(\re,X^*)_a \subset \bk{f: f \mbox{ is } \sigma(X^*,X)-\mbox{ right minimal }}.
$$
\end{theo}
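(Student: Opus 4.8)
The plan is to combine the reversibility of vectors in $X^{\sun}_a$ recorded in Proposition~\ref{recurr-results} with an approximate-identity argument that upgrades the integrated weak topology $\sigma(X^{\sun},X)=\sigma(BUC(\re,X^*),L^1(\re,X))$ to the pointwise weak$^*$ topology $\sigma(X^*,X)$ used in the definition of $\sigma(X^*,X)$-right minimality. Throughout, recall that under the a.p.\ hypothesis Theorem~\ref{BUC-sun} identifies $X^{\sun}=BUC(\re,X^*)$, that the $\sun$-semigroup acts by translation $T^{\sun}(t)f=f(\cdot+t)$, and that $\kappa^{\sun}$-convergence $T_{\la}\to V$ forces $T_{\la}f\to Vf$ in $\sigma(X^{\sun},X)$ for each fixed $f$, since for fixed $f$ the pairing $\dual{T_{\la}f,x}\to\dual{Vf,x}$ over all $x\in L^1(\re,X)$ is exactly $\sigma(X^{\sun},X)$-convergence.

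First I would fix $f\in BUC(\re,X^*)_a=R(P^{\sun})$ and an arbitrary net $\net{t}{\la}{\Lambda}\subset\rep$. Since $\scS_0^{\sun}$ is $\kappa^{\sun}$-compact I pass to a subnet with $\kappa^{\sun}\text{-}\netlim{\gamma}{\Gamma}T^{\sun}(t_{\la_{\gamma}})=V\in\scS_0^{\sun}$ and set $g:=Vf\in BUC(\re,X^*)$; this yields $f(\cdot+t_{\la_{\gamma}})=T^{\sun}(t_{\la_{\gamma}})f\to g$ in $\sigma(X^{\sun},X)$. By Proposition~\ref{recurr-results} we have $X^{\sun}_a\subset\Xsr$, so $f$ is reversible; applied to the operator $V$ this produces $U\in\scS_0^{\sun}$ with $Ug=UVf=f$. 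Writing $U$ as a $\kappa^{\sun}$-limit of forward translations $T^{\sun}(s_{\al})$ gives $g(\cdot+s_{\al})=T^{\sun}(s_{\al})g\to Ug=f$ in $\sigma(X^{\sun},X)$. These two displays are already the two right-minimality conditions, but only in the integrated topology rather than in the required pointwise weak$^*$ topology $\sigma(X^*,X)$.

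The heart of the argument, and the step I expect to be the main obstacle, is therefore upgrading $\sigma(X^{\sun},X)$-convergence to $\sigma(X^*,X)$-pointwise convergence of the translates. Here I would exploit that every member of the family $\bk{f(\cdot+t):t\in\rep}$ shares the single modulus of continuity $\om_f$ of $f$, so the family is bounded and uniformly equicontinuous. Testing against the approximate identities $\phi^s_{\ep}:=\frac{1}{\ep}\mathbf{1}_{[s,s+\ep]}\otimes x\in L^1(\re,X)$ gives, uniformly in the shift $t$,
\[
\btr{\dual{f(s+t),x}-\int_{\re}\dual{f(u+t),\phi^s_{\ep}(u)}\,du}\le\nrm{x}\,\om_f(\ep),
\]
and the analogous estimate holds for $g$ with $\om_g$. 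Splitting $\dual{f(s+t_{\la_{\gamma}})-g(s),x}$ through the intermediate quantity $\dual{f(\cdot+t_{\la_{\gamma}})-g,\phi^s_{\ep}}$ and choosing $\ep$ small before letting $\gamma$ run lets me interchange the two limits, yielding $\dual{f(s+t_{\la_{\gamma}}),x}\to\dual{g(s),x}$ for every $s\in\re$ and $x\in X$. The identical estimate applied to $g$ and the reversing net $\net{s}{\al}{A}$ gives $g(s+s_{\al})\to f(s)$ weak$^*$ pointwise.

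Assembling the two upgraded limits shows that $\sigma(X^*,X)\text{-}\netlim{\gamma}{\Gamma}f(\cdot+t_{\la_{\gamma}})=g$ and $\sigma(X^*,X)\text{-}\netlim{\al}{A}g(\cdot+s_{\al})=f$, which is precisely the statement that $f$ is $\sigma(X^*,X)$-right minimal; since $f\in BUC(\re,X^*)_a$ was arbitrary, the inclusion follows. The only delicate points are the uniform equicontinuity of the translate family (which legitimizes the double-limit interchange) and the matching of index ranges, so that the reversing net $\net{s}{\al}{A}$ may be taken among the forward translations used to build $\scS_0^{\sun}$, thereby fitting the forward-shift form of the right-minimality definition.
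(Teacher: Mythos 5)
Your proposal is correct and follows essentially the same route as the paper's own proof, which consists of exactly your two ingredients: the inclusion $BUC(\re,X^*)_a\subset BUC(\re,X^*)_{rev}$ (via Theorem \ref{apply-to-sun}, equivalently the reversibility argument you quote from Proposition \ref{recurr-results}) together with the identification of $\kappa^{\sun}$-reversibility with $\sigma(X^*,X)$-right minimality for translation orbits in $X^{\sun}=BUC(\re,X^*)$. The only difference is that the paper asserts this identification in one sentence without justification, whereas you supply the missing detail --- the equicontinuity/approximate-identity argument upgrading $\sigma(X^{\sun},X)$-convergence of translates to pointwise weak$^*$ convergence --- which is precisely the direction of the equivalence the inclusion needs.
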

\begin{proof} The definitions of $\kappa^{\sun}-$ minimal and $\sigma(X^*,X)-$right minimal coincide. By Theorem \ref{apply-to-sun}, we have $BUC(\re,X)_a\subset BUC(\re,X)_{rev}.$
\end{proof}

By \cite{KnappDeco}, we obtain the following theorem, which becomes a corollary to Theorem \ref{BUC_a-description} and Theorem \ref{apply-to-sun} in the case $S\in\bk{\re,\rep}.$
\begin{theo}\cite[Cor. 3.5]{KnappDeco}
Let $\jz\in\bk{\re,\rep}$ and $\bk{T(t)}_{t\ge 0}$ be the corresponding translation semigroup. Then,
\begin{enumerate}
\item $BUC(\jz)_{rev}\cap BUC(\jz)_{fl}=\bk{0}$,
\item $BUC(\jz)=BUC(\re)_{rev} + BUC(\jz)_{fl}$, and
\item $BUC(\jz)=A\oplus I$ in the notion of \cite{KnappDeco}.
\end{enumerate}
\end{theo}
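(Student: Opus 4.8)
The plan is to specialize the $\sun$-machinery of the previous sections to the scalar translation semigroup, so that the three items fall out of Theorem~\ref{apply-to-sun}, Theorem~\ref{BUC_a-description} and Proposition~\ref{recurr-results}. First I would fix the frame: for $\jz\in\bk{\re,\rep}$ set $X:=L^1(\jz)$ with $\scS$ the translation semigroup, and apply Theorem~\ref{BUC-sun} with the scalar field (which trivially has the a.p.) in the role of $X$ to obtain $L^1(\jz)^{\sun}=BUC(\jz)$, the $\sun$-semigroup being translation on $BUC(\jz)$; for $\jz=\rep$ the same a.p.\ argument together with the embedding of Corollary~\ref{topolgy-equality} supplies the identification. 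With $X^{\sun}=BUC(\jz)$ the abstract objects become concrete: $BUC(\jz)_{rev}=\Xsr$ and $BUC(\jz)_{fl}=X^{\sun}_{fl}$, while $BUC(\jz)_a=R(P^{\sun})$ and $BUC(\jz)_0=N(P^{\sun})$ for the minimal idempotent $P^{\sun}$ of Theorem~\ref{apply-to-sun}.

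Item (1) is then exactly Proposition~\ref{recurr-results}(1), which yields $\Xsr\cap X^{\sun}_{fl}=\bk{0}$ for every bounded $C_0$-semigroup. For item (2) I would use the splitting $X^{\sun}=X^{\sun}_a\oplus X^{\sun}_0$ of Theorem~\ref{apply-to-sun}, so that each $f\in BUC(\jz)$ writes as $f=P^{\sun}f+(I-P^{\sun})f$, and then place the summands: $P^{\sun}f\in BUC(\jz)_a\subset BUC(\jz)_{rev}$ by the inclusion $X^{\sun}_a\subset\Xsr$ of Theorem~\ref{BUC_a-description} (already contained in the proof of Proposition~\ref{recurr-results}), and $(I-P^{\sun})f\in BUC(\jz)_0\subset BUC(\jz)_{fl}$ by Theorem~\ref{apply-to-sun}(3). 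Hence $BUC(\jz)=BUC(\jz)_{rev}+BUC(\jz)_{fl}$, the reverse inclusion being trivial.

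Item (3) records that (1) and (2) together are the direct sum $BUC(\jz)=A\oplus I$ of \cite{KnappDeco}, with $A:=BUC(\jz)_{rev}$ and $I:=BUC(\jz)_{fl}$. The genuine difficulty, and the place where more than soft arguments are needed, is to make this a true direct sum carried by the projection $P^{\sun}$, i.e.\ to upgrade the two containments of item (2) to the equalities $\Xsr=X^{\sun}_a$ and $X^{\sun}_{fl}=X^{\sun}_0$ (equivalently, to know that the reversible functions form a \emph{subspace}, so that from $f,P^{\sun}f\in\Xsr$ one may conclude $(I-P^{\sun})f\in\Xsr\cap X^{\sun}_{fl}=\bk{0}$). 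One cannot obtain this by commutativity: for the translation semigroup $\scS^{\sun}_0$ is in general \emph{not} Abelian, since the correlation functions $t\mapsto\int_{\jz}g(s)f(s+t)\,ds$ with $g\in L^1(\jz)$, $f\in BUC(\jz)$ need not be Eberlein weakly almost periodic, so the criterion of Theorem~\ref{pointwise_Abelian}, and with it Proposition~\ref{recurr-results}(3) and the Abelian corollary, is unavailable. The reverse inclusion must instead be extracted from the algebra structure of $BUC(\jz)$ exploited in \cite{KnappDeco} (reversible functions forming a subalgebra, flight functions an ideal); I expect verifying this subspace/ideal property, rather than the containments themselves, to be the crux of the argument.
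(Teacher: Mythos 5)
Your treatment of items (1) and (2) coincides with the paper's own proof: after identifying $L^1(\jz)^{\sun}=BUC(\jz)$ for the translation semigroup (the scalar field trivially has the a.p., Theorem \ref{BUC-sun}), item (1) is the triviality recorded in Proposition \ref{recurr-results}(1), and item (2) follows from the splitting $BUC(\jz)=BUC(\jz)_a\oplus BUC(\jz)_0$ of Theorem \ref{apply-to-sun} together with the containments $BUC(\jz)_a\subset BUC(\jz)_{rev}$ and $BUC(\jz)_0\subset BUC(\jz)_{fl}$. Up to this point the proposal is correct and essentially identical to what the paper does.

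For item (3), however, you have read into the statement something the paper neither claims nor proves, and your proposal is left incomplete exactly there. You set $A:=BUC(\jz)_{rev}$ and $I:=BUC(\jz)_{fl}$, so that (3) would require upgrading the containments of (2) to the equalities $\Xsr=X^{\sun}_a$ and $X^{\sun}_{fl}=X^{\sun}_0$. You rightly observe that the Abelian criterion (Theorem \ref{pointwise_Abelian}, Proposition \ref{recurr-results}(3)) is unavailable here --- the paper's own closing Remark notes that $(L^1(\re),BUC(\re))$ is a non-Abelian pairing --- and you then defer the decisive step to the subalgebra/ideal structure in \cite{KnappDeco} without carrying it out; as written, that is a genuine gap. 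The paper's proof of (3) takes a different and much shorter route that avoids the issue entirely: in \cite[pp. 911-912, Thm. 3.4]{KnappDeco} the spaces $A$ and $I$ are themselves produced by a minimal idempotent, and the paper's argument consists of the single observation that $P^{\sun}$ is a minimal idempotent in Knapp's sense, whence $A=R(P^{\sun})$ and $I=N(P^{\sun})$; the direct sum is then nothing more than the bounded-projection splitting already established in Theorem \ref{apply-to-sun}. In other words, item (3) identifies the abstract splitting $R(P^{\sun})\oplus N(P^{\sun})$ with Knapp's decomposition; it does not assert that $BUC(\jz)_{rev}$ and $BUC(\jz)_{fl}$ are complementary subspaces --- indeed the paper notes, citing \cite{BasitGuenzler_recurr}, that the reversible vectors need not even form a vector space. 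Your instinct that this stronger statement would require Knapp's algebra structure is sound, but it is not what the theorem asserts, and the theorem as stated needs no such argument.
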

\begin{proof}
From the relative compactness-open compactness, we obtain the first claim. The second claim is a consequence of Proposition \ref{recurr-results}, $BUC(\re)_a\subset BUC(\re)_{rev},$  $BUC(\re)_0\subset BUC(\re)_{fl}$ and $BUC(\re)=BUC(\re)_a\oplus BUC(\re)_0.$ To prove the last item, note that $P^{\sun}$ is a minimal idempotent in the sense of \cite[pp. 911-912, Thm. 3.4]{KnappDeco}; hence, $A=R(P^{\sun}),$ and $I=N(P^{\sun}).$
\end{proof}

\section{Application II}
Next, we show from the compactness of the convex semigroup $\scT_0^{\sun}$ that $N(A^{\sun})$ is complemented in $X^{\sun}.$

An application of $\scT_0^{\sun}$ is found in \cite{Gerlach}, where the theory of norming dual pairs is discussed. Note that $(X,X^{\sun},<\cdot,\cdot>)$ is such a norming dual pair. 
We recall that 
$$
C^{\sun}(r):=\frac{1}{r}\int_0^rT^{\sun}(s)ds \in \scT_0^{\sun}
$$
and
$$
(T^{\sun}(t)-I)C^{\sun}(r)x^{\sun}\to 0 \mbox{ in } \nrm{\cdot}.
$$
Thus, \cite[Lemma 4.5]{Gerlach} leads to the following.
\begin{cor} Let $\bk{T(t}_{t\ge 0}$ be a $C_0-$semigroup with generator $A$. Then, we have, for the mean of the dual semigroup and an appropriate net $\net{t}{\la}{\Lambda}$,
$$
\sigma(X^{\sun},X)-\netlim{\la}{\Lambda}C^{\sun}(r_{\la})x^{\sun} \in N(A^{\sun}),
$$
and
$
\kappa^{\sun}-\netlim{\la}{\Lambda}C^{\sun}(r_{\la})=Q^{\sun}$ is a projection onto $N(A^{\sun}).$ 
\end{cor}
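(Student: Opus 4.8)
The plan is to realize $Q^{\sun}$ as a $\kappa^{\sun}$-cluster point of the Cesàro means and then identify its range with the fixed space $N(A^{\sun})$. Since it is recalled just above that $C^{\sun}(r)\in\scT_0^{\sun}$ and that $\scT_0^{\sun}$ is $\kappa^{\sun}$ compact, the net $\bk{C^{\sun}(r)}_{r>0}$ admits a subnet $\bk{C^{\sun}(r_{\la})}_{\la\in\Lambda}$ with $\kappa^{\sun}-\netlim{\la}{\Lambda}C^{\sun}(r_{\la})=Q^{\sun}\in\scT_0^{\sun}\subset L(X^{\sun})$. As the index runs over $r\to\infty$, the subnet still satisfies $r_{\la}\to\infty$, so every property valid as $r\to\infty$ persists along it. By the definition of $\kappa^{\sun}$ this means in particular that $\sigma(X^{\sun},X)-\netlim{\la}{\Lambda}C^{\sun}(r_{\la})x^{\sun}=Q^{\sun}x^{\sun}$ for each $x^{\sun}\in X^{\sun}$.

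Next I would show $Q^{\sun}x^{\sun}\in N(A^{\sun})$ for every $x^{\sun}$, which yields simultaneously the first displayed assertion and the inclusion $R(Q^{\sun})\subset N(A^{\sun})$. Fix $t\ge 0$. The adjoint relation $\dual{x,T^{\sun}(t)y^{\sun}}=\dual{T(t)x,y^{\sun}}$ shows that $T^{\sun}(t)$ is $\sigma(X^{\sun},X)$-continuous, so from the weak-star convergence above one obtains $(T^{\sun}(t)-I)C^{\sun}(r_{\la})x^{\sun}\to (T^{\sun}(t)-I)Q^{\sun}x^{\sun}$ in $\sigma(X^{\sun},X)$. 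On the other hand the recalled estimate gives $(T^{\sun}(t)-I)C^{\sun}(r_{\la})x^{\sun}\to 0$ in $\nrm{\cdot}$, hence also in $\sigma(X^{\sun},X)$. Since $\sigma(X^{\sun},X)$ is Hausdorff the two limits agree, so $T^{\sun}(t)Q^{\sun}x^{\sun}=Q^{\sun}x^{\sun}$ for all $t\ge 0$, i.e. $Q^{\sun}x^{\sun}\in N(A^{\sun})$.

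Finally I would check that $Q^{\sun}$ restricts to the identity on $N(A^{\sun})$: if $x^{\sun}\in N(A^{\sun})$ then $T^{\sun}(s)x^{\sun}=x^{\sun}$ for all $s$, so $C^{\sun}(r)x^{\sun}=\frac{1}{r}\int_0^r T^{\sun}(s)x^{\sun}\,ds=x^{\sun}$ for every $r$, whence $Q^{\sun}x^{\sun}=x^{\sun}$. Combining this with $R(Q^{\sun})\subset N(A^{\sun})$ gives $R(Q^{\sun})=N(A^{\sun})$ together with $(Q^{\sun})^2x^{\sun}=Q^{\sun}(Q^{\sun}x^{\sun})=Q^{\sun}x^{\sun}$, so that $Q^{\sun}$ is a bounded projection onto $N(A^{\sun})$; this is exactly the content of \cite[Lemma 4.5]{Gerlach} specialized to the norming dual pair $(X,X^{\sun},\dual{\cdot,\cdot})$. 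The one delicate point is the transfer step of the second paragraph: the limit operator $Q^{\sun}$ is produced only in the weak-star operator sense, whereas the fixed-point property rests on the norm decay of the Cesàro differences, so the argument must keep the two topologies strictly apart and reconcile them through the $\sigma(X^{\sun},X)$-continuity of $T^{\sun}(t)$ and the Hausdorff property.
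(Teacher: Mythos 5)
Your proposal is correct, and its skeleton --- extract a $\kappa^{\sun}$-convergent subnet of $\bk{C^{\sun}(r)}_{r>0}$ from the $\kappa^{\sun}$ compactness of $\scT_0^{\sun}$, prove $R(Q^{\sun})\subset N(A^{\sun})$, prove $Q^{\sun}$ is the identity on $N(A^{\sun})$, and deduce idempotency --- matches the paper's. The genuine difference is the middle step: the paper obtains $Q^{\sun}x^{\sun}\in N(A^{\sun})$ by citing \cite[Lemma 4.5]{Gerlach}, whereas you re-derive it inline by playing the norm decay $(T^{\sun}(t)-I)C^{\sun}(r_{\la})x^{\sun}\to 0$ against the $\sigma(X^{\sun},X)$-limit $Q^{\sun}x^{\sun}$, using the $\sigma(X^{\sun},X)$-continuity of $T^{\sun}(t)$ (from $\dual{x,T^{\sun}(t)y^{\sun}}=\dual{T(t)x,y^{\sun}}$) and the Hausdorff property of $\sigma(X^{\sun},X)$, which the paper itself establishes in the course of Proposition \ref{sun_lcs}; you also handle the one delicate point correctly, namely that the subnet indices still tend to infinity, so the norm decay persists along the subnet and the two limits can be reconciled. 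What your route buys is self-containedness: the corollary then rests only on the two facts recalled immediately before it plus the compactness of $\scT_0^{\sun}$, and your closing remark rightly identifies the argument as a re-proof of Gerlach's lemma specialized to the norming dual pair $(X,X^{\sun})$. What the paper's route buys is brevity and an explicit link to the framework of \cite{Gerlach}. The remaining steps --- $C^{\sun}(r)x^{\sun}\equiv x^{\sun}$ for $x^{\sun}\in N(A^{\sun})$, hence $Q^{\sun}x^{\sun}=x^{\sun}$ there, hence $Q^{\sun}Q^{\sun}=Q^{\sun}$ and $R(Q^{\sun})=N(A^{\sun})$ --- coincide with the paper's essentially verbatim.
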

\begin{proof}
By \cite[Lemma 4.5]{Gerlach}, we have $Q^{\sun}x^{\sun}\in N(A^{\sun}).$ Let $x^{ \sun}\in N(A^{\sun})$; then, 
$C(r)x^{\sun}\equiv x^{\sun}=Q^{\sun}x^{\sun}.$ It remains to prove that $Q^{\sun}Q^{\sun}=Q^{\sun}.$ 
If $x^{\sun}\in X^{\sun}$ and $Q^{\sun}x^{\sun}=y^{\sun}\in N(A^{\sun})$, then 
$$
Q^{\sun}Q^{\sun}x^{\sun}=Q^{\sun}y^{\sun}=y^{\sun}=Q^{\sun}x^{\sun},
$$
which concludes the proof.
\end{proof}

\section{Examples}
In this section, we present some counterexamples. We start with an example of a $\sigma(BUC(\re),L^1(\re))-$flight vector, which fails to be a $\sigma(BUC(\re),BUC(\re)^*)-$flight vector.
Throughout this section, let 
$$
\Funk{f}{\re}{\re,}{t}{\sin(\ln(\btr{t}+1)).}
$$
This function is taken from \cite{RuessInt}.
We recall the following obvious result from functional analysis.
\begin{pro} \label{interchange}
Let $\net{x}{\la}{\Lambda}\subset X$ and $ \net{x^*}{\gamma}{\Gamma}\subset X^*,$ with 
$\sigma(X,X^*)-\netlim{\la}{\Lambda} x_{\la}=x$ and $\sigma(X^*,X)-\netlim{\gamma}{\Gamma}x^*_{\gamma}=x^*;$ then,
$$
\netlim{\gamma}{\Gamma}\netlim{\lambda}{\Lambda}<x^*_{\gamma},x_{\la}>=\netlim{\lambda}{\Lambda}\netlim{\gamma}{\Gamma}<x^*_{\gamma},x_{\la}>=<x^*,x>.
$$
\end{pro}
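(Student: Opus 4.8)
The plan is to evaluate each of the two iterated limits separately, in each case computing the inner limit first and then the outer one, and at every stage reducing to the defining convergence of one of the two nets while the other index is held fixed. Since both computations will be seen to yield the same value $<x^*,x>$, the asserted double equality follows immediately.

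First I would treat $\netlim{\gamma}{\Gamma}\netlim{\lambda}{\Lambda}<x^*_{\gamma},x_{\la}>$. For a fixed $\gamma$, the functional $x^*_{\gamma}$ is a fixed element of $X^*$, so the inner limit is governed by the weak convergence $\sigma(X,X^*)-\netlim{\la}{\Lambda}x_{\la}=x$; pairing the fixed functional with the net gives $\netlim{\la}{\Lambda}<x^*_{\gamma},x_{\la}>=<x^*_{\gamma},x>$. The outer limit is then taken over $\gamma$ against the fixed vector $x\in X$, hence it is governed by the weak-$*$ convergence $\sigma(X^*,X)-\netlim{\gamma}{\Gamma}x^*_{\gamma}=x^*$, yielding $\netlim{\gamma}{\Gamma}<x^*_{\gamma},x>=<x^*,x>$.

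The second iterated limit $\netlim{\lambda}{\Lambda}\netlim{\gamma}{\Gamma}<x^*_{\gamma},x_{\la}>$ is handled symmetrically. For fixed $\lambda$, the vector $x_{\la}$ is a fixed element of $X$, so the inner limit over $\gamma$ uses the weak-$*$ convergence of $\net{x^*}{\gamma}{\Gamma}$ and produces $<x^*,x_{\la}>$; the outer limit over $\lambda$ then uses the weak convergence of $\net{x}{\la}{\Lambda}$ and produces $<x^*,x>$. Thus both iterated limits equal $<x^*,x>$, as claimed.

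There is no genuine obstacle here, which is why the statement is announced as an obvious fact from functional analysis; the only point worth recording is that the argument never invokes any joint convergence of the doubly-indexed family $<x^*_{\gamma},x_{\la}>$ (which need not exist), but only the fact that fixing either index collapses the corresponding inner limit to the defining mode of convergence of the remaining net.
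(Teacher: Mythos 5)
Your proof is correct: fixing one index reduces each inner limit to the defining weak or weak-$*$ convergence of the other net, and the outer limit then follows the same way, so both iterated limits equal $<x^*,x>$. The paper itself offers no proof, introducing the proposition as an obvious fact from functional analysis, and your argument is precisely the standard one it implicitly relies on, including the correct observation that no joint convergence of the doubly-indexed family is needed.
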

Next, we show the following:
\begin{pro} \label{0-separated}
$$ 0\not\in \overline{\bk{f_t:t\in\re}}^{\sigma(BUC(\re),BUC(\re)^*)}. $$
\end{pro}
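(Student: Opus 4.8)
The plan is to separate $0$ from the orbit by a single functional. Since $0$ lies in the $\sigma(BUC(\re),BUC(\re)^*)$-closure of $\{f_t:t\in\re\}$ precisely when every weak neighbourhood of $0$ meets the orbit, it suffices to produce one $\phi\in BUC(\re)^*$ with $\inf_{t\in\re}|\phi(f_t)|>0$. Indeed, once such a $\phi$ is found, the set $\{g:|\phi(g)|<\tfrac12\inf_{t}|\phi(f_t)|\}$ is a weak neighbourhood of $0$ disjoint from the orbit, which gives the claim. So the whole proof reduces to constructing a functional that is constant and nonzero on the orbit.

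To build $\phi$, I would work with the Cesàro means $M_T(g):=\frac1T\int_0^T g(s)\,ds$, which are functionals of norm one on $BUC(\re)$. A direct computation governs their action on $f$: the substitution $u=\ln(s+1)$ turns $\int_0^T\sin(\ln(s+1))\,ds$ into $\int_0^{\ln(T+1)}e^u\sin u\,du$, whose elementary antiderivative $\tfrac12 e^u(\sin u-\cos u)$ yields
$$
M_T(f)=\frac{T+1}{2T}\bigl(\sin L-\cos L\bigr)+\frac{1}{2T},\qquad L:=\ln(T+1).
$$
Since $\sin L-\cos L=\sqrt2\,\sin(L-\tfrac{\pi}{4})$, this says $M_T(f)=\tfrac{1}{\sqrt2}\sin(L-\tfrac{\pi}{4})+o(1)$ as $T\to\infty$, so the means stay bounded but do \emph{not} converge.

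The structural ingredient is the asymptotic translation invariance of the means: for every fixed $a\in\re$ one has $M_T(g_a)-M_T(g)=\frac1T\bigl(\int_T^{T+a}g-\int_0^a g\bigr)=O(1/T)\to 0$, so any weak-$*$ cluster point $\phi$ of the net $\{M_T\}_{T\to\infty}$ satisfies $\phi(g_a)=\phi(g)$ for all $a\in\re$, hence $\phi(f_t)=\phi(f)$ for every $t$. The remaining issue is that an arbitrary invariant mean might vanish on $f$, so I would instead choose $T_n\to\infty$ with $L_n=\ln(T_n+1)=\tfrac{3\pi}{4}+2\pi n$, whence $M_{T_n}(f)\to\tfrac{1}{\sqrt2}$. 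By Banach--Alaoglu the bounded net $\{M_{T_n}\}$ has a weak-$*$ cluster point $\phi\in BUC(\re)^*$; passing to a convergent subnet forces $\phi(f)=\tfrac{1}{\sqrt2}$ while preserving the invariance above, so $\phi(f_t)=\tfrac{1}{\sqrt2}$ for all $t\in\re$, and the separation is complete.

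The main obstacle is exactly this non-convergence of the mean: one cannot simply invoke ``the'' invariant mean, because it need not detect $f$. The resolution is to marry the explicit amplitude computation (which shows the Cesàro means remain bounded away from $0$ along a well-chosen sequence $T_n$) with the asymptotic-invariance argument (which guarantees that any cluster functional is genuinely constant on the whole orbit). Once these two facts are in hand, the existence of the cluster point and the neighbourhood argument are routine.
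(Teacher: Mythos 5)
Your proof is correct, but it takes a genuinely different route from the paper's. The paper argues by contradiction with a double-limit interchange: if $f_{s_\gamma}\to 0$ weakly, then pairing against weak$^*$ cluster points of the point evaluations $\delta_{t_m}$, $t_m=\exp(2m\pi+\pi/2)$, Proposition \ref{interchange} forces both iterated limits of $f(s_\gamma+t_{m_\lambda})$ to equal $0$, while the explicit inner limit is $1$ --- a contradiction. You instead build an explicit separating functional: a weak$^*$ cluster point $\phi$ of the Ces\`aro means $M_{T_n}$ along $T_n=\exp(3\pi/4+2\pi n)-1$, which is translation invariant (by the $O(1/T)$ estimate) and satisfies $\phi(f_t)=\tfrac{1}{\sqrt2}$ for all $t\in\re$, so that the weakly open set $\bk{g:\btr{\phi(g)}<\tfrac{1}{2\sqrt2}}$ separates $0$ from the orbit. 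Your steps all check out: the antiderivative computation giving $M_T(f)=\tfrac{1}{\sqrt2}\sin(\ln(T+1)-\tfrac{\pi}{4})+o(1)$, the invariance estimate valid for every fixed real shift, and the fact that any subnet of the sequence $(M_{T_n})$ inherits both the limit $\phi(f)=\tfrac{1}{\sqrt2}$ and the invariance. What the paper's route buys is economy: no integration is needed, and the same Dirac-type cluster functionals are immediately reused in Corollary \ref{separated} to describe the whole weak$^*$ closure of the orbit. What your route buys is a stronger and more constructive conclusion: a single genuinely translation-invariant functional showing that the entire weak closure of $\bk{f_t:t\in\re}$ lies in the affine hyperplane $\phi^{-1}\fk{\bk{1/\sqrt 2}}$, not merely that it avoids $0$. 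In spirit, your construction is an averaged variant of the paper's Corollary \ref{separated}, which extracts functionals constant on the orbit from weak$^*$ limits of point evaluations rather than from Ces\`aro means; both exploit the same feature of $f$, namely that the logarithmic time scale makes $f$ nearly constant over exponentially long windows.
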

\begin{proof}
Assume that $0\in \overline{\bk{f_t:t\in\re}}^{\sigma(BUC(\re),BUC(\re)^*)};$ then, there is a net $\net{s}{\gamma}{\Gamma}\subset \re$ such that $\sigma(BUC(\re,),BUC(\re)^*)-\netlim{\gamma}{\Gamma}f_{s_{\gamma}}= 0,$ and for all 
$(t_m)_{m\in\za},$ $\netlim{\la}{\Lambda}\delta_{t_{m_{\la}}}=x^*,$ we have
$$
\netlim{\la}{\Lambda}\netlim{\gamma}{\Gamma}f(s_{\gamma}+t_{m_{\la}})=\netlim{\la}{\Lambda}\netlim{\gamma}{\Gamma}<f_{s_{\gamma}},\delta_{t_{m_{\la}}}>=0.
$$
By Proposition \ref{interchange}, we have
$$
\netlim{\gamma}{\Gamma}\netlim{\la}{\Lambda}f(s_{\gamma}+t_{m_{\la}})=0.
$$
However, for $t_m=\exp(2m\pi+\pi/2),$ i.e., $t_{m_{\la}}=\exp(2m_{\la}\pi+\pi/2),$ we find that, for $t_{m_{\la}}>s_{\gamma},$
\begin{eqnarray*}
\netlim{\la}{\Lambda}\sin(\ln(t_{m_{\la}}+s_{\gamma}+1))&=&\netlim{\la}{\Lambda}\sin\fk{\ln\fk{\exp(2m_{\la}\pi+\pi/2)\fk{1+\frac{s_{\gamma}+1}{\exp(2m_{\la}\pi+\pi/2)}}}} \\
&=&\netlim{\la}{\Lambda}\sin\fk{2m_{\la}\pi+\pi/2 +\ln\fk{1+\frac{s_{\gamma}+1}{\exp(2m_{\la}\pi+\pi/2)}}} \\
&=&\netlim{\la}{\Lambda}\sin\fk{\pi/2 +\ln\fk{1+\frac{s_{\gamma}+1}{\exp(2m_{\la}\pi+\pi/2)}}} \\
&=& 1,
\end{eqnarray*}
which is a contradiction.
\end{proof}
\begin{cor} \label{separated}
\begin{enumerate}
\item Let $x^*=\netlim{\la}{\Lambda}\delta_{t_{m_{\la}}}$ with $t_{m_{\la}}=\exp(2m_{\la}\pi+\pi/2);$ then,
$$
x^*_{|\overline{\bk{f_t:t\in\re}}^{\sigma(BUC(\re),BUC(\re)^*)}}\equiv 1.
$$
Hence, 
$$ \overline{\bk{f_t:t\in\re}}^{\sigma(BUC(\re)^{**},BUC(\re)^*)} \subset (x^*)^{-1}(\bk{1})\not\in 0.$$
Moreover, let $g(t)\equiv \nu <1;$ then,
$$
\nu=<x^*,g> < \ \alpha \le \ <x^*,f_t> \mbox{ for all } t\in\re
$$
for some $\nu <\alpha < 1.$ \\
\item Choosing $\tau\in \sin^{-1}\bk{\nu},$ $\nu\in [-1,1]$, we obtain for $t_m=\exp(2m\pi+\tau)$ that for the subnet $\net{m}{\la}{\Lambda}$, 
$$
\netlim{\lambda}{\Lambda}\delta_{t_{m_{\la}}}=:x^*_{\nu},
$$ and
$$
<x^*_{\nu},f_s>=\nu.
$$
Moreover, let $g(t)\equiv 1;$ then,
$$
1=<x^*_{\nu},g> \  > \ \alpha \ge \ <x^*_{\nu},f_t> \mbox{ for all } t\in\re
$$
for some $\alpha < 1.$
\end{enumerate}
Thus, 
\begin{equation}
[-1,1]\cap \overline{\bk{f_t:t\in\re}}^{\sigma(BUC(\re)^{**},BUC(\re)^*)}=\emptyset,
\end{equation}
and
\begin{equation} \label{w-star-closure}
\overline{\bk{f_t:t\in\re}}^{\sigma(BUC(\re)^{**},BUC(\re)^*)} \subset \bigcap_{\nu\in [-1,1]} x_{\nu}^{-1}(\bk{\nu}).
\end{equation}

\end{cor}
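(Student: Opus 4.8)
The plan is to generalize the single asymptotic computation already carried out in Proposition \ref{0-separated} to an arbitrary target phase, and then to read off both displayed inclusions as immediate consequences of weak-star continuity together with an elementary separation argument. First I would fix $\nu\in[-1,1]$, choose $\tau$ with $\sin\tau=\nu$, and set $t_m:=\exp(2m\pi+\tau)$. Using weak-star compactness of the unit ball of $BUC(\re)^*$, I would pass to a subnet $\net{m}{\la}{\Lambda}$ along which $\delta_{t_{m_\la}}$ converges weak-star to some $x^*_\nu\in BUC(\re)^*$ with $\nrm{x^*_\nu}\le 1$. The central estimate is exactly the logarithm expansion of Proposition \ref{0-separated}: for each fixed $s\in\re$ and all sufficiently large indices (so that $s+t_{m_\la}>0$),
\[
\ln(s+t_{m_\la}+1)=2m_\la\pi+\tau+\ln\!\fk{1+\frac{s+1}{\exp(2m_\la\pi+\tau)}},
\]
and since the correction term tends to $0$ we obtain $<x^*_\nu,f_s>=\netlim{\la}{\Lambda}f(s+t_{m_\la})=\sin\tau=\nu$ for every $s$. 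The case $\tau=\pi/2$ (so $\nu=1$) produces the functional $x^*$ of part 1 and the identity $<x^*,f_t>\equiv 1$.

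Next I would transport these pointwise identities to the closures by continuity. Regarded as a functional on $BUC(\re)$, each $x^*_\nu$ is $\sigma(BUC(\re),BUC(\re)^*)$-continuous and constant equal to $\nu$ on $\bk{f_t:t\in\re}$, hence constant $\nu$ on the weak closure of the orbit; for $\nu=1$ this is the first identity of part 1. Regarded instead as an element of the predual defining $\sigma(BUC(\re)^{**},BUC(\re)^*)$, $x^*_\nu$ is weak-star continuous on the bidual, so the canonical image of the orbit, on which it equals $\nu$, has its weak-star closure inside $(x^*_\nu)^{-1}(\bk{\nu})$; intersecting over $\nu\in[-1,1]$ is precisely (\ref{w-star-closure}), and the case $\nu=1$ gives $\overline{\bk{f_t:t\in\re}}^{\sigma(BUC(\re)^{**},BUC(\re)^*)}\subset(x^*)^{-1}(\bk{1})$. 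Since $<x^*,0>=0\ne 1$, this re-proves, in strengthened form, that $0$ lies outside the orbit closure.

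Finally, the statements about constant functions are mere evaluations: if $g\equiv c$ is constant then $<x^*_\nu,g>=\netlim{\la}{\Lambda}c=c$, so choosing $\nu=1$ (respectively any $\nu<1$) and any $\alpha$ strictly between $c$ and the orbit value $\nu$ yields the two displayed chains of inequalities of parts 1 and 2. For the disjointness $[-1,1]\cap\overline{\bk{f_t:t\in\re}}^{\sigma(BUC(\re)^{**},BUC(\re)^*)}=\emptyset$, I would argue that for a constant $c\in[-1,1]$ one can always pick $\nu\in[-1,1]$ with $\nu\ne c$; then $<x^*_\nu,c>=c\ne\nu$, so $c\notin(x^*_\nu)^{-1}(\bk{\nu})$ and a fortiori $c$ is not in the orbit closure by (\ref{w-star-closure}).

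I expect no genuine obstacle here: the analytic heart, namely the vanishing of the logarithmic correction as $m_\la\to\infty$, is already supplied by Proposition \ref{0-separated}, and the pushforward to the closure is automatic once weak-star continuity of each $x^*_\nu$ is invoked. The only care required is the bookkeeping of nets, that is, extracting the weak-star limit defining $x^*_\nu$ before pairing, and, where a genuine interchange of a weakly convergent net with a weak-star convergent net should arise, appealing to Proposition \ref{interchange}.
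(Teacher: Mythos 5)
Your proposal is correct and follows exactly the route the paper intends: the corollary is read off from the logarithm expansion in the proof of Proposition \ref{0-separated}, generalized from the phase $\pi/2$ to an arbitrary $\tau\in\sin^{-1}\bk{\nu}$ (this is precisely the paper's computation (\ref{possible-limits})), after which every assertion follows from $\sigma(BUC(\re),BUC(\re)^*)$-continuity, respectively $\sigma(BUC(\re)^{**},BUC(\re)^*)$-continuity, of the functionals $x^*_{\nu}$ together with the evaluation $<x^*_{\nu},g>=c$ on constants $g\equiv c$. Your unified separation step (for $c\in[-1,1]$ pick any $\nu\neq c$ and use $x^*_{\nu}$) is a slightly cleaner packaging of the paper's two-case argument, and it quietly avoids the degenerate case $\nu=1$ in part (2), where the paper's strict inequality has no admissible $\alpha$.
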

By a similar construction, we obtain for similar translations and the pointwise topology that
$$\scT:=\bk{\delta_t:t\in\re}.$$
\begin{remk} \label{pointwise-Orbit}
By choosing $\tau\in \sin^{-1}\bk{\nu},$ $\nu\in [-1,1]$, we find that, for $s_n=\exp(2k_n\pi+\tau)$ and $f_{s_n},$

$$
\bk{f_t}_{t\in\re}\cup[-1,1]\subset \overline{\bk{f_t:t\in\re}}^{\scT}.
$$
Because $f$ is even, it is sufficient to consider a net $\net{s}{\la}{\Lambda}\subset \rep,$ $s_{\la}=\exp(2k_{\la}\pi+\tau_{ \la}).$ We may assume that $\tau_{\la}\to \tau\in[0,2\pi],$ and we obtain
\begin{equation} 
\bk{f_t}_{t\in\re}\cup [-1,1]\supset \overline{\bk{f_t:t\in\re}}^{\scT}.
\end{equation}
\end{remk}

\begin{proof} From the proof of Proposition \ref{0-separated}, we learn that
\begin{eqnarray} \label{possible-limits}
\netlim{\la}{\Lambda}\sin(\ln(t+s_{\la}+1))&=&\netlim{\la}{\Lambda}\sin\fk{\tau_{\la} +\ln\fk{1+\frac{t+1}{\exp(2k_{\la}\pi+\tau_{\la})}}},
\end{eqnarray}
which concludes the proof.
\end{proof}
The same remark will hold for $\scTco,$ and we obtain the following.

\begin{remk} \label{compact-open-Orbit}
By choosing $\tau\in \sin^{-1}\bk{\nu},$ $\nu\in [-1,1]$, we find that, for $s_n=\exp(2k_n\pi+\tau)$,

$$
\bk{f_t}_{t\in\re}\cup[-1,1]\subset \overline{\bk{f_t:t\in\re}}^{\scTco}.
$$
Because $\scTco$ is metric, for a given sequence $s_n=\exp(2k_n\pi+\tau_n)$, we may assume that $\tau_n\to \tau\in[0,2\pi],$ and we obtain
$$
\bk{f_t}_{t\in\re}\cup [-1,1]\supset \overline{\bk{f_t:t\in\re}}^{\scTco}.
$$
\end{remk}
From the above observation, it is clear that $f$ is not Eberlein weakly almost periodic. 
\begin{cor}
$$
\overline{O(f)}^{\scTco}\not=\scS_0f,
$$
where $\scS_0$ is the compactification of the bounded operators of translations $\bk{T(t)}_{t\in\re}$ coming from \cite{Witz}.
\end{cor}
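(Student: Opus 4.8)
The plan is to exhibit a single function that lies in the compact-open closure $\overline{O(f)}^{\scTco}$ but is missing from the Witz orbit $\scS_0f$; the constant function $g\equiv 0$ will serve. Everything hinges on contrasting the compact-open (pointwise-on-compacta) limits that build $\overline{O(f)}^{\scTco}$ with the weak-star limits in the bidual that build $\scS_0f$. First I would describe $\scS_0f$ as a set of weak-star limits: by Proposition \ref{elemenatry-properties} (1), $\scS=\bk{T(t)}$ is $w^*OT$-dense in $\scS_0$, so each $V\in\scS_0$ is the $w^*OT$-limit of a net of translations $T(t_{\la})$. Pairing with an arbitrary $x^*\in BUC(\re)^*$ gives $<f_{t_{\la}},x^*>=<T(t_{\la})f,x^*>\to <Vf,x^*>$, which is precisely $\sigma(BUC(\re)^{**},BUC(\re)^*)$-convergence of the canonically embedded translates $f_{t_{\la}}$ to $Vf$. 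Hence
$$
\scS_0f\subset\overline{\bk{f_t:t\in\re}}^{\sigma(BUC(\re)^{**},BUC(\re)^*)}.
$$

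Next I would feed in the separation already established in Corollary \ref{separated}. The functional $x^*$ of Corollary \ref{separated} (1) satisfies $<f_t,x^*>\ge\alpha$ for all $t\in\re$ with some $0<\alpha<1$ (take $\nu=0$ there). Passing to the limit along the net defining $V$, this yields $<Vf,x^*>\ge\alpha$ for every $V\in\scS_0$, whereas the constant $g\equiv0$ obeys $<g,x^*>=0<\alpha$. Consequently $g\equiv0\notin\scS_0f$; equivalently, the constant functions valued in $[-1,1]$ are disjoint from the weak-star orbit closure displayed above, which is exactly the content of Corollary \ref{separated}.

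Finally I would contrast this with Remark \ref{compact-open-Orbit}, which gives $[-1,1]\subset\overline{O(f)}^{\scTco}$ and in particular $g\equiv0\in\overline{O(f)}^{\scTco}$. Thus $g\equiv0$ distinguishes the two sets, proving $\overline{O(f)}^{\scTco}\neq\scS_0f$. I expect the only delicate point to be the bookkeeping across ambient spaces: the members of $\overline{O(f)}^{\scTco}$ live in $BUC(\re)$, whereas those of $\scS_0f$ live in the bidual $BUC(\re)^{**}$, so one must verify that the separating functional $x^*\in BUC(\re)^*$ genuinely evaluates the canonical image of $g$ to $0$. This is immediate, since both $<g,x^*>$ and $<Vf,x^*>$ are computed in the same duality $(BUC(\re)^{**},BUC(\re)^*)$, and it is this single comparison that makes the quantitative bound $<Vf,x^*>\ge\alpha>0=<g,x^*>$ meaningful across the two constructions.
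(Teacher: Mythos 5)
Your proof is correct and takes essentially the same approach as the paper: the constant $0$ lies in $\overline{O(f)}^{\scTco}$ by Remark \ref{compact-open-Orbit}, yet the separating functional $x^*$ of Corollary \ref{separated} forces $<Vf,x^*>=1$ (in particular $\ge\alpha>0$) for every $V\in\scS_0$, so $0\notin\scS_0f$. The only difference is that you spell out the density step $\scS_0f\subset\overline{\bk{f_t:t\in\re}}^{\sigma(BUC(\re)^{**},BUC(\re)^*)}$ via Proposition \ref{elemenatry-properties}, which the paper leaves implicit; this is a clarification, not a different argument.
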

\begin{proof}
We showed in Remark \ref{compact-open-Orbit} that $0\in \overline{O(f)}^{\scTco},$ but by Corollary \ref{separated}, $0$ is weak$^*$ separated from the $O(f)$ in $BUC(\re)^{**};$ hence, $0$  is not in the weak$^*$ closure of the orbit with respect to $S_0.$.
\end{proof}
Because the pointwise topology is weaker than the weak topology $\scT\subset \sigma(BUC(\re),BUC(\re,X)^*),$ an application of Corollary \ref{separated} leads to the following corollary:
\begin{cor}
$$
\overline{\bk{f_t:t\in\re}}^{\sigma(BUC(\re),BUC(\re)^*)}=\bk{f_t:t\in\re}.
$$ 
\end{cor}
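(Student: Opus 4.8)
The plan is to trap the weak closure of the orbit between the orbit itself and its closure in the coarser pointwise topology $\scT$, which has already been determined. Since $\scT\subset\sigma(BUC(\re),BUC(\re)^*)$, passing to the coarser topology only enlarges closures, so
$$
\overline{\bk{f_t:t\in\re}}^{\sigma(BUC(\re),BUC(\re)^*)}\subset\overline{\bk{f_t:t\in\re}}^{\scT}=\bk{f_t}_{t\in\re}\cup[-1,1],
$$
the last equality being Remark \ref{pointwise-Orbit}. Thus the only candidates that the weak closure could add to the orbit are the constant functions with value in $[-1,1]$; since every translate $f_t$ is nonconstant, none of these constants belongs to the orbit, so it suffices to rule them out of the weak closure.

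I would discard these constants by descending into the bidual. The weak topology $\sigma(BUC(\re),BUC(\re)^*)$ is precisely the trace on $BUC(\re)$ of the weak-star topology $\sigma(BUC(\re)^{**},BUC(\re)^*)$ under the canonical embedding, so for every subset $E$ one has
$$
\overline{E}^{\sigma(BUC(\re),BUC(\re)^*)}=BUC(\re)\cap\overline{E}^{\sigma(BUC(\re)^{**},BUC(\re)^*)}.
$$
In particular, a constant $\nu\in[-1,1]$ lying in the weak closure of the orbit would also lie in its weak-star closure in $BUC(\re)^{**}$. But Corollary \ref{separated} provides, for each such $\nu$, a functional in $BUC(\re)^*$ arising as a weak-star limit of point evaluations that strictly separates $\nu$ from the entire orbit, which is exactly the recorded identity
$$
[-1,1]\cap\overline{\bk{f_t:t\in\re}}^{\sigma(BUC(\re)^{**},BUC(\re)^*)}=\emptyset.
$$
Hence no constant in $[-1,1]$ can survive in the weak closure.

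Combining the two steps gives $\overline{\bk{f_t:t\in\re}}^{\sigma(BUC(\re),BUC(\re)^*)}\subset\bk{f_t:t\in\re}$, and since the orbit is always contained in its own closure, equality follows. The one point requiring care is the identification of the weak closure with the trace of the bidual weak-star closure; once that is granted, the separation already established in Corollary \ref{separated} does all the remaining work, and no constant function can be a weak limit of translates of $f$.
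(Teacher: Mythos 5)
Your proposal is correct and is essentially the paper's own argument, made explicit: the paper likewise combines the inclusion $\scT\subset\sigma(BUC(\re),BUC(\re)^*)$ (so the weak closure sits inside the pointwise closure $\bk{f_t:t\in\re}\cup[-1,1]$ from Remark \ref{pointwise-Orbit}) with Corollary \ref{separated} to exclude the constants. Your additional observation that the weak topology is the trace of $\sigma(BUC(\re)^{**},BUC(\re)^*)$ on the canonical image of $BUC(\re)$ is exactly the (unstated) bridge the paper relies on, so the write-up fills in the details rather than deviating from the paper's route.
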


Therefore, it remains for us to compute the weak* closure. The question is what is $\omega(f)$ considered in the w* topology in $BUC(\re)^{**}?$ The previous study showed that the pointwise topology will not provide a hint. The missing weak compactness may serve for elements in $BUC(\re)^{**}\setminus BUC(\re).$

The pointwise solution is not an approach. Moreover, we have the following:
\begin{remk}
$$
\fk{\overline{\bk{f_t:t\in\re}}^{\sigma(BUC(\re)^{**},BUC(\re)^*)},\scT} \mbox{ is not Hausdorff. }
$$
\end{remk}
\begin{proof}
Because $\net{f}{t}{\re}$ is bounded, there is a subnet $\net{s}{\la}{\Lambda}$ such that 
$$
g=\sigma(BUC(\re)^{**},BUC(\re)^*)-\netlim{\la}{\Lambda}f_{s_{\la}}.
$$ 
An application of \ref{possible-limits} leads to a $\nu\in [-1,1]$ such that 
$$
f_{s_{\la}} \to \nu
$$
pointwise.
However, in light (\ref{w-star-closure}) of Corollary \ref{separated}, we have
$$
\nu\not\in \overline{\bk{f_t:t\in\re}}^{\sigma(BUC(\re)^{**},BUC(\re)^*)}.
$$
Hence, the topology $\scT$ cannot separate the $\nu$ from the weak$^*$ closure of the orbit. Thus, in the pointwise topology,
$$
g-\nu =0,
$$
but $g$ and $\nu$ are separated in the w$^*$ topology.
\end{proof}

\begin{remk}
$f$ is uniformly continuous, but by \cite{RuessInt}, the function fails to be Eberlein weakly almost periodic. Therefore, in view of Theorem \ref{S0-Abelian}, with respect to the translation semigroup, the pairing $(L^1(\re), BUC(\re))$ forms a non-Abelian pairing, while $(C_0(\re),L^1(\re))$ from an Abelian one.
\end{remk}

\begin{exa} \label{weak-null-but-not-Eberlein}
Let $t_m:=16^m$ and $s_n:=16^{n+1};$ by \cite[Example 3.1]{RRS}, we have, for
$$
E=\bk{t_n\pm t_m:m\le n},
$$
that ${\chi_{E\cup -E}}_{|\za}\in W(\za)$.
Further, let
$$
\Funk{\vp}{\rep}{\re,}{t}{\left\{ 
\begin{array}{rcr} 
4(\frac{1}{4}-s)&:& 0\le s\le\frac{1}{4}, \\ 
0 &:& s>\frac{1}{4}, 
\end{array} \right.}
$$
and for $M\subset \za$,
$$
\Funk{g_M}{\re}{\re,}{t}{\sum_{k\in M}\vp(\btr{t-k})}
$$
if $F:=(E\cup-E)\cap\za.$ We have \cite[Remark 3.4]{RRS} that $g:=g_F \in W_0(\rep)$ if ${\rm lb} $ is the binary logarithm 
$$
\Funk{f}{\re}{\re,}{t}{\sin(\frac{\pi}{8}\mbox{${\rm lb}$}(\btr{t}+1)),}
$$
and $h(t):=g(t)f(t).$
Then,
$$
h(s_n+t_m)=g(s_n+t_m)f(s_n+t_m)=f(s_n+t_m) \ \ \forall \ m\le n,
$$
and for some null sequences $\seq{\beta}{n},\seq{\al}{n}$, we have
\begin{eqnarray*}
f(s_n+t_m)&=&\sin(\frac{\pi}{8}\lb(\btr{s_n+t_m}+1))\\
&=&\sin(\frac{\pi}{8}\mbox{\lb}(16^{n+1}(1+\al_n)))=\sin(\frac{4n\pi}{8}+\frac{\pi}{2}+\beta_n).
\end{eqnarray*}
Hence, for the subsequence $n=4k,$ $\ilm{n}f(s_{4n}+t_m)=1$,
$\ilm{m}h(s_n+t_{4m})=\ilm{m}f(s_n+t_{4m})=0;$ consequently, $h\not\in W(\rep),$ by \cite[Double limit criterion]{Groth}, but for some 
$\seq{\om}{k}$, we have
$$
g_{\omega_n}\to 0 \mbox{ weakly in } BUC(\rep).
$$
Therefore, for all $\seq{t}{m}\subset \rep$, the double limits of 
$h(\om_n+t_m)$ are $0;$ hence,
$$
h_{\om_n}\to 0 \mbox{ weakly in } BUC(\rep)
$$
again by Grothendieck's double limit criterion \cite{Groth}.
Summarizing, we find a function for which some translations converge weakly to $0.$   Therefore,
$h\in BUC(\rep)_0$ by Theorem \ref{apply-to-sun} but fails to be Eberlein weakly almost periodic. Moreover,
if $E:=\bk{P^{\sun}\in E(\scS^{\sun}_0): \ \le_L\mbox{-minimal}}$, then
$$
h \in \bigcap_{P^{\sun}\in E }N(P^{\sun});
$$
thus, the intersection of all $N(P^{\sun})$ does not reduce to the Eberlein weakly almost periodic functions.
\end{exa}
\begin{exa} \label{Abelian_but_not_Eberlein}
Let $h_1=0$ and, for $n\ge 2,$ $\funk{h_n}{\re}{[0,1]}$ equicontinuous, $h_n(2^{2n+1})=h_n(2^{2(n+1)})=0$, 
${h_n}_{|[2^{2n+1}+1,2^{2(n+1)}-1]}\equiv 1,$  and $h_n$ equal 0 otherwise. By definition, we have $\supp{h_n}\cap \supp{h_m}=\emptyset$ for $n\not=m.$ With these functions, we define
$$
\Funk{g}{\re}{l^2(\za),}{t}{\left\{
\begin{array}{ccl}
h_n(t)e_n&:& t\in [2^{2n+1},2^{2(n+1)}], \\
0&:& \mbox{otherwise}.
\end{array} \right.
}
$$
If $Y=\overline{span}\bk{g_{\tau}:\tau\in\re},$ then $Y\subset BUC(\re,l^2(\za)),$
and ${\scS_0^{\sun}}_{|Y}=\bk{T(t)}_{t\in\re}\cup \bk{0},$ where 
$\bk{T(t)}_{t\in\re}$ denotes the translation group. Then, ${\scS_0^{\sun}}_{|Y}$ is Abelian, but 
$\bk{t\mapsto T(t)g}$ fails to be Eberlein weakly almost periodic. Note that for $f\in Y,$ $\overline{O(f)}^{\sigma(Y,L^1(\re,l^2))}=\scS_0^{\sun}f\subset Y.$
\end{exa}
\begin{proof}
To compute the $w^*OT$ closure of $\bk{T(t)}_{t\ge }$ restricted to $Y,$ we have to consider for $h\in L^1(\re,l^2(\za))$,
\begin{eqnarray*}
\btr{\int_{\re}<g_t,h>d\mu} &\le&\int_{-R}^R\sum_{n=2}^{\infty}h_n(s+t)\btr{<e_n,h(s)>}ds +\ep\\
&\le& \int_{-R}^R\btr{<e_n,h(s)>}ds +\ep,
\end{eqnarray*}
when for large $t,$ $t+s\in t+[-R,R]\cap [2^{2n+1},2^{2(n+1)}]\not=\emptyset;$ hence, we are in the situation of Lebesgue's dominated convergence theorem, and we obtain the limit of 0. For $t\to -\infty,$ we choose $t<-R$, and we find that the integral is equal to $0.$
Consequently, a finite linear combination of translations of $g$ converges to zero. The proof for the uniform limit is straightforward. To verify $g$ not being Eberlein weakly almost periodic, apply \cite[Theorem 2.1]{RuessInt} with $\om_n=2^{2n}$ and $(t_m,x_m^*)=(2^{2m+1}+1,e_m).$ 
\end{proof}


\begin{thebibliography}{99}
\bibitem{BH}
Berglund, J.F. and Hofmann K.H., Compact Semitopological Semigroups and Weakly Almost Periodic Functions, Springer Lect. Notes Math 42, (1967).

\bibitem{BasitGuenzler_recurr}
Basit,B. and G\"unzler, H., Recurrent Solutions of Neutral Differential-Difference Systems arXiv.1206.3821v1 [math.DS] (2012)


\bibitem{BJM}
	Berglund, J.F., Junghenn, H.D. and Milnes, P., Compact Right Topological Semigroups and Generalizations of Almost Periodicity, Springer Lect. Notes Math. 663 (1978).
	
\bibitem{BochnerAA}
	Bochner, S., A new Approach to Almost Periodicity. Proc. Nat. Acad- Sc. U.S.A. 48 (1962) 2039-2043.

\bibitem{DeGli1}
  de Leeuw, K. and Glicksberg,I., Applications of Almost Periodic
  Compactifications, Acta Math. 105(1961) pp. 63-97.
\bibitem{DeGli2}
  de Leeuw, K. and Glicksberg,I., Almost Periodic Functions on
  Semigroups, Acta Math. 105(1961) pp. 99-140.
  
\bibitem{DiestelUhl} Diestel, J. and Uhl, J.J. Jr., Vector Measures, AMS Math. Surv. No. 15. (1977). 

\bibitem{DS}
	Dunford, N. and Schwartz, J. Linear Operators, Part 1, Wiley Intersc. Publ. Pure and Appl. Math. Vol. VII 	(1957)
	
\bibitem{Ellis}
	Ellis, R., Distal Transformation Groups, Pac. Math J. Vol 8(3) (1958), pp. 401-405.
	
\bibitem{Ellis_cont}
	Ellis, R., Locally compact transformation groups, Duke M.J.  24 (1957) 119-126.
	
\bibitem{Flohr}
	Flohr, P., Rhythmische Abbildungen ablescher Gruppen, \"Osterreich, Akad. Wiss. math. natur. Kl. S. Ber. Abt. 		II (1966) 117-138.
	
\bibitem{Groth}
  Grothendieck, A., Crit\`eres de compacit\'e dans le espaces fonctionels
  g\'en\'eraux,
  Amer. J. Math. 74(1952), pp. 168-186.
  
\bibitem{HewittRoss}
	Hewitt, E. and Ross, K.A., Abstract Harmonic Analysis I, Grundl. math. Wiss. Vol 115, Springer (1963). 

\bibitem{HillePhillips} 
	Hille, E. and Phillips, R.S., Functional Analysis and Semi-groups, AMS Colloq. Publ. Vol. XXXI, (1957)
	
\bibitem{Gerlach}
	Gerlach, M. and Kunze, M., Mean Ergodic Theorems for Norming Dual Pairs, arXiv:1206.5703v1 [math.FA] (2012)

\bibitem{Kelley} 
	Kelley, J.L.,  General Topology Ishi Press Int. (1955)
	
\bibitem{Jarchow}
    Jarchow, H., Locally Convex space, Math. Leitf., Teubner 1981
	
\bibitem{KnappDeco}
 	Knapp, A.W., Decomposition Theorem for the bounded uniformly continuous functions on a Group, Amer. Math. J. 		Vol. 88, No. 4 (1966), pp. 902-914.
 	
\bibitem{Krengel} 
	Krengel, U., Ergodic Theorems, de Gruyter, Studies in Math 6.(1985).
	
\bibitem{neervenLNM}
 	van Neerven, J., The Adjoint of a Semigroup of Linear Operators, Lect. Note Math. 1529, Springer (1992)
 	
\bibitem{RudinFA}
	Rudin W. Functional Analysis, McGraw Hill (1973)
 
\bibitem{RuppertLNM}
	 Ruppert, W., Compact Semitopological Semigroups, Lect. Notes. Math. 1079, Springer (1984).
	 
\bibitem{RuessInt}
	Ruess, W.M. and Summers, W.H.,   Integration of asymptotically almost periodic
  	functions and weak asymptotic almost periodicity, Dissertationes Math. 279
  	(1989).
  	
\bibitem{RuessSemi}
	Ruess, W.M. and Summers W.H., Weak almost periodicity and the strong ergodic limit
	theorem for contraction semigroups, Israel J. Math. 64 (1988), 139-157
	
\bibitem{RuessAAP}
 	Ruess, W.M. and Summers W.H., Asymptotic Almost Periodicity and Motions of Semigroups of Operators,
 	Linear Algebra and its Appl. 84 (1986), pp. 335-351.
 	
\bibitem{RuesErgdc}
	Ruess, W.M. and Summers W.H., Ergodic Theorems for Semigroups of Operators, Proc. AMS 114 (2) (1992) pp. 423-432.
	
\bibitem{RRS}
	Rosenblatt, J., Ruess, W.M. and Sentilles, D. On the critical part of a weakly almost periodic function. Houston Math. J. 17(2) 1991 pp. 237-249
 	
\bibitem{Shen}
	Shen, W. and Yi,Y, Almost Automorphic and Almost Periodic Dynamics in Skew-Product Semiflows, Mem. AMS No.647  (1998).
 
\bibitem{VeechProblems}
	Veech, W.A., A-Almost Automorphic Functions, Symp Honor of S. Bochner, Problems in Analysis (1970) pp. 	345-351.
	
\bibitem{VeechAA}
	Veech, W.A., Almost Automorphic functions on groups. Amer. J. Math 87,(1965), 719-751.
 
 \bibitem{Witz}  
 	Witz, K., Applications of a compactification for bounded operator semigroups, Illinois J. Math. 8 (1964), 			685-696. MR 31:2626
 
\end{thebibliography}
\end{document}